\documentclass[10pt, reqno]{amsart}

\usepackage[utf8]{inputenc}
\usepackage[english]{babel}
\usepackage{csquotes} 
\usepackage[a4paper,twoside]{geometry}
\usepackage{graphicx}
\usepackage[backend=biber,style=alphabetic,bibencoding=utf8,language=auto,autolang=other,giveninits=true,doi=false,isbn=false,url=false,maxnames=10,sorting=nty]{biblatex}
\usepackage{caption} 
\usepackage[hyperfootnotes=false]{hyperref} 
\hypersetup{
	colorlinks = true,
	linkcolor = {blue},
	urlcolor = {red},
	citecolor = {blue}
}


\usepackage{amsmath}
\usepackage{amsthm}
\usepackage{amssymb}
\usepackage{esint} 
\usepackage{mathrsfs} 
\usepackage{faktor} 
\usepackage{dsfont} 
\usepackage[dvipsnames]{xcolor}
\usepackage{array}
\usepackage{hhline}
\usepackage{enumitem} 
\usepackage{comment} 
\usepackage[toc,page]{appendix} 
\usepackage{imakeidx} 
\usepackage{xparse} 
\usepackage{mathtools}
\usepackage{fancyhdr} 
\usepackage{ifthen} 
\usepackage{forloop} 
\usepackage{xstring}
\usepackage{tikz}
\usetikzlibrary{babel} 
\usetikzlibrary{cd} 
\usepackage{emptypage} 
\usepackage{listings} 
\usepackage{mathabx} 





\usepackage[nameinlink,capitalise,sort]{cleveref}
\crefname{equation}{}{} 
\crefname{enumi}{}{} 

\newcounter{results}[section] 

\theoremstyle{plain}
\newtheorem{theorem}[results]{Theorem}
\newtheorem{lemma}[results]{Lemma}
\newtheorem{proposition}[results]{Proposition}
\newtheorem{corollary}[results]{Corollary}

\newtheorem*{theorem*}{Theorem}
\newtheorem*{lemma*}{Lemma}
\newtheorem*{proposition*}{Proposition}
\newtheorem*{corollary*}{Corollary}
\newtheorem*{exercise*}{Exercise}
\newtheorem*{fact*}{Fact}
\newtheorem*{problem*}{Problem}
\newtheorem*{conjecture*}{Conjecture}

\theoremstyle{remark}
\newtheorem{remark}[results]{Remark}

\newtheorem*{remark*}{Remark}
\newtheorem*{question*}{Question}

\theoremstyle{definition}

\newtheorem*{definition*}{Definition}
\newtheorem*{example*}{Example}

\numberwithin{equation}{section}



\makeatletter
\newcommand{\newreptheorem}[2]{%
  \newtheorem*{rep@#1}{\rep@title}%
  \newenvironment{rep#1}[1]%
    {\def\rep@title{#2 \ref*{##1}}\begin{rep@#1}}%
    {\end{rep@#1}}%
}%
\makeatother
\theoremstyle{plain}
\newreptheorem{theorem}{Theorem}

\newcommand{\N}{\ensuremath{\mathbb N}} 
\newcommand{\Z}{\ensuremath{\mathbb Z}} 
\newcommand{\R}{\ensuremath{\mathbb R}} 

\renewcommand{\S}{\ensuremath{\mathbb S}} 

\makeatletter 
\DeclarePairedDelimiter{\@tmpabs}{\lvert}{\rvert}
\newcommand{\@absstar}[1]{{\@tmpabs*{#1}}}
\newcommand{\@absnostar}[2][]{{\@tmpabs[#1]{#2}}}
\newcommand{\abs}{\@ifstar\@absstar\@absnostar}
\makeatother

\makeatletter 
\DeclarePairedDelimiter{\@tmpnorm}{\lVert}{\rVert}
\newcommand{\@normstar}[1]{{\@tmpnorm*{#1}}}
\newcommand{\@normnostar}[2][]{{\@tmpnorm[#1]{#2}}}
\newcommand{\norm}{\@ifstar\@normstar\@normnostar}
\makeatother

\newcommand{\scalprod}[1]{\ensuremath{\langle #1\rangle}} 
\newcommand{\Haus}{\ensuremath{\mathscr H}} 


\renewcommand{\complement}{\ensuremath{\mathsf{c}}} 

\newcommand{\eps}{\ensuremath{\varepsilon}}
\newcommand{\emptyparam}{\ensuremath{\,\cdot\,}}
\newcommand{\defeq}{\ensuremath{\coloneqq}}



\newcommand{\lapl}{\ensuremath{\Delta}}



\newcommand{\meas}{\ensuremath{\mathscr{M}}} 


\DeclareMathOperator{\annulus}{An}

\addbibresource{FrCKN-refs-2.bib}

\begin{document}

\title[Non-degeneracy, stability, and symmetry for the fractional CKN inequality]{Non-degeneracy, stability and symmetry for the fractional Caffarelli--Kohn--Nirenberg inequality}

\author[N. De Nitti]{Nicola De Nitti}
\address[N. De Nitti]{EPFL, Institut de Mathématiques, Station 8, 1015 Lausanne, Switzerland.}
\email{nicola.denitti@epfl.ch}

\author[F. Glaudo]{Federico Glaudo}
\address[F. Glaudo]{Institute for Advanced Study, School of Mathematics, 1 Einstein Dr., Princeton NJ 08540, U.S.A. \and Princeton University, Fine Hall 310, Washington Road, Princeton NJ 08544, U.S.A.}
\email{fglaudo@ias.edu}

\author[T. König]{Tobias König}
\address[T. König]{Goethe-Universität Frankfurt, Institut für Mathematik, 
Robert-Mayer-Str. 10, 60325 Frankfurt am Main, Germany.}
\email{koenig@mathematik.uni-frankfurt.de}

\begin{abstract}
    The fractional Caffarelli--Kohn--Nirenberg inequality states that
\[        \int_{\R^n}\int_{\R^n} \frac{(u(x)-u(y))^2}{\abs{x}^\alpha\abs{x-y}^{n+2s}\abs{y}^\alpha} \,\mathrm d x  \,\mathrm d y 
        \ge \Lambda_{n, s, p, \alpha,\beta} 
        \norm*{u\abs{x}^{-\beta}}_{L^p}^2,
\]
for $0<s<\min\{1, n/2\}$, $2<p<2^*_s$, and $\alpha,\beta\in\R$ so that $\beta-\alpha = s - n\big(\frac12 - \frac1p\big)$ and  $-2s < \alpha <  \frac{n-2s}{2}$.

    Continuing the program started in Ao et al. (2022) \cite{AoDelaTorreGonzalez2022}, we establish the non-degeneracy and sharp quantitative stability of minimizers for $\alpha\ge 0$. Furthermore, we show that minimizers remain symmetric when $\alpha<0$ for $p$ very close to $2$. 
    
    Our results fit into the more ambitious goal of understanding the symmetry region of the minimizers of the fractional Caffarelli--Kohn--Nirenberg inequality.

    We develop a general framework to deal with fractional inequalities in $\R^n$, striving to provide statements with a minimal set of assumptions. Along the way, we discover a Hardy-type inequality for a general class of radial weights that might be of independent interest.
\end{abstract}

\maketitle

\section{Introduction and main results}
\label{sec:intro}

The \emph{Caffarelli--Kohn--Nirenberg} (CKN) inequality, first introduced in \cite{CKN82,CKN84}, states that 
\begin{align}\label{eq:ckn-classic}
\int_{\mathbb{R}^n}\abs{x}^{-2 \alpha}\abs{\nabla u}^2 \, \mathrm d x \ge  \Lambda_{n,p,\alpha, \beta} \left(\int_{\mathbb{R}^n}\abs{x}^{-\beta(p)}\abs{u}^{p} \, \mathrm d x\right)^{\frac{2}{p}}  
\end{align}
for any $u \in C^\infty_c(\R^n)$, 
$n \in \N$, $-\infty<\alpha< \frac{n-2}{2}$, $p\defeq \frac{2n}{n-2+2(\beta-\alpha)}$, and (if $n \geq 3$, for simplicity) $\alpha \leq \beta<\alpha+1$.

Particular cases of \cref{eq:ckn-classic} include the Hardy ($\alpha = 0$, $\beta = 1$) and
Sobolev ($\alpha = \beta = 0$) inequalities.

The CKN inequality \cref{eq:ckn-classic} has been vastly studied in the literature as a paradigmatic example for  the phenomenon of symmetry-breaking. That is, even if all the terms in \cref{eq:ckn-classic} are rotationally invariant, its minimizers are not rotationally invariant for certain values of the parameters $\alpha$ and $\beta$~\cite{CW01}. Since the minimizers $U = U(\alpha, \beta)$ of \cref{eq:ckn-classic} among radial functions are explicit, Felli and Schneider \cite{FS03} were able to compute the exact region of $(\alpha, \beta)$ such that the Hessian in $U$ of the quotient functional corresponding to \cref{eq:ckn-classic} has a negative eigenvalue. After some intermediate results \cite{Dolbeault2009, Dolbeault2012}, in the breakthrough paper \cite{Dolbeault2016}, Dolbeault, Esteban, and Loss were able to prove using a flow method that the ``Felli--Schneider region'' coincides exactly with the symmetry-breaking region. The quantitative stability of \cref{eq:ckn-classic}, in the sense of the classical result by Bianchi and Egnell \cite{BianchiEgnell}, has been investigated in the recent works \cite{WW22, WW23, FP23}.

\subsection{The fractional CKN inequality}
A natural fractional counterpart of \cref{eq:ckn-classic} is given by the inequality 
\begin{equation}\label{eq:fckn}\tag{CKN}
    \norm{w}^2_{D^s_\alpha(\R^n)}\defeq
    \int_{\mathbb R^n}\int_{\mathbb R^n} \frac{(u(x)-u(y))^2}{\abs{x}^{\alpha}\abs{x-y}^{n+2s}\abs{y}^\alpha} \,\mathrm d x  \,\mathrm d y 
    \ge \Lambda_{n, s, p, \alpha,\beta}     \norm*{u\abs{\emptyparam}^{-\beta}}_{L^p}^2 \qquad \text{for any $u \in D^s_\alpha(\mathbb R^n)$,} 
\end{equation}
{which was first studied (in a different, but equivalent formulation) in \cite{Ghoussoub2015} and more recently in \cite{AoDelaTorreGonzalez2022}.}
We shall refer to \cref{eq:fckn} as the \emph{fractional CKN inequality}. The constant $\Lambda_{n,s,p,\alpha,\beta}$ is assumed to be the optimal one. Other fractional variants of \cref{eq:ckn-classic} have appeared in \cite{Abdellaoui2017, Nguyen2018}, but we will be exclusively concerned with \cref{eq:fckn}.

Here, $D^s_\alpha(\R^n)$ denotes the closure of $C^\infty_c(\R^n)$ with respect to the norm on the left side of \cref{eq:fckn}. The parameters involved in \cref{eq:fckn} shall satisfy 
\begin{equation}\label{parameters-ckn}
\begin{aligned} 
&n\ge 1,\quad
0 < s < \min\Big\{1, \frac n2\Big\}, \quad
2 < p < 2^*_s \defeq \frac{2n}{n-2s}, \\ 
&-2s < \alpha < \frac n2 -s, \quad
\beta-\alpha = s - n\left(\frac12 - \frac1p\right).
\end{aligned}
\end{equation}
We stress that the restriction $\alpha > -2s$ is not present in the classical inequality \cref{eq:ckn-classic}. 
If we replace $u$ by $u(\lambda\emptyparam)\lambda^{n/2-\alpha-s}$, the left-hand side and the right-hand side of \cref{eq:fckn} do not change value (because $n/2-\alpha-s = n/p-\beta$).

Similarly to \cref{eq:ckn-classic}, the inequality \cref{eq:fckn} interpolates between the fractional Hardy and Sobolev inequalities. 

Ultimately, the goal would be to reproduce the above-mentioned results on \cref{eq:ckn-classic} for \cref{eq:fckn}, namely to obtain a complete characterization of the symmetry-breaking region. In doing so, however, one faces multiple fundamental difficulties: the minimizers of \cref{eq:fckn} among radial functions do not have a known explicit expression, flow methods are not available in the fractional setting, and ODE techniques break down. 

In \cite{AoDelaTorreGonzalez2022}, the analysis in this direction was started and some remarkable partial results were obtained. We summarize the most significant results from \cite{AoDelaTorreGonzalez2022} in connection with our paper:

\begin{enumerate}[label=(\Alph*)]
    \item A minimizer always exists; moreover, if $0 \le \alpha < \frac{n-2s}{2}$, it is radially symmetric \cite[Theorem 1.2 (ii), (iv)]{AoDelaTorreGonzalez2022}\footnote{In \cite[Theorem 1.2 (ii), (iv)]{AoDelaTorreGonzalez2022}, it is also claimed that for $\alpha \geq 0$ the minimizer $U$ is non-increasing in the radial variable. This claim is likely to be true (it is true for the classical CKN inequality \cref{eq:ckn-classic}), but not justified by the arguments given there: in \cite[Proposition 4.1]{AoDelaTorreGonzalez2022} it is only shown that $W(x) = \abs{x}^{-\alpha} U(x)$ must be radially decreasing. However, if $\alpha > 0$, this does not necessarily imply that $U$ is radially decreasing.}  {(or \cite[Theorem 1.1]{Ghoussoub2015})};  
    \item Minimizers are non-radial for certain valid choices of the parameters $s,p,\alpha,\beta$~\cite[Theorem 1.4]{AoDelaTorreGonzalez2022}; 
    \item \label{ADM nondeg} If the global minimizer is a radial function, then it is non-degenerate in the space of radial functions~ \cite[Theorem 1.5]{AoDelaTorreGonzalez2022};
    \item \label{ADM uniqueness} If the global minimizer is a radial function, then it is unique (up to scaling) \cite[Theorem 1.6]{AoDelaTorreGonzalez2022}.
\end{enumerate}
The last two items of the previous list are highly non-trivial since non-degeneracy of fractional-order equations is generally a hard question. For instance, in the context of fractional Schrödinger equations, it was only obtained in the seminal papers \cite{FL2013, FLS2016}.

\subsection{Non-degeneracy and stability for \texorpdfstring{$\alpha \geq 0$}{alpha non-negative}}
Despite these achievements, the method of \cite{AoDelaTorreGonzalez2022} to prove non-degeneracy is restricted to the class of radial functions.\footnote{Indeed, as explained in \cite[p. 7]{AoDelaTorreGonzalez2022}, their method crucially relies on comparison with the radial solution to a linearized equation. In higher angular momentum channels, no such solution is available. In fact, one precisely needs to prove that there exists none.} In this paper, we develop a different strategy to overcome this limitation, and prove the full non-degeneracy for $\alpha \geq 0$. 

To state our first main result, we fix a minimizer $U$ of \cref{eq:fckn}, normalized so that it satisfies the Euler--Lagrange equation
\begin{equation}
    \label{U equation} \tag{CKN-eq}
    \mathcal L_{s,\alpha} U(x) \defeq \mathrm{P.V.}_x \int_{\R^n} \frac{u(x)-u(y)}{\abs{x}^\alpha \abs{x-y}^{n+2s} \abs{y}^\alpha} \, \mathrm d y  = \frac{U^{p-1}}{\abs{x}^{\beta p}}.
\end{equation}
By the results of \cite{AoDelaTorreGonzalez2022}, such $U$ is radially symmetric and unique up to scaling if $\alpha \geq 0$.  We denote by 
\begin{equation}
    \label{U lambda definition}
    U_\lambda(x) \defeq \lambda^{\frac{n-2s}{2} - \alpha} U(\lambda x)
\end{equation}
its dilations, which also solve \cref{U equation}. 

By computing the second variation of \cref{eq:fckn} and using the fact that $U$ is a minimizer, we deduce  
\begin{equation}
    \label{U hessian}
    \norm{\varphi}_{D^s_\alpha(\R^n)}^2 \geq (p-1) \int_{\R^n} \varphi^2 \frac{U^{p-2}}{\abs{x}^{\beta p}} 
\end{equation} 
for every $\varphi \in D^s_\alpha(\R^n)$ such that $\int_{\R^n} \frac{U^{p-1}}{\abs{x}^{\beta p}} \varphi = 0$. 

The following result encodes the non-degeneracy of positive solutions to \eqref{U equation}. 

\begin{theorem}[Non-degeneracy for positive solutions]
\label{theorem ckn nondeg intro}
    Assume \cref{parameters-ckn} and, additionally, $\alpha\ge 0$.
    Let $U\in D_\alpha^s(\R^n)$ be a non-negative solution to \cref{U equation}.  
    If $\varphi \in D^s_\alpha(\R^n)$ solves 
    \[ \mathcal L_{s,\alpha} \varphi = (p-1) \frac{U^{p-2}}{\abs{x}^{\beta p}} \varphi, \]
    then $\varphi$ is a scalar multiple of $\partial_\lambda|_{\lambda = 1} U_\lambda$. 
\end{theorem}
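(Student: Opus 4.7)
The strategy is to exploit the rotational invariance of the linearized operator: decompose a candidate kernel element into spherical harmonics and treat each angular momentum sector separately, using the known radial non-degeneracy result of \cite{AoDelaTorreGonzalez2022} for the $k=0$ sector, and a ``centrifugal'' strict inequality (for which the Hardy-type inequality announced in the abstract is the natural tool) for the sectors $k \geq 1$.

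\textit{Step 1 (Spherical harmonic reduction).} Since $U$, $\abs{x}^{-\alpha}$, and $\abs{x}^{-\beta p}$ are all radial (the radiality of $U$ is \cite[Theorem 1.2]{AoDelaTorreGonzalez2022}), the linearized operator $\mathcal L_{s,\alpha} - (p-1) U^{p-2}\abs{x}^{-\beta p}$ commutes with rotations. Expanding $\varphi = \sum_{k\geq 0, i} \varphi_{k,i}(r)\, Y_{k,i}(\theta)$ in spherical harmonics, each mode $\varphi_{k,i}(r) Y_{k,i}(\theta)$ separately lies in the kernel. Via the Funk--Hecke formula, the equation reduces on each $k$-sector to a $1$D integral equation $\mathcal L_{s,\alpha}^{(k)} \varphi_{k,i} = (p-1) U^{p-2} r^{-\beta p}\, \varphi_{k,i}$, where $\mathcal L_{s,\alpha}^{(k)}$ is a $k$-dependent radial operator whose quadratic form strictly dominates the $k=0$ one for $k \geq 1$.

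\textit{Step 2 (Radial sector).} The radial component $\varphi_{0,1}$ is a kernel element of the linearized operator restricted to radial functions. By the radial non-degeneracy \cite[Theorem 1.5]{AoDelaTorreGonzalez2022} (item \ref{ADM nondeg} above), $\varphi_{0,1}$ is a scalar multiple of $\partial_\lambda|_{\lambda=1} U_\lambda$.

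\textit{Step 3 (Higher angular momenta).} Let $\psi(x) = \varphi_{k,i}(r) Y_{k,i}(\theta)$ with $k \geq 1$. Being orthogonal to every radial function, $\psi$ satisfies $\int U^{p-1}\abs{x}^{-\beta p}\psi = 0$, so \eqref{U hessian} applies and gives $\norm{\psi}_{D^s_\alpha(\R^n)}^2 \geq (p-1)\int \psi^2 U^{p-2}\abs{x}^{-\beta p}$, with equality if $\psi$ is in the kernel. The core task is to upgrade this to a strict inequality. I would do so via a centrifugal-type gap
\[ \norm{\psi}_{D^s_\alpha(\R^n)}^2 \;\geq\; \norm{\varphi_{k,i}}_{D^s_{\alpha,\mathrm{rad}}}^2 + \gamma_k \, \mathcal H(\varphi_{k,i}), \qquad \gamma_k > 0, \]
where $\mathcal H$ is a positive weighted quadratic form obtained from the Hardy-type inequality for radial weights announced in the abstract, and $\norm{\cdot}_{D^s_{\alpha,\mathrm{rad}}}$ is the $k=0$ reduced norm. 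Combined with the radial Hessian inequality applied to $\varphi_{k,i}$ viewed as a radial function (after splitting off its components along $U$ and $\partial_\lambda U_\lambda|_{\lambda=1}$, where $Q$ is strictly negative and zero respectively), this centrifugal gain forces $\varphi_{k,i} \equiv 0$.

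\textit{Main obstacle.} The delicate part is Step 3: producing a strict gap strong enough to dominate the nonlinear term $(p-1) U^{p-2}\abs{x}^{-\beta p}$ uniformly in $k$ and in the radial profile $\varphi_{k,i}$. Unlike the fractional Sobolev case ($\beta = 0$), where translations give genuine kernel elements at $k=1$, translations here are broken by $\abs{x}^{-\beta p}$ (since $\beta > 0$ when $p < 2^*_s$); thus the kernel at $k = 1$ is expected to be trivial, but this must be proven without the aid of any explicit ansatz. A further subtlety is that the radial trace $\varphi_{k,i}$ is not automatically orthogonal to $U^{p-1}\abs{x}^{-\beta p}$, so its projections onto $U$ (where $Q<0$) and onto $\partial_\lambda U_\lambda|_{\lambda=1}$ (where $Q=0$) must be carefully separated before invoking the radial Hessian inequality on the remainder. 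The Hardy-type inequality, which is the main technical innovation of the paper, is precisely what should make all these estimates quantitative.
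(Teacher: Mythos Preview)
Your overall architecture matches the paper: split into radial and non-radial parts, dispatch the radial part with \cite[Theorem~1.5]{AoDelaTorreGonzalez2022}, and use the Hardy-type inequality for the rest. Step~2 is exactly what the paper does.

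The gap is in Step~3. Your plan there is too indirect and, as written, does not close. You propose to extract a centrifugal gain $\gamma_k\,\mathcal H(\varphi_{k,i})$ and then feed the radial profile $\varphi_{k,i}$ into a ``radial Hessian inequality'' after projecting off $U$ and $\partial_\lambda U_\lambda$. Two problems. First, the Hessian inequality \eqref{U hessian} and any ``radial Hessian inequality'' with a spectral gap above $p-1$ require $U$ to be a \emph{minimizer}, whereas the theorem only assumes $U$ is a non-negative \emph{solution}; you are importing an unproven hypothesis. Second, even granting minimality, your bookkeeping does not produce a contradiction: the centrifugal gain gives $Q(\varphi_{k,i})<0$ for the radial quadratic form $Q$, which only says $\varphi_{k,i}$ has a nonzero component along $U$; you have no mechanism forcing that component (and the rest) to vanish.

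The missing idea is much simpler. Write the equation as $(-\Delta)^s W = f(|x|,W)$ with $f(r,z)=r^{-tp}z^{p-1}-C(\alpha)r^{-2s}z$ (in the Hardy formulation; the translation to your $U$ is the change of variables \eqref{hardy ckn trafo formula}). The Hardy-type inequality for functions orthogonal to radial ones (\cref{thm:general-hardy}) gives
\[
\|\tilde\varphi\|_{\dot H^s}^2 \;\ge\; \int_{\R^n}\tilde\varphi^2\,\rho_W,\qquad \rho_W\defeq \frac{\big((-\Delta)^sW\big)'}{W'}.
\]
Differentiating the equation radially yields $\big((-\Delta)^sW\big)'=\partial_1 f(|x|,W)+\partial_2 f(|x|,W)\,W'$, hence
\[
\rho_W \;=\; \partial_2 f(|x|,W)\;+\;\frac{\partial_1 f(|x|,W)}{W'}\;>\;\partial_2 f(|x|,W),
\]
because $\partial_1 f<0$ (here is where $\alpha\ge 0$, i.e.\ $C(\alpha)\le 0$, is used) and $W'<0$ (this is \cref{thm:radial-decreasing-negative-derivative}, applied via \cref{prop:W-regularity}). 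On the other hand, testing the linearized equation against $\tilde\varphi$ gives $\|\tilde\varphi\|_{\dot H^s}^2=\int\tilde\varphi^2\,\partial_2 f(|x|,W)$. Combining, $\|\tilde\varphi\|_{\dot H^s}^2>\|\tilde\varphi\|_{\dot H^s}^2$ unless $\tilde\varphi\equiv 0$. No spherical-harmonic bookkeeping, no projections onto $U$, no minimality assumption, no uniform-in-$k$ estimate is needed: the pointwise strict inequality $\rho_W>\partial_2 f$ does all the work.
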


We emphasize that in \cref{theorem ckn nondeg intro} we only assume $U$ to solve \cref{U equation}, not to minimize \cref{eq:fckn}. 

Notice that, if $U$ is a minimizer of \cref{eq:fckn}, then $U$ does not change sign because, otherwise, $\| |U| \|_{D^s_\alpha(\R^n)}$ would be strictly smaller than  $\| U \|_{D^s_\alpha(\R^n)}$. Hence, a suitable scalar multiple of $U$ satisfies the assumption of \cref{theorem ckn nondeg intro}.

\begin{corollary}[Non-degeneracy for minimizers]
    \label{corollary ckn nondeg intro}
    Assume \cref{parameters-ckn} and, additionally, $\alpha\ge 0$.
    Let $U\in D_\alpha^s(\R^n)$ be a minimizer of \cref{eq:fckn} normalized to satisfy \cref{U equation}. 
    If $\varphi \in D^s_\alpha(\R^n)$ satisfies $\int_{\R^n} \frac{U^{p-1}}{\abs{x}^{\beta p}} \varphi = 0$ and equality holds in \cref{U hessian}, then $\varphi$ is a scalar multiple of $\partial_\lambda|_{\lambda = 1} U_\lambda$. 
\end{corollary}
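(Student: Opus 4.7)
The plan is to deduce \cref{corollary ckn nondeg intro} from \cref{theorem ckn nondeg intro} by a Lagrange multiplier argument. Since the inequality in \cref{U hessian} holds over the codimension-one subspace
\[ V \defeq \bigg\{\psi \in D^s_\alpha(\R^n) \;:\; \int_{\R^n} \frac{U^{p-1}}{\abs{x}^{\beta p}} \psi = 0 \bigg\},\]
and equality is attained at $\varphi \in V$, the function $\varphi$ minimizes the quadratic functional
\[ Q(\psi) \defeq \norm{\psi}_{D^s_\alpha(\R^n)}^2 - (p-1) \int_{\R^n} \psi^2 \frac{U^{p-2}}{\abs{x}^{\beta p}} \]
over $V$, with $Q(\varphi) = 0$. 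Computing $\frac{d}{dt}\big|_{t=0} Q(\varphi + t\eta) = 0$ for every $\eta \in V$ and using the standard Lagrange multiplier principle for quadratic forms with a linear constraint, one finds that $\varphi$ satisfies in the weak sense
\[ \mathcal{L}_{s,\alpha} \varphi = (p-1) \frac{U^{p-2}}{\abs{x}^{\beta p}} \varphi + \mu \frac{U^{p-1}}{\abs{x}^{\beta p}} \]
for some Lagrange multiplier $\mu \in \R$.

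The key step is to show that $\mu = 0$, which I would do by pairing the equation against $U$. By the symmetry of the bilinear form defining $\mathcal{L}_{s,\alpha}$ and by \cref{U equation}, one gets $\int_{\R^n} U \, \mathcal{L}_{s,\alpha} \varphi = \int_{\R^n} \varphi \, \mathcal{L}_{s,\alpha} U = \int_{\R^n} \varphi \frac{U^{p-1}}{\abs{x}^{\beta p}} = 0$ thanks to the orthogonality $\varphi \in V$. On the right-hand side, the pairing produces $\mu \int_{\R^n} \frac{U^p}{\abs{x}^{\beta p}}$, and the integral is strictly positive because $U$ is a nontrivial minimizer of \cref{eq:fckn}. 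Hence $\mu = 0$, so $\varphi$ solves $\mathcal{L}_{s,\alpha} \varphi = (p-1) \frac{U^{p-2}}{\abs{x}^{\beta p}} \varphi$, and \cref{theorem ckn nondeg intro} yields that $\varphi$ is a scalar multiple of $\partial_\lambda|_{\lambda = 1} U_\lambda$.

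As a consistency check, $\partial_\lambda|_{\lambda = 1} U_\lambda$ itself belongs to $V$: differentiating the scale-invariance identity $\int_{\R^n} U_\lambda^p / \abs{x}^{\beta p} \, \mathrm d x = \int_{\R^n} U^p / \abs{x}^{\beta p} \, \mathrm d x$ (which follows from the parameter constraint $\beta - \alpha = s - n(\tfrac12 - \tfrac1p)$) with respect to $\lambda$ at $\lambda = 1$ gives exactly the required orthogonality. The only minor technical hurdle is to justify rigorously that $U$ is an admissible test function in the weak equation for $\varphi$; this is built into the functional framework of $D^s_\alpha(\R^n)$ together with the finiteness of the weighted integrals $\int_{\R^n} U^p / \abs{x}^{\beta p}$ that are already implicit in \cref{eq:fckn}. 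All the substantive difficulty of the corollary is absorbed into \cref{theorem ckn nondeg intro}.
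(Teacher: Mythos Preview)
Your proposal is correct and follows essentially the same approach as the paper: derive the Euler--Lagrange equation with a Lagrange multiplier, pair against the minimizer $U$ using self-adjointness of $\mathcal L_{s,\alpha}$ and the orthogonality hypothesis to kill the multiplier, then invoke \cref{theorem ckn nondeg intro}. The paper carries this out in the equivalent Hardy formulation (proof of \cref{corollary nondeg hardy}), but the underlying computation is the same. One small clarification: when you say ``the pairing produces $\mu \int_{\R^n} \frac{U^p}{\abs{x}^{\beta p}}$'', you should note explicitly that the other term $(p-1)\int_{\R^n} \frac{U^{p-1}}{\abs{x}^{\beta p}}\varphi$ vanishes by the same orthogonality $\varphi\in V$; otherwise the reader may wonder where it went.
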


The non-degeneracy of $U$ is the crucial ingredient needed to prove the sharp quantitative stability of \cref{eq:fckn}, following the classical strategy pioneered by Bianchi and Egnell~\cite{BianchiEgnell}. Thus, as a consequence of \cref{theorem ckn nondeg intro}, we obtain the following stability theorem for the fractional CKN inequality, extending the recent result of \cite{WW22} to the fractional case. 

\begin{theorem}[Stability]
    \label{theorem ckn stability intro}
    Assume \cref{parameters-ckn} and, additionally, $\alpha\ge 0$.
    Let $U\in D_\alpha^s(\R^n)$ be a minimizer of \cref{eq:fckn}.
    There exists $\kappa > 0$ such that, for all $u \in D^s_\alpha(\R^n)$, it holds 
    \[
      \norm{u}_{D^s_\alpha(\R^n)}^2 
    - \Lambda_{n,s,p,\alpha,\beta}
    \norm*{u\abs{\emptyparam}^{-\beta}}_{L^p}^2 \geq \kappa \inf_{c \in \R, \lambda > 0} \norm{u - c U_\lambda}_{D^s_\alpha(\R^n)}^2,
    \]
    where $U_\lambda$ is given by \cref{U lambda definition}.
\end{theorem}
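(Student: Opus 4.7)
The plan is to run the Bianchi--Egnell argument \cite{BianchiEgnell} adapted to the weighted fractional setting, using \cref{corollary ckn nondeg intro} as the spectral input. The proof splits into a local inequality near the manifold of optimizers and a global reduction by contradiction.

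\emph{Local version.} I first prove the inequality for $u$ in a $D^s_\alpha(\R^n)$-neighborhood of the manifold $\mathcal M \defeq \{c U_\lambda : c \in \R,\ \lambda > 0\}$ of optimizers. Let $(c_*, \lambda_*)$ realize $\inf_{c, \lambda} \norm{u - c U_\lambda}_{D^s_\alpha(\R^n)}^2$; by the scale invariance of both sides of the target inequality I may assume $\lambda_* = 1$ and write $u = c_* U + \varphi$. The first-order optimality conditions in $c$ and $\lambda$ yield
\[
\langle \varphi, U \rangle_{D^s_\alpha(\R^n)} = 0, \qquad \langle \varphi, \partial_\lambda|_{\lambda=1} U_\lambda \rangle_{D^s_\alpha(\R^n)} = 0.
\]
Testing \cref{U equation} against $\varphi$ shows that the first identity is equivalent to $\int_{\R^n} U^{p-1} \abs{x}^{-\beta p} \varphi \de x = 0$, i.e.\ the constraint of \cref{corollary ckn nondeg intro}. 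The latter then asserts that the quadratic form
\[
Q(\varphi) \defeq \norm{\varphi}_{D^s_\alpha(\R^n)}^2 - (p-1) \int_{\R^n} U^{p-2} \abs{x}^{-\beta p} \varphi^2 \de x
\]
is non-negative on this constraint subspace with kernel exactly $\R \cdot \partial_\lambda|_{\lambda=1} U_\lambda$. Compactness of the bilinear form $\varphi \mapsto \int_{\R^n} U^{p-2} \abs{x}^{-\beta p} \varphi^2 \de x$ relative to $\norm{\cdot}_{D^s_\alpha(\R^n)}$, which should follow from the weighted fractional Sobolev embeddings used earlier in the paper, upgrades this to a spectral gap $Q(\varphi) \ge \delta \norm{\varphi}_{D^s_\alpha(\R^n)}^2$ on the codimension-two subspace cut out by the two orthogonality conditions. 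Taylor-expanding $\norm{(c_* U + \varphi)\abs{\emptyparam}^{-\beta}}_{L^p}^2$ to second order in $\varphi$, and using the normalization $\Lambda \norm{U \abs{\emptyparam}^{-\beta}}_{L^p}^{2-p} = 1$ --- obtained by testing \cref{U equation} against $U$ and combining with the minimality of $U$ --- one checks that $Q(\varphi)$ is precisely the leading quadratic term in the expansion of $\norm{u}_{D^s_\alpha(\R^n)}^2 - \Lambda \norm{u\abs{\emptyparam}^{-\beta}}_{L^p}^2$, while the cubic remainder is controlled by a power of $\norm{\varphi}_{D^s_\alpha(\R^n)}$ via Sobolev-type estimates. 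This closes the local estimate, since $\inf_{c, \lambda}\norm{u - c U_\lambda}_{D^s_\alpha(\R^n)}^2 = \norm{\varphi}_{D^s_\alpha(\R^n)}^2$ by construction.

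\emph{Global reduction.} Suppose the theorem fails. Then there is a sequence $u_n \in D^s_\alpha(\R^n)$ with
\[
\norm{u_n}_{D^s_\alpha(\R^n)}^2 - \Lambda \norm{u_n \abs{\emptyparam}^{-\beta}}_{L^p}^2 = o\bigl( \inf_{c, \lambda} \norm{u_n - c U_\lambda}_{D^s_\alpha(\R^n)}^2 \bigr),
\]
and we may assume the right-hand side is strictly positive (otherwise we are already in the regime of the local step). Normalizing $\norm{u_n}_{D^s_\alpha(\R^n)} = 1$ makes $(u_n)$ a minimizing sequence for the CKN quotient. A concentration-compactness analysis in $D^s_\alpha(\R^n)$ --- simpler than in the pure Sobolev case because the weights forbid translations and leave only the dilation action $U \mapsto U_\lambda$ --- shows that, after a suitable rescaling, $u_n$ converges strongly in $D^s_\alpha(\R^n)$ to a minimizer, which by the uniqueness statement recalled in item~(D) of the introduction lies in $\mathcal M$. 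Hence $\inf_{c, \lambda} \norm{u_n - c U_\lambda}_{D^s_\alpha(\R^n)} \to 0$, so for $n$ large the local estimate applies, contradicting our assumption.

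The main obstacle is the concentration-compactness step. One must rule out the possibility that the rescaled minimizing sequence splits into several bubbles concentrating at different scales. The decisive simplification compared with the pure Sobolev case is that the only residual invariance is the one-parameter dilation group, so there is no loss of compactness due to translations; after choosing the scale so that the bulk of the mass lies in a fixed annulus around the origin, strict sub-additivity of the CKN variational problem under splittings --- which should follow from standard arguments, given that a minimizer exists by item~(A) --- rules out dichotomy and delivers strong convergence.
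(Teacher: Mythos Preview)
Your proposal is correct and follows essentially the same Bianchi--Egnell strategy as the paper: a local quadratic expansion near the minimizing manifold using the non-degeneracy result and a compact embedding to obtain a spectral gap, combined with a global reduction by contradiction via concentration--compactness (with only the dilation group to mod out). The paper carries this out in the equivalent Hardy formulation obtained by setting $u=|x|^\alpha w$ (see \cref{sec:stability}, in particular \cref{lemma compact embedding,lemma spectrum nondeg,lemma BE local,lemma compactness lions}), but the structure of the argument is identical to yours.
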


For functions $u$ supported on a domain $\Omega$ of finite measure $|\Omega| < \infty$, we can deduce from \cref{theorem ckn stability intro} a remainder term version of inequality \cref{eq:fckn}. The remainder term is given in terms of the \emph{weak $L^r$-norm}, that is, for $r \in (1, \infty)$, 
\[ \norm{u}_{L^{r, \infty}}  \defeq  \sup_{A \subset \R^n, |A| > 0} |A|^{\frac{1}{r} 
- 1} \int_A |u|. \]

\begin{corollary}[\cref{eq:fckn} inequality with remainder]
    \label{corollary remainder CKN}
Assume \cref{parameters-ckn} and, additionally, $\alpha \geq 0$. 
There exists $c =  c(n,s,\alpha,p) > 0$ such that, for all $\Omega \subseteq \R^n$, with $\abs{\Omega} < \infty$, and for all $u \in D^s_\alpha(\R^n)$ supported in $\Omega$, 
    \[ \norm{u}_{D^s_\alpha(\R^n)}^2  -  \Lambda_{n,s,p,\alpha,\beta} \norm*{u |\emptyparam|^{-\beta}}_{L^p(\Omega)}^2 \geq  c |\Omega|^{-\frac{n-2s-2\alpha}{n}} \norm{u|\emptyparam|^{-\alpha}}_{L^{\frac{n}{n-2s-\alpha}, \infty}}^2.   \]
\end{corollary}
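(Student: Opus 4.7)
The plan is to chain the stability estimate \cref{theorem ckn stability intro} with the $p=2^*_s$ endpoint of the fractional CKN inequality (i.e., the fractional Hardy--Sobolev inequality, for which $\beta=\alpha$) and then extract the weak-$L^r$ norm via a Hölder-in-Lorentz estimate on sets of finite measure.

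Let $c_*\in\R$ and $\lambda_*>0$ realize the infimum in \cref{theorem ckn stability intro} and set $v:=u-c_*U_{\lambda_*}$. Combining the stability theorem with the fractional Hardy--Sobolev inequality yields
\[
\|u\|_{D^s_\alpha(\R^n)}^2-\Lambda_{n,s,p,\alpha,\beta}\||\cdot|^{-\beta}u\|_{L^p}^2\geq \kappa\|v\|_{D^s_\alpha(\R^n)}^2\geq \kappa C_{\mathrm{HS}}\||\cdot|^{-\alpha}v\|_{L^{2^*_s}}^2.
\]
Thus it suffices to establish the key estimate $\||\cdot|^{-\alpha}u\|_{L^{r,\infty}}^2\leq C|\Omega|^{(n-2s-2\alpha)/n}\||\cdot|^{-\alpha}v\|_{L^{2^*_s}}^2$. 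Since $u$ is supported on $\Omega$, the supremum defining $\||\cdot|^{-\alpha}u\|_{L^{r,\infty}}$ is attained on subsets $A\subset\Omega$; writing $u=v+c_*U_{\lambda_*}$ on $A$ and applying Hölder's inequality gives
\[
|A|^{1/r-1}\!\int_A|\cdot|^{-\alpha}|u|\leq |A|^{1/r-1/2^*_s}\||\cdot|^{-\alpha}v\|_{L^{2^*_s}}+|c_*|\,|A|^{1/r-1}\!\int_A|\cdot|^{-\alpha}U_{\lambda_*},
\]
and since $1/r-1/2^*_s=(n-2s-2\alpha)/(2n)$, the first summand is dominated by $|\Omega|^{(n-2s-2\alpha)/(2n)}\||\cdot|^{-\alpha}v\|_{L^{2^*_s}}$, delivering the required scaling.

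The main obstacle will be to bound the remaining bubble-only term $|c_*|\,|A|^{1/r-1}\!\int_A|\cdot|^{-\alpha}U_{\lambda_*}$ by the same quantity. The key input is that $u=0$ on $\Omega^c$ forces $v=-c_*U_{\lambda_*}$ there, so
\[
|c_*|\,\||\cdot|^{-\alpha}U_{\lambda_*}\|_{L^{2^*_s}(\Omega^c)}=\||\cdot|^{-\alpha}v\|_{L^{2^*_s}(\Omega^c)}\leq \||\cdot|^{-\alpha}v\|_{L^{2^*_s}}.
\]
Since $W(x):=|x|^{-\alpha}U(x)$ is radially decreasing (as recalled in the footnote accompanying the summary of \cite{AoDelaTorreGonzalez2022} above), so is $x\mapsto |x|^{-\alpha}U_{\lambda_*}(x)=\lambda_*^{(n-2s)/2}W(\lambda_*x)$. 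Hardy--Littlewood rearrangement then lets us replace $A$ by a ball $B_R$ of the same measure, and the scale-invariance of both sides of the claimed inequality under $u\mapsto \lambda^{(n-2s)/2-\alpha}u(\lambda\,\cdot)$---which simultaneously rescales $\Omega$---reduces matters to controlling $\int_{B_R}|\cdot|^{-\alpha}U$ in terms of $\||\cdot|^{-\alpha}U\|_{L^{2^*_s}(B_R^c)}$ with $|B_R|\leq |\Omega|$. This is a purely profile-level statement that follows from the power-law decay of $U$ at infinity, deducible from the Euler--Lagrange equation \cref{U equation}. Taking the supremum over $A\subset\Omega$ then yields the key estimate, and assembling everything gives the conclusion with a universal constant $c=c(n,s,\alpha,p)$.
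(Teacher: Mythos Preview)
Your plan is sound and shares the paper's key ingredients: both start from the stability theorem, pass to the $L^{2^*_s}$-scale via (Hardy--)Sobolev, exploit that $u$ vanishes on $\Omega^c$ so that $v = -c_* U_{\lambda_*}$ there, and ultimately rely on the two-sided asymptotics $|x|^{-\alpha}U(x) \sim |x|^{-(n-2s-\alpha)}$ at infinity (this is \cref{prop:W-regularity} in the paper and is not a triviality---it requires separate arguments for the upper and lower bounds). The paper organises things differently: it first symmetrically rearranges $w = |\cdot|^{-\alpha}u$ itself (using \cite{AlmgrenLieb1989} together with $C(\alpha)\le 0$), normalises $|\Omega|=1$, and then case-splits on the remaining concentration scale $\lambda$, whereas you normalise $\lambda_*=1$ and defer everything to a profile-level inequality. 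Your route has the mild advantage of avoiding the rearrangement of $w$.

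One point needs repair. After scaling $\lambda_* = 1$ you have used up the single dilation degree of freedom, so there remain \emph{two} free parameters: the set $A \subset \Omega$ (rearranged to a ball $B_R$) and $\Omega$ itself (rearranged, via Hardy--Littlewood applied to $(|\cdot|^{-\alpha}U)^{2^*_s}$, to a ball $B_{R_\Omega}$ with $R \le R_\Omega$). The profile inequality you actually need is therefore
\[
|B_R|^{1/r-1}\!\int_{B_R}|\cdot|^{-\alpha}U \;\le\; C\,|B_{R_\Omega}|^{\frac{n-2s-2\alpha}{2n}}\,\bigl\||\cdot|^{-\alpha}U\bigr\|_{L^{2^*_s}(B_{R_\Omega}^{\,c})}\qquad\text{for all }0<R\le R_\Omega,
\]
not with $B_R^c$ on the right as you wrote. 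This two-parameter statement does hold: for large $R_\Omega$ the left side is at most $\||\cdot|^{-\alpha}U\|_{L^{r,\infty}}<\infty$ (using the upper bound $|\cdot|^{-\alpha}U \lesssim |x|^{-(n-2s-\alpha)}$) while the right side is bounded below by a positive constant (using the lower bound $|\cdot|^{-\alpha}U \gtrsim |x|^{-(n-2s-\alpha)}$); for small $R_\Omega$ the left side is $\lesssim R_\Omega^{\,n-2s-2\alpha}$ while the right is $\sim R_\Omega^{(n-2s-2\alpha)/2}$. Note in particular that you need \emph{both} directions of the asymptotics, not merely ``power-law decay''.
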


For the Sobolev inequality ($s=1$, $\alpha = \beta = 0$), this refinement with remainder is due to Brezis and Lieb (see \cite{BrezisLieb1985}). In \cite{BianchiEgnell}, Bianchi and Egnell  gave a simplified proof by deducing it from their stability estimate. For the classical ($s = 1$) CKN inequality \cref{eq:ckn-classic} it was proved in \cite{Radulescu2002} (for $\alpha = 0$) and in \cite{Wang2003}. 

For fractional $s \in (0, \frac{n}{2})$, in \cite{Chen2013}, Chen, Frank, and Weth  gave a proof in the case  $\alpha=\beta=0$. Our proof is an adaptation (and slight simplification) of their argument.

\subsection{Symmetry for \texorpdfstring{$\alpha < 0$}{alpha negative}}

We complement our previous analysis by a symmetry result for $\alpha < 0$ and $p$ close to $2$. It is the first positive symmetry result for \cref{eq:fckn} in the parameter range $\alpha <0$. 

\begin{theorem}[Symmetry]
    \label{thm:perturbative-symmetry-ckn}
    For every $n\ge 1$, $0 < s < \min\{1,n/2\}$, and $-2s < \alpha_0 < 0$, there exists $\eps = \eps(n,s,\alpha_0) >0$ such that the following statement holds.
    If $\alpha \in (\alpha_0, 0)$ and  $p \in (2, 2 + \varepsilon)$, then every minimizer of \cref{eq:fckn} is a radial function.
\end{theorem}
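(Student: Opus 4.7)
The plan is to argue by contradiction. Suppose the statement fails: then there exist sequences $\alpha_k \in (\alpha_0, 0)$ and $p_k \downarrow 2$ together with non-radial minimizers $U_k \in D^s_{\alpha_k}(\R^n)$ of \cref{eq:fckn}; extracting a subsequence, $\alpha_k \to \alpha_* \in [\alpha_0, 0]$. The driving idea is that as $p_k \to 2^+$ the inequality \cref{eq:fckn} degenerates into a rotationally invariant fractional Hardy-type inequality, whose sharp constant is realised only in the radial sector, so a spectral argument should force $U_k$ to be radial once $p_k$ is sufficiently close to $2$.

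To implement this, I would first pass to the cylinder $\mathcal{C} \defeq \R \times \mathbb{S}^{n-1}$ via the Emden--Fowler substitution $u(x) = \abs{x}^{-(n/2 - \alpha - s)}\, v(-\log\abs{x}, x/\abs{x})$. Under this change of variables, the scaling invariance of \cref{eq:fckn} becomes translation invariance in the first variable of $\mathcal{C}$, and radiality of $u$ is equivalent to $\omega$-independence of $v$; the CKN inequality rewrites as $\mathcal{Q}_{s,\alpha}(v) \ge \Lambda_{n,s,p,\alpha,\beta}\, \norm{v}_{L^p(\mathcal{C})}^2$ for a translation-invariant (in $t$) and rotationally invariant (in $\omega$) quadratic form $\mathcal{Q}_{s,\alpha}$.

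The core of the argument is a spectral-gap analysis of $\mathcal{Q}_{s,\alpha}$ at the linear level $p=2$. Since $\mathcal{Q}_{s,\alpha}$ commutes with both the $t$-translations and the $\mathrm{SO}(n)$-action, combining the Fourier transform in $t$ with the spherical-harmonic decomposition in $\omega$ diagonalises it, yielding on each angular mode $\ell \ge 0$ an explicit symbol $m_\ell(\tau;n,s,\alpha)$ of a dual variable $\tau \in \R$; the sharp $p=2$ constant restricted to mode $\ell$ equals $\Lambda_\ell(n,s,\alpha) \defeq \inf_\tau m_\ell(\tau;n,s,\alpha)$. The key claim, to be established by an explicit computation in the spirit of the sharp fractional Hardy analysis of \cite{FLS2016}, is strict monotonicity in $\ell$: for every $\alpha \in [\alpha_0, 0]$ and $\ell \ge 1$ one has $\Lambda_0(n,s,\alpha) < \Lambda_\ell(n,s,\alpha)$, with the gap $\min_{\ell \ge 1} \Lambda_\ell - \Lambda_0$ uniformly positive in $\alpha$. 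Such Mellin-type computations would also naturally produce the Hardy-type inequality for radial weights advertised in the abstract.

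Finally, the gap must be transferred to $p$ close to $2$ via a compactness and perturbation argument. Letting $V_k$ denote the cylindrical representation of $U_k$, translated into a canonical position, the subcriticality $p_k > 2$ and the regularity theory for the CKN Euler--Lagrange equation should extract a weak subsequential limit $V_\infty$ saturating the $p=2$ inequality at $\alpha_*$; the spectral gap then forces $V_\infty$ to be $\omega$-independent, while non-radiality of $U_k$ forces the non-radial component of $V_k$ to carry uniformly positive mass, producing a quantitative contradiction with minimality once $p_k$ is close enough to $2$. The main obstacle is precisely this compactness step: the $p=2$ limit is translation-invariant on $\mathcal{C}$ and possesses no honest extremizer, so ruling out vanishing, splitting, or spreading of $V_k$ along the $t$-axis as $p_k \to 2^+$ requires a careful Lions-type concentration-compactness analysis combined with a priori bounds for the minimizers (using uniqueness of the radial minimizer from \cite{AoDelaTorreGonzalez2022} to rule out bubbling in the radial sector).
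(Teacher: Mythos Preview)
Your overall architecture---contradiction, spectral gap at the linear level $p=2$ between the radial and non-radial sectors, and a perturbation argument to transfer this to $p$ close to $2$---matches the paper's strategy. But the paper takes a substantially different route to close the argument, and the difference lies precisely in the step you flag as ``the main obstacle.''

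You propose to extract a subsequential limit $V_\infty$ of the (cylindrical) minimizers and argue that it saturates the $p=2$ inequality. As you yourself note, the $p=2$ problem is the fractional Hardy inequality, which has \emph{no} extremizer; so there is nothing for $V_\infty$ to be, and the concentration-compactness you sketch cannot terminate in the usual way. Your suggested remedy---``using uniqueness of the radial minimizer from \cite{AoDelaTorreGonzalez2022} to rule out bubbling in the radial sector''---is also problematic: the uniqueness result there (item~\ref{ADM uniqueness}) assumes that the global minimizer is radial, which is exactly what you are trying to prove. So the compactness route, as written, has a genuine gap.

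The paper avoids compactness of minimizers entirely. Working in the Hardy formulation on $\R^n$ (deliberately \emph{not} passing to the cylinder), it takes a discrete angular derivative $\chi_{\alpha,p}\defeq W_{\alpha,p}\circ R-W_{\alpha,p}$ of a putative non-radial minimizer and tests the linearised equation against $\chi_{\alpha,p}$, obtaining
\[
\mathcal F_{\alpha,2}(\chi_{\alpha,p})\le (p-1)\,\big\|W_{\alpha,p}\,|\cdot|^{\frac n2-s}\big\|_{L^\infty}^{\,p-2}.
\]
The spectral gap (your $\Lambda_0<\Lambda_1$, here the strict improvement of the Hardy constant on functions orthogonal to radial ones, taken from \cite{Yafaev1999}) gives a lower bound $\mathcal F_{\alpha,2}(\chi_{\alpha,p})\ge\tilde\Lambda'_{\alpha_\infty,2}>\tilde\Lambda_{\alpha_\infty,2}$. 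The contradiction then comes from showing that the right-hand side tends to $\tilde\Lambda_{\alpha_\infty,2}$ as $p\to 2$. This last step is where the real work is: one needs a \emph{pointwise} a priori bound $\|W_{\alpha,p}|\cdot|^{\frac n2-s}\|_{L^\infty}\lesssim \|W_{\alpha,p}|\cdot|^{-t}\|_{L^p}^{\kappa}$, uniform as $p\downarrow 2$. In the classical case $s=1$ this was obtained in \cite{Dolbeault2009} by comparison with the explicit radial minimizer; since no explicit formula is available here, the paper proves it directly (\cref{lem:asymptotic-growth}) via the global comparison principle of \cref{thm:global-max-principle} followed by a careful, $p$-uniform Hardy--Littlewood--Sobolev bootstrap on annuli. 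This a priori bound is the main new ingredient and is exactly what is missing from your proposal.
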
 

\begin{remark}[Constraints on the parameters]
We restrict ourselves to the case $\alpha <0$ because the case $\alpha \ge 0$ is contained in \cite[Theorem 1.2 (ii), (iv)]{AoDelaTorreGonzalez2022}. 
By the symmetry-breaking result from \cite[Theorem 1.4 (ii)]{AoDelaTorreGonzalez2022}, the symmetry range in \cref{thm:perturbative-symmetry-ckn} cannot be uniform in $\eps$ for $\alpha$ close to $-2s$; in other words, $\eps$ goes to $0$ as $\alpha_0$ approaches $-2s$.
 \end{remark}

\subsection{Comments on the proofs}
Differently from most works on the CKN inequality \cref{eq:ckn-classic} and in particular differently from \cite{AoDelaTorreGonzalez2022}, we work entirely on $\R^n$ and do not pass to cylindrical variables. 
Instead, we reformulate \cref{eq:fckn} by setting $w(x) \defeq u(x)\abs{x}^{-\alpha}$, obtaining the inequality
\begin{equation}
    \label{hardy ineq intro}
      \norm{w}_{{\dot{H}^s}}^2 + C(\alpha)\norm{w\abs{\emptyparam}^{-s}}_{L^2}^2 
\ge \tilde\Lambda
\norm*{w\abs{\emptyparam}^{-(\beta - \alpha)}}_{L^p}^2 .
\end{equation}
This formulation was first found in \cite[eq. (4.3)]{FrankLiebSeiringer2008}. 
It was studied in \cite{Ghoussoub2015,Dipierro2016} and also appears in \cite{AoDelaTorreGonzalez2022}.
We refer to \cref{sec:hardy-formulation,sec:notation} for more explanations about \cref{hardy ineq intro} and the space $\dot H^s(\R^n)$, respectively.

We strive to provide general statements with the minimal reasonable set of assumptions.
This approach has two positive byproducts:
\begin{itemize}
    \item We state and prove a number of statements (e.g., the maximum principles~ \cref{thm:global-max-principle,thm:strong-max-principle}, the fractional mean value property~\cref{prop:fractional_mean_value_property}, the strict negativity of the derivative of radial decreasing functions~\cref{thm:radial-decreasing-negative-derivative}, and the general Hardy-type inequality~\cref{thm:general-hardy}) that may be useful for other problems. We strive to state such statements with the minimal reasonable set of assumptions.
    \item Our proofs are free of computations, which are often a burden to take care of when working with the CKN inequality. The drawback is that the proofs are more technical and abstract than usual in this area (e.g., we use rather heavily the theory of distributions).
\end{itemize}

Our proof of \cref{theorem ckn nondeg intro} is inspired by the paper \cite{MusinaNazarov2021}. There, non-degeneracy of minimizers of the fractional Hardy--Sobolev inequality (corresponding to the value $\alpha = 0$ in our framework) is obtained.
Our adaptation of their argument turned out to be completely different from the original one: we avoid involved computations as well as the use of the Caffarelli--Silvestre extension. 
Along the road, we obtain a general Hardy-type inequality, \cref{thm:general-hardy}, for functions orthogonal to radial ones. 
Namely, for any $\varphi \in \dot H^s(\R^n)$ orthogonal to radial functions and every radial $U$ such that both $U$ and $(-\Delta)^s U$ are radially decreasing, it holds 
\begin{equation}
    \label{general hardy ineq intro}
     \norm{ \varphi }^2_{\dot H^s(\R^n)} \geq \int_{\R^n} \varphi^2 \frac{((-\Delta)^s U)'}{U'}.
\end{equation} 
This general inequality might be of some independent interest. We expect it to be implicitly known when $s =1$, but we were unable to find a reference  even in this case. 
Interestingly, an inequality similar in spirit to \cref{general hardy ineq intro} has appeared independently in the very recent work \cite{FallWeth2023}. Like in our paper, \cite[Lemma 2.1]{FallWeth2023} is one of the main novel ingredients there used to prove the non-degeneracy of a fractional PDE. 

Even though the inequalities \cref{general hardy ineq intro} and \cite[Lemma 2.1]{FallWeth2023} are not equivalent (for example, because they concern different classes of functions), they are definitely related. It would be interesting to further clarify and systematize the role played by such inequalities in the context of non-degeneracy and related issues.

For $s=1$, a result corresponding to \cref{thm:perturbative-symmetry-ckn} was obtained in \cite[Theorem 1.1]{Dolbeault2009}. We adapt the argument of \cite{Dolbeault2009} to the fractional setting, taking care of a major difficulty that arises because  the radial minimizers are not explicit. The key point is that an a priori control, independent of $p$, is needed on the (global) minimizers; in the classical case, such control can be obtained by comparison with the radial minimizers (see \cite[Lemma 2.2]{Dolbeault2009}). We bypass the comparison entirely with a rather technical bootstrap argument (see \cref{lem:asymptotic-growth}). 
Moreover, we streamline the proof of \cite{Dolbeault2009} (namely, we avoid working in cylindrical coordinates and we have virtually no computations).

\subsection{Structure of the paper}

In \Cref{sec:hardy-formulation}, we introduce the reformulation \cref{hardy ineq intro} of the fractional CKN inequality and we restate our main results in this new setting. It is these equivalent versions that we are actually going to prove. 

\Cref{sec:notation} is devoted to preliminaries. We recall the definitions of fractional Laplacian and fractional Sobolev spaces. Furthermore we state and prove a number of basic facts about them that will be used throughout the paper. We also collect some useful notation and results about radial functions and distributions.

In \Cref{sec:max}, we prove two maximum principles for the fractional Laplacian.
To this purpose, we need to generalize the theory developed in \cite[Section 2.2]{Silvestre2007} to distributions that are not locally integrable and we need some technical one-dimensional lemmas that are contained in \cref{sec:smoothing}.
In \Cref{sec:radial-negative}, we show that a radially weakly decreasing non-negative function such that $(-\lapl)^sU$ is radially weakly decreasing in $\R^n\setminus\{0\}$ is either constant or $U'<0$ in $\R^n\setminus\{0\}$. In \Cref{sec:general-hardy}, we prove the general Hardy-type inequality \cref{general hardy ineq intro}. 

These results from \cref{sec:max,sec:radial-negative,sec:general-hardy} are the basis for \cref{sec:non-degeneracy-proof}, where we prove \cref{thm:non-degeneracy} (respectively \cref{theorem ckn nondeg intro}) and \cref{corollary nondeg hardy} (respectively \cref{corollary ckn nondeg intro}).
In \Cref{sec:stability}, we prove the quantitative stability result, \cref{theorem stability can} (respectively \cref{theorem ckn stability intro}). 
In \cref{sec:symmetry-proof}, we prove the symmetry result, \cref{thm:perturbative-symmetry-hardy} (respectively \cref{thm:perturbative-symmetry-ckn}).


\section{Main results in the Hardy formulation}
\label{sec:hardy-formulation}

Let us introduce the alternative formulation of the fractional Caffarelli--Kohn--Nirenberg inequality that we will work with, and the corresponding Euler--Lagrange equation. 
We refer to \Cref{ssec:fraclaplace} for the definition of the fractional Sobolev space $\dot{H}^s(\R^n)$, the constant $C_{n,s}$ and the fractional Laplace operator $(-\lapl)^s$. 

Setting $u(x) \eqqcolon \abs{x}^\alpha w(x)$, one has the identity 
\begin{equation}
    \label{hardy ckn trafo formula}
    C_{n,s}\abs{x}^\alpha\mathcal L_{s,\alpha}u-(-\lapl)^sw(x) = \frac{C(\alpha)}{\abs{x}^{2s}}w(x)
\end{equation}
(see \cref{U equation} for the definition of $\mathcal L_{s,\alpha}$), where
\begin{equation}
    \label{C(alpha) definition}
     C(\alpha) = C(n, s, \alpha) \defeq C_{n,s} \mathrm{P.V.}_{e_1}\int_{\R^n} \frac{1-\abs{z}^\alpha}{\abs{z}^\alpha\abs{e_1-z}^{n+2s}} \, \mathrm d z .
\end{equation}
Using this identity, we can prove
\begin{equation}
\label{hardy ckn integral identities}
    \frac{C_{n,s}}{2}\norm{u}_{D_\alpha^s(\R^n)}^2  = \norm{w}_{\dot H^s}^2 + C(\alpha)\norm{w\abs{\emptyparam}^{-s}}_{L^2}^2
    \quad
    \text{and}
    \quad
    \norm{u\abs{\emptyparam}^{-\beta}}_{L^p}^2
    =
    \norm{w\abs{\emptyparam}^{-t}}_{L^p}^2,
\end{equation}
for $t\defeq s - n\big(\frac12 - \frac1p\big)$.
As a byproduct of this computation, we note that $u \in D^s_\alpha(\R^n)$ if and only if $w \in \dot H^s(\R^n)$. 

Thus, we deduce that \cref{eq:fckn} is equivalent to
\begin{align}
    \label{eq:hardy} \tag{H-CKN}
    \norm{w}_{{\dot{H}^s}}^2 + C(\alpha)\norm{w\abs{\emptyparam}^{-s}}_{L^2}^2 
    &\ge \tilde\Lambda_{n,s,p,\alpha}
    \norm*{w\abs{\emptyparam}^{-t}}_{L^p}^2 \qquad \text{for any }w \in \dot H^s(\R^n) ,
\end{align}
where $\tilde\Lambda_{n,s,p,\alpha}\defeq \frac{C_{n,s}}{2}\Lambda_{n,s,p,\alpha,\beta}$. 

Due to the presence of the Hardy term $\norm{w\abs{\emptyparam}^{-s}}_{L^2}^2$, we refer to \cref{eq:hardy} as the \emph{Hardy formulation} of the fractional CKN inequality. 

By the results of \cite{AoDelaTorreGonzalez2022}, there exists a $W \in \dot H^s(\R^n)$ which minimizes \cref{eq:hardy} and satisfies distributionally (and pointwise in $\R^n\setminus\{0\}$) the Euler--Lagrange equation 
\begin{equation}
\label{eq:hardy-eq} \tag{H-CKN-eq}
    (-\Delta)^s W + C(\alpha) \frac{W}{\abs{x}^{2s}} = \frac{W^{p-1}}{\abs{x}^{tp}}. 
\end{equation}
 To obtain \cref{eq:hardy-eq} in this clean form without multiplicative constants, we are normalizing $W$ so that $\tilde\Lambda = \norm{W\abs{\emptyparam}^{-t}}^{{p-2}}_{L^p}$. 
 Moreover, if $\alpha \geq 0$, then $W$ is (up to scaling) unique and radially non-increasing.
 
 For a fixed $W$, we denote by 
 \begin{equation}
     \label{W lambda}
     W_\lambda(x) \defeq \lambda^{\frac{n-2s}{2}} W(\lambda x) 
 \end{equation}
 its dilations, which also solve \cref{eq:hardy-eq}. Computing the second variation in $W$ of the quotient associated to \cref{eq:hardy} yields that, for every $\varphi \in \dot H^s(\R^n)$ such that $\int_{\R^n} \varphi \frac{W^{p-1}}{\abs{x}^{tp}} = 0$,   
 \begin{equation}
     \label{W Hessian}
     \norm{\varphi}_{\dot H^s}^2 + C(\alpha) \norm{\varphi \abs{\emptyparam}^{-s} }_{L^2}^2 \geq (p-1) \int_{\R^n} \varphi^2 \frac{W^{p-2}}{\abs{x}^{tp}} \mathrm d x. 
 \end{equation}

For clarity and future reference, let us repeat here the exact and complete assumptions on the parameters. We assume
\begin{equation}
    \label{parameters-hardy}
    \begin{aligned}
   & n \geq 1, \quad 0 < s < \min\left\{1, \frac n2\right\}, \quad 2 <  p < 2^*_s \defeq \frac{2n}{n-2s}, \\ 
    &-2s <\alpha < \frac n2-s, \quad  t \defeq s-n\left(\frac12-\frac1p\right) .
    \end{aligned}
\end{equation}

\subsection{Properties of \texorpdfstring{$C(\alpha)$}{the constant C(alpha)}}

Before going on, let us establish some basic properties of the function $\alpha\mapsto C(\alpha)$.\footnote{See also  \cite[Appendix A]{AoDelaTorreGonzalez2022}. We warn the reader that the constant denoted by $C(\alpha)$ in \cite{AoDelaTorreGonzalez2022} is different from ours; in fact our $C(\alpha)$ corresponds to the constant $\kappa_{\alpha, \gamma}^{n, -\alpha}$ from \cite[eq. (9.2)]{AoDelaTorreGonzalez2022}.} 
Observe that $C(\alpha)$ is well-defined (and finite) as soon as $0<s<1$ and $-2s<\alpha<n$, which is implied by our assumptions \cref{parameters-hardy}. 

For any function $\varphi:\R^n\to\R$, we have $\int_{\R^n}\varphi(z) \,\mathrm d z  = \int_{B_1} \varphi(z) + \varphi(z/\abs{z}^2)\abs{z}^{-2n} \,\mathrm d z$ and, furthermore, $\abs*{\frac{z}{\abs{z}^2}-e_1}=\frac{\abs{z-e_1}}{\abs{z}}$. Applying these two identities, we obtain
\begin{align}
\nonumber
    C(\alpha) &= {C_{n,s}} \int_{B_1} \frac{1-\abs{z}^\alpha}{\abs{z}^\alpha\abs{e_1-z}^{n+2s}} 
    +\frac{1-\abs{z}^{-\alpha}}{\abs{z}^{-\alpha}\abs{e_1-z}^{n+2s}\abs{z}^{-n-2s}}\abs{z}^{-2n}\,\mathrm d z
    \\
    &=
    {C_{n,s}} \int_{B_1} \frac{1-\abs{z}^\alpha}{\abs{e_1-z}^{n+2s}} \big(\abs{z}^{-\alpha}-\abs{z}^{2s-n}\big)\,\mathrm d z. 
    \label{C alpha B1}
\end{align}
As a consequence (consistently with \cite[Corollary 9.2]{AoDelaTorreGonzalez2022}),
\begin{itemize}
    \item $C(\alpha)>0$ for $-2s<\alpha < 0$,
    \item $C(\alpha)=0$ for $\alpha=0$,
    \item $C(\alpha)<0$ for $0<\alpha<\frac n2-s$.
\end{itemize}
Let us emphasize that the change of sign of $C(\alpha)$ at $\alpha=0$ is the reason why the problem at hand becomes more tractable for $\alpha\ge 0$.
Moreover, the function $\alpha \mapsto C(\alpha)$ is strictly decreasing on $(-2s, \frac n2-s)$ because (as can be seen differentiating \cref{C alpha B1})
\begin{equation*}
    \frac{\mathrm d}{\mathrm d\alpha} C(\alpha) =
    C_{n,s}\int_{B_1} \frac{\abs{z}^{\alpha+2s-n}-\abs{z}^{-\alpha}}{\abs{e_1-z}^{n+2s}}\log\abs{z}\,\mathrm dz < 0.
\end{equation*}

Let $C_\mathrm{Hardy}(s)$ denote the best constant in the fractional Hardy inequality 
    \[ \norm{w}_{\dot H^s(\R^n)}^2 \geq C_\mathrm{Hardy}(s) \norm{w\abs{\emptyparam}^{-s}}_{L^2}^2. \]
Since $\norm{w}_{{\dot{H}^s}}^2 + C(\alpha)\norm{w\abs{\emptyparam}^{-s}}_{L^2}^2 \geq 0$ for all $w\in\dot H^s(\R^n)$ by \cref{eq:hardy}, it must hold $C(\alpha) \ge -C_\mathrm{Hardy}(s)$ for all $-2s<\alpha < \frac n2-s$. Thanks to the strict monotonicity of $C(\alpha)$, this implies
\begin{equation}\label{eq:compare_calpha_chardy}
    C(\alpha) > -C_\mathrm{Hardy}(s) \quad\text{for all $-2s<\alpha < \frac n2-s$.}
\end{equation}

\subsection{Reformulation of the main results}

We can now equivalently reformulate our main results from \cref{sec:intro} in terms of $w(x) \defeq \abs{x}^{-\alpha} u(x)$. The following theorems are what we will actually prove. 
Since it is straightforward to deduce \cref{theorem ckn nondeg intro,corollary ckn nondeg intro,theorem ckn stability intro,thm:perturbative-symmetry-ckn} from \cref{thm:non-degeneracy,corollary nondeg hardy,theorem stability can,thm:perturbative-symmetry-hardy} by using \cref{hardy ckn trafo formula,hardy ckn integral identities}, we omit the proofs of the former.

\begin{theorem}[Non-degeneracy for positive solutions, Hardy formulation]
\label{thm:non-degeneracy}
      Assume \cref{parameters-hardy} and, additionally, $\alpha\ge 0$. 
    Let $W\in \dot H^s(\R^n) \setminus \{0\}$ be a non-negative solution of \cref{eq:hardy-eq}. 
    If $\varphi \in\dot H^s(\R^n)$ solves 
    \begin{equation}
    \label{lin-eq-thm}
         (-\Delta)^s \varphi + C(\alpha) \frac{\varphi}{\abs{x}^{2s}} = (p-1) \frac{W^{p-2}}{\abs{x}^{t p}} \varphi \qquad \text{ in } \R^n \setminus \{0\}, 
    \end{equation}
    then $\varphi$ is a scalar multiple of $\partial_\lambda|_{\lambda = 1} W_\lambda$.
\end{theorem}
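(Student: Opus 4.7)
The overall strategy is to decompose $\varphi$ into its radial part and its orthogonal-to-radial part, and to dispatch each piece separately using, respectively, the radial non-degeneracy result of \cite{AoDelaTorreGonzalez2022} and the general Hardy-type inequality \cref{thm:general-hardy} (the main novel tool developed earlier in the paper). Write $\varphi = \varphi_0 + \varphi^\perp$, where
\[
\varphi_0(x) \defeq \frac{1}{|S^{n-1}|}\int_{S^{n-1}} \varphi(|x|\theta)\, \mathrm d\sigma(\theta)
\]
is the spherical average of $\varphi$, and $\varphi^\perp \defeq \varphi - \varphi_0$ has vanishing spherical average on every sphere. Since $W$ is radial (a consequence of $\alpha\ge 0$ and the symmetry/uniqueness results of \cite{AoDelaTorreGonzalez2022}) and the operator appearing in \eqref{lin-eq-thm} commutes with rotations, both $\varphi_0$ and $\varphi^\perp$ lie in $\dot H^s(\R^n)$ and separately satisfy \eqref{lin-eq-thm}. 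The radial part is immediately handled by the in-radial-class non-degeneracy \cite[Theorem 1.5]{AoDelaTorreGonzalez2022} (item \ref{ADM nondeg}), which yields $\varphi_0 = c\,\partial_\lambda|_{\lambda=1} W_\lambda$ for some $c\in\R$.

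The remaining task is to show $\varphi^\perp \equiv 0$. Testing \eqref{lin-eq-thm} against $\varphi^\perp$ itself produces the identity
\[
\norm{\varphi^\perp}_{\dot H^s}^2 + C(\alpha)\int_{\R^n}\frac{(\varphi^\perp)^2}{|x|^{2s}}\de x = (p-1)\int_{\R^n} (\varphi^\perp)^2 \frac{W^{p-2}}{|x|^{tp}}\de x.
\]
At the same time, \eqref{eq:hardy-eq} reads $(-\lapl)^s W = W^{p-1}|x|^{-tp} - C(\alpha)W|x|^{-2s}$; since $\alpha\ge 0$ gives $C(\alpha)\le 0$ and $W$ is radially non-increasing, the right-hand side is radially non-increasing too. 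Hence \cref{thm:general-hardy} applies with $U = W$ and yields
\[
\norm{\varphi^\perp}_{\dot H^s}^2 \ge \int_{\R^n} (\varphi^\perp)^2\, \frac{((-\lapl)^s W)'}{W'}\de x.
\]

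To close the loop, I would differentiate \eqref{eq:hardy-eq} in the radial variable $r = |x|$ (using the smoothness of $W$ on $\R^n\setminus\{0\}$ coming from standard bootstrap) and divide by $W'$, which is strictly negative for $r>0$ by \cref{thm:radial-decreasing-negative-derivative}. This produces the pointwise identity
\[
\frac{((-\lapl)^s W)'}{W'} - (p-1)\frac{W^{p-2}}{|x|^{tp}} + \frac{C(\alpha)}{|x|^{2s}} = -\frac{tp}{W'}\cdot\frac{W^{p-1}}{|x|^{tp+1}} + \frac{2s\,C(\alpha)}{W'}\cdot\frac{W}{|x|^{2s+1}},
\]
whose right-hand side is strictly positive for every $r>0$: the first summand is positive because $W>0$, $W'<0$, $tp>0$, and the second is non-negative because $C(\alpha)\le 0$, $W>0$, $W'<0$. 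Multiplying by $(\varphi^\perp)^2$, integrating, and combining with the Hardy bound above, we obtain
\[
\norm{\varphi^\perp}_{\dot H^s}^2 + C(\alpha)\int_{\R^n}\frac{(\varphi^\perp)^2}{|x|^{2s}}\de x > (p-1)\int_{\R^n} (\varphi^\perp)^2 \frac{W^{p-2}}{|x|^{tp}}\de x,
\]
whenever $\varphi^\perp \not\equiv 0$. This contradicts the testing identity above, forcing $\varphi^\perp \equiv 0$ and hence $\varphi = \varphi_0 = c\,\partial_\lambda|_{\lambda=1}W_\lambda$.

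\textbf{Main obstacle.} The genuinely hard work sits upstream, in the proofs of \cref{thm:general-hardy} and \cref{thm:radial-decreasing-negative-derivative}, which in turn rely on the maximum principles of \cref{sec:max}. Once those tools are in hand, the non-degeneracy proof reduces to the above two-line algebraic manipulation of \eqref{eq:hardy-eq}. The auxiliary points to verify carefully are: (i) that $\varphi_0, \varphi^\perp$ genuinely lie in $\dot H^s(\R^n)$, that both solve \eqref{lin-eq-thm}, and that $\varphi^\perp$ is an admissible test function in \cref{thm:general-hardy}; (ii) that all integrals above converge, which follows from the decay/blow-up rates of $W$ at $0$ and $\infty$ dictated by \eqref{eq:hardy-eq}; and (iii) the radiality of $W$ itself, since the theorem allows any non-negative $\dot H^s$ solution, not just a minimizer — for $\alpha\ge 0$ this should follow by combining rearrangement (or a moving-planes argument) with the radial uniqueness of \cite{AoDelaTorreGonzalez2022}.
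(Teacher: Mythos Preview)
Your proposal is correct and follows the paper's proof essentially verbatim: split $\varphi$ into its radial and non-radial parts, dispatch the radial part via \cite[Theorem 1.5]{AoDelaTorreGonzalez2022}, and eliminate the non-radial part by combining \cref{thm:general-hardy} with the strict pointwise inequality obtained by radially differentiating \eqref{eq:hardy-eq} (the paper packages this via $f(r,z) = r^{-tp}z^{p-1} - C(\alpha)r^{-2s}z$ and the observation $\partial_1 f(|x|,W) < 0$, but your explicit computation is equivalent). Your caveat (iii) is on point: the radiality of a general non-negative solution $W$ is supplied by \cref{prop:W-regularity}, which invokes the moving-planes result \cite[Theorem~1.1]{Montoro2018} rather than the minimizer-based arguments of \cite{AoDelaTorreGonzalez2022}.
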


We emphasize again that in Theorem \ref{thm:non-degeneracy} we only assume $W$ to solve \eqref{eq:hardy-eq}, not to minimize \eqref{eq:hardy}. If $W$ is a minimizer of \cref{eq:hardy}, then $W$ does not change sign because, otherwise, $\| |W| \|_{\dot H^s}$ would be strictly smaller than  $\| W \|_{\dot H^s}$. Hence, a suitable scalar multiple of $W$ satisfies the assumption of \cref{thm:non-degeneracy}.

\begin{corollary}[Non-degeneracy for minimizers, Hardy formulation]
\label{corollary nondeg hardy}
           Assume \cref{parameters-hardy} and, additionally, $\alpha\ge 0$. 
    Let $W\in \dot H^s(\R^n)$ be a minimizer of \cref{eq:hardy} normalized to satisfy \cref{eq:hardy-eq}.
    If $\varphi \in \dot H^s(\R^n)$ satisfies $\int_{\R^n} \frac{W^{p-1}}{\abs{x}^{t p}} \varphi = 0$ and equality holds in \cref{W Hessian}, then $\varphi$ is a scalar multiple of $\partial_\lambda|_{\lambda = 1} W_\lambda$. 
\end{corollary}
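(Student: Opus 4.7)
The plan is to reduce \cref{corollary nondeg hardy} to \cref{thm:non-degeneracy} by identifying the precise Euler--Lagrange equation that $\varphi$ satisfies when equality is attained in \cref{W Hessian}. As pointed out right after \cref{thm:non-degeneracy}, any minimizer $W$ does not change sign (otherwise $\||W|\|_{\dot H^s} < \|W\|_{\dot H^s}$), so after possibly replacing $W$ by $-W$ and rescaling as prescribed, we may assume $W\ge 0$ and that $W$ satisfies \cref{eq:hardy-eq}. Thus \cref{thm:non-degeneracy} applies to $W$.

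First I would set up the constrained variational problem. The inequality \cref{W Hessian} together with the orthogonality condition $\int \frac{W^{p-1}}{|x|^{tp}}\varphi = 0$ means that $\varphi$ minimizes the quadratic form
\[ Q(\psi) \defeq \norm{\psi}_{\dot H^s}^2 + C(\alpha)\norm{\psi\abs{\emptyparam}^{-s}}_{L^2}^2 - (p-1)\int_{\R^n}\psi^2\frac{W^{p-2}}{\abs{x}^{tp}}\]
among all $\psi\in\dot H^s(\R^n)$ with $\int \frac{W^{p-1}}{|x|^{tp}}\psi = 0$, achieving the minimum value $0$. The Lagrange multiplier rule then yields some $\mu\in\R$ such that
\[ (-\lapl)^s\varphi + C(\alpha)\frac{\varphi}{\abs{x}^{2s}} - (p-1)\frac{W^{p-2}}{\abs{x}^{tp}}\varphi = \mu\,\frac{W^{p-1}}{\abs{x}^{tp}}\]
in the distributional sense on $\R^n\setminus\{0\}$.

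The second step is to show $\mu=0$. Testing this identity against $W$ (which is legitimate because $W\in\dot H^s(\R^n)$ and $W$ satisfies \cref{eq:hardy-eq} both distributionally and pointwise away from the origin), using the self-adjointness of $(-\lapl)^s$, and substituting the equation \cref{eq:hardy-eq} for $W$ gives
\[ \int_{\R^n}\varphi\frac{W^{p-1}}{\abs{x}^{tp}} - (p-1)\int_{\R^n}\varphi\frac{W^{p-1}}{\abs{x}^{tp}} = \mu\int_{\R^n}\frac{W^{p}}{\abs{x}^{tp}}.\]
The left-hand side equals $-(p-2)\int \frac{W^{p-1}}{|x|^{tp}}\varphi$, which vanishes by the imposed orthogonality. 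Since $W\not\equiv 0$ and $W\ge 0$, the integral $\int W^p|x|^{-tp}$ is strictly positive, forcing $\mu=0$.

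With $\mu=0$ in hand, $\varphi$ solves exactly equation \cref{lin-eq-thm}, so \cref{thm:non-degeneracy} (the main result whose proof is the bulk of the paper) gives that $\varphi = c\,\partial_\lambda|_{\lambda=1}W_\lambda$ for some $c\in\R$. As a sanity check, this limiting function genuinely satisfies the orthogonality constraint: differentiating in $\lambda$ the scale-invariant quantity $\int W_\lambda^p|x|^{-tp}\de x$ (which is independent of $\lambda$ by the same change of variables that makes \cref{eq:hardy} scale-invariant) yields $p\int \frac{W^{p-1}}{|x|^{tp}}\partial_\lambda|_{\lambda=1}W_\lambda = 0$, so the conclusion is consistent. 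The only substantive step is the justification of the testing computation above and the distributional form of the Lagrange multiplier rule; these are routine given the functional-analytic framework already laid out for \cref{thm:non-degeneracy}, so the corollary should follow essentially for free from the theorem.
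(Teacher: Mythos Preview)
Your proposal is correct and follows essentially the same approach as the paper: derive the Euler--Lagrange equation for $\varphi$ with a Lagrange multiplier from the constrained minimization, test against $W$ and use \cref{eq:hardy-eq} together with the orthogonality to show the multiplier vanishes, then invoke \cref{thm:non-degeneracy}. The paper packages the computation using the shorthand $f(r,z)=r^{-tp}z^{p-1}-C(\alpha)r^{-2s}z$, but the content is identical to what you wrote out explicitly.
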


\begin{theorem}[Stability, Hardy formulation]
\label{theorem stability can}
    Assume \cref{parameters-hardy} and, additionally, $\alpha\ge 0$. 
    Let $W\in \dot H^s(\R^n)$ be a minimizer of \cref{eq:hardy}. 
    There exists $\kappa > 0$ such that for all $w \in \dot H^s(\R^n)$, it holds 
    \begin{equation}
        \label{inequality stability thm}
         \norm{w}_{\dot H^s(\R^n)}^2 + C(\alpha) \norm{w \abs{\emptyparam}^{-s}}_{L^2}^2  
    - \tilde\Lambda_{n,s,p,\alpha} 
    \norm*{w\abs{\emptyparam}^{-t}}_{L^p}^2 
    \geq 
    \kappa \inf_{c \in \R, \lambda > 0} \norm{w - c W_\lambda}_{\dot H^s(\R^n)}^2. 
    \end{equation}
\end{theorem}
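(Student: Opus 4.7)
The plan is to follow the classical Bianchi-Egnell strategy, adapted to the fractional CKN setting along the lines of \cite{Chen2013}. Arguing by contradiction, I would produce a sequence $(w_k)\subset \dot H^s(\R^n)$ with $F(w_k)/d_k^2 \to 0$, where $F(w)$ denotes the left-hand side of \cref{inequality stability thm} and $d_k := \inf_{c,\lambda}\norm{w_k - cW_\lambda}_{\dot H^s}$. Since both $F$ and $d_k$ are invariant under the dilation $w\mapsto \lambda^{(n-2s)/2}w(\lambda\emptyparam)$, I may rescale so that the infimum defining $d_k$ is attained at $\lambda_k = 1$ with some $c_k\in\R$; after an additional scalar normalization, write $w_k = c_k W + t_k r_k$ with $\norm{r_k}_{\dot H^s} = 1$ and $t_k = d_k$. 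The first-order optimality conditions for $(c_k,\lambda_k)$ yield the $\dot H^s$-orthogonalities $\scalprod{r_k, W}_{\dot H^s} = \scalprod{r_k, Z}_{\dot H^s} = 0$, where $Z := \partial_\lambda|_{\lambda=1} W_\lambda$.

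Next, I would argue (up to a subsequence) that $t_k \to 0$ and $c_k \to c_\ast \ne 0$; the case $t_k \not\to 0$ is excluded by a compactness argument which, since translations are broken by the weights, reduces to controlling the one remaining dilation degree of freedom already gauge-fixed by $\lambda_k = 1$. A Taylor expansion of $F$ at $c_\ast W$ then gives
\begin{equation*}
F(w_k) = t_k^2\, \mathcal H(r_k) + o(t_k^2),
\end{equation*}
where
\begin{equation*}
\mathcal H(r) \defeq \norm{r}^2_{\dot H^s} + C(\alpha)\norm{r \abs{\emptyparam}^{-s}}^2_{L^2} - (p-1)\int_{\R^n} \frac{W^{p-2}r^2}{\abs{x}^{tp}}\de x + (p-2) I_W^{-1} \Bigl(\int_{\R^n} \frac{W^{p-1}r}{\abs{x}^{tp}}\de x\Bigr)^2
\end{equation*}
and $I_W \defeq \int_{\R^n} W^p \abs{x}^{-tp}\de x$. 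The linear term cancels thanks to the Euler-Lagrange equation \cref{eq:hardy-eq} (and $F(c_k W)=c_k^2 F(W)=0$); the $o(t_k^2)$ remainder is produced by the sub-quadratic Taylor expansion of the $L^p$-term and is valid since $p>2$.

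The decisive step is to establish a spectral gap: there exists $\gamma > 0$ such that $\mathcal H(r) \geq \gamma \norm{r}^2_{\dot H^s}$ for every $r \in \dot H^s(\R^n)$ with $\scalprod{r, W}_{\dot H^s} = \scalprod{r, Z}_{\dot H^s} = 0$. I would analyze the weighted eigenvalue problem $(-\Delta)^s\varphi + C(\alpha)\varphi\abs{x}^{-2s} = \mu\, W^{p-2}\abs{x}^{-tp}\varphi$: the equation \cref{eq:hardy-eq} shows that $\mu_1 = 1$ is an eigenvalue with eigenfunction $W$, while the linearized equation \cref{lin-eq-thm} shows that $\mu_2 = p-1$ is an eigenvalue with eigenfunction $Z$; the non-degeneracy of \cref{thm:non-degeneracy} (or \cref{corollary nondeg hardy}) asserts that the eigenspace for $\mu_2=p-1$ is one-dimensional, so the next eigenvalue $\mu_3$ strictly exceeds $p-1$. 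Multiplication by $W^{p-2}\abs{x}^{-tp}$ defines a compact operator from $\dot H^s$ to its dual (owing to the decay of $W$ at $0$ and $\infty$), so the spectrum is discrete; combining $\mu_3 > p-1$ with the norm equivalence between the quadratic form $Q(\cdot)$ and $\norm{\cdot}^2_{\dot H^s}$ provided by \cref{eq:compare_calpha_chardy} yields the required gap. Applying this bound to $r_k$ gives $F(w_k)/t_k^2 \geq \gamma/2$ for $k$ large, contradicting $F(w_k)/t_k^2\to 0$.

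The hardest part will be the compactness analysis, which has two components. First, ruling out $t_k\not\to 0$ and extracting a strongly convergent subsequence of $(r_k)$ in the weighted space $L^2(\R^n;W^{p-2}\abs{x}^{-tp}\de x)$ requires excluding mass escape to scales $\lambda\ne 1$; the optimality $\lambda_k=1$ must be exploited here, via a (simpler than Sobolev) profile decomposition. Second, the transfer from $\mu_3>p-1$ to the spectral gap requires reconciling the $\dot H^s$-orthogonality coming from the decomposition with the weighted-$L^2$ orthogonality natural for the eigenvalue problem, through a careful splitting of $r_k$ along the first two eigenspaces. Both steps are done in \cite{Chen2013} for the fractional Sobolev case ($\alpha=0$); in our setting I would adapt them so as to accommodate the additional Hardy term $C(\alpha)\norm{r\abs{\emptyparam}^{-s}}_{L^2}^2$.
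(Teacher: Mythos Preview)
Your proposal is correct and follows essentially the same Bianchi--Egnell route as the paper: contradiction, compactness of minimizing sequences modulo dilation, Taylor expansion near $W$, and the spectral gap $\mu_3>p-1$ obtained from \cref{thm:non-degeneracy} via the compact embedding into $L^2(\R^n,W^{p-2}|x|^{-tp})$.

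One organizational difference worth noting: the paper measures the distance to $\mathcal W$ in the $\ast$-norm $\norm{\cdot}_\ast^2=\norm{\cdot}_{\dot H^s}^2+C(\alpha)\norm{\cdot\,|\emptyparam|^{-s}}_{L^2}^2$ rather than in $\norm{\cdot}_{\dot H^s}$. The point is that the first-order optimality then gives $\scalprod{\rho,W}_\ast=\scalprod{\rho,Z}_\ast=0$, and by the Euler--Lagrange equation $\scalprod{\rho,W}_\ast=\int W^{p-1}\rho\,|x|^{-tp}$, so the orthogonality conditions are already the ones natural for the weighted eigenvalue problem; your extra term $(p-2)I_W^{-1}(\int W^{p-1}r\,|x|^{-tp})^2$ vanishes and no reconciliation step between $\dot H^s$- and weighted-$L^2$-orthogonality is needed. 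Since $\norm{\cdot}_\ast\sim\norm{\cdot}_{\dot H^s}$ by \cref{eq:compare_calpha_chardy}, this costs nothing in the final inequality and streamlines exactly the part you flag as delicate.
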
 

\begin{corollary}[\cref{eq:hardy} inequality with remainder]
    \label{corollary remainder Hardy}
    Assume \cref{parameters-hardy} and, additionally, $\alpha \geq 0$. There exists $\tilde c = \tilde c(n,s,\alpha,p) > 0$ such that, for all $\Omega \subset \R^n$, with $|\Omega| < \infty$, and for all $w \in \dot H^s(\R^n)$ supported in $\Omega$, 
    \begin{equation}
        \label{remainder term ineq}
        \norm{w}_{\dot H^s}^2 + C(\alpha)  \norm*{w |\emptyparam|^{-s}}_{L^2}^2 - \tilde \Lambda_{n,s,p,\alpha} \norm*{w |\emptyparam|^{-t}}_{L^p(\Omega) }^2 \geq \tilde c |\Omega|^{-\frac{n-2s-2\alpha}{n}} \norm{w}_{L^{\frac{n}{n-2s-\alpha}, \infty}}^2.  
    \end{equation} 
    In fact, $\tilde c = \frac{C_{n,s}}{2} c$, where $c$ is the constant from \cref{corollary remainder CKN}. 
\end{corollary}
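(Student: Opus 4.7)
The plan is to derive \cref{corollary remainder Hardy} from the sharp stability estimate \cref{theorem stability can}, adapting to the weighted setting the Brezis--Lieb/Bianchi--Egnell-type argument of Chen, Frank, and Weth \cite{Chen2013} (who treated the unweighted case $\alpha=0$). Writing $D(w)$ for the left-hand side of \cref{remainder term ineq} and setting $r\defeq\frac{n}{n-2s-\alpha}$ (note that $r<2^*_s$ by $\alpha<\frac{n-2s}{2}$ in \cref{parameters-hardy}), \cref{theorem stability can} reduces the claim to showing that, for every $w\in\dot H^s(\R^n)$ with $\supp w\subseteq\Omega$ and $|\Omega|<\infty$,
\[
\kappa\inf_{c\in\R,\,\lambda>0}\|w-cW_\lambda\|_{\dot H^s}^2 \ge \tilde c\,|\Omega|^{-(n-2s-2\alpha)/n}\,\|w\|_{L^{r,\infty}}^2 .
\]
A scaling check confirms that both sides of \cref{remainder term ineq} are invariant under $w\mapsto \lambda^{(n-2s)/2}w(\lambda\emptyparam)$ coupled with $|\Omega|\mapsto\lambda^{-n}|\Omega|$, matching the invariance of the left-hand side, which is reassuring.

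I would then split the analysis into two regimes. When $D(w)\ge\tfrac12\|w\|_{\dot H^s}^2$, combining the fractional Sobolev embedding $\dot H^s\hookrightarrow L^{2^*_s}\hookrightarrow L^{2^*_s,\infty}$ with the Lorentz--Hölder inequality $\|w\|_{L^{r,\infty}}\le |\Omega|^{1/r-1/2^*_s}\|w\|_{L^{2^*_s,\infty}}$ (valid because $r<2^*_s$ and $\supp w\subseteq\Omega$) directly yields the claim, after noting that $2(1/r-1/2^*_s)=(n-2s-2\alpha)/n$. When $D(w)<\tfrac12\|w\|_{\dot H^s}^2$, I would pick a near-minimizing pair $(c^*,\lambda^*)$ in the infimum and decompose $w=c^*W_{\lambda^*}+\rho$ with $\|\rho\|_{\dot H^s}^2\le D(w)/\kappa$. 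Since $w\equiv 0$ on $\Omega^c$, the triangle inequality for Lorentz norms restricted to $\Omega$ gives
\[
\|w\|_{L^{r,\infty}}\le |c^*|\,\|W_{\lambda^*}\mathbf 1_\Omega\|_{L^{r,\infty}}+\|\rho\,\mathbf 1_\Omega\|_{L^{r,\infty}}.
\]
The residual term is bounded by $|\Omega|^{1/r-1/2^*_s}\|\rho\|_{\dot H^s}$ via Sobolev + Lorentz--Hölder applied to $\rho\mathbf 1_\Omega$; the bubble term is handled using the scale invariance $\|W_\lambda\|_{L^{2^*_s,\infty}}=\|W\|_{L^{2^*_s,\infty}}$, the Lorentz--Hölder bound $\|W_{\lambda^*}\mathbf 1_\Omega\|_{L^{r,\infty}}\le|\Omega|^{1/r-1/2^*_s}\|W\|_{L^{2^*_s,\infty}}$, and a control on $|c^*|$ extracted from the near-minimality $\|w-c^*W_{\lambda^*}\|_{\dot H^s}^2\le D(w)/\kappa$.

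The hard part will be the second regime, where one must tie together $|c^*|$, the scale $\lambda^*$, and the Lorentz--Hölder estimate on $W_{\lambda^*}\mathbf 1_\Omega$ so that the bubble contribution can be absorbed into $D(w)$ with exactly the correct $|\Omega|$-weight; in this regime $\|w\|_{L^{r,\infty}}$ is essentially $|c^*|\,\|W_{\lambda^*}\mathbf 1_\Omega\|_{L^{r,\infty}}$, so the trade-off between the spread of the bubble $W_{\lambda^*}$ and the localization of $w$ to $\Omega$ must be tracked carefully through the scaling. The constant relation $\tilde c = \frac{C_{n,s}}{2}c$ announced in the statement is then a direct consequence of the equivalence between \cref{corollary remainder Hardy,corollary remainder CKN} under the substitution $u(x)=|x|^\alpha w(x)$: combining \cref{hardy ckn trafo formula,hardy ckn integral identities} with the trivial identity $\|u\,|\emptyparam|^{-\alpha}\|_{L^{r,\infty}}=\|w\|_{L^{r,\infty}}$ multiplies the inequality from \cref{corollary remainder CKN} by $\frac{C_{n,s}}{2}$ and produces \cref{remainder term ineq} with the asserted constant.
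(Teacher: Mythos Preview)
Your reduction via \cref{theorem stability can} and the first regime $D(w)\ge\tfrac12\|w\|_{\dot H^s}^2$ are fine and match the paper. The gap is in the second regime: the combination ``Lorentz--H\"older bound $\|W_{\lambda^*}\mathbf 1_\Omega\|_{L^{r,\infty}}\le|\Omega|^{1/r-1/2^*_s}\|W\|_{L^{2^*_s,\infty}}$ plus a control on $|c^*|$'' cannot close. Near-minimality only gives $|c^*|\sim\|w\|_{\dot H^s}/\|W\|_{\dot H^s}$, so your bubble estimate becomes $|c^*|\,|\Omega|^{(n-2s-2\alpha)/(2n)}\|W\|_{L^{2^*_s,\infty}}\sim|\Omega|^{(n-2s-2\alpha)/(2n)}\|w\|_{\dot H^s}$, which in the regime $D(w)<\tfrac12\|w\|_{\dot H^s}^2$ is strictly larger than $|\Omega|^{(n-2s-2\alpha)/(2n)}\sqrt{D(w)}$. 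In other words, the $L^{2^*_s,\infty}$ bound is insensitive to $\lambda^*$ and therefore cannot capture the ``trade-off'' you correctly flag as the hard part.

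The paper supplies exactly the missing ingredient: the pointwise asymptotics $W(x)\sim|x|^{-(n-2s-\alpha)}$ at infinity from \cref{prop:W-regularity}. These give $W\in L^{r,\infty}$ with $\|W_\lambda\|_{L^{r,\infty}}\lesssim\lambda^{-(n-2s-2\alpha)/2}$, and simultaneously (because $w$ vanishes outside $\Omega$) $\|w-W_\lambda\|_{\dot H^s}\gtrsim\|W_\lambda|\emptyparam|^{-t}\|_{L^p(\Omega^\complement)}\gtrsim\lambda^{-(n-2s-2\alpha)/2}$, so the two scale identically and the bubble term is absorbed. To make this clean, the paper first passes to the symmetric-decreasing rearrangement of $w$ (so that $\Omega$ becomes a ball centered at $0$, where the radial asymptotics of $W$ apply directly), normalizes $|\Omega|=1$ and $c^*=1$, and then splits on whether $\lambda^*$ is below or above a fixed threshold $c_0$; your case split on $D(w)$ versus $\|w\|_{\dot H^s}^2$ is morally equivalent to theirs on $\lambda^*$, but you still need the decay of $W$ to finish.
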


\begin{theorem}[Symmetry, Hardy formulation] \label{thm:perturbative-symmetry-hardy}
    For every $n\ge 1$, $0<s<\min\{1,n/2\}$, and $-2s < \alpha_0 < 0$, there exists $\eps = \eps(n,s,\alpha_0) >0$ such that the following statement holds.

    If $\alpha\in(\alpha_0, 0)$ and $p\in(2, 2+\eps)$, then every minimizer of \cref{eq:hardy} is radial.
\end{theorem}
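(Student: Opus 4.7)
I would adapt the compactness strategy of \cite{Dolbeault2009} to the fractional setting, arguing by contradiction. Suppose the conclusion fails: there exist $\alpha_0 \in (-2s, 0)$, sequences $\alpha_k \in (\alpha_0, 0)$, $p_k \to 2^+$, and non-radial minimizers $W_k \in \dot H^s(\R^n)$ of \cref{eq:hardy} with parameters $(s, p_k, \alpha_k)$. Up to replacing $W_k$ by $\abs{W_k}$ we take $W_k \geq 0$, and using the scale invariance \cref{W lambda}, we fix a normalization (e.g., $\norm{W_k \abs{\emptyparam}^{-t_k}}_{L^{p_k}} = 1$ together with an anchoring of the dyadic concentration scale) so that $(W_k)$ neither concentrates nor vanishes; after extraction, $\alpha_k \to \bar\alpha \in [\alpha_0, 0]$ and $W_k \weakto W_\infty$ in $\dot H^s(\R^n)$.

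\textbf{Step 1 (uniform a priori control).} The crucial step is to establish integral and pointwise bounds on $W_k$ that are uniform in $k$, hence in $p_k \to 2^+$. For $s=1$ such bounds follow by comparison with the explicit radial extremizer (\cite[Lemma~2.2]{Dolbeault2009}); in the fractional case, no explicit profile is available, and I would instead invoke the bootstrap of \cref{lem:asymptotic-growth}. The idea is to start from the summability of $W_k$ given by the normalization and iteratively upgrade it by inverting the Hardy--Schrödinger operator $(-\Delta)^s + C(\alpha_k) \abs{x}^{-2s}$ appearing in \cref{eq:hardy-eq}. By \cref{eq:compare_calpha_chardy} and the continuity of $\alpha \mapsto C(\alpha)$ on $[\alpha_0, 0]$, this operator is uniformly coercive in $k$, and standard fractional regularity then delivers the desired bounds together with strong convergence $W_k \to W_\infty$ (along a subsequence) in suitable weighted Lebesgue spaces.

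\textbf{Step 2 (identification of the limit).} Passing to the limit in \cref{eq:hardy-eq} and using $p_k \to 2$, $t_k \to s$, the limit $W_\infty \geq 0$, $W_\infty \not\equiv 0$, solves the linear Hardy--Schrödinger eigenvalue problem
\begin{equation*}
    (-\Delta)^s W_\infty + C(\bar\alpha)\, \frac{W_\infty}{\abs{x}^{2s}} \;=\; \bar\mu \, \frac{W_\infty}{\abs{x}^{2s}},
\end{equation*}
with $\bar\mu$ equal to $C_{\mathrm{Hardy}}(s) + C(\bar\alpha)$ in the limit. Being a non-negative ground state of this operator, $W_\infty$ must be radial by the simplicity of the ground state of the fractional Hardy--Schrödinger operator.

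\textbf{Step 3 (spectral gap and contradiction).} Decompose $W_k = W_k^{(0)} + W_k^{\perp}$ using spherical harmonics, with $W_k^{(0)}$ the radial part. Since $W_\infty$ is radial, Step~1 gives $W_k^{\perp} \to 0$ in $\dot H^s(\R^n)$. At $p = 2$, the Rayleigh quotient of \cref{eq:hardy} restricted to the non-radial sector is bounded below by $C_{\mathrm{Hardy}}^{\perp}(s) + C(\alpha)$, where $C_{\mathrm{Hardy}}^{\perp}(s) > C_{\mathrm{Hardy}}(s)$ is the best constant of the fractional Hardy inequality on functions orthogonal to radial ones (these being the Herbst-type constants in each angular momentum channel). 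The gap $C_{\mathrm{Hardy}}^{\perp}(s) - C_{\mathrm{Hardy}}(s) > 0$ is independent of $\alpha$; by perturbation in $p$, leveraging the uniform bounds of Step~1, this strict gap persists for the $p_k$-problem when $k$ is large. Consequently, no non-radial function can be a minimizer at scale $p_k$ for large $k$, contradicting the assumption that $W_k$ is one.

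\textbf{Main obstacle.} The principal difficulty is Step~1, that is, uniform a priori control on $W_k$ as $p_k \to 2^+$ without an explicit radial profile to compare against. The bootstrap encoded in \cref{lem:asymptotic-growth} is designed precisely for this, and it is where most of the technical work resides. A secondary subtlety is making the spectral gap in Step~3 quantitative and uniform in $\bar\alpha \in [\alpha_0, 0]$; here the hypothesis $\alpha_0 > -2s$ is essential, as it ensures, via \cref{eq:compare_calpha_chardy}, a uniform coercivity margin of the limiting operator.
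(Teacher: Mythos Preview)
Your proposal identifies the right two ingredients---the uniform $L^\infty$ control of \cref{lem:asymptotic-growth} and the strict spectral gap $C_{\mathrm{Hardy}}^{\perp}(s)>C_{\mathrm{Hardy}}(s)$ for the fractional Hardy inequality on functions orthogonal to radial ones---but the way you combine them has a genuine gap.

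\textbf{The compactness step fails.} Your Step~2 asserts the existence of a nontrivial $W_\infty\in\dot H^s(\R^n)$ solving $(-\Delta)^s W_\infty + C(\bar\alpha)\,\abs{x}^{-2s}W_\infty = \bar\mu\,\abs{x}^{-2s}W_\infty$ with $\bar\mu = C_{\mathrm{Hardy}}(s)+C(\bar\alpha)$. But the fractional Hardy inequality has \emph{no} extremizer in $\dot H^s(\R^n)$: the only formal eigenfunctions are multiples of $\abs{x}^{-(n-2s)/2}$, which do not belong to $\dot H^s$. Hence no anchoring of the dilation scale can produce a nontrivial strong limit; any minimizing sequence for the $p=2$ problem necessarily loses mass by concentration or spreading. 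The claim ``$W_\infty\not\equiv 0$ and $W_\infty$ is a non-negative ground state, hence radial'' is therefore unfounded, and with it the conclusion $W_k^\perp\to 0$ in Step~3. The subsequent ``by perturbation in $p$ the gap persists'' is also not substantiated: since the $p=2$ infimum is not attained, one cannot simply transport the gap to $p_k>2$ by continuity of minimizers.

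\textbf{What the paper does instead.} The paper never attempts to pass to a limit on the minimizers. For a rotation $R$ it sets $\chi_{\alpha,p}\defeq W_{\alpha,p}\circ R-W_{\alpha,p}$, which is automatically orthogonal to radial functions. Subtracting the Euler--Lagrange equations for $W_{\alpha,p}\circ R$ and $W_{\alpha,p}$ and testing against $\chi_{\alpha,p}$ yields the \emph{finite-$p$} inequality
\[
\mathcal F_{\alpha,2}(\chi_{\alpha,p})\;\le\;(p-1)\,\bigl\|W_{\alpha,p}\,\abs{\emptyparam}^{\frac n2-s}\bigr\|_{L^\infty}^{\,p-2}.
\]
Now \cref{lem:asymptotic-growth} is used not to obtain compactness but to bound the right-hand side: together with $\tilde\Lambda_{\alpha,p}=\|W_{\alpha,p}\abs{\emptyparam}^{-t}\|_{L^p}^{p-2}$ and $\limsup_k\tilde\Lambda_{\alpha_k,p_k}\le\tilde\Lambda_{\alpha_\infty,2}$, it forces $\limsup_k\mathcal F_{\alpha_k,2}(\chi_{\alpha_k,p_k})\le\tilde\Lambda_{\alpha_\infty,2}$. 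This contradicts the improved Hardy constant on the non-radial sector, so $\chi_{\alpha_k,p_k}\equiv 0$ for large $k$. The angular-difference device is precisely what lets one exploit the $p=2$ spectral gap without ever needing a limiting object in $\dot H^s$.
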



\section{Notation and preliminaries}
\label{sec:notation}

We write $B_r(x)$ for the open ball in Euclidean space with radius $r$ and center $x$; we abbreviate $B_r \defeq B_r(0)$. When no ambiguity is possible, we use the shortened notation $\norm{\emptyparam}_{L^p} = \norm{\emptyparam}_{L^p(\R^n)}$ for the $L^p$-norm of a function, and we omit the integration variable in integral expressions. Finally, for a set $M$ and functions $f,g : M \to \R_+$, we write $f(m) \lesssim g(m)$ if there exists a constant $C > 0$, independent of $m$, such that $f(m) \leq C g(m)$ for all $m \in M$ (and accordingly for $\gtrsim$). We write $f \sim g$ if both $f \lesssim g$ and $f \gtrsim g$ hold. 

\subsection{Notation for distributions}
\label{subsec:notation-distributions}
Let us denote by $C^\infty_c(\R^n)$ and $\mathcal D'(\R^n)$ respectively the space of compactly supported test functions and its dual, i.e., the space of distributions. Analogously, let $\mathcal S$ and $\mathcal S'$ denote the Schwartz space of rapidly decaying functions and its dual, i.e., the space of tempered distributions.

Let us introduce the notation for the \emph{Cauchy principal value} (see \cite[Chapter 3, Section 8]{MR4703940}).
Given $x_0\in\R^n$, a signed Radon measure $\mu$ in $\R^n\setminus\{x_0\}$ and a measurable function $\varphi:\R^n\to\R$, we define
\begin{equation*}
   \mathrm{P.V.}_{x_0}\int_{\R^n} \varphi\, \mathrm{d} \mu \defeq 
    \lim_{r\to 0}
    \int_{B_r(x_0)^\complement} \varphi\, \mathrm{d} \mu. 
\end{equation*}
Sometimes we drop the subscript of $\mathrm{P.V.}$ if it is clear from the context what is the point $x_0$ (usually it is the point where the function or measure we are integrating is singular).
Moreover, we denote by $ \mathrm{P.V.}_{x_0}(\mu)$ the distribution that acts as
\begin{equation*}
    \scalprod{ \mathrm{P.V.}_{x_0}(\mu), \varphi} \defeq  \mathrm{P.V.}_{x_0}\int_{\R^n} \varphi\, \mathrm{d} \mu,
\end{equation*}
for all $\varphi\in C^\infty_c(\R^n)$.

Let us recall the definition and main properties of the convolution in the framework of distributions (see, e.g., \cite[Chapter IV]{MR1996773}).
Given a distribution $u\in \mathcal D'(\R^n)$ and a smooth compactly supported function $\varphi\in C^\infty_c(\R^n)$, the convolution $u\ast \varphi$ is defined as the smooth function
\begin{equation}\label{eq:def_convolution}
    (u\ast\varphi)(x) \defeq \scalprod{u, \varphi(x-\emptyparam)}
\end{equation}
for all $x\in\R^n$.
For any function $\eta\in C^\infty_c(\R^n)$, it holds
\begin{equation}\label{eq:def_convolution_duality}
    \scalprod{u\ast\varphi, \eta} = \scalprod{u, \eta\ast\varphi(-\emptyparam)}.
\end{equation}
Observe that the latter identity could be taken as a definition of convolution by duality.
Finally, let us recall that $\partial_i(u\ast \varphi) = (\partial_i u)\ast\varphi = u\ast(\partial_i\varphi)$ for any $1\le i\le n$.

\subsection{The fractional Laplacian and its inverse}
\label{ssec:fraclaplace}
We present some basic notions in the theory of the fractional Laplacian and its inverse; see \cite{Landkof1972,Stinga2019} or \cite[Section 2]{Silvestre2007} for a thorough presentation of the subject.

For $0<s<1$, let $\mathcal S_s$ be the subspace of $C^\infty(\R^n)$-functions such that $\norm{(1+\abs{x}^{n+2s})D^k\varphi}_{L^\infty}<\infty$ for all $k\ge 0$. We endow such space with the topology induced by the family of seminorms $\norm{(1+\abs{x}^{n+2s})D^k\varphi}_{L^\infty}$. We denote by $\mathcal S_s'$ the dual of $\mathcal S_s$. Observe that $C^\infty_c(\R^n)\subseteq \mathcal S\subseteq \mathcal S_s$ and $\mathcal S'_s\subseteq \mathcal S'\subseteq \mathcal D'(\R^n)$. Let us remark that $\mathcal S_s$ is closed under differentiation. 

For a function $\varphi\in \mathcal S_s$, we define its fractional Laplacian through the integral formula
\begin{equation}\label{eq:laplacian-pointwise}
    (-\lapl)^s \varphi(x) = C_{n,s}\,  \mathrm{P.V.}_{x}\int_{\R^n} \frac{\varphi(x)-\varphi(y)}{\abs{x-y}^{n+2s}}\, \mathrm{d} y ,
\end{equation}
where the constant $C_{n,s} > 0$ is chosen so that the symbol of the operator $(-\lapl)^s$ is $\abs{\xi}^{2s}$ (see \cite[Theorem 1]{Stinga2019}). With this normalization, we have $(-\lapl)^s\circ (-\lapl)^t = (-\lapl)^{s+t}$. Observe also that $(-\lapl)^s$ is self-adjoint with respect to the $L^2(\R^n)$-scalar product.

The operator $(-\lapl)^s$ maps $\mathcal S$ into $\mathcal S_s$  (see \cite{Silvestre2007}); hence, since $(-\lapl)^s$ is self-adjoint, we can extend its definition to any distribution $u\in \mathcal S'_s$ via duality:
\begin{equation*}
    \scalprod{(-\lapl)^s u, \varphi} = \scalprod{u, (-\lapl)^s\varphi} \quad
    \text{for all $\varphi\in \mathcal S$.}
\end{equation*}
In particular, $(-\lapl)^s$ maps $\mathcal S'_s$ into $\mathcal S'$. Let us remark that the pointwise definition \cref{eq:laplacian-pointwise} holds for $\varphi\in C^{2s+\delta}_{\mathrm{loc}}(\R^n)$ {with $(1 + |\cdot|^{n+2s})^{-1} \varphi \in L^1(\R^n)$} ~\cite[Proposition 2.4]{Silvestre2007}.

Let us the define the inverse operator $(-\lapl)^{-s}$. For a function $\varphi\in\mathcal S$, its inverse fractional Laplacian is defined as
\begin{equation}\label{eq:representation_formula_inverse}
    (-\lapl)^{-s}\varphi(x) = C_{n,-s}\int_{\R^n} \frac{\varphi(y)}{\abs{x-y}^{n-2s}}\, \mathrm{d} y,
\end{equation}
where the constant $C_{n,-s}$ is chosen so that $(-\lapl)^s(-\lapl)^{-s}\varphi=\varphi$ (see \cite[Theorem 5 and 6]{Stinga2019}).

Let $\mathcal S_{-s}$ be defined analogously to $\mathcal S_s$ (substituting $s$ with $-s$) and let $\mathcal S_{-s}'$ be its dual. Observe that $C^\infty_c(\R^n)\subseteq \mathcal S\subseteq \mathcal S_{-s}$ and $\mathcal S'_{-s}\subseteq \mathcal S'\subseteq \mathcal D'(\R^n)$.

The inverse fractional Laplacian $(-\lapl)^{-s}$ maps $\mathcal S$ into $\mathcal S_{-s}$. And therefore, as we did for the fractional Laplacian, we can extend its definition by duality to $\mathcal S_{-s}'$. In particular, $(-\lapl)^{-s}$ maps $\mathcal S_{-s}$ into $\mathcal S'$.

\subsection{Regularity properties of \texorpdfstring{$(-\lapl)^s$}{the fractional Laplacian}}
Let us recall the definition of H\"older spaces.
Fix an open set $\Omega\subseteq\R^n$.
Given $\gamma > 0$, write it as $\gamma = [\gamma] + \{\gamma\}$, where $[\gamma]$ is an integer and $0 < \{\gamma\}\le 1$ (so, for example, $[1]=0$ and $\{1\}=1$).
For $\gamma\ge 0$, let $C^{\gamma}(\Omega)$ denote the space of functions $u:\Omega\to\R$ such that
\begin{equation*}
    \infty > [u]_{C^\gamma(\Omega)} 
    = [D^{[\gamma]}u]_{C^{\{\gamma\}}(\Omega)}
    \defeq
    \sup_{x,y\in\Omega} \frac{\abs{D^{[\gamma]}u(x) - D^{[\gamma]}u(y)}}{\abs{x-y}^{\{\gamma\}}}.
\end{equation*}
Notice that, with this notation, $C^1$ corresponds to Lipschitz functions. We endow $C^\gamma(\Omega)$ with the topology given by the seminorm $[\emptyparam]_{C^\gamma(\Omega)}$.

We define $C^\gamma_{\mathrm{loc}}(\Omega)$ as the space of functions that belong to $C^\gamma(\Omega')$ for all $\Omega'\Subset \Omega$, endowed with the family of seminorms $([\emptyparam]_{C^\gamma(\Omega')})_{\Omega'\Subset\Omega}$.

Let us begin by studying the behavior of $(-\lapl)^s u$ away from the support of $u$.

\begin{lemma}\label{lem:laplacian-where-zero}
    Let $n\ge 1$ be a positive integer, let $s\in (0,1)$, and let $\Omega\subseteq\R^n$ be an open set.
    
    If a distribution $u\in\mathcal S'_s$ is null on $\Omega$, then $(-\lapl)^s\varphi$ is smooth in $\Omega$. 
    Moreover, when restricted on distributions that are supported on $\Omega^\complement$, the operator $(-\lapl)^s$ is continuous from $\mathcal S_s'(\R^n)$ into $C^\infty(\Omega)$.
    
    Equivalently, if $(u_k)_{k\in\N}, u\subseteq\mathcal S_s'(\R^n)$ are supported on $\Omega^\complement$ and $u_k\to u$ in the $\mathcal S_s'(\R^n)$-topology, then $u_k\to u$ in the $C^\infty(\Omega)$-topology.
    Moreover, for any $u\in\mathcal S'_s(\R^n)$ supported on $\Omega^\complement$, for all $x\in \Omega$, we have\footnote{The scalar product makes sense because the result does not change if we replace $\abs{\emptyparam}^{-(n+2s)}$ with a function $\eta\in \mathcal S_s$ that coincides with $\abs{\emptyparam}^{-(n+2s)}$ outside of a small ball centered at the origin.}
    \begin{equation*}
        (-\lapl)^s u(x) = -C_{n,s}\scalprod{u, \abs{\emptyparam-x}^{-(n+2s)}}.
    \end{equation*}
\end{lemma}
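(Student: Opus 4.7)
The plan is to establish the pointwise formula first and then deduce smoothness and continuity from it. Fix $u\in\mathcal S'_s$ supported on $\Omega^\complement$ and $x\in\Omega$. Choose a cutoff $\eta\in C^\infty_c(\Omega)$ with $\eta\equiv 1$ on a neighborhood of $x$. The function $g_x(y)\defeq(1-\eta(y))\abs{y-x}^{-(n+2s)}$ lies in $\mathcal S_s$: it vanishes near $x$, while at infinity it behaves like $\abs{y}^{-(n+2s)}$ and each derivative decays at the same rate. Since $1-\eta\equiv 1$ on $\supp u\subseteq\Omega^\complement$, the pairing $\scalprod{u,g_x}$ is unchanged if we modify $\eta$ on $\Omega^\complement$ or near $x$, so
\[ f(x) \defeq -C_{n,s}\scalprod{u,g_x} \]
is well-defined independently of the cutoff. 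This will be shown to coincide with $(-\lapl)^s u$ on $\Omega$.

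The key step is to verify $f=(-\lapl)^s u$ in $\mathcal D'(\Omega)$. Take $\varphi\in C^\infty_c(\Omega)$ and pick $\eta\in C^\infty_c(\Omega)$ with $\eta\equiv 1$ on $\supp\varphi$. Since $\supp u\cap\supp\eta=\emptyset$, one has $\scalprod{u,(-\lapl)^s\varphi}=\scalprod{u,(1-\eta)(-\lapl)^s\varphi}$. For $y\notin\supp\varphi$, the pointwise formula \cref{eq:laplacian-pointwise} gives $(-\lapl)^s\varphi(y)=-C_{n,s}\int\varphi(z)\abs{y-z}^{-(n+2s)}\,\mathrm d z$ (no principal value needed), and multiplying by $1-\eta(y)$ produces an identity valid for all $y\in\R^n$ between elements of $\mathcal S_s$. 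A Fubini-type interchange then yields
\[ \scalprod{u,(-\lapl)^s\varphi} = -C_{n,s}\int\varphi(z)\,\scalprod{u,(1-\eta)\abs{\emptyparam-z}^{-(n+2s)}}\,\mathrm d z = \int\varphi(z)f(z)\,\mathrm d z, \]
the interchange being justified by the fact that $z\mapsto(1-\eta)\abs{\emptyparam-z}^{-(n+2s)}$ is continuous from $\supp\varphi$ into $\mathcal S_s$, so $\int\varphi(z)(1-\eta)\abs{\emptyparam-z}^{-(n+2s)}\,\mathrm d z$ is a Bochner integral in $\mathcal S_s$ on which $u$ acts continuously.

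Smoothness of $f$ on $\Omega$ follows from the same cutoff construction: for a compact $K\Subset\Omega$, fix $\eta$ with $\eta\equiv 1$ on a neighborhood of $K$; then $x\mapsto(1-\eta)\abs{\emptyparam-x}^{-(n+2s)}$ is smooth from $K$ into $\mathcal S_s$, and differentiating under the pairing gives $\partial_x^\gamma f(x)=-C_{n,s}\scalprod{u,\partial_x^\gamma[(1-\eta)\abs{\emptyparam-x}^{-(n+2s)}]}$ for $x\in K$. For the continuity claim, suppose $u_k\to u$ in $\mathcal S'_s$ with all supported in $\Omega^\complement$. By Banach--Steinhaus, $(u_k)$ is equicontinuous on $\mathcal S_s$, so weak-$*$ convergence upgrades to uniform convergence on any compact subset of $\mathcal S_s$. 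Applied to the compact set $\{\partial_x^\gamma[(1-\eta)\abs{\emptyparam-x}^{-(n+2s)}]:x\in K\}$, this delivers uniform convergence of $\partial^\gamma((-\lapl)^s u_k)$ to $\partial^\gamma((-\lapl)^s u)$ on $K$, i.e., convergence in $C^\infty(\Omega)$.

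The main technical obstacle is the Fubini-type interchange with an $\mathcal S_s$-valued integrand; once one recognizes the integral as a Bochner integral in $\mathcal S_s$ and exploits the continuity of $u$ on that space, everything else reduces to routine manipulations of the defining formula $f(x)=-C_{n,s}\scalprod{u,g_x}$ together with standard arguments on weak-$*$ convergence in duals of Fr\'echet spaces.
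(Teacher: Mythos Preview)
Your proof is correct and rests on the same key observation as the paper's: for $\varphi\in C^\infty_c(\Omega)$ and $y\in\Omega^\complement$, the fractional Laplacian $(-\lapl)^s\varphi(y)$ is given by the convolution $-C_{n,s}\,\varphi\ast\abs{\emptyparam}^{-(n+2s)}(y)$, and the singular kernel can be replaced by a function in $\mathcal S_s$ without changing the pairing against $u$.

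The two arguments differ in order and technical flavor. The paper first proves smoothness by estimating $\norm{D^k(-\lapl)^su}_{L^\infty(\Omega')}$ via an $L^1$--$L^\infty$ duality (bounding $\scalprod{u,\varphi\ast D^k\eta}$ uniformly over $\norm{\varphi}_{L^1}\le 1$), and only afterwards deduces the pointwise formula by testing against a sequence $\varphi_k\to\delta_x$. You invert this: define the candidate $f(x)=-C_{n,s}\scalprod{u,g_x}$ first, verify $f=(-\lapl)^su$ distributionally via a Bochner-integral Fubini argument, then obtain smoothness by differentiating under the pairing and continuity via Banach--Steinhaus plus the fact that weak-$*$ convergent sequences in duals of Fr\'echet spaces converge uniformly on compacta. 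Your route is more abstract but arguably cleaner, since the pointwise formula is available from the start rather than recovered at the end; the paper's route is more hands-on and avoids appealing to functional-analytic machinery. Both arrive at the same place with comparable effort.
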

\begin{proof}
    Fix $u\in\mathcal S_s(\R^n)$ supported outside of $\Omega$, let $\Omega'$ be a smaller open set $\Omega'\Subset\Omega$ and take $\varphi\in C^{\infty}_c(\Omega')$. 
    Observe that
    \begin{equation*}
        \scalprod{(-\lapl)^s u, \varphi}
        =
        \scalprod{u, (-\lapl)^s\varphi}. 
    \end{equation*}
    For $x\in \Omega^\complement$, by definition of the fractional Laplacian and exploiting that $\varphi(x)=0$, we have
    \begin{equation*}
        (-\lapl)^s\varphi(x) = 
        -C_{n,s}\varphi\ast \abs{\emptyparam}^{-(n+2s)}.
    \end{equation*}
    Observe that we can remove the singularities of $\abs{\emptyparam}^{-(n+2s)}$ because $\varphi$ is supported far from $x$. Hence, there exists a function $\eta\in\mathcal S_s$, depending only on $\Omega$ and $\Omega'$, such that
    \begin{equation*}
        (-\lapl)^s\varphi(x) = 
        -\varphi\ast \eta.
    \end{equation*}
    Hence, since $u$ is supported on the complement of $\Omega$, we have
    \begin{equation}\label{eq:local43}
        \scalprod{(-\lapl)^s u, \varphi} = -\scalprod{u, \varphi\ast \eta}.
    \end{equation}
    Thanks to this formula, for any $k\ge 0$, we have
    \begin{align*}
        \norm{D^k(-\lapl)^su}_{L^{\infty}(\Omega')}
        &=
        \sup_{\varphi\in C^\infty_c(\Omega'), \norm{\varphi}_{L^1}\le 1}
        \scalprod{D^k(-\lapl)^su, \varphi}
        =
        \sup_{\varphi\in C^\infty_c(\Omega'), \norm{\varphi}_{L^1} \le 1}
        \scalprod{(-\lapl)^su, D^k\varphi}
        \\
        &=
        \sup_{\varphi\in C^\infty_c(\Omega'), \norm{\varphi}_{L^1}\le 1}
        \scalprod{u, \varphi\ast D^k\eta}.
    \end{align*}
    Observe that the map $\varphi\to \varphi\ast D^k\eta$ is continuous from $L^1(\Omega')$ to $\mathcal S_s(\Omega^\complement)$ and thus the supremum appearing in the last chain of identities is finite. Hence, $(-\lapl)^s u$ is smooth in $\Omega$. This argument also proves the part of the statement about the continuity of $(-\lapl)^s$.

    It remains to compute the pointwise value of $(-\lapl)^su$ at an arbitrary $x\in\Omega'$. To this end, consider a sequence $(\varphi_k)_{k\in\N}\subseteq C^\infty_c(\Omega')$ so that $0\le \varphi_k$, $\varphi_k$ is supported in $B_{\frac1k}(x)$, $\int\varphi_k=1$. This is a sequence of smooth functions converging to $\delta_x$. We have
    \begin{equation*}
        (-\lapl)^su(x) =
        \lim_{k\to\infty} \scalprod{(-\lapl)^su, \varphi_k}
        =
        \lim_{k\to\infty} -\scalprod{u, \varphi_k\ast\eta}.
    \end{equation*}
    At this point, observe that $\varphi_k\ast\eta\to \eta(\emptyparam-x)$ in $\mathcal S_s$ and thus the last limit coincides with $-\scalprod{u, \eta(\emptyparam-x)}$ which is the desired result (because we can replace $\eta$ with $C_{n,s}\abs{\emptyparam}^{-(n+2s)}$ and the value does not change).
\end{proof}

The following result is a refinement of \cite[Theorem 2]{Stinga2019} and \cite[Proposition 2.7]{Silvestre2007}.

\begin{proposition}\label{prop:laplacian-holder-regularity}
    Let $n\ge 1$ be a positive integer, let $s\in (0,1)$, let $\Omega\subseteq\R^n$ be an open set, and let $\gamma>2s$.
    
    The operator $(-\lapl)^s$ is continuous from $\mathcal S'_s(\R^n)\cap C_{\mathrm{loc}}^{\gamma}(\Omega)$ into $C_{\mathrm{loc}}^{\gamma-2s}(\Omega)$, i.e., if $(u_k)_{k\in\N},u\subseteq \mathcal S'_s(\R^n)\cap C_{\mathrm{loc}}^{\gamma}(\Omega)$ satisfy $u_k\to u$ both in the $S'_s(\R^n)$-topology and in the $C_{\mathrm{loc}}^{\gamma}(\Omega)$-topology, then $((-\lapl)^s u_k), (-\lapl)^su \subseteq C_{\mathrm{loc}}^{\gamma-2s}$ and $(-\lapl)^s u_k\to (-\lapl)^su$ with respect to the $C_{\mathrm{loc}}^{\gamma-2s}(\Omega)$-topology.
\end{proposition}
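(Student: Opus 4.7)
The plan is to prove the proposition by the standard localization-and-split argument for nonlocal operators: fix $\Omega' \Subset \Omega$, choose $\Omega''$ with $\Omega' \Subset \Omega'' \Subset \Omega$ and a cutoff $\chi \in C^\infty_c(\Omega)$ with $\chi \equiv 1$ on a neighborhood of $\overline{\Omega''}$, and decompose $u = \chi u + (1-\chi) u$. This separates $u$ into a compactly supported $C^\gamma$ piece (for which one can apply the classical pointwise regularity theory for $(-\lapl)^s$ on Hölder functions) and a piece which vanishes on $\Omega''$ (for which \cref{lem:laplacian-where-zero} gives $C^\infty$-smoothness on $\Omega'$ together with the relevant continuity).

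More precisely, I would first observe that $\chi u \in C^\gamma_c(\R^n)$, with norm controlled by $\chi$ and the local $C^\gamma$-seminorms of $u$ over $\supp \chi$, while $(1-\chi)u \in \mathcal{S}'_s(\R^n)$ is supported in $(\Omega'')^\complement$ (note that multiplication by the smooth bounded-derivative function $1-\chi$ preserves $\mathcal{S}_s$ and hence, by duality, $\mathcal{S}'_s$). Then by the pointwise regularity result of \cite[Proposition 2.7]{Silvestre2007} (together with \cite[Theorem 2]{Stinga2019}), $(-\lapl)^s(\chi u) \in C^{\gamma-2s}(\R^n)$, with an inequality of the form
\[
    [(-\lapl)^s(\chi u)]_{C^{\gamma-2s}(\R^n)} \lesssim [\chi u]_{C^\gamma(\R^n)},
\]
which, upon inspection of the proofs there (basically a combination of differentiating under the integral sign in \cref{eq:laplacian-pointwise} on the singular part and using $L^1$-integrability of the kernel on the regular part), is indeed linear and continuous in $\chi u \in C^\gamma_c(\R^n)$. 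On the other hand, \cref{lem:laplacian-where-zero} yields $(-\lapl)^s((1-\chi)u) \in C^\infty(\Omega'')$, with continuity of the operator from $\mathcal{S}'_s$ (restricted to distributions supported in $(\Omega'')^\complement$) into $C^\infty(\Omega'')$. Summing the two contributions and restricting to $\Omega'$ proves both that $(-\lapl)^s u \in C^{\gamma-2s}(\Omega')$ and the stated continuity.

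For the sequential statement, if $u_k \to u$ in $\mathcal{S}'_s \cap C^\gamma_{\mathrm{loc}}(\Omega)$, then $\chi u_k \to \chi u$ in $C^\gamma_c(\R^n)$ (all supported in $\supp \chi$), so $(-\lapl)^s(\chi u_k) \to (-\lapl)^s(\chi u)$ in $C^{\gamma-2s}(\R^n)$ by the estimate above; meanwhile $(1-\chi)u_k \to (1-\chi)u$ in $\mathcal{S}'_s$, so \cref{lem:laplacian-where-zero} gives $(-\lapl)^s((1-\chi)u_k) \to (-\lapl)^s((1-\chi)u)$ in $C^\infty(\Omega'')$, which a fortiori implies $C^{\gamma-2s}(\Omega')$-convergence. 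Since $\Omega' \Subset \Omega$ was arbitrary, the announced $C^{\gamma-2s}_{\mathrm{loc}}(\Omega)$-continuity follows.

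The main obstacle is that the standard references state \cite[Proposition 2.7]{Silvestre2007} and \cite[Theorem 2]{Stinga2019} as pointwise regularity results rather than as continuity of a linear operator. However, their proofs are based on an explicit Hölder estimate for second-order differences of $(-\lapl)^s(\chi u)$ in terms of $[\chi u]_{C^\gamma}$ plus a tail term controlled by the global integrability of $(1+|x|^{n+2s})^{-1}\chi u \in L^1(\R^n)$ (which holds trivially here due to compact support). Hence the map is manifestly bounded and linear in the relevant Banach topologies, and the continuity assertion comes for free — so the technical content of this proposition is essentially a careful bookkeeping of known estimates through the cutoff decomposition.
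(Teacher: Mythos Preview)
Your proposal is correct and follows essentially the same approach as the paper's proof: fix $\Omega'\Subset\Omega$, use a cutoff to split $u$ into a compactly supported $C^\gamma$ piece (handled by \cite[Proposition 2.7]{Silvestre2007}) and a piece vanishing near $\Omega'$ (handled by \cref{lem:laplacian-where-zero}). Your version is slightly more detailed in introducing an intermediate set $\Omega''$ and in spelling out why the Silvestre estimate yields continuity rather than just regularity, but the argument is the same.
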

\begin{proof}
    Fix a bounded open set $\Omega'\Subset\Omega$ and a smooth $\eta\in C^\infty_c(\R^n)$ so that $\eta=1$ on $\Omega'$ and $\eta=0$ on $\Omega^{\complement}$. Split $u_k$ as $u_k=u^{(1)}_k + u^{(2)}_k$, where $u^{(1)}_k=u_k\eta$ and $u^{(2)}_k=u_k(1-\eta)$. Observe that $u_k^{(1)}$ belongs to $C^\gamma(\R^n)$ and is supported in $\Omega$, while $u_k^{(2)}\in\mathcal S'_s(\R^n)$ and is supported in the complement of $\Omega'$. The result follows thanks to \cref{lem:laplacian-where-zero} and \cite[Proposition 2.7]{Silvestre2007}.
\end{proof}

\subsection{Regularity properties of \texorpdfstring{$(-\lapl)^{-s}$}{the inverse fractional Laplacian}}
Let us begin with a statement analogous to \cref{lem:laplacian-where-zero}.

\begin{lemma}\label{lem:inverse-laplacian-where-zero}
    Let $n\ge 1$ be a positive integer, let $s\in (0,1)$, and let $\Omega\subseteq\R^n$ be an open set. 
    
    If a distribution $u\in\mathcal S'_{-s}$ is null on $\Omega$, then $(-\lapl)^{-s}u$ is smooth in $\Omega$. 
\end{lemma}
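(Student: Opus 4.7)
The plan is to mimic the proof of \cref{lem:laplacian-where-zero} almost verbatim, replacing the kernel $\abs{z}^{-(n+2s)}$ that appears in the fractional Laplacian by the kernel $\abs{z}^{-(n-2s)}$ appearing in the representation formula \cref{eq:representation_formula_inverse} for $(-\lapl)^{-s}$. The only structural difference is that the kernel in \cref{eq:representation_formula_inverse} is locally integrable and requires no principal value, which actually makes the computation slightly easier.

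\textbf{Key steps.} Fix a bounded open set $\Omega'\Subset\Omega$ and set $d\defeq\operatorname{dist}(\Omega',\Omega^\complement)>0$. For $\varphi\in C^\infty_c(\Omega')$, the duality definition of $(-\lapl)^{-s}$ gives
\begin{equation*}
    \scalprod{(-\lapl)^{-s}u,\varphi}=\scalprod{u,(-\lapl)^{-s}\varphi}.
\end{equation*}
I would then choose a fixed $\eta\in\mathcal S_{-s}$ with $\eta(z)=\abs{z}^{-(n-2s)}$ for $\abs{z}\ge d/2$ (such a choice is possible because $\abs{z}^{-(n-2s)}$ already has the decay rate built into the definition of $\mathcal S_{-s}$; the only issue is the singularity at $0$, which we smooth out). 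For $x$ with $\operatorname{dist}(x,\Omega')\ge d/2$ and $y\in\operatorname{supp}\varphi\subseteq\Omega'$ we have $\abs{x-y}\ge d/2$, and hence
\begin{equation*}
    (-\lapl)^{-s}\varphi(x)=C_{n,-s}(\varphi\ast\eta)(x)\qquad\text{whenever }\operatorname{dist}(x,\Omega')\ge d/2.
\end{equation*}
In particular, the smooth function $(-\lapl)^{-s}\varphi-C_{n,-s}(\varphi\ast\eta)$ is supported in the bounded set $\{x:\operatorname{dist}(x,\Omega')<d/2\}\subset\Omega$, hence belongs to $C^\infty_c(\Omega)$; since $u$ is null on $\Omega$, it annihilates this difference, yielding $\scalprod{(-\lapl)^{-s}u,\varphi}=C_{n,-s}\scalprod{u,\varphi\ast\eta}$. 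Applying this identity to $D^k\varphi$ in place of $\varphi$ and using distributional integration by parts,
\begin{equation*}
    \scalprod{D^k(-\lapl)^{-s}u,\varphi}=(-1)^{\abs{k}}C_{n,-s}\scalprod{u,\varphi\ast D^k\eta}\qquad\text{for all }\varphi\in C^\infty_c(\Omega').
\end{equation*}
Because $D^k\eta\in\mathcal S_{-s}$, the convolution map $\varphi\mapsto\varphi\ast D^k\eta$ is continuous from $L^1(\Omega')$ into $\mathcal S_{-s}$, and combining this with $u\in\mathcal S'_{-s}$ produces the bound
\begin{equation*}
    \norm{D^k(-\lapl)^{-s}u}_{L^\infty(\Omega')}=\sup_{\substack{\varphi\in C^\infty_c(\Omega')\\\norm{\varphi}_{L^1}\le 1}}\scalprod{D^k(-\lapl)^{-s}u,\varphi}<\infty.
\end{equation*}
Since $k$ and $\Omega'\Subset\Omega$ are arbitrary, $(-\lapl)^{-s}u\in C^\infty(\Omega)$.

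\textbf{Main obstacle.} The only delicate point is verifying that $(-\lapl)^{-s}\varphi-C_{n,-s}(\varphi\ast\eta)$ truly lies in $C^\infty_c(\Omega)$, so that the hypothesis $u\equiv 0$ on $\Omega$ may be invoked. This rests on two facts that must be stated carefully: first, replacing $\abs{z}^{-(n-2s)}$ by $\eta(z)$ only alters the integrand when $\abs{x-y}<d/2$, i.e.\ when $x$ lies within $d/2$ of $\operatorname{supp}\varphi\subset\Omega'$; second, $\Omega'\Subset\Omega$ is bounded, so the exceptional set is bounded and, by the choice of $d/2<d$, strictly contained in $\Omega$. Once this localization is in place, the rest of the argument is a straightforward transcription of the proof of \cref{lem:laplacian-where-zero}.
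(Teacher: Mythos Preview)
Your proposal is correct and takes exactly the approach the paper intends: the paper's own proof consists entirely of the sentence ``It can be shown repeating, almost verbatim, the proof of \cref{lem:laplacian-where-zero},'' and you have carried out precisely that transcription, correctly noting that the locally integrable kernel $\abs{z}^{-(n-2s)}$ removes the need for a principal value. Your explicit verification that $(-\lapl)^{-s}\varphi - C_{n,-s}(\varphi\ast\eta)\in C^\infty_c(\Omega)$ is in fact slightly more careful than the corresponding step in the paper's proof of \cref{lem:laplacian-where-zero}.
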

\begin{proof}
    It can be shown repeating, almost verbatim, the proof of \cref{lem:laplacian-where-zero}.
\end{proof}

The following proposition is a summary of the \emph{local} elliptic regularity enjoyed by the fractional Laplacian.
\begin{proposition}\label{prop:fractional-elliptic-regularity}
    Let $n\ge 1$ be a positive integer, let $0<s<\min\{1, \frac n2\}$, and let  $\Omega\subseteq\R^n$ be an open set.

    Assume that $(-\lapl)^su=f$ with $u\in\mathcal S'_s$ and $f\in \mathcal S_{-s}'$.
    \begin{enumerate}
        \item If $f\in L^p(\Omega)$ with $1<p<\frac n{2s}$, then $u\in L_{\mathrm{loc}}^{\frac{np}{n-2sp}}(\Omega)$.
        \item If $f\in L^p(\Omega)$ with $p>\frac n{2s}$ (and $p<\frac n{2s-1}$ if $2s>1$), then $u\in C_{\mathrm{loc}}^{2s-\frac np}(\Omega)$.
        \item If $f\in C^\gamma(\Omega)$ with $\gamma\ge 0$, then $u\in C_{\mathrm{loc}}^{\gamma+2s}(\Omega)$.
    \end{enumerate}
\end{proposition}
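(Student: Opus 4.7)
The plan is to reduce the local regularity of $u$ to mapping properties of the Riesz potential $(-\lapl)^{-s}$ applied to a compactly supported source, through a double localization. First, cut off $u$ by a test function equal to $1$ near the target set; by \cref{lem:laplacian-where-zero} this cutoff's image under $(-\lapl)^s$ differs from $f$ by a function smooth on the target neighborhood. Second, cut off this new source so only its local behavior survives; by \cref{lem:inverse-laplacian-where-zero} the discarded far-field contribution enters the final formula only as a smooth remainder.

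Concretely, fix nested open sets $\Omega'''\Subset\Omega''\Subset\Omega'\Subset\Omega$ together with cutoffs $\eta\in C^\infty_c(\Omega')$ and $\zeta\in C^\infty_c(\Omega'')$ equal to $1$ near $\overline{\Omega''}$ and $\overline{\Omega'''}$ respectively. Set $v\defeq \eta u$; since $(1-\eta)u$ vanishes near $\overline{\Omega''}$, \cref{lem:laplacian-where-zero} gives $F\defeq (-\lapl)^sv=f-(-\lapl)^s\bigl((1-\eta)u\bigr)$, which on $\Omega''$ differs from $f$ by a smooth function. Inverting (an identity $v=(-\lapl)^{-s}F$ that is true on Schwartz data and extends to the compactly supported distribution $v$ by a mollification argument in $\mathcal S'$), splitting $F=\zeta F+(1-\zeta)F$, and applying \cref{lem:inverse-laplacian-where-zero} to the second summand yields
\[
u = (-\lapl)^{-s}(\zeta F)+R\quad\text{on }\Omega''',
\]
with $R\in C^\infty(\Omega''')$. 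It then suffices to control the Riesz potential $(-\lapl)^{-s}(\zeta F)$ of a compactly supported function of the same local regularity as $f$, which by \cref{eq:representation_formula_inverse} is convolution with $|x|^{-(n-2s)}$. Claim (1) follows from Hardy--Littlewood--Sobolev, giving continuity $L^p(\R^n)\to L^{np/(n-2sp)}(\R^n)$ for $1<p<n/(2s)$. Claim (2) is the Morrey-type bound obtained by splitting the Riesz integral near and far from $x$, optimizing in the splitting radius, and using compact support; the auxiliary restriction $p<n/(2s-1)$ keeps the resulting exponent $2s-n/p$ strictly below $1$, so that it lies in our Hölder scale. Claim (3) is a Schauder-type estimate $(-\lapl)^{-s}\colon C^\gamma_c(\R^n)\to C^{\gamma+2s}_{\mathrm{loc}}(\R^n)$, proved by differentiating $[\gamma+2s]$ times under the integral (the derivatives being transferred onto $\zeta F$ by integration by parts, valid since $\zeta F\in C^\gamma_c$) and controlling the remaining Hölder increment through a dyadic decomposition of the kernel near its singularity.

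The main obstacle is claim (3): one must count derivatives carefully so that the residual kernel integration matches the fractional part $\{\gamma+2s\}$, paying attention to the borderline cases where $\{\gamma\}+\{2s\}>1$ or where $\gamma+2s$ is an integer, and in particular respecting the convention $0<\{\gamma\}\le 1$ adopted in the paper. A minor secondary point is the rigorous justification of the inversion identity $v=(-\lapl)^{-s}(-\lapl)^sv$ for compactly supported $v\in\mathcal S'_s$; this is obtained by mollifying $v$ with a Schwartz bump, where the identity is classical, and passing to the limit using the $\mathcal S'$-continuity of $(-\lapl)^{\pm s}$ on the relevant dual spaces.
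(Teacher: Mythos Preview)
Your proposal is correct and shares the paper's underlying philosophy---localize so that the problem reduces to mapping properties of the Riesz potential $(-\lapl)^{-s}=C_{n,-s}\,|\cdot|^{-(n-2s)}\ast$---but the paper executes the localization differently and more economically. Instead of your double cutoff (first on $u$, then on the resulting source $F$), the paper cuts off $f$ directly: with $\eta\in C^\infty_c(\Omega)$ equal to $1$ on $\Omega'\Subset\Omega$, it sets $u_1\defeq(-\lapl)^{-s}(\eta f)$ and $u_2\defeq u-u_1$. Then $(-\lapl)^su_2=(1-\eta)f$ vanishes on $\Omega'$, so (via \cref{lem:inverse-laplacian-where-zero} together with the Liouville-type representation from \cite{Fall2016} used in Step~1) $u_2$ is smooth on $\Omega'$, while $u_1$ inherits global regularity from the Riesz potential applied to the compactly supported $\eta f$. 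This avoids entirely the inversion identity $v=(-\lapl)^{-s}(-\lapl)^sv$ for compactly supported $v\in\mathcal S'_s$ that you flag as a secondary point; in your scheme, passing that identity to the limit requires convergence of $(-\lapl)^sv_\epsilon$ in $\mathcal S'_{-s}$ (not just $\mathcal S'$), which is doable but adds friction.

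For the endgame, the paper simply cites standard references: Hardy--Littlewood--Sobolev for (1), Stein for (2), and \cite[Proposition~2.8]{Silvestre2007} (iterated by differentiating $(-\lapl)^su=f$) for the Schauder estimate (3). Your plan to prove (3) by hand via dyadic decomposition is sound but unnecessary here, and the borderline bookkeeping you worry about is exactly what Silvestre's proposition already handles.
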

\begin{proof}
    We show the proposition in two steps of increasing generality.
    The three parts of the statement are proven together.
    
    \textbf{Step 1.} \emph{Special case $\Omega=\R^n$ and $f\in L^1(\R^n)$}. 
    Under these stronger assumptions, we observe that $u=C_{n,-s}f\ast \abs{x}^{2s-n} + g$, where $g$ is an affine function (see \cite[Theorem 1.1, Corollary 1.4]{Fall2016}).
    
    Then the classical regularity theory concerning the convolution with the Riesz kernel yields the desired statements, and the conclusion are not even local (so we have $L$ instead of $L_{\mathrm{loc}}$ and $C$ instead of $C_{\mathrm{loc}}$). 
    For \textit{(1)} and \textit{(2)} of the statement, see \cite[Proposition 2.1]{MusinaNazarov2021} or directly \cite[Chapter V]{Stein1970}. 
    For \textit{(3)}, see \cite[Proposition 2.8]{Silvestre2007} (which can be iterated by differentiating the identity $(-\lapl)^su=f$). 

    \textbf{Step 2.} \emph{Full generality}.
    Let $\eta\in C^\infty_c(\R^n)$ be a cut-off function with $0\le \eta\le 1$ supported in $\Omega$ and such that $\eta=1$ in $\Omega'\Subset\Omega$. Let $u_1\defeq (-\lapl)^{-s}(\eta f)$ and $u_2=u-u_1$. We have $(-\lapl)^su_1=\eta f$ while $(-\lapl)^su_2=0$ in $\Omega'$. Applying \cref{lem:inverse-laplacian-where-zero}, we obtain that $u_2$ is smooth in $\Omega'$. On the other hand, \textbf{Step 1} tells us that $u_1$ has the desired regularity (or integrability) in $\R^n$. Hence, we obtain the desired control on $u=u_1+u_2$ in $\Omega'$. Since $\Omega'\Subset\Omega$ is arbitrary, this concludes the proof.
\end{proof}

\subsection{Fractional Sobolev Spaces}
Let us briefly introduce the fractional Sobolev space with exponent $2$. For an in-depth presentation of this topic, we refer the reader to \cite{DiNezzaPalatucciValdinoci2012}.

Let $\dot H^s(\R^n)$ be the closure of $C^\infty_c(\R^n)$ with respect to the norm
\begin{equation}
\label{Hs norm normalization}
    \norm{\varphi}_{\dot H^s}^2 
    \defeq 
    \frac{C_{n,s}}2 \int_{\R^n}\int_{\R^n} \frac{(\varphi(x)-\varphi(y))^2}{\abs{x-y}^{n+2s}}\, \mathrm{d} x\, \mathrm{d} y .
\end{equation}

Let us remark that, with our normalization of the fractional Laplacian and of the fractional Sobolev norm, it holds
\begin{equation*}
    \norm{\varphi}_{\dot H^s}^2
    = \scalprod{(-\lapl)^{s/2}\varphi, (-\lapl)^{s/2}\varphi}
    = \scalprod{(-\lapl)^s\varphi, \varphi}.
\end{equation*}
The fractional Sobolev inequality states that the space $\dot H^s(\R^n)$ embeds into $L^{2^*_s}(\R^n)$, where $2^*_s \defeq \frac{2n}{n-2s}$.

Let us conclude this subsection with a simple yet useful fractional integration by parts formula. Observe that in the classical case, the equivalent formula is
\begin{equation*}
    \norm{fg}_{\dot H^1}^2 - \int_{\R^n} f^2g(-\lapl)g = \int_{\R^n} \abs{\nabla f}^2g^2
\end{equation*}
and, in particular, the right-hand side is always non-negative. In the fractional setting, the right-hand side might be negative, but in the cases we care about it will turn out to be non-negative.

\begin{lemma}[Fractional integration by parts] \label{lem:product_rule_fractional_laplacian}
    Let $n\ge 1$ be a positive integer and let $s\in (0,1)$.
    For any $f, g\in C^\infty_c(\R^n)$, it holds
    \begin{equation*}
        \norm{fg}^2_{{\dot{H}^s}} - \int_{\R^n} f^2 g(-\lapl)^sg 
        =
        \frac{C_{n,s}}2 
        \int_{\R^n} \int_{\R^n} 
        \frac{g(x)g(y)(f(x)-f(y))^2}{\abs{x-y}^{n+2s}}
        \, \mathrm{d} y\, \mathrm{d} x
        .
    \end{equation*}
\end{lemma}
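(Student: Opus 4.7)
The statement is a purely algebraic identity (for smooth compactly supported functions no integrability subtlety arises). The plan is to expand both $\norm{fg}_{\dot H^s}^2$ and $\int f^2 g(-\lapl)^s g$ as double integrals against the Gagliardo kernel $|x-y|^{-(n+2s)}$, symmetrize in $x,y$, and watch a cancellation produce exactly $g(x)g(y)(f(x)-f(y))^2$.

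First I would write, using the quadratic form representation (which is valid for $\phi \in C^\infty_c$),
\begin{equation*}
    \int_{\R^n} \phi (-\lapl)^s g \, \mathrm d x = \frac{C_{n,s}}{2}\int_{\R^n}\int_{\R^n} \frac{(\phi(x)-\phi(y))(g(x)-g(y))}{\abs{x-y}^{n+2s}} \, \mathrm d x\, \mathrm d y
\end{equation*}
with $\phi = f^2 g$, and decompose $f(x)^2 g(x) - f(y)^2 g(y) = f(x)^2(g(x)-g(y)) + g(y)(f(x)-f(y))(f(x)+f(y))$. Symmetrizing in $x,y$ (the kernel is symmetric) yields
\begin{equation*}
    \int f^2 g(-\lapl)^s g = \frac{C_{n,s}}{2}\int\!\!\int \frac{\tfrac12(f(x)^2+f(y)^2)(g(x)-g(y))^2 + \tfrac12(g(x)+g(y))(f(x)^2-f(y)^2)(g(x)-g(y))}{\abs{x-y}^{n+2s}}.
\end{equation*}

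Next I would expand $\norm{fg}_{\dot H^s}^2$ using the splitting $f(x)g(x)-f(y)g(y) = (f(x)-f(y))g(y) + f(x)(g(x)-g(y))$, square it, and again symmetrize. The result is
\begin{equation*}
    \norm{fg}_{\dot H^s}^2 = \frac{C_{n,s}}{2}\int\!\!\int \frac{\tfrac12(f(x)-f(y))^2(g(x)^2+g(y)^2) + \tfrac12(g(x)-g(y))^2(f(x)^2+f(y)^2) + (f(x)-f(y))(g(x)-g(y))(f(x)g(y)+f(y)g(x))}{\abs{x-y}^{n+2s}}.
\end{equation*}

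Subtracting, the terms $\tfrac12(f(x)^2+f(y)^2)(g(x)-g(y))^2$ cancel, and I would then use the elementary identity
\begin{equation*}
    f(x)g(y)+f(y)g(x) - \tfrac12(f(x)+f(y))(g(x)+g(y)) = -\tfrac12(f(x)-f(y))(g(x)-g(y))
\end{equation*}
to rewrite the remaining cross-term contributions. This collapses $\norm{fg}_{\dot H^s}^2 - \int f^2 g(-\lapl)^s g$ into
\begin{equation*}
    \frac{C_{n,s}}{4}\int\!\!\int \frac{(f(x)-f(y))^2 \bigl[g(x)^2+g(y)^2-(g(x)-g(y))^2\bigr]}{\abs{x-y}^{n+2s}}.
\end{equation*}
Finally, $g(x)^2+g(y)^2-(g(x)-g(y))^2 = 2g(x)g(y)$ gives the claim.

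There is no real obstacle — all double integrals converge absolutely because $f,g\in C^\infty_c$ (the singularity of the kernel is controlled by the $(f(x)-f(y))$ or $(g(x)-g(y))$ factors, which are Lipschitz). The only care needed is bookkeeping during the symmetrization steps; the derivation reduces to the algebraic identity above together with the symmetry of the Gagliardo kernel.
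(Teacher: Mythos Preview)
Your proof is correct and follows the same overall strategy as the paper: write both terms as double integrals against the Gagliardo kernel and exploit the $x\leftrightarrow y$ symmetry. The paper's version is a bit more economical, however. Instead of expanding $\norm{fg}_{\dot H^s}^2$ and $\int f^2 g(-\lapl)^s g$ separately and then subtracting, the paper first writes the difference pointwise as
\[
\int_{\R^n} f(x)g(x)\,(-\lapl)^s(fg)(x) - f(x)^2 g(x)\,(-\lapl)^s g(x)\,\mathrm d x
= C_{n,s}\int_{\R^n}\mathrm{P.V.}_x\!\int_{\R^n}\frac{f(x)g(x)g(y)\bigl(f(x)-f(y)\bigr)}{\abs{x-y}^{n+2s}}\,\mathrm d y\,\mathrm d x,
\]
where the cancellation is immediate, and only then symmetrizes. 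This bypasses the separate decompositions of $(f^2g)(x)-(f^2g)(y)$ and $(fg)(x)-(fg)(y)$ and the algebraic identity you used for the cross term. Your route is perfectly valid; it just does more bookkeeping to reach the same place.
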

\begin{proof}
    We have
    \begin{align*}
        \norm{fg}^2_{{\dot{H}^s}} &-  \int_{\R^n} f^2 g(-\lapl)^sg  \\
        &=
        \int_{\R^n} f(x)g(x)(-\lapl)^s(fg)(x) - f^2(x)g(x)(-\lapl)^sg(x)\, \mathrm{d} x
        \\
        &=
        C_{n,s}\int_{\R^n}  \mathrm{P.V.}_{x}\int_{\R^n} 
        f(x)g(x)\frac{f(x)g(x)-f(y)g(y)}{\abs{x-y}^{n+2s}} 
        - f^2(x)g(x)\frac{g(x)-g(y)}{\abs{x-y}^{n+2s}}\, \mathrm{d} y\, \mathrm{d} x
        \\
        &=
        C_{n,s}\int_{\R^n}  \mathrm{P.V.}_{x} \int_{\R^n} 
        \frac{f(x)^2g(x)g(y)-f(x)f(y)g(x)g(y)}{\abs{x-y}^{n+2s}}\, \mathrm{d} y\, \mathrm{d} x. 
    \end{align*}
    Hence, swapping $x$ and $y$, we get
    \begin{align*}
        \norm{fg}^2_{{\dot{H}^s}}& - \int_{\R^n} f^2 g(-\lapl)^sg  \\
        &=
        \frac{C_{n,s}}2 
        \int_{\R^n}  \mathrm{P.V.}_{x}\int_{\R^n}  
        \frac{f(x)^2g(x)g(y) + f(y)^2g(y)g(x)-2f(x)f(y)g(x)g(y)}{\abs{x-y}^{n+2s}}
        \, \mathrm{d} y\, \mathrm{d} x \\
        &= 
        \frac{C_{n,s}}2 
        \int_{\R^n} \int_{\R^n} 
        \frac{g(x)g(y)(f(x)-f(y))^2}{\abs{x-y}^{n+2s}}
        \, \mathrm{d} y\, \mathrm{d} x ,
    \end{align*}
    which proves the lemma. 
\end{proof}

\subsection{Radial functions and distributions}
A function $U:\R^n\to\R\cup\{\pm\infty\}$ is radial if $U(x)=U(y)$ whenever $\abs{x}=\abs{y}$. It is equivalent to ask that $U=U\circ L$ for all linear isometries $L\in \mathcal O(\R^n)$.

Analogously, a distribution $U\in \mathcal D'(\R^n)$ is radial if $\scalprod{U, \varphi}=\scalprod{U, \varphi\circ L}$ for any $\varphi\in C^\infty_c(\R^n)$ and any linear isometry $L\in\mathcal O(\R^n)$. Observe that a radial function in $L^1_{\mathrm{loc}}(\R^n)$ is a radial distribution.
Let us remark that the definition of radial distribution (or function) can be extended also to any open domain $\Omega\subseteq\R^n$ that is rotationally invariant.
Observe that if $U\in \mathcal S_s'(\R^n)$ is a radial distribution, then also $(-\lapl)^s U$ is a radial distribution.

For a (smooth enough) radial function $U$, we may define its radial derivative as $U'(x)\defeq \partial_r U(x) = \nabla U(x)\cdot \frac{x}{\abs{x}}$. Observe that $U'$ is a radial function in $\R^n\setminus\{0\}$ (and it is not defined in the origin) and furthermore it holds $\nabla U(x) = U'(x)\frac{x}{\abs{x}}$.

Analogously, for a radial distribution $U\in \mathcal D'(\R^n)$, we define its radial derivative as $U'(x)\defeq \partial_r U(x) = D U(x)\cdot \frac{x}{\abs{x}}$ (we denote with $DU$ the distributional derivative of $U$). Since $\frac{x}{\abs{x}}$ is smooth away from the origin, then $U'$ is a radial distribution in $\R^n\setminus\{0\}$. The formula $D U(x) = U'(x)\frac{x}{\abs{x}}$ holds also in this setting and can be shown by approximation since we know that it holds when $U$ is smooth.
We will also use that, if $U_k\to U$ in the sense of distributions, then $U_k'\to U'$ in the sense of distributions in the domain $\R^n\setminus\{0\}$.

We say that a radial function $U$ is weakly decreasing if $U(x)\ge U(y)$ whenever $\abs{x}\le\abs{y}$. Analogously, a radial distribution is weakly decreasing if $U'\le 0$ in $\R^n\setminus\{0\}$. Let us remark that a weakly decreasing radial distribution is necessarily a function in $\R^n\setminus\{0\}$ (observe that $U'\le 0$, thus $U'$ is a measure and so is $DU$).

The following lemma provides a representation of the distributional derivative of a radial function that has a \emph{wild behavior} at the origin. Let us recall, following~\cite{AmbrosioFuscoPallara2000}, that $\mathrm{BV}_{\mathrm{loc}}(\Omega)$ denotes the functions of local bounded variation in $\Omega$, i.e., the family of functions in $L^1_{\mathrm{loc}}(\Omega)$ such that their differential is a vectorial Radon measure in $\Omega$.

\begin{lemma}\label{lem:der_radial_distr}
    For $n\ge 1$, let $u\in \mathrm{BV}_{\mathrm{loc}}(\R^n\setminus\{0\})\cap L^1_{\mathrm{loc}}(\R^n)$ be a radial function such that $\lim_{x\to 0} u(x)\abs{x}^n=0$.
    Then\footnote{Here, $Du$ represents the distributional derivative of $u$, while $\nabla u$ is the vector measure that coincides with the distributional derivative outside of the origin.} $Du = \mathrm{P.V.}_{0_{\R^n}}(\nabla u)$.
\end{lemma}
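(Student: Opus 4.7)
The plan is to prove, for each test function $\varphi \in C^\infty_c(\R^n)$ and each index $i \in \{1,\dots,n\}$, the identity
\[
-\int_{\R^n} u \,\partial_i\varphi \,\de x = \lim_{r \to 0^+} \int_{B_r^\complement} \varphi \,\de(\partial_i u),
\]
which is exactly what the equality $Du = \mathrm{P.V.}_{0}(\nabla u)$ means once both sides are tested against $\varphi$. The strategy is to integrate by parts on the annulus-like region $B_r^\complement$ for each fixed $r > 0$ (where $u$ is BV and therefore eligible for the BV divergence theorem) and then send $r \to 0^+$. Everything hinges on showing that the boundary term on $\partial B_r$ vanishes in the limit.

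For (a.e.) $r > 0$, the BV integration-by-parts formula, valid because $\supp\varphi \cap B_r^\complement$ is compactly contained in $\R^n\setminus\{0\}$, gives
\[
\int_{B_r^\complement} \varphi \,\de(\partial_i u) = -\int_{B_r^\complement} u \,\partial_i\varphi \,\de x - \int_{\partial B_r} \varphi(x)\, u(x)\, \frac{x_i}{|x|} \,\de\Haus^{n-1}(x),
\]
the sign in the boundary term reflecting that the outer normal to $B_r^\complement$ along $\partial B_r$ points inward, i.e., equals $-x/|x|$. As $r \to 0^+$, the first term on the right-hand side converges to $-\int_{\R^n} u\, \partial_i\varphi$ by dominated convergence (using $u \in L^1_{\mathrm{loc}}(\R^n)$ and $\partial_i\varphi \in C_c^\infty$). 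It remains to show that the boundary term tends to $0$.

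Writing $u(x) = f(|x|)$ for the radial profile, for a.e. $r > 0$ the trace of $u$ on $\partial B_r$ is the constant $f(r)$, so the boundary integral equals $f(r)\int_{\partial B_r} \varphi(x)\,\frac{x_i}{r}\,\de\Haus^{n-1}(x)$. The key observation is the symmetry identity $\int_{\partial B_r} \frac{x_i}{r}\,\de\Haus^{n-1} = 0$, so the Taylor expansion $\varphi(x) = \varphi(0) + O(|x|)$ yields
\[
\left|\int_{\partial B_r} \varphi(x)\,\frac{x_i}{r}\,\de\Haus^{n-1}(x)\right| \leq \|\nabla\varphi\|_{L^\infty(\R^n)} \int_{\partial B_r} |x|\,\de\Haus^{n-1} \lesssim r^n.
\]
Consequently the boundary term is dominated by $C_\varphi \,|f(r)|\, r^n$, which vanishes as $r \to 0^+$ (along a sequence $r_k$ of "good" radii, for which the trace equals $f(r_k)$) by the hypothesis $u(x)|x|^n \to 0$.

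The main obstacle is precisely this boundary estimate: without the symmetry cancellation, the naive bound $|f(r)|\, \Haus^{n-1}(\partial B_r) \sim |f(r)|\, r^{n-1}$ would require the strictly stronger hypothesis $|f(r)|\, r^{n-1} \to 0$, which fails in general for merely $|f(r)|r^n\to 0$. It is therefore essential to exploit the radial symmetry of $u$ together with the mean-zero property of the vector field $x/|x|$ on spheres, so that only the first-order behavior of $\varphi$ near the origin contributes and produces the missing power of $r$ that aligns exactly with the given decay assumption.
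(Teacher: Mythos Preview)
Your proof is correct and rests on exactly the same key observation as the paper's: because $u$ is radial, one may replace $\varphi$ by $\varphi-\varphi(0)$ in the boundary contribution (equivalently, use $\int_{\partial B_r}\tfrac{x_i}{|x|}\,\de\Haus^{n-1}=0$), which upgrades the naive $|f(r)|\,r^{n-1}$ bound to $|f(r)|\,r^{n}$ and matches the hypothesis $u(x)|x|^n\to0$. The only difference is in implementation. You perform a hard cutoff, applying the BV Gauss--Green formula on $B_r^\complement$ and handling the trace on $\partial B_r$; the paper instead multiplies by a smooth radial cutoff $1-\eta_\eps$, reducing the question to an estimate on the annulus $B_\eps\setminus B_{\eps/2}$ (their inequality \eqref{eq:local320}), which they prove first for smooth radial $v$ via the same $\varphi\mapsto\varphi-\varphi(0)$ trick and then pass to the limit. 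Your route is more direct and transparent; the paper's soft cutoff has the minor advantage of sidestepping trace theory entirely, so one never has to single out the (at most countably many) radii where $|\nabla u|(\partial B_r)>0$ or where the one-sided trace might differ from $f(r)$.
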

\begin{proof}
    Let $\eta\in C^\infty_c(B_1)$ be a radial function satisfying $0\le \eta\le 1$ and $\eta\equiv 1$ in $B_{\frac12}$. Let $\eta_\eps(x) \defeq \eta( \eps^{-1}x)$. For any $\varphi\in C^\infty_c(\R^n)$, we have
    \begin{equation*}
        \int_{B_\eps^\complement} \varphi \,\mathrm d\nabla u
        +
        \int_{B_\eps\setminus B_{\eps/2}} \varphi \, \mathrm d\nabla((1-\eta_\eps)u)
        =
        \scalprod{(1-\eta_\eps)Du, \varphi}
        \xrightarrow{\eps\to 0}
        \scalprod{Du, \varphi}.
    \end{equation*}
    Let $u_\eps\defeq (1-\eta_\eps)u$. The desired result would follow if we were able to prove that
    \begin{equation}\label{eq:local98}
        \int_{B_\eps\setminus B_{\eps/2}} \varphi\, \mathrm d \nabla u_\eps \xrightarrow{\eps\to 0} 0.
    \end{equation}
    We show that there exists a constant $C=C(n)$ such that, for any radial function $v\in \mathrm{BV}_{\mathrm{loc}}(\R^n)$, it holds
    \begin{equation}\label{eq:local320}
        \abs*{
        \int_{B_R\setminus B_r}\varphi\,\mathrm d\nabla v
        }
        \le C\norm{\nabla\varphi}_{L^\infty}
        \Big(
        \norm{v}_{L^1(B_R)} + 
        \norm{v\abs{x}^n}_{L^\infty(B_R)}
        \Big)
    \end{equation}
    for all $0<r<R$. Applying this with $v=u_\eps$, we obtain \cref{eq:local98} (because of the assumptions on $u$).

    To prove \cref{eq:local320} we can assume that $v$ is smooth and compactly supported (the inequality for more general $v$ can be recovered by approximation). Integrating by parts, we have
    \begin{equation*}
        \int_{B_R\setminus B_r}\varphi \nabla v
        =
        v(R)\int_{\partial B_R}(\varphi-\varphi(0)) \frac{x}{\abs{x}}\,\mathrm d\Haus^{n-1}
        -v(r)\int_{\partial B_r}(\varphi-\varphi(0))\frac{x}{\abs{x}}\,\mathrm d\Haus^{n-1}
        +
        \int_{B_R\setminus B_r} \nabla\varphi v
    \end{equation*}
    and \cref{eq:local320} follows (we used that $v$ is radial to replace $\varphi$ with $\varphi-\varphi(0)$ in the first two terms in the right-hand side).
\end{proof}

\begin{remark}
   The assumptions $u\in \mathrm{BV}_{\mathrm{loc}}(\R^n\setminus\{0\})$ and $\lim_{x\to 0} u(x)\abs{x}^n=0$ of \cref{lem:der_radial_distr} holds if $u\in L^1_{\mathrm{loc}}(\R^n)$ is radially decreasing. Indeed, for any $r>0$, $\int_{B_r\setminus B_{r/2}} u\gtrsim u(r)r^n$ and thus $u(r)r^n\to 0$ as $r\to 0$ by the (local) uniform integrability of $u$. 
\end{remark}

\section{Two maximum principles for the fractional Laplacian}
\label{sec:max}

The goal of this section is to prove two different maximum principles for the fractional Laplacian.
The two statements \cref{thm:global-max-principle,thm:strong-max-principle} offer two different perspectives. The first one is a global comparison principle on $\R^n$ that allows us to exploit the integral formula for the fractional Laplacian in situations where the function is not necessarily good enough to apply it. The second one is a local strong maximum principle that is valid for a large class of distributions.

\begin{theorem}[Global comparison principle]\label{thm:global-max-principle}
    Let $n\ge 1$ be a positive integer, let $0<s<\min\{1, \frac n2\}$, and let $u\in L^1_{\mathrm{loc}}(\R^n)$ be a function such that $\fint_{B_R}\abs{u}\to0$ as $R\to\infty$.

    If $(-\lapl)^s u \le \mu$ (in the distributional sense), where $\mu\in\meas(\R^n)$ is a non-negative measure, then
    \begin{equation} \label{eq:local57}
        u(x) \le C_{n, -s} \int_{\R^n} \frac{1}{\abs{x-y}^{n-2s}} \, \mathrm{d} \mu(y)
        \quad\text{for almost every $x\in\R^n$}
        .
    \end{equation}
\end{theorem}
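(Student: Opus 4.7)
The plan is to test the distributional inequality $(-\lapl)^s u \le \mu$ against the truncated Riesz potential of an arbitrary non-negative test function and pass to the limit. Set $v(x) \defeq C_{n,-s}\int|x-y|^{-(n-2s)}\,\mathrm d\mu(y)$; if $v\equiv +\infty$ the bound is vacuous, so assume $v$ is finite on a set of positive measure, which forces $v \in L^1_{\mathrm{loc}}(\R^n)$ and $\int (1+|y|)^{-(n-2s)}\,\mathrm d\mu(y) < \infty$. The averaged decay $\fint_{B_R}|u|\to 0$ ensures $u \in \mathcal S_s'(\R^n)$, so $(-\lapl)^s u$ is a well-defined distribution. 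Given a non-negative $\phi \in C^\infty_c(\R^n)$, define its Riesz potential $\psi \defeq C_{n,-s}|\emptyparam|^{-(n-2s)}\ast\phi$, which is smooth, non-negative, satisfies $(-\lapl)^s\psi = \phi$ and $\psi(x)\lesssim (1+|x|)^{-(n-2s)}$. With $\eta_R \in C^\infty_c(B_{2R})$ equal to $1$ on $B_R$, set $\psi_R \defeq \psi\eta_R \in C^\infty_c(\R^n)$ and $\zeta_R \defeq \psi(1-\eta_R)$. Decomposing $\phi = (-\lapl)^s\psi_R + (-\lapl)^s\zeta_R$ and integrating against $u$ yields
\begin{equation*}
\int u\phi\,\mathrm dx = \langle (-\lapl)^s u, \psi_R\rangle + \int u\,(-\lapl)^s\zeta_R\,\mathrm dx \le \int \psi_R\,\mathrm d\mu + \int u\,(-\lapl)^s\zeta_R\,\mathrm dx,
\end{equation*}
since $\psi_R \ge 0$ is a compactly supported test function. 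By monotone convergence $\int\psi_R\,\mathrm d\mu \to \int\psi\,\mathrm d\mu = \int v\phi\,\mathrm dx$ as $R\to\infty$.

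The key technical step is to show that the error term vanishes in the limit. For this I would establish the pointwise bound
\begin{equation*}
|(-\lapl)^s\zeta_R(x)| \lesssim \frac{R^{2s}}{(R+|x|)^{n+2s}}.
\end{equation*}
In the regime $|x| < R/2$, $\zeta_R(x) = 0$ and $\zeta_R$ is supported in $\{|y|>R\}$ with $\zeta_R \le \psi \lesssim |y|^{-(n-2s)}$, so the non-local formula from \cref{lem:laplacian-where-zero} directly gives $|(-\lapl)^s\zeta_R(x)| \lesssim \int_{|y|>R} |y|^{-(n-2s)}|y|^{-(n+2s)}\,\mathrm dy \lesssim R^{-n}$. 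For $|x| \ge R/2$, I would use the scaling identity $(-\lapl)^s\zeta_R(x) = R^{-n}(-\lapl)^s\tilde\zeta(x/R)$ with $\tilde\zeta(y) \defeq R^{n-2s}\zeta_R(Ry)$; since $\tilde\zeta$ is uniformly bounded (for $R$ large) by a fixed profile approximating $C|y|^{-(n-2s)}(1-\eta_1(y))$, a tail estimate for $(-\lapl)^s\tilde\zeta$ (which exploits that $(-\lapl)^s|y|^{-(n-2s)} = 0$ away from the origin) yields the claimed bound. Pairing it with $|u|$ and decomposing $\R^n$ dyadically in annuli,
\begin{equation*}
\int |u|\,|(-\lapl)^s\zeta_R|\,\mathrm dx \lesssim \fint_{B_R}|u| + \sum_{k\ge 0} 2^{-2sk}\fint_{B_{2^{k+1}R}}|u|,
\end{equation*}
which tends to $0$ by dominated convergence and the hypothesis. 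Passing to the limit in the previous display gives $\int u\phi\,\mathrm dx \le \int v\phi\,\mathrm dx$ for every non-negative test $\phi$, whence $u \le v$ a.e.

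The main obstacle is the pointwise estimate for $(-\lapl)^s\zeta_R$ in the outer region $|x|\gtrsim R$. Here $\zeta_R$ has only polynomial decay and is not compactly supported, so the standard ``far from support'' technique of \cref{lem:laplacian-where-zero} is insufficient by itself. The scaling reduction produces a uniform-in-$R$ estimate because $\zeta_R$ inherits the self-similar tail $|\emptyparam|^{-(n-2s)}$ of the Riesz potential; the factor $R^{2s}$ in the numerator reflects the ``effective mass'' $\int\zeta_R \sim R^{2s}$ of the complementary cutoff. It is exactly this sharp estimate that enables exchanging the limit $R\to\infty$ with integration against $u$ under only the averaged decay assumption, without any pointwise control on $u$.
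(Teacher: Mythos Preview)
Your argument is correct and runs parallel to the paper's, but with a different choice of test function. Both proofs test the distributional inequality against a non-negative compactly supported approximation of a Riesz potential and then show the remainder vanishes via the same pointwise decay $R^{2s}(R+|x|)^{-(n+2s)}$, paired with the dyadic estimate you wrote.

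The paper, instead of truncating $(-\lapl)^{-s}\phi$, builds once a smoothed fundamental solution $\Gamma$ (\cref{prop:Gamma_def}) with $\gamma\defeq(-\lapl)^s\Gamma\in\mathcal S_s$ (\cref{lem:gamma_Ss}), tests against $\Gamma_r-\Gamma_R\ge 0$, and shows $\scalprod{u,\gamma_R}\to 0$; then $r\to 0$ via \cref{lem:gamma_convolution}. The decay $\gamma\in\mathcal S_s$ is precisely your estimate on $(-\lapl)^s\zeta_R$, but obtained from the short explicit formula \cref{eq:formula_gamma}. What your route buys: no auxiliary construction (no Appendix~A) and non-negativity of $\psi_R$ is immediate. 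What the paper's route buys: the key decay is isolated as a reusable lemma (it also feeds into \cref{prop:fractional_mean_value_property} and \cref{thm:strong-max-principle}), and its proof is shorter than your scaling argument for $(-\lapl)^s\tilde\zeta$. In your outline, the genuinely delicate part is not the far-field tail but the transition zone $|x|\sim R$ (i.e.\ $|z|\sim 1$), where you need uniform-in-$R$ local $C^{2s+\delta}$ bounds on $\tilde\zeta$ together with uniform $\mathcal S_s'$ bounds to apply \cref{prop:laplacian-holder-regularity}; both hold because $R^{n-2s}\psi(R\,\cdot)$ converges in $C^\infty_{\mathrm{loc}}(\R^n\setminus\{0\})$ to a multiple of $|\cdot|^{-(n-2s)}$, but this step deserves a sentence rather than being folded into ``a tail estimate''.
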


\begin{theorem}[Local Strong Maximum Principle]\label{thm:strong-max-principle}
    Let $n\ge 1$ be a positive integer, let $0<s<\min\{1,\tfrac n2\}$, let $\Omega\subseteq\R^n$ be an open set, and let $u\in \mathcal S'_s(\R^n)$. 
    
    Fix $\bar x\in\Omega$ and assume that:
    \begin{enumerate}
        \item $u\ge 0$ in $\Omega$,
        \item $(-\lapl)^su\ge 0$ in $\Omega$,
        \item There is a function $\psi\in C^\infty(\R^n)$ satisfying $0\le \psi\le \abs{x-\bar x}^{-(n+2s)}$ and $\psi=\abs{x-\bar x}^{-(n+2s)}$ in $\Omega^\complement$ such that $\scalprod{u, \psi}>0$.\footnote{Observe that it makes sense to compute $\scalprod{u, \psi}$ as $u\in\mathcal S'_s(\R^n)$ and $\psi\in\mathcal S_s(\R^n)$.}
    \end{enumerate}
    Then $u$ coincides in $\Omega$ with a lower semicontinuous function that is strictly positive at $\bar x$.
\end{theorem}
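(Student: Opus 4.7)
My plan combines a Poisson-type representation for $s$-superharmonic functions on a small ball around $\bar x$ with a mollification argument to handle the distributional setting, and then uses the hypothesis on $\psi$ to obtain strict positivity at $\bar x$.

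First, I mollify: set $u_\eps \defeq u\ast \rho_\eps$ for a standard non-negative radial mollifier $\rho_\eps$ supported in $B_\eps$. Since $u \ge 0$ and $(-\lapl)^s u\ge 0$ on $\Omega$ distributionally, for $\eps < \mathrm{dist}(\bar x,\partial\Omega)$ and $x$ near $\bar x$, the identities $u_\eps(x) = \scalprod{u, \rho_\eps(x-\emptyparam)}$ and $(-\lapl)^s u_\eps(x) = \scalprod{(-\lapl)^s u, \rho_\eps(x-\emptyparam)}$ yield pointwise non-negativity of $u_\eps$ and $(-\lapl)^s u_\eps$. In particular, $u|_\Omega$ is a non-negative Radon measure, and by iterating \cref{prop:fractional-elliptic-regularity} one bootstraps the local integrability of $u$ near $\bar x$.

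Next, for $r>0$ small enough that $B_{3r}(\bar x) \Subset \Omega$, I apply the classical fractional Poisson representation for smooth $s$-superharmonic functions on $B_r(\bar x)$: for $x\in B_r(\bar x)$,
\[
u_\eps(x) \ge \int_{B_r(\bar x)^\complement} P_r(x,y)\, u_\eps(y)\de y,
\]
where $P_r\ge 0$ is the fractional Poisson kernel of $B_r(\bar x)$ (dropping the non-negative Green's function contribution of $(-\lapl)^s u_\eps\ge 0$). Letting $\eps \to 0$, the right-hand side converges to the distributional pairing $\scalprod{u, P_r(x,\emptyparam)\id_{B_r(\bar x)^\complement}}$, since $P_r(x,\emptyparam)\id_{B_r(\bar x)^\complement}$ belongs to $\mathcal S_s$ and is continuous in $x$. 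Combined with the local integrability of $u$, this identifies $u$ on $B_r(\bar x)$ with a lower semicontinuous function $\tilde u$ satisfying $\tilde u(x) \ge \scalprod{u, P_r(x,\emptyparam)\id_{B_r(\bar x)^\complement}}$.

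To conclude strict positivity at $\bar x$, I use the explicit bound $P_r(\bar x, y) \ge c\, r^{2s}\abs{y-\bar x}^{-(n+2s)}$ for $\abs{y-\bar x}\ge 2r$, obtaining $\tilde u(\bar x) \ge c\, \scalprod{u, \chi_r}$, where $\chi_r(y) \defeq \eta_r(y)\abs{y-\bar x}^{-(n+2s)}$ and $\eta_r\in C^\infty(\R^n)$ vanishes on $B_{2r}(\bar x)$ and equals $1$ on $B_{3r}(\bar x)^\complement$. Note $\chi_r\in\mathcal S_s$, and for $r$ small enough $\chi_r$ agrees with $\psi$ outside $\Omega$. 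On $\Omega\setminus B_{3r}$, the inequality $\psi\le \abs{y-\bar x}^{-(n+2s)}$ gives $\chi_r\ge \psi$, and since $u|_\Omega\ge 0$ the corresponding integral contribution satisfies $\scalprod{u|_\Omega,(\chi_r-\psi)\id_{\Omega\setminus B_{3r}}}\ge 0$. On the remaining piece $B_{3r}(\bar x)$, the difference $\chi_r-\psi$ is bounded in absolute value by $C r^{-(n+2s)}+\norm{\psi}_{L^\infty}$, so its pairing with $u|_\Omega$ is controlled by the local mass of $u|_\Omega$ near $\bar x$, which tends to zero as $r\to 0^+$ once the regularity bootstrap gives sufficient decay of $u|_\Omega(B_{3r})$. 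Consequently $\scalprod{u,\chi_r}\ge \scalprod{u,\psi} - \smallo(1)$ as $r\to 0$, and is strictly positive for $r$ small enough; hence $\tilde u(\bar x)>0$.

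The main obstacle is this final comparison: transferring the abstract hypothesis $\scalprod{u,\psi}>0$ into a positive lower bound for the distributional pairing against the specific kernel $\chi_r$ that emerges from the Poisson formula. The functions $\chi_r$ and $\psi$ are forced to agree on $\Omega^\complement$, but their disagreement on $B_{3r}(\bar x)\subseteq\Omega$ is not a priori negligible, since $\chi_r$ can reach values of order $r^{-(n+2s)}$ there. Closing this gap is exactly where the regularity bootstrap must be pushed far enough to ensure that the measure $u|_\Omega$ does not concentrate excessively near $\bar x$, at a rate strong enough to beat the growth of $\chi_r$.
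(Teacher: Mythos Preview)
Your high-level strategy---obtain a mean-value-type lower bound at $\bar x$ and compare the resulting kernel to $\psi$---is the same as the paper's. But the execution has two genuine gaps and one overcomplication.

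\textbf{The ``obstacle'' is easier than you think.} For the comparison on $B_{3r}(\bar x)$ you only need a \emph{lower} bound on $\scalprod{u|_{B_{3r}},\chi_r-\psi}$. Since $\chi_r\ge 0$ and $u|_\Omega\ge 0$, you have $\chi_r-\psi\ge -\psi\ge -\norm{\psi}_{L^\infty}$, so the error is at worst $-\norm{\psi}_{L^\infty}\,u|_\Omega(B_{3r})$. No $r^{-(n+2s)}$ ever appears; you only need $u|_\Omega(B_{3r})\to 0$, i.e.\ local integrability of $u$ near $\bar x$ without an atom.

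\textbf{Gap 1: local integrability and lower semicontinuity.} This is where the real work lies, and your bootstrap via \cref{prop:fractional-elliptic-regularity} does not apply: that proposition requires $(-\lapl)^s u$ to be in $L^p$ or $C^\gamma$, whereas here it is only a non-negative measure. Nor does the mollification route close the loop: you get $u_\eps(\bar x)\ge\text{RHS}_\eps$, but passing to the limit on the left requires already knowing that $u$ coincides with a function whose value at $\bar x$ controls $\liminf u_\eps(\bar x)$. This is precisely what remains to be proved. The paper avoids the circularity by constructing a smooth $\Gamma$ with $\gamma=(-\lapl)^s\Gamma\in\mathcal S_s$ (\cref{prop:Gamma_def}, \cref{lem:gamma_Ss}) so that $u\ast\gamma_\lambda$ is \emph{monotone} in $\lambda$; the l.s.c.\ representative of $u$ is then the increasing pointwise limit of continuous functions (\cref{prop:fractional_mean_value_property}).

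\textbf{Gap 2: the Poisson kernel is not in $\mathcal S_s$.} The function $P_r(x,\emptyparam)\id_{B_r(\bar x)^\complement}$ blows up like $(\abs{y-\bar x}-r)^{-s}$ at $\partial B_r(\bar x)$ and is neither smooth nor bounded there, so the claimed convergence of the right-hand side to a pairing against this kernel is unjustified. This is fixable by first dropping (via $u_\eps\ge 0$) to the region $\abs{y-\bar x}\ge 2r$ before taking $\eps\to 0$, but as written the step is incorrect.

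In short: once local integrability and l.s.c.\ of $u$ are secured, your kernel comparison is actually easier than you feared; but securing them is exactly the content you have not supplied, and the paper's custom kernel $\gamma_\lambda$ (with its monotonicity in $\lambda$ and its $\mathcal S_s$-asymptotics to $\abs{x}^{-(n+2s)}$, \cref{lem:strange-distributional-convergence}) is the device that does it.
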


Some remarks on the statements are due.

Under the same assumptions of \cref{thm:global-max-principle}, observe that if $(-\lapl)^su=\mu$ and $\mu$ is smooth and compactly supported, then \cref{eq:local57} is an identity (see \cite[Theorem 5]{Stinga2019}). The statement of \cref{thm:global-max-principle} is not an immediate consequence of the representation formula for the inverse fractional Laplacian \cref{eq:representation_formula_inverse} because we do not assume anything about the regularity of $u$ or $(-\lapl)^su$.

In the literature various strong maximum principles for the fractional Laplacian similar to \cref{thm:strong-max-principle} have appeared, e.g., \cite[Theorem 1]{ChenLiLi2017}, \cite[Corollary 4.2]{MusinaNazarov2019}, \cite[Proposition 3.1]{Dipierro2016}.
There are two differences between our statement and the ones appearing in the literature:
\begin{itemize}
    \item We do not assume symmetry of $u$. Indeed, we replace such an assumption with condition \textit{(3)} (which is implied by symmetry with respect to a hyperplane). 
    \item We do not assume $u$ to be a function, that is, we prove the statement for distributions with bare minimum regularity assumptions. As will be clear later on, this seemingly minor technical difference makes the proof substantially more delicate. We cannot avoid it because our main application of \cref{thm:strong-max-principle} involves the radial derivative of a minimizer of \cref{eq:hardy}, which may not be integrable at the origin.
\end{itemize}

Observe that conditions \textit{(1)} and \textit{(2)} in \cref{thm:strong-max-principle} are expected also for the classical Laplacian, while condition \textit{(3)} is necessary because of the non-locality of the operator.

\subsection{Supersolutions for the fractional Laplacian}
To prove \cref{thm:strong-max-principle}, we need to generalize the theory for locally integrable functions developed in \cite[Section 2.2]{Silvestre2007} to the case of distributions that are not necessarily locally integrable.

Let $\Phi = \Phi_{n,s} \defeq C_{n,-s} \abs{x}^{-(n-2s)}$ be the fundamental solution of $(-\lapl)^s$ in $\R^n$.
We construct a smooth function $\Gamma=\Gamma_{n,s}$ by appropriately smoothing the singularity of $\Phi$ so that $\Gamma$ is still a supersolution for $(-\lapl)^s$.
In \cite[Section 2.2]{Silvestre2007}, an analogous function $\Gamma$ is constructed but it is only $C^{1,1}$ instead of smooth. This lack of higher regularity would introduce a large number of issues in our argument.

\begin{proposition}\label{prop:Gamma_def}
    Let $n\ge 1$ be a positive integer and let $0<s < \min\{1, \tfrac n2\}$.
    There exists a decreasing radial function $\Gamma=\Gamma_{n,s}\in C^\infty(\R^n)$ such that:
    \begin{itemize}
        \item $\Gamma(x)>0$ for all $x\in\R^n$,
        \item $\Gamma'(x)<0$ for all $x\in\R^n\setminus\{0\}$,
        \item $\Gamma=\Phi_{n,s}$ in $B_1^\complement$,
        \item $(-\lapl)^s\Gamma$ is a strictly positive smooth function such that $\int_{\R^n}(-\lapl)^s\Gamma=1$,
        \item It holds $\Gamma_{\lambda_1}\ge \Gamma_{\lambda_2}$ for any $0<\lambda_1<\lambda_2$, where $\Gamma_\lambda(x)\defeq \Gamma(\frac x\lambda)\lambda^{-(n-2s)}$.
    \end{itemize}
\end{proposition}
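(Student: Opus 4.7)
The plan is to construct $\Gamma$ as an explicit smooth radial modification of $\Phi$ inside $B_1$ and then verify the listed properties one by one. Concretely, I would set $\Gamma(x) = \tilde g(|x|)$, where $\tilde g\colon[0,\infty)\to(0,\infty)$ is a smooth strictly decreasing function coinciding with $r\mapsto C_{n,-s}r^{-(n-2s)}$ on $[1,\infty)$ and additionally satisfying $\tilde g(r)<\Phi(r)$ on $(0,1)$ and $r\mapsto r^{n-2s}\tilde g(r)$ non-decreasing on $[0,1]$. Such $\tilde g$ exist by a standard smooth-extension argument: one can take, for instance, a smooth cutoff joining $\Phi$ on $[1,\infty)$ to a suitable smooth decreasing bump at the origin, matching all derivatives of $\Phi$ at $r=1$. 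With this construction the first three bullets are immediate by design.

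The last bullet follows from a direct computation: writing $r=|x|/\lambda$, one gets $\partial_\lambda\Gamma_\lambda(x)=-\lambda^{-(n-2s)-1}r^{-(n-2s-1)}\tfrac{\mathrm d}{\mathrm dr}\bigl(r^{n-2s}\tilde g(r)\bigr)$, which is $\le 0$ on $[0,1]$ by the monotonicity condition and vanishes on $[1,\infty)$ by the scale invariance of $\Phi$. For the fourth bullet, write $\Gamma=\Phi+g$ with $g\defeq\Gamma-\Phi$ supported in $\overline{B_1}$. Smoothness of $(-\lapl)^s\Gamma$ on $\R^n\setminus\{0\}$ follows from \cref{prop:laplacian-holder-regularity}, while at the origin smoothness is immediate from the pointwise formula \cref{eq:laplacian-pointwise} applied to the smooth function $\Gamma$. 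For the value of the integral, Fourier analysis yields $\hat\Gamma(\xi)=|\xi|^{-2s}+\hat g(\xi)$ with $\hat g$ smooth (since $g$ is compactly supported and integrable), hence $\widehat{(-\lapl)^s\Gamma}(\xi)=1+|\xi|^{2s}\hat g(\xi)$, whose value at $\xi=0$ is $1$. Strict positivity of $(-\lapl)^s\Gamma$ is easy in two regions: at $x=0$ the pointwise formula integrates a strictly positive integrand since $\Gamma$ attains its strict maximum there, and for $|x|>1$, \cref{lem:laplacian-where-zero} combined with $(-\lapl)^s\Phi\equiv 0$ on $\R^n\setminus\{0\}$ gives
\[
(-\lapl)^s\Gamma(x)=-C_{n,s}\int_{B_1}\frac{g(y)}{|x-y|^{n+2s}}\,\mathrm dy>0,
\]
since $g<0$ on $B_1\setminus\{0\}$ by the construction.

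The main obstacle is the strict positivity of $(-\lapl)^s\Gamma$ on the annulus $0<|x|<1$, where the principal-value integral has competing positive contributions from $|y|>|x|$ (where $\Gamma(y)<\Gamma(x)$) and negative contributions from $|y|<|x|$. Since $(-\lapl)^s\Gamma$ is smooth and strictly positive both at $x=0$ and for $|x|\ge 1$, one can exploit the additional freedom in the choice of $\tilde g$ -- for instance by choosing $\tilde g$ very close to $\Phi$ on $(0,1)$ so that $g$ is uniformly small there, together with a quantitative comparison estimate -- to guarantee that positivity extends to the whole annulus. The non-locality of $(-\lapl)^s$ means that a local modification of $\Gamma$ near the origin affects $(-\lapl)^s\Gamma$ globally, and controlling this interaction quantitatively is where most of the technical work of the proof lies.
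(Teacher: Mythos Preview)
Your outline handles the first three bullets, the last bullet, and the positivity of $(-\lapl)^s\Gamma$ at $x=0$ and for $|x|\ge 1$ correctly. The genuine gap is exactly where you identify it: the strict positivity of $(-\lapl)^s\Gamma$ on the annulus $0<|x|<1$. However, the approach you sketch for this region does not work, and the paper's actual mechanism is different and worth knowing.

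First, the suggestion to choose $\tilde g$ ``very close to $\Phi$ on $(0,1)$ so that $g$ is uniformly small there'' is impossible: since $\Gamma$ is bounded near the origin while $\Phi(x)\to\infty$ as $x\to 0$, the difference $g=\Gamma-\Phi$ must blow up to $-\infty$ at the origin regardless of how you choose $\tilde g$. One can make $\|g\|_{L^1}$ small by concentrating the smoothing near $0$, but then for points $x$ with $|x|$ comparable to the smoothing scale the competing contributions in the singular integral are of the same order, and a smallness argument gives no sign information.

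The paper (following Silvestre) bypasses any quantitative estimate via a \emph{touching argument}. The profile $\psi$ of $\Gamma$ is constructed so that for every $x_0\in B_1\setminus\{0\}$ there exists $\tau>0$ such that the translate $\Phi(\,\cdot+\tau x_0)$ lies \emph{above} $\Gamma$ globally, with equality precisely at $x_0$. Once this is available, the pointwise formula \cref{eq:laplacian-pointwise} gives immediately
\[
(-\lapl)^s\Gamma(x_0)\;=\;C_{n,s}\,\mathrm{P.V.}\!\int_{\R^n}\frac{\Gamma(x_0)-\Gamma(y)}{|x_0-y|^{n+2s}}\,\mathrm dy\;>\;C_{n,s}\,\mathrm{P.V.}\!\int_{\R^n}\frac{\Phi(x_0+\tau x_0)-\Phi(y+\tau x_0)}{|x_0-y|^{n+2s}}\,\mathrm dy\;=\;0,
\]
the last equality because $(1+\tau)x_0\neq 0$. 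The strict inequality comes from the fact that the touching is not an identity.

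The crucial point is that this touching property is \emph{not} automatic for an arbitrary smooth decreasing profile $\tilde g\le\varphi$: it requires that whenever $\psi'(r_0)=\varphi'(r_1)$ one has $\psi''(r_0)<\varphi''(r_1)$ (so that at the point of equal slope the translate of $\varphi$ curves away from $\psi$). The paper devotes \cref{sec:smoothing} to constructing such a $\psi$ explicitly (\cref{lem:crucial_smoothing_argument}), and then verifies the global touching property in \cref{lem:smoothing_argument_consequence}. Your ``standard smooth-extension argument'' would produce a $\tilde g$ with no control on second derivatives relative to those of $\varphi$, and for such a profile the touching---hence the positivity on the annulus---can genuinely fail.

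A minor remark on the integral condition: your Fourier argument is correct but tacitly uses $(-\lapl)^s\Gamma\in L^1(\R^n)$ to identify $\widehat{(-\lapl)^s\Gamma}(0)$ with $\int(-\lapl)^s\Gamma$. This follows from the decay $(-\lapl)^s\Gamma(x)\lesssim|x|^{-(n+2s)}$ for $|x|\ge 1$, which is the content of \cref{lem:gamma_Ss}.
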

\begin{proof}
    Let $\Gamma$ be the radial function generated by the profile function $\psi$ whose existence is guaranteed by \cref{lem:crucial_smoothing_argument} when $\varphi(\abs{x})=\Phi(x)$.
    By construction, $\Gamma$ is smooth, radial, strictly decreasing with $\Gamma'<0$, and it coincides with $\Phi$ in $B_1^\complement$.

    Let us show that, for any $x_0\in B_1\setminus\{0\}$, one can find $\tau>0$ such that $\Phi(\emptyparam+\tau x_0)-\Gamma$ has its global minimum at $x_0$. 
    We apply \textit{(2)} of \cref{lem:smoothing_argument_consequence} to the function $\psi$ (which is the radial profile of $\Gamma$). 
    Let $r_0=|x_0|$ and let $r_1$ be the value mentioned in \textit{(2)} of \cref{lem:smoothing_argument_consequence}, let $\tau\defeq\frac{r_1-r_0}{r_0}$. 
    Observe that, when restricted on $\{r\frac{x_0}{\abs{x_0}}:\, r>0\}$, we have that $\Phi(\emptyparam+\tau x_0)-\Gamma$ has a global minimum at $x_0$. 
    Given $x\in\R^n$, we have $\abs{x+\tau x_0} \le \abs{\frac{\abs{x}}{\abs{x_0}}x_0+\tau x_0 }$. Therefore,
    \begin{equation*}
        \Phi(x+\tau x_0)-\Gamma(x) \ge 
        \Phi\Big(\frac{\abs{x}}{\abs{x_0}}x_0+\tau x_0\Big) - \Gamma\Big(\frac{\abs{x}}{\abs{x_0}}x_0\Big)
    \end{equation*}
    and thus the desired statement follows.

    At this point, one can repeat verbatim the proof of \cite[Proposition 2.11]{Silvestre2007} to obtain that $(-\lapl)^s\Gamma$ is a strictly positive smooth function with integral equal to $1$.

    To show that $\Gamma_{\lambda_1}\ge \Gamma_{\lambda_2}$ when $0<\lambda_1<\lambda_2$, we have to use property \textit{(6)} of \cref{lem:crucial_smoothing_argument}. Observe that, since $\varphi(\abs{x})=\Phi(x)$, then $\frac{r\varphi'(r)}{\varphi(r)}=-(n-2s)$ for all $r>0$.
    By \textit{(6)} of \cref{lem:crucial_smoothing_argument}, in the interval $[\bar r, 1]$, we have $\frac{\psi'}{\psi}\ge \tfrac{\varphi'}{\varphi}$.
    Moreover, since $\psi''<0$ in $(0,\bar r]$ (and $\psi>0$, $\psi'<0$) we get that $\frac{r\psi'(r)}{\psi(r)}$ is decreasing on $(0,\bar r]$. 
    Joining these observations, we deduce that $\frac{\abs{x}\Gamma'(x)}{\Gamma(x)}\ge -(n-2s)$ on $\R^n\setminus\{0\}$. This is equivalent to $\frac{d}{d\lambda}\Gamma_\lambda \le 0$, which implies the desired statement.
\end{proof}

Let $\Gamma=\Gamma_{n,s}:\R^d\to(0,\infty)$ be a function satisfying the assumptions mentioned in \cref{prop:Gamma_def}.
Let $\gamma=\gamma_{n,s}\defeq (-\lapl)^s\Gamma$. For any $x\not\in B_1$, we have
\begin{equation}\label{eq:formula_gamma}
\begin{aligned}
    \gamma(x) 
    &= (-\lapl)^s\Gamma(x) = C_{n,s}\,\text{P.V.}_{x}\int_{\R^n}\frac{\Gamma(x)-\Gamma(y)}{\abs{x-y}^{n+2s}}\, \mathrm{d} y
    \\
    &= 
    C_{n,s}\,\text{P.V.}_x\int_{\R^n}\frac{\Phi(x)-\Phi(y)}{\abs{x-y}^{n+2s}}\, \mathrm{d} y
    + C_{n,s}\,\text{P.V.}_x \int_{\R^n}\frac{\Phi(y)-\Gamma(y)}{\abs{x-y}^{n+2s}}\, \mathrm{d} y
    \\
    &=
    C_{n,s}\,\int_{\R^n}\frac{\Phi(y)-\Gamma(y)}{\abs{x-y}^{n+2s}}\, \mathrm{d} y= C_{n,s}(\Phi-\Gamma)\ast\abs{\emptyparam}^{-(n+2s)}(x)
\end{aligned}
\end{equation}

Let us prove the following strengthening of \cite[Proposition 2.12]{Silvestre2007}.

\begin{lemma}\label{lem:gamma_Ss}
    Let $n\ge 1$ be a positive integer and let $0<s < \min\{1, \tfrac n2\}$.
    The function $\gamma_{n,s}$ belongs to $\mathcal S_s$.
\end{lemma}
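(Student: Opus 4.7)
The plan is to verify the two defining properties of $\mathcal S_s$: smoothness of $\gamma$ on all of $\R^n$, and the bound $\norm{(1+\abs{\emptyparam}^{n+2s})D^k\gamma}_{L^\infty} < \infty$ for every multi-index $k$. First observe that $\Gamma$ lies in $\mathcal S_s'(\R^n)$: indeed, it is bounded on $B_1$ and coincides with $\Phi_{n,s}(x) = C_{n,-s}\abs{x}^{-(n-2s)}$ on $B_1^\complement$, which pairs integrably against the weight $(1+\abs{\emptyparam}^{n+2s})^{-1}$. Hence $\gamma = (-\lapl)^s\Gamma$ is well-defined as a distribution.

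For the smoothness, I would invoke \cref{prop:laplacian-holder-regularity}. Since $\Gamma\in C^\infty(\R^n)\subseteq C^{\gamma'}_{\mathrm{loc}}(\R^n)$ for every $\gamma'>0$, that proposition yields $\gamma\in C^{\gamma'-2s}_{\mathrm{loc}}(\R^n)$ for every $\gamma'>2s$; letting $\gamma'\to\infty$ gives $\gamma\in C^\infty(\R^n)$. In particular, $(1+\abs{x}^{n+2s})\abs{D^k\gamma(x)}$ is automatically bounded on the compact set $\overline{B_2}$ for every $k$, so it remains only to control $D^k\gamma$ in the exterior region $\{\abs{x}>2\}$.

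For this I use the integral representation \cref{eq:formula_gamma}, which gives, for $\abs{x}>1$,
\[ \gamma(x) = C_{n,s}\int_{B_1}\frac{\Phi(y)-\Gamma(y)}{\abs{x-y}^{n+2s}}\,\mathrm d y, \]
where $\Phi-\Gamma$ is supported in $\overline{B_1}$ (by the construction of $\Gamma$) and belongs to $L^1(B_1)$ (since $\Phi(y)\sim\abs{y}^{-(n-2s)}$ is integrable near the origin). The kernel $x\mapsto\abs{x-y}^{-(n+2s)}$ is smooth in $x$ uniformly for $x$ in compact subsets of $\{\abs{x}>1\}$ and $y\in B_1$, so dominated convergence permits differentiation under the integral to all orders. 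For $\abs{x}\ge 2$ and $y\in B_1$ one has $\abs{x-y}\ge\abs{x}/2$, hence $\abs{D^k_x\abs{x-y}^{-(n+2s)}}\le C_k\abs{x}^{-(n+2s+k)}$, and therefore $\abs{D^k\gamma(x)}\le C_k'\abs{x}^{-(n+2s)}$ on $\{\abs{x}\ge 2\}$, which is actually stronger than what is needed. Combined with the bound on $\overline{B_2}$, this yields $\gamma\in\mathcal S_s$. I do not anticipate any real obstacle: the compactness of $\supp(\Phi-\Gamma)$ cleanly decouples the local behavior near the origin, handled by the elliptic regularity of \cref{prop:laplacian-holder-regularity}, from the behavior at infinity, handled by the explicit convolution representation.
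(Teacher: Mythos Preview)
Your proof is correct and follows essentially the same route as the paper: smoothness handled separately from the decay, with the decay read off from the convolution representation \cref{eq:formula_gamma} using that $\Phi-\Gamma\in L^1$ is supported in $\overline{B_1}$. The only cosmetic difference is that the paper cites \cref{prop:Gamma_def} directly for the smoothness of $\gamma$ (it is one of the listed properties there), whereas you re-derive it from \cref{prop:laplacian-holder-regularity}; both are valid and the remainder of the argument is the same.
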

\begin{proof}
    By \cref{prop:Gamma_def} we know that $\gamma$ is smooth. Thus, it is sufficient to verify that 
    \begin{equation*}
        \sup_{x\in B_2^\complement}\abs{D^k\gamma}(x)\abs{x}^{n+2s} < +\infty
    \end{equation*}
    for all $k\ge 1$. Thanks to \cref{eq:formula_gamma}, it would be sufficient to show
    \begin{equation*}
        \sup_{x\in B_2^\complement} (\Phi-\Gamma)\ast \abs{D^k\big(\abs{\emptyparam}^{-(n+2s)}\big)}(x)\abs{x}^{-{n+2s}} < \infty,
    \end{equation*}
    which holds true since $\Phi-\Gamma$ is an $L^1$-function supported in $B_1$ and $\abs{D^k\big(\abs{\emptyparam}^{-(n+2s)}\big)}\le C_k \abs{x}^{-{n+2s}}$ for all $x\in B_1^\complement$ (where the constant $C_k$ does not depend on $x$, but can depend on $k, n, s$).
\end{proof}

For $\lambda>1$, define $\Gamma_\lambda(x) \defeq \Gamma(x/\lambda)\lambda^{-(n-2s)}$ and $\gamma_\lambda\defeq \gamma(x/\lambda)\lambda^{-n}$. Observe that $(-\lapl)^s\Gamma_\lambda = \gamma_\lambda$.

Let us study the behavior, away from the origin, of $\gamma_\lambda\lambda^{-2s}$ as $\lambda\to0$. The following lemma does not have a counterpart in \cite{Silvestre2007}.
\begin{lemma}\label{lem:strange-distributional-convergence}
    Let $n\ge 1$ be a positive integer and let $0<s < \min\{1, \tfrac n2\}$.
    
    There is a constant $c=c(n,s)>0$\footnote{The value of $c$ depends also on the precise choice of $\Gamma$.} such that $c\gamma_\lambda\lambda^{-2s}\to \abs{x}^{-(n+2s)}$ with respect to the $\mathcal S_s$-convergence in $\R^n\setminus\{0\}$ as $\lambda\to0$.
    More precisely, for any $\eta\in C^\infty_c(\R^n)$ that is equal to $1$ in a neighborhood of the origin, we have $c\gamma_\lambda\lambda^{-2s}(1-\eta)\xrightarrow{\mathcal S_s} \abs{x}^{-(n+2s)}(1-\eta)$ as $\lambda\to 0$.
\end{lemma}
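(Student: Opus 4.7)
The strategy is to use the explicit integral representation \cref{eq:formula_gamma} for $\gamma$ outside $B_1$ and rescale. Substituting $x\mapsto x/\lambda$ yields, for $\abs{x}>\lambda$,
\[
\lambda^{-2s}\gamma_\lambda(x) = C_{n,s}\int_{B_1}(\Phi-\Gamma)(y)\,\abs{x-\lambda y}^{-(n+2s)}\,\mathrm{d}y.
\]
I would define $a\defeq \int_{B_1}(\Phi-\Gamma)\,\mathrm{d}y$ and set $c\defeq (C_{n,s}\,a)^{-1}$. To see $a>0$, I would invoke the monotonicity $\Gamma_{\lambda_1}\ge \Gamma_{\lambda_2}$ from \cref{prop:Gamma_def}: since $\Gamma=\Phi$ outside $B_1$, for $x\ne 0$ and $\lambda<\abs{x}$ one has $\Gamma_\lambda(x)=\lambda^{-(n-2s)}\Phi(x/\lambda)=\Phi(x)$, so letting $\lambda\to 0$ in the monotonicity yields $\Phi\ge \Gamma$ on $\R^n\setminus\{0\}$. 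Combined with the fact that $\Gamma$ is bounded while $\Phi$ blows up at the origin, this gives $\int_{B_1}(\Phi-\Gamma)>0$.

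Next, I would fix any $\eta\in C^\infty_c(\R^n)$ with $\eta\equiv 1$ on a ball $B_{r_0}$. For every $k\ge 0$ the aim is to show
\[
\sup_{x\in\R^n}(1+\abs{x}^{n+2s})\,\Bigl|D^k\bigl[(c\lambda^{-2s}\gamma_\lambda - \abs{\emptyparam}^{-(n+2s)})(1-\eta)\bigr](x)\Bigr| \xrightarrow{\lambda\to 0} 0.
\]
For $\lambda<r_0/2$ the expression inside the supremum is supported in $\{\abs{x}\ge r_0\}\subset\{\abs{x}>2\lambda\}$, so the integral formula for $\lambda^{-2s}\gamma_\lambda$ is available on the relevant set.

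The quantitative heart of the argument is the Taylor-type bound
\[
\bigl|D^k_x\abs{x-\lambda y}^{-(n+2s)} - D^k_x\abs{x}^{-(n+2s)}\bigr| \le C_k\,\lambda\,\abs{x}^{-(n+2s+k+1)},
\]
valid uniformly for $\abs{x}\ge 2\lambda$ and $y\in B_1$, which I would obtain by applying the mean value theorem to $t\mapsto D^k_x\abs{x-t\lambda y}^{-(n+2s)}$ on $[0,1]$. Plugging this into the integral formula and using $c\,C_{n,s}\,a=1$ would give
\[
\bigl|D^k_x\bigl(c\lambda^{-2s}\gamma_\lambda(x) - \abs{x}^{-(n+2s)}\bigr)\bigr| \lesssim_k \lambda\,\abs{x}^{-(n+2s+k+1)} \quad\text{on } \{\abs{x}\ge r_0\}.
\]
Multiplying by $(1+\abs{x}^{n+2s})$ would then produce a uniform bound of order $\lambda$, and expanding the $k$-th derivative of the product with $(1-\eta)$ via Leibniz introduces only finitely many similar terms, all estimated in the same way.

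The main obstacle is the cancellation encoded in the Taylor estimate. A crude pointwise bound only gives $\abs{\lambda^{-2s}\gamma_\lambda}\lesssim \abs{x}^{-(n+2s)}$, which when multiplied by the $\mathcal{S}_s$-weight $(1+\abs{x}^{n+2s})$ is merely bounded and does not vanish as $\lambda\to 0$; it is essential that subtracting the correctly normalized leading-order term $\abs{x}^{-(n+2s)}$ produces an extra factor of $\lambda$ pointwise, uniformly on $\{\abs{x}\ge r_0\}$. A secondary issue is the verification that $a>0$, for which the monotonicity of $\Gamma_\lambda$ in the last bullet of \cref{prop:Gamma_def} turns out to be the crucial ingredient.
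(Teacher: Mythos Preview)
Your argument is correct. The starting point---the rescaled integral formula for $\lambda^{-2s}\gamma_\lambda$ coming from \cref{eq:formula_gamma}---is the same as the paper's, but the execution diverges from there. The paper rewrites $c\lambda^{-2s}\gamma_\lambda$ as the convolution $\abs{\emptyparam}^{-(n+2s)}\ast\rho_\lambda$ with an $L^1$-normalized approximate identity $\rho_\lambda(y)=\lambda^{-n}\rho(y/\lambda)$ (where $\rho$ is a constant multiple of $(\Phi-\Gamma)\chi_{B_1}$), then replaces $\abs{\emptyparam}^{-(n+2s)}$ away from the origin by a genuine $\mathcal S_s$-function $\varphi$ and invokes the general fact that $\varphi\ast\rho_\lambda\to\varphi$ in $\mathcal S_s$. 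You instead carry out a direct mean-value estimate on $D^k_x\abs{x-\lambda y}^{-(n+2s)}-D^k_x\abs{x}^{-(n+2s)}$, which yields the quantitative rate $O(\lambda)$ in every $\mathcal S_s$-seminorm. Your route is more hands-on and gives an explicit rate; the paper's is shorter and more conceptual but hides the same computation inside the approximate-identity statement. Your verification that $a=\int_{B_1}(\Phi-\Gamma)>0$ via the monotonicity of $\Gamma_\lambda$ is a nice touch; the paper obtains $\Phi\ge\Gamma$ instead from \cref{lem:smoothing_argument_consequence}\textit{(1)} in the appendix.
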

\begin{proof}
    Thanks to \cref{eq:formula_gamma}, for any $x\in B_\lambda^\complement$, we obtain
    \begin{equation*}
        \gamma_\lambda(x) = C_{n,s}\lambda^{2s}\int_{B_1}\frac{\Phi(y)-\Gamma(y)}{\abs{x-\lambda y}^{n+2s}}\, \mathrm{d} y.
    \end{equation*}
    Therefore, by choosing $c>0$ appropriately, there is an $L^1$-function $\rho$ supported in $B_1$  such that $\norm{\rho}_{L^1} = 1$ and
    \begin{equation*}
        c\gamma_\lambda(x)\lambda^{-2s} = 
        \abs{\emptyparam}^{-(n+2s)}\ast \rho_\lambda,
    \end{equation*}
    where $\rho_\lambda\defeq \lambda^{-n} \rho(\frac{\emptyparam}{\lambda})$. Hence, we need to show that $\abs{\emptyparam}^{-(n+2s)}\ast \rho_\lambda$ converges to $\abs{\emptyparam}^{-(n+2s)}$ as $\lambda\to 0$ away from the origin, with respect to the $\mathcal S_s$-topology. Since we are interested only in the convergence away from the origin, we can replace $\abs{\emptyparam}^{-(n+2s)}$ with a function $\varphi\in\mathcal S_s$ that coincides with it in $B_1^\complement$. The convergence $\varphi\ast \rho_\lambda\to \varphi$ as $\lambda\to 0$ with respect to $\mathcal S_s$-convergence holds as $\varphi\in\mathcal S_s$ and this concludes the proof.
\end{proof}

It is now time to establish a generalization of \cite[Proposition 2.13]{Silvestre2007}.

Let us recall that the convolution between a distribution and compactly supported smooth function is well defined~\cref{eq:def_convolution} and satisfies \cref{eq:def_convolution_duality}.

Given $\varphi\in\mathcal S_s$, the map $x\mapsto \varphi(x-\emptyparam)$ is continuous from $\R^n$ into $\mathcal S_s$. Thus, the formula \cref{eq:def_convolution} makes sense also when $u\in\mathcal S'_s$ and $\varphi\in\mathcal S_s$. One can check that the resulting convolution belongs to $C^\infty(\R^n)$.
Moreover, by a standard approximation of $\eta$ with linear combinations of delta-distributions, one can show the validity of \cref{eq:def_convolution_duality} when $u\in \mathcal S_s'$ and $\varphi\in\mathcal S_s$.\footnote{We provide a sketch of the proof of \cref{eq:def_convolution_duality} in this setting. Given $\ell>0$, let $P_\ell=\{\ell\big(z+[0,1)^d\big):z\in\Z^d\}$ be a partition of $\R^d$ into cubes with side $\ell$. Define $\eta_\ell\defeq \sum_{Q\in P_\ell} \delta_{\text{center}(Q)}\int_Q\eta$. Observe that $\eta_\ell\to \eta$ in the distributional sense as $\ell\to0$.
Since $\eta_\ell$ is a combination of delta-distributions, we have
\begin{equation*}
    \scalprod{u\ast\varphi, \eta_\ell}
    =
    \scalprod{u, \eta_\ell\ast \varphi(-\emptyparam)}.
\end{equation*}
To pass this identity to the limit (as $\ell\to 0$) and obtain \cref{eq:def_convolution_duality}, we employ the following two facts:
\begin{itemize}
    \item For any $w\in C^1_{\mathrm{loc}}(\R^n)$, we have $\scalprod{w, \eta_\ell}\to \scalprod{w, \eta}$ as $\ell\to 0$. 
    \item For any $\varphi\in \mathcal S_s$, $\varphi\ast\eta_\ell \to \varphi\ast\eta$ with respect to the $\mathcal S_s$-topology.
\end{itemize}
}

\begin{lemma}\label{lem:gamma_convolution}
    Let $n\ge 1$ be a positive integer and let $0<s < \min\{1, \tfrac n2\}$.

    For any $\varphi\in\mathcal C^\infty_c(\R^n)$, we have $\varphi\ast\gamma_\lambda \to \varphi$, as $\lambda\to 0$, with respect to the $\mathcal S_s$-topology.
    Moreover, for any $u\in\mathcal S'_s$, the convolution $u\ast\gamma_\lambda$ is a smooth function and converges to $u$, in the sense of distributions, as $\lambda\to0$.
\end{lemma}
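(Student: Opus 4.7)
The plan is to establish the two claims separately: first the $\mathcal{S}_s$-convergence $\varphi\ast\gamma_\lambda\to\varphi$ for test functions, and then the smoothness and distributional convergence of $u\ast\gamma_\lambda$, which will follow from the first claim by duality.

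For the first claim, I observe that by \cref{prop:Gamma_def,lem:gamma_Ss} the function $\gamma$ is smooth and strictly positive with $\int_{\R^n}\gamma = 1$ and $\gamma\in\mathcal{S}_s$; in particular $\gamma_\lambda$ is a classical approximate identity. Since $D^k(\varphi\ast\gamma_\lambda) = (D^k\varphi)\ast\gamma_\lambda$, it suffices to control the weighted sup-norm for $k=0$ (replacing $\varphi$ by an arbitrary derivative). I would split $\R^n = B_R\cup B_R^{\complement}$, choosing $R$ so large that $\supp\varphi\subset B_{R/2}$. Inside $B_R$, uniform convergence $\varphi\ast\gamma_\lambda\to\varphi$ follows from the standard mollification argument, and the prefactor $1+\abs{x}^{n+2s}$ is bounded. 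Outside $B_R$, $\varphi(x)=0$, so I must control $\abs{x}^{n+2s}\abs{\varphi\ast\gamma_\lambda(x)}$. From $\gamma\in\mathcal{S}_s$ one gets the pointwise bound
\[ \gamma_\lambda(z) \lesssim \frac{\lambda^{2s}}{\lambda^{n+2s}+\abs{z}^{n+2s}}, \]
and, since $\abs{x-y}\ge\abs{x}/2$ for $x\in B_R^{\complement}$ and $y\in\supp\varphi$, this gives
\[ \abs{\varphi\ast\gamma_\lambda(x)} \lesssim \norm{\varphi}_{L^1}\,\lambda^{2s}\abs{x}^{-(n+2s)} \qquad (\abs{x}\ge R,\ \lambda\text{ small}). \]
Multiplying by $1+\abs{x}^{n+2s}$ and combining both regions yields vanishing of every $\mathcal{S}_s$-seminorm as $\lambda\to 0$.

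For the second claim, the smoothness of $u\ast\gamma_\lambda$ follows because $\gamma_\lambda\in\mathcal{S}_s$ and the map $x\mapsto\gamma_\lambda(x-\emptyparam)$ is $C^\infty$ from $\R^n$ into $\mathcal{S}_s$; hence $(u\ast\gamma_\lambda)(x) = \scalprod{u,\gamma_\lambda(x-\emptyparam)}$ is smooth, with derivatives obtained by differentiating under the pairing. For the distributional convergence, fix $\eta\in C^\infty_c(\R^n)$. Since $\Gamma$ is radial by \cref{prop:Gamma_def}, so is $\gamma$, and therefore $\gamma_\lambda(-\emptyparam) = \gamma_\lambda$. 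Using the duality identity \cref{eq:def_convolution_duality} (extended to $u\in\mathcal{S}'_s$ and $\gamma_\lambda\in\mathcal{S}_s$ as discussed in the footnote preceding the lemma), I get
\[ \scalprod{u\ast\gamma_\lambda,\eta} = \scalprod{u,\eta\ast\gamma_\lambda}. \]
Applying the first claim to $\eta$ gives $\eta\ast\gamma_\lambda\to\eta$ in $\mathcal{S}_s$, and continuity of $u\in\mathcal{S}'_s$ yields $\scalprod{u,\eta\ast\gamma_\lambda}\to\scalprod{u,\eta}$, which is the asserted distributional convergence.

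The main obstacle is the decay estimate in the first claim: one genuinely needs the membership $\gamma\in\mathcal{S}_s$ (i.e., the sharp $\abs{x}^{-(n+2s)}$-decay rate), not merely $\gamma\in L^1$, because $\mathcal{S}_s$-convergence demands precisely this decay. This is exactly the reason why the singularity of $\Phi$ had to be smoothed carefully in \cref{prop:Gamma_def} so that $\gamma=(-\lapl)^s\Gamma$ would land in $\mathcal{S}_s$ via \cref{lem:gamma_Ss}, rather than being merely $C^{1,1}$ as in Silvestre's original construction.
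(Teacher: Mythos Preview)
Your proof is correct and follows essentially the same route as the paper: split into a compact region (uniform convergence by mollification) and a far region (decay from the $\mathcal S_s$-bound on $\gamma$), reduce higher derivatives to the $k=0$ case, and then deduce the distributional convergence by the duality formula \cref{eq:def_convolution_duality}. The only cosmetic difference is that the paper cites \cref{lem:strange-distributional-convergence} to obtain the tail bound $\gamma_\lambda(y)\lesssim \lambda^{2s}\abs{y}^{-(n+2s)}$ away from the origin, whereas you extract it directly from $\gamma\in\mathcal S_s$ (\cref{lem:gamma_Ss}); your derivation is in fact slightly more self-contained here.
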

\begin{proof}
    Let us begin by proving the first part of the statement.
    Without loss of generality, we may assume that $\varphi$ is supported in $B_1$.
    Arguing as in \cite[Proposition 2.13]{Silvestre2007}, one can show that $\varphi\ast \gamma_\lambda \to \varphi$ in $L^\infty(B_2)$ as $\lambda\to 0$.
    Moreover, for any $x\in B_2^\complement$, we have
    \begin{equation*}
        \abs{\varphi\ast\gamma_\lambda-\varphi}(x)
        = \abs*{\int_{x+B_1}\varphi(x-y)\gamma_\lambda(y)\, \mathrm{d} y}
        \lesssim \norm{\varphi}_{L^\infty}\cdot \norm{\gamma_\lambda}_{L^\infty(x+B_1)}
        \lesssim \norm{\varphi}_{L^{\infty}}\lambda^{2s}\abs{x}^{-(n+2s)},
    \end{equation*}
    where in the last step we have applied \cref{lem:strange-distributional-convergence}.
    Hence, since we have handled both the region $B_2$ and its complement, we deduce that 
    \begin{equation*}
        \norm{(\varphi\ast\gamma_\lambda-\varphi)(x)(1+\abs{x}^{n+2s})}_{L^\infty} \to 0
        \quad
        \text{as $\lambda\to 0$.}
    \end{equation*}
    By repeating the same argument for the derivatives of $\varphi$ (observing that $D^k(\varphi\ast\gamma_\lambda)=(D^k\varphi)\ast\gamma_\lambda$) we obtain the first part of the statement.

    Let us now focus our attention to the second part of the statement. Since $u\in\mathcal S'_s$ and $\gamma_\lambda\in\mathcal S_s$, we have already observed that the convolution $u\ast\gamma_\lambda$ is smooth.
    Moreover, the first part of the statement together with the formula \cref{eq:def_convolution_duality} imply that $u\ast \gamma_\lambda\to u$ in the distributional sense as $\lambda\to 0$.
\end{proof}

We are ready to state our generalization of \cite[Proposition 2.15]{Silvestre2007}.

\begin{proposition}[Fractional Mean Value Property] \label{prop:fractional_mean_value_property}
    Let $n\ge 1$ be a positive integer and let $0<s < \min\{1, \tfrac n2\}$.
    
    Let $u\in\mathcal S'_s$ be a distribution such that $(-\lapl)^s u\ge 0$ in an open set $\Omega\subseteq\R^n$.
    Then $u$ coincides (as a distribution) with a lower semicontinuous function such that, for any $x_0\in\Omega$,
    \begin{equation*}
        u(x_0) \ge \scalprod{u, \gamma_\lambda(x_0-\emptyparam)},
    \end{equation*}
    for all $\lambda < \mathrm{dist}(x_0, \partial\Omega)$. 
\end{proposition}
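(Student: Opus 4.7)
The plan is to establish the pointwise inequality first at the level of the smooth regularizations
\[
f_\lambda(x) \defeq (u\ast\gamma_\lambda)(x) = \scalprod{u,\gamma_\lambda(x-\emptyparam)},
\]
and then to pass to the limit $\lambda\to 0^+$. The lower semicontinuous representative of $u$ on $\Omega$ will be taken to be
\[
\tilde u(x) \defeq \sup_{0<\lambda<\mathrm{dist}(x,\partial\Omega)} f_\lambda(x).
\]
Each $f_\lambda$ is smooth on $\R^n$ by \cref{lem:gamma_convolution}, and the same lemma ensures that $f_\lambda\to u$ in the sense of distributions as $\lambda\to 0$.

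The crucial step is the pointwise monotonicity $f_{\lambda_1}(x_0)\ge f_{\lambda_2}(x_0)$ whenever $x_0\in\Omega$ and $0<\lambda_1<\lambda_2<\mathrm{dist}(x_0,\partial\Omega)$. Writing the difference as $\scalprod{u,(\gamma_{\lambda_1}-\gamma_{\lambda_2})(x_0-\emptyparam)}$ and using $\gamma_{\lambda_i}=(-\lapl)^s\Gamma_{\lambda_i}$, the difference becomes $\scalprod{u,(-\lapl)^s g}$, where
\[
g(y)\defeq (\Gamma_{\lambda_1}-\Gamma_{\lambda_2})(x_0-y).
\]
By \cref{prop:Gamma_def}, $\Gamma_{\lambda_1}\ge\Gamma_{\lambda_2}$, so $g\ge 0$; moreover both $\Gamma_{\lambda_i}$ agree with $\Phi$ outside $B_{\lambda_i}$, hence $g$ is compactly supported in $B_{\lambda_2}(x_0)\subset\Omega$. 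Thus $g\in C^\infty_c(\Omega)\subset\mathcal S$, and the duality definition of $(-\lapl)^s$ on $\mathcal S_s'$ yields
\[
\scalprod{u,(-\lapl)^s g}=\scalprod{(-\lapl)^s u,g}\ge 0,
\]
since $(-\lapl)^s u\ge 0$ in $\Omega$ and $g$ is a non-negative test function supported in $\Omega$.

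With the monotonicity at hand, $\tilde u$ is a pointwise supremum of continuous functions on $\Omega$, hence lower semicontinuous there. To identify $\tilde u$ with $u$ as a distribution, I apply monotone convergence: for any non-negative $\varphi\in C^\infty_c(\Omega)$ whose support lies at distance $>\lambda_0$ from $\partial\Omega$, the integrals $\int f_\lambda\,\varphi$ are non-decreasing as $\lambda\to 0^+$ and converge both to $\int\tilde u\,\varphi$ (by monotone convergence) and to $\scalprod{u,\varphi}$ (by \cref{lem:gamma_convolution}). This forces $\tilde u\in L^1_{\mathrm{loc}}(\Omega)$ together with $\tilde u=u$ in $\mathcal D'(\Omega)$, and then the inequality claimed in the proposition reduces to the tautology $u(x_0)=\tilde u(x_0)\ge f_\lambda(x_0)$.

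The main technical obstacle is precisely the duality step $\scalprod{u,(-\lapl)^s g}=\scalprod{(-\lapl)^s u,g}$, which demands $g\in\mathcal S$. This in turn requires $\Gamma_{\lambda_1}-\Gamma_{\lambda_2}$ to be smooth and compactly supported, which is exactly why \cref{prop:Gamma_def} was set up to produce a genuinely $C^\infty$ (rather than merely $C^{1,1}$, as in Silvestre's construction) barrier $\Gamma$ agreeing with $\Phi$ outside $B_1$, together with the monotonicity $\Gamma_{\lambda_1}\ge\Gamma_{\lambda_2}$. Without this $C^\infty$ improvement, the pairing against a general distribution $u\in\mathcal S'_s$ — which in our intended applications need not be locally integrable — would not be justified.
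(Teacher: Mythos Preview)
Your argument is correct and matches the paper's proof essentially step for step: the same monotonicity $u\ast\gamma_{\lambda_1}\ge u\ast\gamma_{\lambda_2}$ obtained from the non-negativity and compact support of $\Gamma_{\lambda_1}-\Gamma_{\lambda_2}$, followed by defining $\tilde u$ as the increasing limit and identifying it with $u$ via \cref{lem:gamma_convolution}. The only cosmetic differences are that the paper phrases the key step as a convolution inequality $(-\lapl)^s u\ast(\Gamma_{\lambda_1}-\Gamma_{\lambda_2})\ge 0$ on $\Omega_{\lambda_2}$ rather than a single duality pairing, and that it is slightly more explicit about splitting the limit into dominated convergence on the negative part and monotone convergence on the positive part (your version works too, since $f_{\lambda_0}$ is continuous and hence locally bounded on compact subsets of $\Omega$, providing the needed integrable minorant).
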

\begin{proof}
    Given $0<\lambda_1<\lambda_2$, thanks to \cref{prop:Gamma_def} we know that $\Gamma_{\lambda_1}- \Gamma_{\lambda_2}\ge 0$ is a smooth function supported in $B_{\lambda_2}$. Hence, in the open set $\Omega_{\lambda_2}\defeq\{x\in\Omega:\, \mathrm{dist}(x, \partial\Omega)>\lambda_2\}$, we have
    \begin{equation*}
        0 \le (-\lapl)^su \ast \big(\Gamma_{\lambda_1}-\Gamma_{\lambda_2}\big)
        =
        u \ast (-\lapl)^s\big(\Gamma_{\lambda_1}-\Gamma_{\lambda_2}\big)
        =
        u\ast \gamma_{\lambda_1} - u\ast\gamma_{\lambda_2}.
    \end{equation*}
    Therefore we obtain $u\ast\gamma_{\lambda_2} \le u\ast\gamma_{\lambda_1}$ in $\Omega_{\lambda_2}$. Let $\tilde u = \sup_{\lambda>0} u\ast\gamma_\lambda$; we have shown that $u\ast\gamma_\lambda\nearrow \tilde u$ in any $\Omega'\Subset\Omega$ as $\lambda\to 0$. By dominated convergence on the negative part and monotone convergence on the positive part, we deduce that the convergence holds also in the distributional sense in $\Omega$. 
    Therefore, thanks to \cref{lem:gamma_convolution}, we deduce that $u=\tilde u$ in $\Omega$; in particular $u$ is lower semicontinuous as it is an increasing limit of continuous functions.
    The inequality $u(x_0)\ge \scalprod{u, \gamma_\lambda(x_0-\emptyparam)}$ follows from $\tilde u(x_0) \ge (u\ast \gamma_\lambda)(x_0)$.
\end{proof}

\subsection{Proofs of the comparison principles}
We can now show the global comparison principle. 
\begin{proof}[Proof of \cref{thm:global-max-principle}]
    The strategy of the proof is to test the inequality $(-\lapl)^s u \le \mu$ against the fundamental solution of $(-\lapl)^s$, but such a function is not an admissible test function.
    To overcome this difficulty we argue as in the proof of \cite[Proposition 2.15]{Silvestre2007}. 

    Take $0<r<R<\infty$ and observe that $\Gamma_r-\Gamma_R$ is a non-negative smooth function supported in $B_R$.
    By testing $(-\lapl)^s u\le \mu$ against $\Gamma_r-\Gamma_R$ we obtain
    \begin{equation}\label{eq:local52}
        \scalprod{u, \gamma_r - \gamma_R} \le  \scalprod{\mu, \Gamma_r-\Gamma_R} \le \scalprod{\mu, \Gamma_r}.
    \end{equation}
    
    Let us show that $\scalprod{u, \gamma_R}\to0$ as $R\to\infty$.
    We have 
    \begin{equation*}
        \scalprod{u, \gamma_R} = \int_{\R^n} u(Rx)\gamma(x) \, \mathrm{d} x.
    \end{equation*}
    Thanks to \cref{lem:gamma_Ss} we know the decay of $\gamma$, and therefore we have
    \begin{equation*}
        \abs*{\int_{\R^n} u(Rx)\gamma(x) \, \mathrm{d} x}
        \lesssim
        \fint_{B_R} \abs{u} + R^{2s}\int_{B_R^{\complement}} \frac{\abs{u(x)}}{\abs{x}^{n+2s}}\, \mathrm{d} x
    \end{equation*}
    and both quantities on the right-hand side go to $0$ as $R\to\infty$ because of the decay assumption on $u$.\footnote{To show the decay of the second term in the right-hand side, we employ the following dyadic annuli decomposition:
    \begin{align*}
        \int_{B_R^{\complement}} \frac{\abs{u(x)}}{\abs{x}^{n+2s}}\, \mathrm{d} x
        &=
        \sum_{k\ge 1}
        \int_{B_{2^kR}\setminus B_{2^{k-1}R}} \frac{\abs{u(x)}}{\abs{x}^{n+2s}}\, \mathrm{d} x
        \le
        \sum_{k\ge 1}\int_{B_{2^kR}\setminus B_{2^{k-1}R}} \frac{\abs{u}}{\abs{2^{k-1}R}^{n+2s}}
        \\
        &\le
        \sum_{k\ge 1} \frac{1}{{\abs{2^{k-1}R}^{n+2s}}}\int_{B_{2^kR}} \abs{u}
        \lesssim
        \sum_{k\ge 1} \frac{1}{{\abs{2^{k-1}R}^{2s}}}\fint_{B_{2^kR}} \abs{u}
        \lesssim 
        R^{-2s} \sup_{R'>R} \fint_{B_{R'}}\abs{u}.
    \end{align*}
    }

    Hence, by letting $R\to\infty$, \cref{eq:local52} implies
    \begin{equation*}
        \scalprod{u, \gamma_r} \le \scalprod{\mu, \Gamma_r}.
    \end{equation*}
    By repeating the same argument adding a spatial shift, we can show that, for all $x\in\R^n$,
    \begin{equation*}
        u\ast \gamma_r(x) \le \int_{\R^n} \Gamma_r(y-x)\, \mathrm{d} \mu(y).
    \end{equation*}
    Thanks to \cref{lem:gamma_convolution}, by letting $r\to 0$ in the previous inequality, we obtain
    \begin{equation*}
        u(x) \le \int_{\R^n} \Phi(y-x)\, \mathrm{d} \mu(y)
    \end{equation*}
    for almost every $x\in\R^n$. 
\end{proof}

Now we prove the strong maximum principle.
\begin{proof}[Proof of \cref{thm:strong-max-principle}]
    Without loss of generality we may assume $\bar x = 0_{\R^n}$.
    By \cref{prop:fractional_mean_value_property}, we have that $u$ is a lower semicontinuous function in $\Omega$ and
    \begin{equation}
        u(0_{\R^n}) \ge \scalprod{u, \gamma_\lambda}
    \end{equation}
    for any $\lambda>0$ sufficiently small. We will prove that the right-hand side is strictly positive and so $u(0_{\R^n})>0$ as desired.

    Let $\eta\in C^\infty_c(\Omega)$ be a function such that $0\le \eta\le 1$ and $\eta=1$ in a neighborhood of $0_{\R^n}$. 
    Then $\scalprod{u, \gamma_\lambda} = 
        \scalprod{u, \gamma_\lambda\eta} + \scalprod{u, \gamma_\lambda(1-\eta)}$.
    The term $\scalprod{u, \gamma_\lambda\eta}$ is non-negative because $u\ge 0$ in $\Omega$ and $\gamma_\lambda\eta$ is non-negative and supported in $\Omega$. Hence, it is sufficient to prove that $\scalprod{u, \gamma_\lambda(1-\eta)}>0$. 
    We have (here $c$ is the constant appearing in \cref{lem:strange-distributional-convergence})
    \begin{align*}
        c\lambda^{-2s}\scalprod{u, \gamma_\lambda(1-\eta)} 
        &=
        \scalprod{u, \big(c\gamma_\lambda\lambda^{-2s} - \abs{x}^{-(n+2s)}\big)(1-\eta)}
        \\&\quad +
        \scalprod{u, (\abs{x}^{-(n+2s)}-\psi)(1-\eta)}
        \\&\quad+
        \scalprod{u, \psi(1-\eta)}
    \end{align*}
    Observe that $\eta$ can be chosen so that the last term is strictly positive, because $\scalprod{u,\psi}>0$ by assumption. 
    The second term is non-negative because $u\ge 0$ in $\Omega$ and $(\abs{x}^{-(n+2s)}-\psi)(1-\eta)$ is non-negative and supported in $\Omega$.
    The first term goes to $0$ as $\lambda\to0$ as a consequence of \cref{lem:strange-distributional-convergence} because $u\in \mathcal S'_s$.
    Hence, by appropriately choosing $\eta$ and $\lambda>0$, we get the inequality $\scalprod{u, \gamma_\lambda(1-\eta)}>0$ which concludes the proof.
\end{proof}


\section{Radially decreasing implies negative derivative}
\label{sec:radial-negative}

We show the following general result, which we will then apply to the minimizers of \cref{eq:hardy} when $\alpha\ge 0$.

\begin{theorem}\label{thm:radial-decreasing-negative-derivative}
    Let $n\ge 1$ be a positive integer, let $0<s<\min\{1, \tfrac n2\}$, and let $U\in L^1_{\mathrm{loc}}(\R^n)$ be a radially weakly decreasing non-negative function such that $(-\lapl)^sU$ is radially weakly decreasing.
    Then either $U$ is constant or $U'$ is upper semicontinuous and \emph{strictly} negative in $\R^n\setminus\{0\}$.
\end{theorem}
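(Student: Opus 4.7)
The plan is to apply the local strong maximum principle (\cref{thm:strong-max-principle}) to a well-chosen directional derivative of $U$, with an antisymmetry argument playing the role of the non-triviality condition $(3)$ there. Fix $\bar x\in\R^n\setminus\{0\}$ and set $e\defeq \bar x/|\bar x|$. Because $-U'$ itself is radial and cannot separate ``inside'' and ``outside'' in the sense demanded by the strong maximum principle, I work instead with the directional derivative
\[
V_e(x) \defeq -\partial_e U(x) = -U'(x)\,\frac{x\cdot e}{|x|},
\]
which is antisymmetric under the reflection $T_e$ across $\{x\cdot e=0\}$ (because $U$ is radial, and $T_e\bar x=-\bar x$).

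On the open half-space $H_e\defeq\{x\cdot e>0\}$ I verify the three hypotheses of \cref{thm:strong-max-principle}. \emph{(i)} $V_e\ge 0$ on $H_e$, since $U'\le 0$ and $x\cdot e>0$. \emph{(ii)} As $\partial_e$ and $(-\lapl)^s$ commute on distributions, $(-\lapl)^sV_e=-\partial_e\big((-\lapl)^s U\big)$; applying the same sign argument to the (also radial and weakly decreasing) function $(-\lapl)^s U$ gives $(-\lapl)^sV_e\ge 0$ on $H_e$. \emph{(iii)} For the non-triviality, choose $\psi$ equal to $|x-\bar x|^{-(n+2s)}$ on $H_e^\complement$ (as required by the theorem) and also on $H_e$ outside a tiny neighborhood of $\bar x$, smoothing it on the neighborhood so that it still dominates $|x+\bar x|^{-(n+2s)}$. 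Then the antisymmetry of $V_e$ together with the identity $|T_e x-\bar x|=|x+\bar x|$ yields
\[
\scalprod{V_e,\psi}\;=\;\int_{H_e} V_e(x)\,\big(\psi(x)-|x+\bar x|^{-(n+2s)}\big)\,\mathrm d x,
\]
whose integrand is non-negative by construction and strictly positive on a set of positive measure provided $V_e\not\equiv 0$ on $H_e$ (note $|x+\bar x|^2-|x-\bar x|^2=4(x\cdot\bar x)>0$ on $H_e$, so the bracket itself is strictly positive outside the smoothing region).

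A dichotomy closes the argument. If $V_e\equiv 0$ on $H_e$ then $\partial_e U=0$ on the half-space and, combined with radiality, $U$ must be constant. Otherwise \cref{thm:strong-max-principle} yields that $V_e$ agrees in $H_e$ with a lower semicontinuous function that is strictly positive at $\bar x$; since $V_e=-U'\cdot (x\cdot e)/|x|$, dividing by the smooth positive factor $x\cdot e/|x|$ shows that $U'$ is upper semicontinuous at $\bar x$ with $U'(\bar x)<0$. Varying $\bar x$ throughout $\R^n\setminus\{0\}$ gives the claim.

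The main obstacle will be the preparatory work needed to make all of this meaningful under only the $L^1_\mathrm{loc}$ assumption on $U$: bootstrapping enough regularity of $U$ off the origin via \cref{prop:fractional-elliptic-regularity}, checking that $V_e\in\mathcal S'_s$ and that $\partial_e$ commutes with $(-\lapl)^s$ at the distributional level, justifying the distributional inequalities $V_e\ge 0$ and $(-\lapl)^sV_e\ge 0$ on $H_e$ (exploiting that $U$ and $(-\lapl)^s U$, being radial and weakly decreasing, have non-positive radial derivatives in the sense of measures), and carrying out the antisymmetry computation of $\scalprod{V_e,\psi}$ rigorously for an honestly smooth (rather than the singular) test function.
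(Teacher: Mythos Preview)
Your proposal is correct and follows essentially the same approach as the paper: apply \cref{thm:strong-max-principle} to the directional derivative $-\partial_e U$ on the open half-space $\{x\cdot e>0\}$, with the antisymmetry under reflection across $\{x\cdot e=0\}$ supplying condition~(3). The paper normalizes to $\bar x=e_1$ and handles the ``preparatory work'' you flag at the end precisely via \cref{lem:der_radial_distr} (so no regularity bootstrap through \cref{prop:fractional-elliptic-regularity} is needed---the argument stays at the level of distributions, since $\partial_1 U\in\mathcal S'_s$ automatically and $DU=\mathrm{P.V.}_0(\nabla U)$ turns the pairing $\langle-\partial_1 U,\psi\rangle$ into an honest limit of integrals on which the reflection trick applies).
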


The proof is short but technically involved (it will require both \cref{lem:der_radial_distr} and \cref{thm:strong-max-principle}).
On the other hand, the idea is rather simple: applying a fractional maximum principle to a directional derivative of $U$. The technical difficulties arise because the derivative of $U$ is not an integrable function around the origin.

\begin{proof}[Proof of \cref{thm:radial-decreasing-negative-derivative}]
    Since $U$ belongs to $\mathcal S'_s$, also $\partial_1 U$ belongs to $\mathcal S'_s$.
    Therefore, in the distributional sense, it holds that $(-\lapl)^s (\partial_1 U) = \partial_1 (-\lapl)^s U$. 
    
    We would like to apply \cref{thm:strong-max-principle} to the function $-\partial_1 U$ at the point $\bar x=e_1$.
    Assumptions \textit{(1)} and \textit{(2)} are verified because $U$ and $(-\lapl)^sU$ are radially weakly decreasing. Let us check that assumption \textit{(3)} also holds.
    Thanks to \cref{lem:der_radial_distr}, we have
    \begin{equation}\label{eq:local187}
        \scalprod{-\partial_1 U, \psi} = -\lim_{r\to 0}\int_{B_r(0)^\complement} U'\psi \frac{x_1}{\abs{x}},
    \end{equation}
    for any function $\psi$ satisfying the constraints described in \textit{(2)}. Since $U'$ is radial, the integral on the right-hand side can be written as
    \begin{equation}\label{eq:local188}
        \int_{B_r(0)^\complement\cap\{x_1>0\}} [\psi(x_1, x_2, \dots, x_n) - \psi(-x_1, x_2, \dots, x_n)]U'(x)\frac{x_1}{\abs{x}}\, \mathrm{d} x.
    \end{equation}
    Observe that in the region of integration, $U'(x)\frac{x_1}{\abs{x}}\le 0$ and it is not $0$ everywhere because we assume that $U$ is not constant.
    Let $\psi \in C^\infty(\R^n)$ be a smoothed-out version of $\min\{\abs{x-e_1}^{-(n+2s)}, L\}$ for some $L>0$. By choosing $L$ large enough, we have $\psi(x_1, x_2, \dots, x_n) - \psi(-x_1, x_2, \dots, x_n)>0$ for all $x$ with $x_1>0$. Thus, combining \cref{eq:local187,eq:local188}, we obtain that \textit{(3)} holds.

    Now, by \cref{thm:strong-max-principle}, we know that $U'(1)=\partial_1 U(e_1)<0$ (and $\partial_1 U$ is upper semicontinuous around $e_1$). By an analogous argument, we deduce that the radial derivative of $U$ is upper semicontinuous and strictly negative at all points different from the origin.    
\end{proof}


\section{A general Hardy-type inequality for functions orthogonal to radial ones}
\label{sec:general-hardy}

In this section, we establish the following Hardy-type inequality for functions that are orthogonal to the family of radial functions.

\begin{theorem}[General Hardy-type inequality]\label{thm:general-hardy}
    Let $n\ge 1$ be a positive integer, let $0<s<\min\{1, \tfrac n2\}$, and
    let $U\in L^1_{\mathrm{loc}}(\R^n)\cap C_{\mathrm{loc}}^{1+2s}(\R^n\setminus\{0\})$ be a radial non-constant non-negative weakly decreasing function such that also $(-\lapl)^sU$ is weakly decreasing.
    
    Then $\rho_U\defeq \frac{((-\lapl)^sU)'}{U'}$ is a non-negative continuous function on $\R^n\setminus\{0\}$ and for any $\varphi\in \dot H^s(\R^n)$ such that $\int_{B_R}\varphi=0$ for all $R>0$, we have
    \begin{equation}\label{eq:local92}
        \norm{\varphi}_{{\dot{H}^s}}^2 \ge
        \int_{\R^n} \varphi^2 \rho_U.
    \end{equation}
\end{theorem}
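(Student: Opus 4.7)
The claim has two parts: the regularity and sign of $\rho_U$, and the quadratic form inequality. For $\rho_U$: since $U$ is non-constant, \cref{thm:radial-decreasing-negative-derivative} yields $U'<0$ strictly on $\R^n\setminus\{0\}$. Combining $U\in C^{1+2s}_{\mathrm{loc}}(\R^n\setminus\{0\})$ with \cref{prop:laplacian-holder-regularity} gives $(-\lapl)^sU\in C^1_{\mathrm{loc}}(\R^n\setminus\{0\})$, so $((-\lapl)^sU)'$ is a well-defined continuous function, and it is $\le 0$ by the weak monotonicity of $(-\lapl)^sU$. Hence $\rho_U$ is continuous and non-negative on $\R^n\setminus\{0\}$.

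For the inequality, the main idea is a \emph{ground-state representation} built from the translation Jacobi fields $V_i(x)\defeq \partial_{x_i}U(x)=U'(|x|)\,x_i/|x|$. From the distributional commutation $(-\lapl)^s\partial_{x_i}=\partial_{x_i}(-\lapl)^s$ on $\mathcal S_s'$ together with $\partial_{x_i}((-\lapl)^sU)(x)=((-\lapl)^sU)'(|x|)\,x_i/|x|$ for the radial $C^1$ function $(-\lapl)^sU$, we deduce the eigenequation $(-\lapl)^sV_i=\rho_U V_i$ on $\R^n\setminus\{0\}$. Applying \cref{lem:product_rule_fractional_laplacian} (after a cutoff/approximation, to handle the singularity of $V_i$ at the origin and its vanishing on $\{x_i=0\}$) with $g=V_i$ and $f=\varphi/V_i$ yields the formal identity
\[
    \norm{\varphi}_{{\dot{H}^s}}^2-\int\rho_U\varphi^2 = \frac{C_{n,s}}{2}\iint\frac{V_i(x)V_i(y)\bigl((\varphi/V_i)(x)-(\varphi/V_i)(y)\bigr)^2}{|x-y|^{n+2s}}\,\mathrm dx\,\mathrm dy.
\]
The right-hand side is \emph{not} manifestly non-negative because $V_i$ changes sign across the hyperplane $\{x_i=0\}$. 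However, when $\varphi$ is antisymmetric under the reflection $\sigma_i:x\mapsto x-2x_ie_i$, the ratio $f=\varphi/V_i$ is symmetric under $\sigma_i$; folding the inner integration via $y\mapsto\sigma_i(y)$ then recasts the double integral as
\[
    \iint_{x_iy_i>0}|V_i(x)V_i(y)|\,\bigl(f(x)-f(y)\bigr)^2\!\left[\frac{1}{|x-y|^{n+2s}}-\frac{1}{|x-\sigma_i(y)|^{n+2s}}\right]\mathrm dx\,\mathrm dy,
\]
which is non-negative because $|x-y|\le|x-\sigma_i(y)|$ whenever $x_iy_i>0$. This proves the inequality for $\sigma_i$-antisymmetric test functions.

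To extend to an arbitrary $\varphi$ orthogonal to radial functions, observe that the quadratic form $Q(\varphi)\defeq \norm{\varphi}_{{\dot{H}^s}}^2-\int\rho_U\varphi^2$ decouples along the spherical harmonic decomposition of $L^2(S^{n-1})$, since $\rho_U$ is radial and $(-\lapl)^s$ commutes with rotations. The degree-$1$ components are spanned by $\{V_i\}$ modulo radial factors, so each such component is $\sigma_e$-antisymmetric for the appropriate axis $e$ and is handled directly by the reflection argument above. The degree-$k$ components with $k\ge 2$ are controlled by a monotonicity-in-$k$ of the ``radial fractional Laplacian restricted to mode $k$'', an operator $L_{k,s}$ satisfying $L_{k,s}\ge L_{1,s}$, which yields $Q|_{\mathrm{mode}\,k}\ge Q|_{\mathrm{mode}\,1}\ge 0$. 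The main technical obstacle is precisely to establish this cross-mode dominance intrinsically in $\R^n$ in keeping with the paper's avoidance of cylindrical reductions, together with the rigorous justification of the ground-state identity despite the zero set and the singularity of $V_i$, which requires a careful cutoff and limiting scheme.
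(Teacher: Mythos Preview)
Your strategy coincides with the paper's: spherical-harmonic decomposition, reduction of all angular modes to the first, and a ground-state representation via $V=\partial_1 U$ combined with the reflection trick to show non-negativity of the resulting double integral. The two items you flag as unresolved obstacles are exactly where the paper supplies the missing ingredients.

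First, the cross-mode dominance $L_{k,s}\ge L_{1,s}$ is obtained via \emph{Loewner's theorem}. For radial $\psi$ one has the identity $(-\Delta)^s(\psi A_k)=\bigl((-\Delta+\lambda_k|x|^{-2})^{s}\psi\bigr)A_k$; since $t\mapsto t^{s}$ is operator monotone for $0<s<1$ and $\lambda_k\ge\lambda_1$ for $k\ge1$, this yields $\|\varphi_k A_k\|_{\dot H^s}\ge\|\varphi_k A_1\|_{\dot H^s}$ immediately. This is the single idea you are missing, and without it the proposal is incomplete.

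Second, the paper avoids your division-by-$V_i$ problem on $\{x_i=0\}$ by reversing the order of the two reductions. It first applies Loewner to reduce to mode $1$, so the test function already has the form $\psi(|x|)A_1$; the quotient then becomes $\eta=\psi/U'$, which is \emph{radial} and hence has no hyperplane zero set (only the singularity at the origin, since $U'<0$ on $\R^n\setminus\{0\}$). The remaining approximation is handled by first regularizing $U$ (convolution and truncation so that $U\in C^\infty_c$ and $U'<0$ on $\operatorname{supp}\varphi$), proving the inequality for $\varphi\in C^\infty_c(\R^n\setminus\{0\})$, and extending to $\dot H^s$ by density and Fatou.
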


Notice that $\rho_U$ is well-defined because $U' < 0$ as a consequence of \cref{thm:radial-decreasing-negative-derivative}.

\begin{remark}
\label{remark s=1}
While it is likely to exist, we could not find the statement of \cref{thm:general-hardy} for the classical Laplacian ($s=1$) in the literature.
Let us outline an alternative proof of \cref{thm:general-hardy} in the case $s=1$ (assuming all the necessary regularity and integrability conditions are satisfied).

Let $V>0$. For any $\varphi$,  integration by parts together with the product rule gives 
    \begin{equation}
        \label{s=1 ibp}
        \int_{\R^n} |\nabla \varphi|^2 - \int_{\R^n} \varphi^2 \frac{(-\Delta)V}{V} = \int_{\R^n} \left|\nabla \varphi - \frac{\varphi}{V} \nabla V\right|^2 = \int_{\R^n} |\nabla w|^2 V^2 ,
    \end{equation}
    where $w \defeq \frac{\varphi}{V}$. (This computation is, e.g., carried out in Lemma 1.5 of the lecture notes \cite{Frank2011}.)

Now suppose that additionally $V$ is radial and $\int_{B_R} \varphi = 0$ for all $R >0$. Then also $\int_{B_R} w = 0$ and hence $\int_{\mathbb S^{n-1}} |\nabla_\theta w(R \theta)|^2 \, \mathrm d \theta \geq (n-1)  \int_{\mathbb S^{n-1}} w(R \theta) \, \mathrm d \theta$ for all $R > 0$. 

By passing to polar coordinates (see, e.g., \cite[Lemma 2.4]{Ekholm2006} for a similar computation), this implies that 
\begin{equation}
    \label{ekholm-frank}
    \int_{\R^n} |\nabla w|^2 V^2 \geq (n-1) \int_{\R^n} \frac{w^2}{r^2}V^2 = (n-1) \int_{\R^n} \frac{\varphi^2}{|x|^2}. 
\end{equation}
Now let $U$ be radial such that $U' < 0$. Taking $V = -U'$, and observing that $(-\Delta)(U') = (-\Delta U)' - \frac{n-1}{r^2} U'$,  \eqref{s=1 ibp} and \eqref{ekholm-frank} give
\begin{equation}
    \label{s=1 hardy}
    \int_{R^n} |\nabla \varphi|^2 \geq \int_{\R^n} \varphi^2 \frac{(-\Delta U)'}{U'}, 
\end{equation} 
which is the counterpart of \cref{eq:local92} for $s = 1$. 

We note that, as a byproduct of \cref{s=1 ibp}, the related Hardy-type inequality 
\[ \int_{\R^n} |\nabla \varphi|^2 \geq \int_{\R^n} \varphi^2 \frac{(-\Delta)V}{V} \]
 actually holds for \emph{any} $\varphi$ (not necessarily orthogonal to radial functions) and \emph{any} $V$ (not necessarily radial). 
\end{remark} 

\begin{remark}\label{rem:orthogonality-necessary}
    The assumption of orthogonality to radial functions cannot be dropped. To see why, let $U\in \dot H^s(\R^n)$ be the minimizer of the fractional Sobolev inequality $\dot H^s\hookrightarrow L^{2^*_s}$ (for $s<\frac12$ if $n=1$). Then, $U$ is a positive radially decreasing function that, up to normalization, satisfies $(-\lapl)^s U = U^p$ for $p=2^*_s-1$.

    Since $\rho_U = \frac{((-\lapl)^sU)'}{U'} = p U^{p-1}$, we then have 
    \begin{equation*}
        \norm{U}_{\dot H^s}^2
        =
        \int_{\R^n} U(-\lapl)^sU = 
        \int_{\R^n} U^{p+1} = \frac1p\int_{\R^n} U^2 \rho_U.
    \end{equation*}
    Since $p>1$, the latter identity would be in contradiction with \cref{eq:local92} if we were allowed to choose $\varphi=U$ (but we cannot because $U$ is not orthogonal to radial functions being radial itself).
\end{remark}

In the proof we employ a sequence of approximation procedures that might obscure the main idea; the readers interested only in the crucial non-technical ideas should focus on \textbf{Steps 2 and 3} of the proof of \cref{lem:far-from-origin-general-hardy}.

We need the following intermediate result.
\begin{lemma}\label{lem:far-from-origin-general-hardy}
    Let $n\ge 1$ be a positive integer, let $0<s<\min\{1, \tfrac n2\}$, and 
    let $U\in L^1_{\mathrm{loc}}(\R^n)\cap C_{\mathrm{loc}}^{1+2s}(\R^n\setminus\{0\})$ be a radial non-negative function such that $U'<0$ in $\R^n\setminus\{0\}$. 

    Then $\rho_U\defeq \frac{((-\lapl)^sU)'}{U'}$ is a continuous function in $\R^n\setminus\{0\}$ and for any $\varphi\in C^{\infty}_c(\R^n\setminus\{0\})$ such that $\int_{B_R}\varphi=0$ for all $R>0$, we have
    \begin{equation*}
        \norm{\varphi}_{{\dot{H}^s}}^2 \ge
        \int_{\R^n} \varphi^2\rho_U .
    \end{equation*}
\end{lemma}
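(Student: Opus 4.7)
I would prove the lemma in three steps. \textbf{Step 1 (continuity of $\rho_U$).} The fractional elliptic regularity, \cref{prop:laplacian-holder-regularity} with $\gamma = 1+2s$, applied to $U \in C^{1+2s}_{\mathrm{loc}}(\R^n\setminus\{0\})$, gives $(-\lapl)^sU \in C^1_{\mathrm{loc}}(\R^n\setminus\{0\})$; hence $((-\lapl)^sU)'$ is continuous, and since $U'$ is continuous and non-vanishing, so is $\rho_U$.

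\textbf{Steps 2--3 (main inequality).} The idea is the fractional analogue of the computation in \cref{remark s=1}. The crucial observation is the existence of a \emph{sign-changing zero mode}: for each index $i$, the function $V_i(x) \defeq -\partial_iU(x) = -U'(|x|)\hat x_i$ satisfies $(-\lapl)^sV_i = \rho_U\,V_i$ in $\R^n\setminus\{0\}$, as a consequence of the distributional commutation $(-\lapl)^s(\partial_iU) = \partial_i(-\lapl)^sU$ combined with $\partial_i(-\lapl)^sU(x) = ((-\lapl)^sU)'(|x|)\hat x_i$. Then, using the positive radial function $V(x) \defeq -U'(|x|)$ and writing $w \defeq \varphi/V$, I would apply the fractional integration by parts of \cref{lem:product_rule_fractional_laplacian} --- after a standard approximation of $V$ by functions in $C^\infty_c$ --- to obtain
\[
\norm{\varphi}_{\dot H^s}^2 - \int_{\R^n}\varphi^2\,\frac{(-\lapl)^sV}{V}\,\mathrm d x = \frac{C_{n,s}}{2}\iint_{\R^n\times\R^n}\frac{V(x)V(y)(w(x)-w(y))^2}{|x-y|^{n+2s}}\,\mathrm d x\,\mathrm d y \ge 0.
\]
The gap between $(-\lapl)^sV/V$ and $\rho_U$ is purely angular: using the zero-mode identity from Step 2, a direct computation gives
\[
\rho_U(|x|) - \frac{(-\lapl)^sV(x)}{V(|x|)} = \frac{C_{n,s}}{V(|x|)}\int_{\R^n}\frac{V(|y|)(1-\hat x\cdot\hat y)}{|x-y|^{n+2s}}\,\mathrm d y \ge 0.
\]
Since $V$ is radial, the orthogonality hypothesis $\int_{B_R}\varphi = 0$ for every $R > 0$ translates to $\int_{S^{n-1}} w(r\theta)\,\mathrm d\theta = 0$ at every radius $r$. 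A Funk--Hecke/Gegenbauer expansion of the two rotationally symmetric kernels then absorbs the angular discrepancy $\int\varphi^2(\rho_U - (-\lapl)^sV/V)\,\mathrm d x$ into the non-negative quadratic remainder on the right, yielding the desired inequality.

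\textbf{Main obstacle.} The hardest part is the last one: to make the angular analysis rigorous, by verifying the required monotonicity of the Gegenbauer coefficients of the two kernels that allows the angular discrepancy to be controlled by the quadratic remainder (the $s = 1$ analogue is the elementary spherical Poincar\'e inequality used in \cref{remark s=1}, but no such shortcut appears to be available in the fractional case). Additionally, the approximation argument validating the fractional integration by parts identity above for the non-compactly-supported $V = -U'$ must be handled carefully, checking that all regularity hypotheses of \cref{lem:product_rule_fractional_laplacian} are recovered in the limit.
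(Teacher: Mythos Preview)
Your proposal diverges from the paper at the crucial step and leaves a real gap there.

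You choose the \emph{radial} positive function $V=-U'$ and apply \cref{lem:product_rule_fractional_laplacian} to $\varphi=wV$. This gives
\[
\norm{\varphi}_{\dot H^s}^2-\int \varphi^2\rho_U
=\frac{C_{n,s}}{2}\iint\frac{V(x)V(y)(w(x)-w(y))^2}{|x-y|^{n+2s}}
-\int \varphi^2\Big(\rho_U-\frac{(-\lapl)^sV}{V}\Big),
\]
with both terms on the right non-negative, so their difference need not be. You then appeal to a Funk--Hecke/Gegenbauer expansion and a ``monotonicity of Gegenbauer coefficients'' to show the remainder dominates the correction, but you do not prove this; it is the heart of the matter and not a routine computation. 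Concretely, after symmetrizing, you need
\[
\iint\frac{V(|x|)V(|y|)}{|x-y|^{n+2s}}\Big[(w(x)-w(y))^2-(w(x)^2+w(y)^2)(1-\hat x\cdot\hat y)\Big]\,\mathrm dx\,\mathrm dy\ge 0
\]
for all $w$ with zero spherical mean at every radius. For $w$ in the first angular mode this is an \emph{identity} (consistent with $\partial_1U$ being a zero mode), so any proof must separate the modes and establish a genuine gain for $\ell\ge 2$; this is exactly the step you flag as the obstacle and do not carry out.

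The paper avoids this entirely by a different factorization. It first decomposes $\varphi=\sum_{k\ge1}\varphi_k(|x|)A_k$, then uses Loewner's theorem (operator monotonicity of $t\mapsto t^s$) and \cref{eq:formula-for-fractional-laplacian} to reduce to the first mode: $\norm{\varphi_kA_k}_{\dot H^s}\ge\norm{\varphi_kA_1}_{\dot H^s}$. For a single first-mode piece $\psi A_1$ the paper applies \cref{lem:product_rule_fractional_laplacian} with the \emph{non-radial} $V=\partial_1U=U'\hat x_1$ and the \emph{radial} $\eta=\psi/U'$. Because $(-\lapl)^sV=\partial_1(-\lapl)^sU=((-\lapl)^sU)'\hat x_1$, the main term already equals $\int\rho_U\psi^2$ with no angular correction; the remainder
\[
\frac{C_{n,s}}{2}\iint\frac{V(x)V(y)(\eta(x)-\eta(y))^2}{|x-y|^{n+2s}}
\]
is then shown to be $\ge0$ by a one-line reflection in $x_1$, using that $\eta$ is radial and $V(x^-)=-V(x)$. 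So the key idea you are missing is precisely this combination: Loewner to reduce to $\ell=1$, and the choice of the sign-changing $V=\partial_1U$ (rather than the radial $-U'$) so that the integration-by-parts hits $\rho_U$ exactly and the leftover is manifestly non-negative. Your Gegenbauer monotonicity, if it can be made precise, is essentially a repackaging of Loewner plus this $\ell=1$ identity; without it the argument is incomplete.
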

\begin{proof}
  \textbf{Step 1.} \emph{Regularization of $U$}.
    By convolution and multiplication with a compactly supported function, we can find a sequence $(U_k)_{k\in\N}\subseteq C^\infty_c(\R^n)$ such that 
    \begin{enumerate}
        \item For all $k\ge 1$, $U_k$ is a non-negative radial function such that $U_k'\le 0$.
        \item $U_k\to U$ in the $C_{\mathrm{loc}}^{1+2s}(\R^n\setminus\{0\})$-topology.
        \item $U_k\to U$ as distributions.
    \end{enumerate}
    In particular, the distribution $\frac{((-\lapl)^sU_k)'}{U_k'}$ converges to $\frac{((-\lapl)^sU)'}{U'}$ uniformly on compact sets of $\R^n\setminus\{0\}$ (recall \cref{prop:laplacian-holder-regularity}).
    
    Thanks to the above-mentioned approximation procedure, we can assume that $U$ is compactly supported and the assumption $U'<0$ is valid on the support of $\varphi$. The additional regularity of $U$ allows us to perform all the computations without worrying about technical details.

    \textbf{Step 2.} \emph{Spectral decomposition of $\varphi$.}
    Let us decompose $\varphi(x) = \sum_{k\ge 1} \varphi_k(\abs{x})A_k(x)$, where $A_k:\S^{n-1}\to\R$ is an eigenfunction of the spherical Laplacian extended $0$-homogeneously to $\R^n$, i.e., $-\lapl_{\S^{n-1}} A_k = \lambda_k A_k$ with $0=\lambda_0 < \lambda_1 \le \lambda_2 \le\cdots$.
    
    We can drop the first term, which would be the radial term $\varphi_0(\abs{x})$ (since $A_0$ is constant), because of the assumption of null average on balls. Let us normalize $A_k$ so that $\fint_{\S^{n-1}} A_k^2 = 1$. With this normalization, for any radial function $\psi\in L^2(\R^n)$, we have $\norm{\psi A_k}_{L^2} = \norm{\psi}_{L^2}$ and also
    \begin{equation}\label{eq:local91}
        \int_{\R^n}\psi\varphi^2 = \sum_{k\ge 1} \int_{\R^n} \psi\varphi_k^2.
    \end{equation}

    For any radial function $\psi:\R^n\to\R$, we have the identity
    \begin{equation*}
        (-\lapl)\big(\psi A_k(x)\big) = \big((-\lapl + \lambda_k \abs{x}^{-2})\psi\big)A_k(x),
    \end{equation*}
    thus we deduce
    \begin{equation}
    \label{eq:formula-for-fractional-laplacian}
        (-\lapl)^s(\psi A_k) = ((-\lapl+\lambda_k \abs{x}^{-2})^s\psi\big)A_k.
    \end{equation}
    In particular, we have
    \begin{equation*}
        \norm{\varphi}_{{\dot{H}^s}}^2 = 
        \sum_{k\ge 1} \norm{\varphi_k A_k}_{{\dot{H}^s}}^2.
    \end{equation*}
    Recalling \cref{eq:formula-for-fractional-laplacian}, since $\lambda_k\ge \lambda_1$, Loewner's theorem (see \cite{Simon2019}) guarantees that $\norm{\varphi_k A_k}_{{\dot{H}^s}} \ge \norm{\varphi_k A_1}_{{\dot{H}^s}}$ for all $k\ge 2$ (observe that $t\mapsto t^s$ satisfies the hypotheses of Loewner's theorem because $0<s<1$) . Hence, we obtain
    \begin{equation}\label{eq:local92uff}
        \norm{\varphi}_{{\dot{H}^s}}^2 \ge 
        \sum_{k\ge 1} \norm{\varphi_k A_1}_{{\dot{H}^s}}^2.
    \end{equation}

    \textbf{Step 3.} \emph{Main estimate.}
    We are going to show that, for any radial function $\psi\in C^\infty_c(\R^n\setminus\{0\})$,
    \begin{equation}\label{eq:magic-strong-non-degeneracy}
        \norm{\psi A_1}_{{\dot{H}^s}}^2 \ge
        \int_{\R^n}\frac{((-\lapl)^sU)'}{U'}\psi^2.
    \end{equation}
    Notice that joining \cref{eq:local91,eq:local92uff,eq:magic-strong-non-degeneracy} we obtain the statement of the lemma. 

    We observe that (up to rotation) $A_1(x) = \sqrt{n} \frac{x_1}{\abs{x}}$.
    Let $V\defeq \partial_1 U = U'(\abs{x})\frac{x_1}{\abs{x}}$. Let $\eta:\R^n\to\R$ be the radial function $\eta \defeq \frac{\psi}{U'}$. (The idea to consider $\eta$ is inspired by a similar argument for $\alpha = 0$ in \cite{MusinaNazarov2021}. This is similar in spirit with the argument discussed in \cref{remark s=1}.)  Notice that $\eta$ is well-defined by \cref{thm:radial-decreasing-negative-derivative}. 
    Applying \cref{lem:product_rule_fractional_laplacian}, we obtain
    \begin{equation}\label{eq:local88}
        \norm{\psi A_1}_{{\dot{H}^s}}^2 = n\norm{\eta V}_{{\dot{H}^s}}^2
        = n \int_{\R^n} \eta^2 V(-\lapl)^s V
        +
        \frac n2 C_{n,s} \int_{\R^n}\int_{\R^n} \frac{V(x)V(y)(\eta(x)-\eta(y))^2}{\abs{x-y}^{n+2s}}\, \mathrm{d} x\, \mathrm{d} y.
    \end{equation}
    Let $x^-\defeq (-x_1, x_2,\dots, x_n)$. Since $\eta$ is radial and $V(x^-)=-V(x)$,  the second term at the right-hand side of the last inequality can be rewritten as
    \begin{align*}
        \int_{\R^n}\int_{\R^n} &\frac{V(x)V(y)(\eta(x)-\eta(y))^2}{\abs{x-y}^{n+2s}}\, \mathrm{d} x\, \mathrm{d} y \\
        &=
        2
        \int_{\{x_1>0\}}
        \int_{\{y_1>0\}}
        V(x)V(y)(\eta(x)-\eta(y))^2\Big(
        \frac{1}{\abs{x-y}^{n+2s}}
        -
        \frac{1}{\abs{x^--y}^{n+2s}}
        \Big)\, \mathrm{d} x\, \mathrm{d} y,
    \end{align*}
    which is non-negative because $V\le 0$ in $\{x_1>0\}$ and $\abs{x-y}<\abs{x^{-}-y}$ when $x_1>0$ and $y_1>0$.
    Therefore, using the commutation of $(-\lapl)^s$ with $\partial_1$, \cref{eq:local88} implies
    \begin{equation*}
        \norm{\psi A_1}_{{\dot{H}^s}}^2
        \ge
        n \int_{\R^n} \eta^2 V(-\lapl)^s V
        =
        n \int_{\R^n} \frac{\psi^2}{U'^2} U'\frac{x_1}{\abs{x}} ((-\lapl)^s U)'\frac{x_1}{\abs{x}}
        =
        \int_{\R^n} \frac{((-\lapl)^sU)'}{U'}\psi^2
        ,
    \end{equation*}
    that is exactly \cref{eq:magic-strong-non-degeneracy}.
\end{proof}

We are now ready to prove \cref{thm:general-hardy}. The proof is fundamentally an additional layer of approximation over \cref{lem:far-from-origin-general-hardy}.

\begin{proof}[Proof of \cref{thm:general-hardy}]
    By \cref{prop:laplacian-holder-regularity}, we have that $U$ and $(-\lapl)^s U$ are both differentiable functions outside the origin. Hence, their radial derivatives are continuous functions in $\R^n\setminus\{0\}$. Thanks to \cref{thm:radial-decreasing-negative-derivative}, we also know that $U'<0$ and therefore $\rho_U$ is a non-negative smooth function in $\R^n\setminus\{0\}$.

    The validity of \cref{eq:local92} was established in \cref{lem:far-from-origin-general-hardy} under the additional assumption $\varphi\in C^\infty_c(\R^n\setminus\{0\}$). To extend it to any $\varphi\in \dot H^s(\R^n)$ we adopt an approximation procedure.
    Given $\varphi\in \dot H^s(\R^n)$, since $2s<n$, we can find a sequence $(\varphi_k)_{k\in\N}\subseteq C^\infty_c(\R^n\setminus\{0\})$ such that $\varphi_k\to \varphi$ in $\dot H^s(\R^n)$ (the $\dot H^s$-capacity of a point is $0$) and furthermore, for all $k\in\N$, also $\varphi_k$ is orthogonal to all radial functions. Such a sequence can be obtained by convolution of $\varphi$ with a smooth radial kernel. 
    In particular $\varphi_k$ converge in the distributional sense to $\varphi$ and thus it holds (observe that all integrals make sense---even though they might be infinite---since the functions involved are positive)
    \begin{equation*}
        \liminf_{k\to\infty} \int_{\R^n} \varphi_k^2\rho_U \ge \int_{\R^n} \varphi^2\rho_U.
    \end{equation*}
    Therefore, we get
    \begin{equation*}
        \norm{\varphi}_{\dot H^s}
        =
        \lim_{k\to\infty} \norm{\varphi_k}_{\dot H^s}
        \ge
        \liminf_{k\to\infty} \int_{\R^n} \varphi_k^2\rho_U
        \ge
        \int_{\R^n} \varphi^2\rho_U
    \end{equation*}
    as desired.
\end{proof}


\section{Non-degeneracy of positive solutions for \texorpdfstring{$\alpha\ge 0$}{alpha non-negative}}
\label{sec:non-degeneracy-proof}

In this section, we prove \cref{thm:non-degeneracy} and \cref{corollary nondeg hardy}. 
Let us begin by establishing some useful qualitative properties of $W$.

\begin{proposition}
\label{prop:W-regularity}
    Assume \cref{parameters-hardy}. Let $W \in \dot H^s(\R^n) \setminus \{0\}$ be a non-negative solution to \eqref{eq:hardy-eq}.
   Then $W$ is a strictly positive smooth function in $\R^n\setminus\{0\}$. 
    Moreover, if we additionally assume $\alpha\ge 0$, then $W$ is radially decreasing with $W'<0$ and 
    \[ W(x) \sim |x|^{-\alpha} \quad \text{ as } x \to 0, \qquad W(x) \sim |x|^{-n + 2s + \alpha} \quad \text{ as } |x| \to \infty. \]
\end{proposition}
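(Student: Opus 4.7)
The plan is to handle the four claims in sequence. First, for smoothness and strict positivity in $\R^n\setminus\{0\}$: the fractional Sobolev embedding gives $W\in L^{2^*_s}(\R^n)$, and on any $\Omega'\Subset\R^n\setminus\{0\}$ the weights $\abs{x}^{-2s}$ and $\abs{x}^{-tp}$ are smooth and bounded, so the right-hand side of \cref{eq:hardy-eq} lies in $L^q_{\mathrm{loc}}$ for a suitable $q>1$. A bootstrap using \cref{prop:fractional-elliptic-regularity} then upgrades the regularity iteratively, eventually yielding $W\in C^\infty(\R^n\setminus\{0\})$. For strict positivity, suppose $W(x_0)=0$ for some $x_0\ne 0$; since $W\ge 0$, the pointwise formula \cref{eq:laplacian-pointwise} gives $(-\lapl)^sW(x_0)\le 0$ with equality only if $W\equiv 0$, while \cref{eq:hardy-eq} forces $(-\lapl)^sW(x_0)=0$. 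The contradiction proves $W>0$ everywhere off the origin.

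Next, assume $\alpha\ge 0$. To obtain radial symmetry, I would run a moving-plane argument in integral form applied to the equation obtained by inverting $(-\lapl)^s$: the sign condition $C(\alpha)\le 0$ makes the Hardy contribution favorable, and the resulting integral equation fits the standard framework. Alternatively, setting $u=\abs{x}^\alpha W$ produces a non-negative solution of the CKN Euler--Lagrange equation \cref{U equation}, to which the symmetrization argument from \cite[Theorem~1.2]{AoDelaTorreGonzalez2022} applies (this argument actually yields symmetry for any positive solution, not just minimizers). Given radial symmetry, the strict negativity of $W'$ follows from \cref{thm:radial-decreasing-negative-derivative}: rewriting the equation as
\[ (-\lapl)^s W = \frac{W^{p-1}}{\abs{x}^{tp}} + \bigl(-C(\alpha)\bigr)\frac{W}{\abs{x}^{2s}}, \]
each summand is a product of non-negative radially weakly decreasing factors (using $\alpha\ge 0\Rightarrow -C(\alpha)\ge 0$), so $(-\lapl)^sW$ is radially weakly decreasing. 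Since $W$ is not constant (it lies in $\dot H^s(\R^n)$), \cref{thm:radial-decreasing-negative-derivative} gives $W'<0$ on $\R^n\setminus\{0\}$.

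For the asymptotics, the key identity is
\[ (-\lapl)^s\abs{x}^{-\alpha} = -C(\alpha)\,\abs{x}^{-\alpha-2s}, \]
which follows directly from the scale-invariance of $(-\lapl)^s$ together with \cref{C(alpha) definition}. Thus $\abs{x}^{-\alpha}$ and its Kelvin-dual $\abs{x}^{-(n-2s-\alpha)}$ span the solution space of the homogeneous Hardy equation $(-\lapl)^s v+C(\alpha)v/\abs{x}^{2s}=0$. Near the origin, the choice of $t$ makes the nonlinear term $W^{p-1}/\abs{x}^{tp}$ lower order with respect to the Hardy term; comparing $W$ with appropriate multiples of $\abs{x}^{-\alpha}$ via \cref{thm:global-max-principle} applied to suitable truncations then yields two-sided bounds $W(x)\sim\abs{x}^{-\alpha}$ as $x\to 0$. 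For the decay at infinity, the Kelvin-type inversion $\tilde W(x)\defeq\abs{x}^{-(n-2s)}W(x/\abs{x}^2)$ (or a variant adapted to the weighted equation) converts the decay of $W$ at infinity into a singularity of $\tilde W$ at the origin of the dual exponent, giving $W(x)\sim\abs{x}^{-(n-2s-\alpha)} = \abs{x}^{-n+2s+\alpha}$.

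The main obstacle is the radial symmetry for an arbitrary non-negative solution, not just a minimizer: the origin is a fixed point of every reflection through a hyperplane containing it, and the Hardy weights are singular exactly there, so any moving-plane implementation must be arranged to avoid comparisons near this singular point. The asymptotic analysis is conceptually clean via the Kelvin transform and the Hardy ODE, but also requires fairly careful quantitative comparisons to match the leading orders from above and below.
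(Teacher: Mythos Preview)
Your outline for smoothness, strict positivity, the application of \cref{thm:radial-decreasing-negative-derivative}, and the Kelvin inversion for the decay at infinity matches the paper's proof essentially verbatim. For radial symmetry of an arbitrary non-negative solution, your moving-plane alternative is the correct route and is what the paper does (it invokes \cite[Theorem~1.1]{Montoro2018}, a moving-plane result for positive solutions of $(-\lapl)^s W=f(\abs{x},W)$ with $f$ radially nonincreasing in the first slot); your second alternative via \cite{AoDelaTorreGonzalez2022} is not safe, since the symmetry statement there is proved for minimizers. Note also that the moving plane must give you weak radial monotonicity of $W$ \emph{before} you can verify that $(-\lapl)^s W$ is radially weakly decreasing and invoke \cref{thm:radial-decreasing-negative-derivative}; your argument implicitly uses this.

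The genuine gap is in the asymptotics at the origin. The claim that ``the choice of $t$ makes the nonlinear term $W^{p-1}/\abs{x}^{tp}$ lower order with respect to the Hardy term'' unpacks to the inequality $\gamma<\tfrac{n-2s}{2}$ for the blow-up rate of $W$, which is exactly what you are trying to prove (even in the weaker form $\gamma\le\alpha$); a priori you only have $W\in L^{2^*_s}$, which does not give a pointwise rate. A linear comparison with $\abs{x}^{-\alpha}$ via \cref{thm:global-max-principle} cannot close this: for $\alpha\ge 0$ the term $C(\alpha)W/\abs{x}^{2s}$ has the wrong sign for a maximum principle, and even after moving it to the right-hand side the resulting bound is self-referential (the Riesz potential of a function of $W$), so one is forced into a Moser-type iteration rather than a one-shot comparison. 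The paper proceeds differently: it passes to $u=\abs{x}^\alpha W$, which solves \cref{U equation}, and invokes a Moser iteration from \cite{Dipierro2016} (recorded as \cref{lemma Dipierro}) to obtain $u\in L^\infty$, hence $W(x)\lesssim\abs{x}^{-\alpha}$. For the lower bound it cites \cite[Lemma~3.10]{Abdellaoui2016}. Also note that \cref{thm:global-max-principle} only produces \emph{upper} bounds, so it cannot give the lower estimate $W(x)\gtrsim\abs{x}^{-\alpha}$ in any case.
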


\begin{proof}
    We begin by employing a standard bootstrap argument to show the smoothness of $W$ away from the origin.
    We have $W\in L^{2^*_s}(\R^n)$ since $\dot H^s(\R^n)$ embeds in $L^{2^*_s}(\R^n)$.
    Observe that if $W\in L^q_{\mathrm{loc}}(\R^n\setminus\{0\})$ for some $q\ge 1$, then $(-\lapl)^sW\in L^{q/(p-1)}_{\mathrm{loc}}(\R^n\setminus\{0\})$. And thus, provided $q$ is not too large, \cref{prop:fractional-elliptic-regularity}-\textit{(1)} tells us that $W\in L^{\frac{nq}{n(p-1)-2sq}}_{\mathrm{loc}}(\R^n\setminus\{0\})$. One can check (see the proof of \cref{lem:asymptotic-growth}, where this check is performed with care) that by iterating this argument (starting with $q=2^*_s$) finitely many times, we will eventually obtain that $W\in L^q_{\mathrm{loc}}(\R^n\setminus\{0\})$ for some $q>\frac{n}{2s}$.
    Then, we apply \cref{prop:fractional-elliptic-regularity}-\textit{(2)-(3)} to deduce that $W$ is smooth in $\R^n\setminus\{0\}$. 

    At this point, we know that $W$ is non-negative and smooth away from the origin. If, by contradiction, $W(\bar x)=0$ for some $\bar x\in\R^n\setminus\{0\}$, then by \cref{eq:hardy-eq} we would have $(-\lapl)^sW(\bar x)=0$ which is in contradiction with the formula \cref{eq:laplacian-pointwise} (which can be applied because $W$ is smooth at $\bar x$ and decays sufficiently fast at infinity, see \cite[Proposition 2.4]{Silvestre2007}).

  If we assume $\alpha\ge 0$, then $C(\alpha) \leq 0$ and hence the function $f(r,z) = -C(\alpha) r^{-2s} z + r^{-tp} z^{p-1}$ is radially non-increasing for every $z \geq 0$. Observe that \cref{eq:hardy-eq} is equivalent to $(-\lapl)^sW = f(\abs{x}, W)$. Thus \cite[Theorem 1.1]{Montoro2018} is applicable\footnote{An inspection of the proof shows that \cite[Theorem 1.1]{Montoro2018} remains true if the assumption $u \in L^1(\R^N)$ is replaced by the weaker assumption $\int_{\R^N} \frac{|u(x)|}{1 + |x|^{N+2s}} \, \mathrm{d} x < \infty$. See \cite[(2.10)]{Montoro2018}, which is the only place in the proof where this assumption is used. Observe that $\int_{\R^n} \frac{\abs{W(x)}}{1 + |x|^{n+2s}} \, \mathrm d x < \infty$ because $W\in\dot H^s(\R^n)$. } and yields that $W$ is radial and  strictly radially decreasing. 
    Finally, we can apply \cref{thm:radial-decreasing-negative-derivative} to get that $W'<0$ (note that this does not follow from the fact that $W$ is strictly radially decreasing).

By \cite[Lemma 3.10]{Abdellaoui2016}, we have that $W(x) \gtrsim |x|^{-\alpha}$ on $B_1$.\footnote{Note that in the notation of \cite{Abdellaoui2016}, $\gamma$ corresponds to our $\alpha$, and $\lambda$ corresponds to our $C(\alpha)$. To see this, one may look at the relationship between \cite[eq. (6)]{Abdellaoui2016} and \cite[eq. (21)]{Abdellaoui2016}, which corresponds to our formula \cref{hardy ckn trafo formula} linking $\alpha$ and $C(\alpha)$. The reference to \cite[eq. (19)]{Abdellaoui2016} in the statement of \cite[Lemma 3.10]{Abdellaoui2016} is erroneous.}

Moreover, as explained in Section \ref{sec:hardy-formulation}, the function $U(x) = |x|^{\alpha} W(x)$ satisfies \cref{U equation}. Then $U \in L^\infty(\R^n)$ by Lemma \ref{lemma Dipierro} below.  In particular, we have $U(x) \lesssim 1$ on $B_1$, and hence $W(x) \lesssim |x|^{-\alpha}$ on $B_1$. 

Altogether, we thus have $W(x) \sim |x|^{-\alpha}$ on $B_1$. Now consider the Kelvin-type transform $\tilde W(x) \defeq |x|^{-n + 2s} W(x/|x|^2)$. Since (see, e.g., \cite[Proposition 3.2]{MR3888401})
\[ (-\Delta)^s \tilde W(x) = |x|^{-n-2s} ((-\Delta)^s W)(\frac{x}{|x|^2}), \]
a direct computation shows that the function $\tilde W$ also satisfies \cref{eq:hardy-eq}. Thus the above implies $\tilde W(x) \sim |x|^{-\alpha}$ on $B_1$. This is equivalent to $W(x) \sim |x|^{-n + 2s + \alpha}$ on $\R^n \setminus B_1$, so the proof is complete. 
\end{proof}

Here is the lemma we have used in the previous proof. 

\begin{lemma}
    \label{lemma Dipierro}
    Assume \eqref{parameters-ckn}, and additionally $\alpha \geq 0$. Let $u \in D^s_\alpha(\R^n)$ be a non-negative weak solution to 
    \cref{U equation}. Then $u \in L^\infty(\R^n)$. 
\end{lemma}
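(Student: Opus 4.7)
The strategy is a Moser--Brezis--Kato iteration adapted to the weighted non-local operator $\mathcal L_{s,\alpha}$. The starting integrability is provided by \cref{eq:fckn}: since $u \in D^s_\alpha(\R^n)$, we already have $u \in L^p(\R^n;|x|^{-\beta p}\,\mathrm dx)$. The target is to upgrade this first to $u \in L^q(\R^n;|x|^{-\beta p}\,\mathrm dx)$ for every finite $q$, and then to $u \in L^\infty(\R^n)$.

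For the iteration step, fix $\gamma > 1$, truncate $u_T \defeq \min(u,T)$ with $T > 0$, and test the weak form of \cref{U equation} against $\varphi = u_T^{2\gamma-2} u$, which lies in $D^s_\alpha$ because $u_T$ is bounded. The kernel $|x|^{-\alpha}|x-y|^{-n-2s}|y|^{-\alpha}$ is symmetric in $(x,y)$, so the standard Stroock--Varopoulos pointwise inequality applied to $z \mapsto u_T(z)^{2\gamma-2} z$ (and its ``square root'' $z \mapsto u_T(z)^{\gamma-1} z$) yields
\[
\langle \mathcal L_{s,\alpha} u,\, u_T^{2\gamma-2} u \rangle \;\geq\; c_\gamma\, \norm{u_T^{\gamma-1} u}^2_{D^s_\alpha(\R^n)}.
\]
Applying \cref{eq:fckn} to $u_T^{\gamma-1} u$ turns the right-hand side into a weighted $L^p$-quantity. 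On the other side of the tested equation, the integral $\int u_T^{2\gamma-2} u^p / |x|^{\beta p}$ is split on $\{u > M\}$ and $\{u \leq M\}$: on $\{u > M\}$, H\"older with exponents $(p/2, p/(p-2))$ produces a factor $\bigl(\int_{\{u>M\}} u^p/|x|^{\beta p}\bigr)^{(p-2)/p}$ which tends to $0$ as $M \to \infty$ by dominated convergence, so for $M$ large enough this piece is absorbed into the left-hand side; on $\{u \leq M\}$ the integrand is bounded by $M^{p-2} u^{2\gamma}/|x|^{\beta p}$. Letting $T \to \infty$ by monotone convergence delivers
\[
\Bigl(\int_{\R^n} \frac{u^{\gamma p}}{|x|^{\beta p}}\Bigr)^{2/p} \;\lesssim_{\gamma}\; \int_{\R^n} \frac{u^{2\gamma}}{|x|^{\beta p}}.
\]
Iterating with $\gamma_{k+1} = (p/2)\gamma_k$ and $2\gamma_0 = p$ (using $p > 2$) yields $u \in L^q(\R^n;|x|^{-\beta p}\,\mathrm dx)$ for every $q < \infty$.

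To promote this to $u \in L^\infty(\R^n)$, I would pass to the Hardy formulation $w \defeq |x|^{-\alpha} u \in \dot H^s(\R^n)$, which satisfies $(-\Delta)^s w \leq w^{p-1}/|x|^{tp}$ since $C(\alpha) \leq 0$ for $\alpha \geq 0$, and invoke \cref{prop:fractional-elliptic-regularity}. The key arithmetic observation is that $\alpha < n/2 - s$ forces $\beta p < n$, so the weight $|x|^{-\beta p}$ is locally integrable near the origin; combined with H\"older this upgrades the weighted $L^q$-bounds to ordinary $L^q_{\mathrm{loc}}$-bounds on $B_1$ for every finite $q$. Feeding this into the equation, the right-hand side $w^{p-1}/|x|^{tp}$ belongs to $L^q_{\mathrm{loc}}$ near $0$ for every $q$, and then \cref{prop:fractional-elliptic-regularity} gives $w \in L^\infty$ near $0$, hence $u = |x|^\alpha w \in L^\infty$ near $0$ (using that $|x|^\alpha$ is bounded there). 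Away from the origin, $u$ is already smooth by the bootstrap of \cref{prop:W-regularity}; the decay at infinity follows from $u \in L^p(|x|^{-\beta p}\,\mathrm dx)$ combined with the representation formula \cref{eq:representation_formula_inverse} for the inverse operator.

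The main obstacle I expect is the bookkeeping in the absorption step: the truncation parameter $M = M(\gamma)$ must be chosen so that the residual constant $C(\gamma)$ in the iteration inequality does not blow up too fast as $\gamma \to \infty$, so that the geometric growth $\gamma_k = (p/2)^k \gamma_0$ actually closes. A secondary technical issue is verifying rigorously that $\varphi = u_T^{2\gamma-2} u$ is an admissible test function and that the Stroock--Varopoulos lower bound is valid for the weighted kernel of $\mathcal L_{s,\alpha}$; this reduces to a routine approximation argument exploiting the symmetry of the kernel.
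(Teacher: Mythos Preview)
Your Moser--Brezis--Kato iteration is exactly the content of the references the paper invokes (\cite[Theorem~1.1]{MusinaNazarov2021} for $\alpha=0$ and \cite[Proposition~4.5]{Dipierro2016} for $\alpha>0$), so the overall route is the same as the paper's. Your first stage, obtaining $u\in L^q(\R^n;|x|^{-\beta p}\,\mathrm dx)$ for every finite $q$, is correct.

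The gap is in the $L^\infty$ step. Your claim that $w^{p-1}/|x|^{tp}\in L^q_{\mathrm{loc}}$ near the origin for \emph{every} $q$ is false when $\alpha>0$. Rewriting $w^{p-1}/|x|^{tp}=u^{p-1}/|x|^{\beta p-\alpha}$, arbitrarily high unweighted integrability of $u$ only places this in $L^q(B_1)$ for $q<n/(\beta p-\alpha)$; a short computation shows $\beta p-\alpha=\alpha(p-1)+n-p(\tfrac n2-s)$, which equals $\alpha+2s$ at $p=2$ and hence exceeds $2s$ whenever $\alpha>0$, so the threshold sits below $n/(2s)$ and \cref{prop:fractional-elliptic-regularity} does not upgrade $w$ to $L^\infty$. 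The Hardy term $C(\alpha)w/|x|^{2s}=C(\alpha)u/|x|^{\alpha+2s}$ (present since $C(\alpha)\neq 0$ for $\alpha>0$) has the same obstruction, and bootstrapping does not help: the weight singularity at $0$ is not erased by improving the integrability of $u$. The correct fix---and what the cited references do---is to remain in the iteration and track the constants: the absorption need only be performed once (with a fixed $M$), after which the recursive constant grows at most polynomially in $\gamma$ (coming only from $c_\gamma$ in Stroock--Varopoulos), so $\prod_k C_{\gamma_k}^{1/(2\gamma_k)}$ converges because $\gamma_k=(p/2)^k$ is geometric, yielding $\|u\|_{L^\infty(\R^n)}<\infty$ directly.
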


\begin{proof}
    When $\alpha = 0$, the statement of the lemma is contained in \cite[Theorem 1.1]{MusinaNazarov2021}. When $\alpha > 0$, we deduce the lemma by repeating verbatim the proof of \cite[Proposition 4.5]{Dipierro2016}, after replacing (in the statement and proof of \cite[Proposition 4.5]{Dipierro2016}) the exponent $2^*_s$ by $p$ and the weight $|\cdot|^{-\alpha 2^*_s}$ by $|\cdot|^{-\beta p}$ (with $\beta$ satisfying \cref{parameters-ckn}). 
\end{proof}

Having established the regularity of $W$, we can show its non-degeneracy.
We perform a decomposition of $\varphi$ in radial and non-radial components. For the radial component, we apply \cite[Theorem 1.5]{AoDelaTorreGonzalez2022}; for the non-radial one, we use \cref{thm:general-hardy}.

\begin{proof}
    [Proof of Theorem \ref{thm:non-degeneracy}]
    Let $W \geq 0$ solve \cref{eq:hardy-eq} and   let $\varphi$ be a solution to \cref{lin-eq-thm}. Since $\alpha \geq 0$ by assumption, Proposition \ref{prop:W-regularity} gives that $W$ is radial with $W >0$ and $W' < 0$ on $\R^n \setminus \{0\}$.   

     Let us define the function $f(r, z) \defeq r^{-tp}z^{p-1}-C(\alpha)r^{-2s}z$. We observe that \cref{eq:hardy-eq} is equivalent to $(-\lapl)^sW = f(\abs{x}, W)$ and that \cref{lin-eq-thm} is equivalent to 
     \begin{equation}
         \label{lin-eq proof}
         (-\lapl)^s \varphi = \partial_2 f(\abs{x}, W) \varphi. 
     \end{equation}
          Write $\varphi = \varphi_0 + \tilde\varphi$, where $\varphi_0$ is radial and $\tilde\varphi$ is $L^2$-orthogonal to any radial function (or equivalently, $\int_{B_r}\tilde\varphi=0$ for all $r>0$). 
     Since $W$ is radial, and since $(-\Delta)^s$ preserves the classes of radial functions and of functions orthogonal to radial functions, both $\varphi_0$ and $\tilde \varphi$ are solutions to \cref{lin-eq proof}.

         Thanks to \cite[Theorem 1.5]{AoDelaTorreGonzalez2022}, since $\varphi_0$ is radial, we must have $\varphi_0 = \partial_\lambda|_{\lambda = 1} W_\lambda$. 

    To deal with $\tilde\varphi$, we can invoke \cref{thm:general-hardy}. So, we have
    \begin{equation*}
        \norm{\tilde\varphi}_{\dot H^s}^2 \ge \int_{\R^n} \tilde\varphi^2 \frac{((-\lapl)^sW)'}{W'} .
    \end{equation*}
    On the other hand, integrating \cref{lin-eq proof} for $\tilde \varphi$ against $\tilde\varphi$ gives 
        \begin{equation*}
        \norm{\tilde\varphi}_{\dot H^s}^2 = \int_{\R^n} \tilde\varphi^2 \partial_2 f(\abs{x}, W).
    \end{equation*}
    
    By differentiating $(-\lapl)^sW = f(\abs{x}, W)$ and recalling that $\partial_1 f(\abs{x}, W)<0$, we obtain $\frac{((-\lapl)^sW)'}{W'} > \partial_2 f(\abs{x}, W)$. Hence, unless $\tilde \varphi \equiv 0$, we find 
    \[  \norm{\tilde\varphi}_{\dot H^s}^2 \ge \int_{\R^n} \tilde\varphi^2 \frac{((-\lapl)^sW)'}{W'} > \int_{\R^n} \tilde\varphi^2 \partial_2 f(\abs{x}, W) =     \norm{\tilde\varphi}_{\dot H^s}^2 \]
    This is a contradiction. Thus we must have $\tilde \varphi \equiv 0$ and the proof is complete. 
\end{proof}

\begin{proof}
    [Proof of Corollary \ref{corollary nondeg hardy}]
       If $\varphi_0=\partial_\lambda|_{\lambda = 1} W_\lambda$, we already know that equality is achieved. 
       
       Assume conversely that equality is achieved in \cref{W Hessian} for some function $\varphi \in \dot H^s(\R^n)$ satisfying $\int_{\R^n} \frac{W^{p-1}}{|x|^{tp}} \varphi = 0$. Let us consider (as in the proof of \cref{thm:non-degeneracy}) the function $f(r, z) \defeq r^{-tp}z^{p-1}-C(\alpha)r^{-2s}z$. Recall that \cref{eq:hardy-eq} is equivalent to $(-\lapl)^sW=f(\abs{x},W)$.
       The fact that $\varphi$ is a minimizer for \cref{W Hessian} implies that
       $(-\lapl)^s\varphi-\partial_2 f(\abs{x}, W)\varphi = \lambda W^{p-1}\abs{x}^{-tp}$ for an appropriate Lagrange multiplier $\lambda\in\R$. Moreover, it holds that $\partial_2 f(\abs{x}, W)W - f(\abs{x}, W) = (p-2)W^{p-1}\abs{x}^{-tp} \perp_{L^2} \varphi$.
       Hence, we have
       \begin{equation*}
           \lambda \int_{\R^n} W^p\abs{x}^{-tp}\, \mathrm d x
           =
           \scalprod{W, (-\lapl)^s\varphi-\partial_2 f(\abs{x}, W)\varphi}_{L^2}=
           \scalprod{(-\lapl)^s W-\partial_2f(\abs{x}, W)W, \varphi}_{L^2} = 0,
       \end{equation*}
       thus $\lambda=0$.
       Hence, \cref{thm:non-degeneracy} yields $\varphi = c \partial_\lambda|_{\lambda = 1} W_\lambda$ as desired. 
\end{proof}


\section{Sharp quantitative stability}
\label{sec:stability}

In this section, we prove \cref{theorem stability can}. It is convenient to denote by 
\begin{equation}
    \label{U manifold}
    \mathcal W \defeq \{ c W_\lambda \, : \, c \in \R \setminus \{0\}, \, \lambda > 0 \}  
\end{equation} 
the set of minimizers of the inequality \cref{eq:hardy}. (Recall that here we fix $W$ minimizing \cref{eq:hardy} and satisfying \cref{eq:hardy-eq}, and write $W_\lambda(x) = \lambda^\frac{n-2s}{2} W(\lambda x)$ for its dilations.)

It is crucial to work with the norm
\begin{equation*}
\norm{w}_\ast^2 \defeq    \norm{w}_{{\dot{H}^s}}^2 + C(\alpha)\norm{w\abs{\emptyparam}^{-s}}_{L^2}^2, 
\end{equation*}
induced by the scalar product
\begin{equation*}
    \scalprod{w_1, w_2}_\ast\defeq \scalprod{(-\lapl)^sw_1, w_2} + C(\alpha)\int_{\R^n} w_1w_2\abs{x}^{-2s}.
\end{equation*}
This is a scalar product on $\dot H^s(\R^n)$ equivalent to the standard scalar product $\scalprod{w, w}_{\dot H^s} = \norm{w}_{\dot H^s}^2$ from \cref{Hs norm normalization}, for every $\alpha < \frac{n-2s}{2}$. That is, there is $c = c(n, s, \alpha)$ such that 
\begin{equation}
    \label{equivalence Hs scalarprod}
      c^{-1} \norm{w}_{\dot H^s} \leq \norm{w}_\ast \leq c \norm{w}_{\dot H^s}. 
\end{equation}
Indeed, by Hardy's inequality $C_\mathrm{Hardy}(s) \|w |\cdot|^{-s}\|_{L^2}^2 \leq \|w\|_{\dot H^s}^2$, the upper bound is immediate, and the lower bound  follows from \cref{eq:compare_calpha_chardy}. 

Let us begin by establishing a compact embedding result that often comes up when addressing stability questions.
\begin{lemma}
\label{lemma compact embedding}
    Assume \cref{parameters-hardy}. 
    Let $W\in \dot H^s(\R^n) \cap C^0(\R^n \setminus \{0\})$ be strictly positive outside of the origin. 
    Then, the embedding $\dot{H}^s(\R^n)\hookrightarrow L^2(\R^n, W^{p-2}\abs{\emptyparam}^{-tp})$ is compact.
\end{lemma}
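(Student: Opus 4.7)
The plan is to establish compactness via the standard three-region decomposition. I would start from a bounded sequence $w_k \rightharpoonup w$ in $\dot H^s(\R^n)$, and by linearity reduce to $w=0$, aiming to show $\int_{\R^n} w_k^2 W^{p-2}|x|^{-tp} \to 0$. Fix $0<r<R$ and write $\R^n = B_r \cup A \cup B_R^{\complement}$ with $A \defeq B_R\setminus B_r$. The game is to make the tail contributions (on $B_r$ and $B_R^{\complement}$) small uniformly in $k$ by choosing $r$ small and $R$ large, and then to kill the middle contribution (on $A$) for each such choice by sending $k\to\infty$.

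For the tails, the key trick is to distribute the weight via Hölder's inequality with conjugate exponents $\tfrac{p}{2}$ and $\tfrac{p}{p-2}$:
\[
\int_{\Omega} w_k^2 \frac{W^{p-2}}{|x|^{tp}} \, \mathrm d x
= \int_\Omega \Big(\tfrac{|w_k|}{|x|^t}\Big)^2 \Big(\tfrac{W}{|x|^t}\Big)^{p-2} \, \mathrm d x
\le \Big\|\tfrac{w_k}{|x|^t}\Big\|_{L^p(\R^n)}^{2} \, \Big\|\tfrac{W}{|x|^t}\Big\|_{L^p(\Omega)}^{p-2}.
\]
The first factor is controlled uniformly in $k$ by $\|w_k\|_\ast^2$ via \cref{eq:hardy}, which is itself comparable to $\|w_k\|_{\dot H^s}^2$ by \cref{equivalence Hs scalarprod}. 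The second factor is finite for $\Omega = \R^n$, because applying \cref{eq:hardy} to $W\in \dot H^s(\R^n)$ gives $W|\cdot|^{-t}\in L^p(\R^n)$; hence by absolute continuity of the integral it tends to $0$ as $r\to 0$ for $\Omega=B_r$ and as $R\to\infty$ for $\Omega=B_R^{\complement}$.

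For the annular region $A$, the assumption that $W$ is continuous and strictly positive on $\R^n\setminus\{0\}$ ensures that $W^{p-2}|x|^{-tp}$ is bounded on the compact set $\overline{A}$. The compact embedding $\dot H^s(\R^n)\hookrightarrow L^2(A)$ (a consequence of Rellich--Kondrachov for $H^s(A)$, applied to restrictions of $\dot H^s$-functions which are locally in $H^s$ by the Sobolev embedding $\dot H^s\hookrightarrow L^{2^*_s}$) then yields $w_k\to 0$ strongly in $L^2(A)$, so $\int_A w_k^2 W^{p-2}|x|^{-tp} \to 0$.

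Combining the three bounds, given $\eps>0$ we first choose $r,R$ to make the tail contributions smaller than $\eps/2$ uniformly in $k$, and then pass $k\to\infty$ to absorb the annular term. I do not anticipate a serious obstacle: the only delicate point is verifying that $W|\cdot|^{-t}\in L^p(\R^n)$ globally (needed to make the tail factor vanish), which however is immediate from the hypothesis $W\in \dot H^s(\R^n)$ together with \cref{eq:hardy}.
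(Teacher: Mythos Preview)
Your proposal is correct and follows essentially the same approach as the paper: the same H\"older splitting $\|w_k|\cdot|^{-t}\|_{L^p}^2\,\|W|\cdot|^{-t}\|_{L^p(\Omega)}^{p-2}$ to control the tails uniformly, combined with the local compact embedding on annuli. The only cosmetic differences are that the paper phrases the annular step via a diagonal extraction along $r\to 0$, $R\to\infty$ rather than your ``choose $r,R$ first, then $k\to\infty$'' formulation, and that your preliminary reduction to $w_k\rightharpoonup 0$ lets you get by with the weight being merely bounded above on $\overline A$ (the paper notes it is bounded away from both $0$ and $\infty$).
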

\begin{proof}
We follow \cite[Appendix A]{FG20}.
Let $(\varphi_k)_{k\in\N}$ be a bounded sequence in $\dot{H}^s(\R^n)$. For any open set $\Omega\subseteq\R^n$, Hölder's inequality  yields
    \begin{equation*}
        \int_{\Omega} \varphi_k^2 W^{p-2}\abs{x}^{-tp} 
        \leq
        \norm{\varphi_k \abs{\emptyparam}^{-t}}_{L^p(\R^n)}^2
        \norm{W\abs{\emptyparam}^{-t}}_{L^p(\Omega)}^{p-2} \lesssim \norm{ \varphi_k}_{\dot{H}^s}^2  \norm{W\abs{\emptyparam}^{-t}}_{L^p(\Omega)}^{p-2} \lesssim \norm{W\abs{\emptyparam}^{-t}}_{L^p(\Omega)}^{p-2}.
    \end{equation*}
    We know that $W\in {\dot{H}^s}(\R^n)$ and thus, by the fractional Hardy--Sobolev inequality, $W\abs{x}^{-t}\in L^p(\R^n)$. Hence, 
    \begin{equation}
    \label{compact-proof-tail}
\begin{aligned}
        \int_{B_R^\complement} \varphi_k^2 W^{p-2}\abs{x}^{-tp} &\to 0 &&\text{as $R\to \infty$,}\\
        \int_{B_r} \varphi_k^2 W^{p-2}\abs{x}^{-tp} &\to 0 &&\text{as $r\to 0$,}
\end{aligned}
\end{equation}
    uniformly in $k$. 

    For every $0 < r < R < \infty$, by assumption, on $B_R \setminus B_r$, the weight $W^{p-2} \abs{x}^{-tp}$ is bounded away from zero and $\infty$. Hence, $L^2(B_R \setminus B_r) \simeq L^2(B_R \setminus B_r, W^{p-2} \abs{\emptyparam}^{-tp})$. In particular, $\dot H^s(\R^n)$ embeds compactly into $L^2(B_R \setminus B_r, W^{p-2} \abs{\emptyparam}^{-tp})$ by the standard compact embedding theorem for fractional Sobolev spaces, see, e.g., \cite[Theorem 7.1]{DiNezzaPalatucciValdinoci2012}.

By a diagonal extraction, we find $\varphi \in L^2(\R^n, W^{p-2}\abs{\emptyparam}^{-tp})$ such that $\varphi_k \to \varphi$ strongly in $L^2(B_R \setminus B_r, W^{p-2}\abs{\emptyparam}^{-tp})$ for every $r, R$. Together with \cref{compact-proof-tail}, it follows that $\varphi_k \to \varphi$ strongly in $L^2(\R^n, W^{p-2}\abs{\emptyparam}^{-tp})$. Hence, the embedding $\dot{H}^s(\R^n)\hookrightarrow L^2(\R^n, W^{p-2}\abs{\emptyparam}^{-tp})$ is compact. 
\end{proof}

The next lemma is central to the proof of \cref{theorem stability can}. Here, we use the fact that $W$ is a non-degenerate minimizer in order to study the spectrum of the linearized operator at $W$. 

\begin{lemma}
\label{lemma spectrum nondeg}
    Assume \cref{parameters-hardy} and, additionally, $\alpha\ge 0$.
    
    Let $W\in\dot H^s(\R^n)$ be a non-negative minimizer of \cref{eq:hardy} which satisfies \cref{eq:hardy-eq}.
    Then the operator
    \begin{equation*}
        \mathcal L_W \defeq     \frac{(-\lapl)^s + C(\alpha)\abs{x}^{-2s}}{W^{p-2}\abs{x}^{-tp}}
    \end{equation*}
    is the inverse of a positive self-adjoint compact operator on $L^2(\R^n, W^{p-2}\abs{\emptyparam}^{-tp})$. In particular, its spectrum is discrete.    

    Denoting by $\mu_0 < \mu_1 <\mu_2 < \ldots$ the sequence of its eigenvalues and by $(E_i)_{i \in \N_0}$ the corresponding eigenspaces, we have 
    \begin{equation}
        \label{mu i E i }
        \mu_0 = 1, \quad E_0 = \mathrm{span}\{W\}, \qquad \mu_1 = p-1, \quad E_1 = \mathrm{span}\{\partial_\lambda|_{\lambda = 1} W_\lambda\}. 
    \end{equation} 
    Moreover, the subspaces $E_i$ are mutually orthogonal with respect to $\scalprod{\emptyparam, \emptyparam}_\ast$. 
\end{lemma}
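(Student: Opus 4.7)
The plan is to realize $\mathcal L_W$ as the inverse of a positive, compact, self-adjoint operator on the weighted Hilbert space $H \defeq L^2(\R^n, W^{p-2}\abs{\emptyparam}^{-tp})$, and then to identify the first two eigenvalues by combining the variational characterization with the non-degeneracy statement \cref{corollary nondeg hardy}.

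First, I define $T \defeq \mathcal L_W^{-1}$ via Riesz representation on $(\dot H^s(\R^n), \scalprod{\emptyparam, \emptyparam}_\ast)$. For $f \in H$, Cauchy--Schwarz together with the continuous embedding $\dot H^s \hookrightarrow H$ (which follows from Hölder's inequality and the fractional Hardy--Sobolev inequality, as in the proof of \cref{lemma compact embedding}) shows that $\varphi \mapsto \int_{\R^n} f\varphi W^{p-2}\abs{x}^{-tp}$ is continuous on $\dot H^s(\R^n)$. The Riesz representative $Tf \in \dot H^s$ then satisfies $\scalprod{Tf, \varphi}_\ast = \int_{\R^n} f \varphi W^{p-2}\abs{x}^{-tp}$ for every $\varphi \in \dot H^s$, which is equivalent to $\mathcal L_W(Tf) = f$ distributionally. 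Viewed as an operator $H \to H$ via the embedding $\dot H^s \hookrightarrow H$, the map $T$ is symmetric and positive (from $\int_{\R^n} (Tf)g\, W^{p-2}\abs{x}^{-tp} = \scalprod{Tf, Tg}_\ast$ and $\int_{\R^n} (Tf)f\, W^{p-2}\abs{x}^{-tp} = \norm{Tf}_\ast^2$), and compact because it factors as the bounded map $H \to \dot H^s$ followed by the compact embedding $\dot H^s \hookrightarrow H$ granted by \cref{lemma compact embedding} (whose hypotheses are met since $W \in \dot H^s$ is continuous and strictly positive on $\R^n \setminus \{0\}$ by \cref{prop:W-regularity}). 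Standard spectral theory for positive compact self-adjoint operators then yields a discrete spectrum for $T$ accumulating only at $0$; inverting, $\mathcal L_W$ has the claimed discrete spectrum $\mu_0 < \mu_1 < \cdots \to \infty$, and pairwise $\ast$-orthogonality of the eigenspaces follows from $\scalprod{\varphi_i, \varphi_j}_\ast = \mu_i \int_{\R^n} \varphi_i \varphi_j W^{p-2}\abs{x}^{-tp} = \mu_j \int_{\R^n} \varphi_i \varphi_j W^{p-2}\abs{x}^{-tp}$.

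To identify $\mu_0, \mu_1, E_0, E_1$, I will collect three ingredients. Equation \cref{eq:hardy-eq} immediately reads $\mathcal L_W W = W$, so $1$ is an eigenvalue. Combining \cref{eq:hardy} with Hölder's inequality (exponents $p/2$ and $p/(p-2)$) together with the normalization $\tilde\Lambda_{n,s,p,\alpha} = \norm{W\abs{\emptyparam}^{-t}}_{L^p}^{p-2}$ (obtained by pairing \cref{eq:hardy-eq} with $W$) yields $\norm{\varphi}_\ast^2 \geq \int_{\R^n} \varphi^2 W^{p-2}\abs{x}^{-tp}$ for every $\varphi \in \dot H^s$, so the Rayleigh quotient of $\mathcal L_W$ is at least $1$, forcing $\mu_0 = 1$. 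Differentiating $(-\lapl)^s W_\lambda + C(\alpha)\abs{x}^{-2s}W_\lambda = W_\lambda^{p-1}\abs{x}^{-tp}$ at $\lambda = 1$ gives $\mathcal L_W Z = (p-1) Z$ for $Z \defeq \partial_\lambda|_{\lambda=1} W_\lambda$, while scale-invariance of $\int_{\R^n} W_\lambda^p\abs{x}^{-tp}$ yields $\int_{\R^n} Z\, W^{p-1}\abs{x}^{-tp} = 0$, so $Z$ is $H$-orthogonal to $W$.

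Putting these together, \cref{W Hessian} gives $\norm{\varphi}_\ast^2 \geq (p-1)\int_{\R^n} \varphi^2 W^{p-2}\abs{x}^{-tp}$ for every $\varphi$ that is $H$-orthogonal to $W$, so any eigenfunction $H$-orthogonal to $W$ has eigenvalue $\geq p-1$; since eigenvectors with distinct eigenvalues are $H$-orthogonal (same computation as for the $\ast$-product), this forces $E_0 = \R W$ and $\mu_1 \geq p-1$. Finally, \cref{corollary nondeg hardy} identifies the equality cases in \cref{W Hessian} as $\R Z$, yielding $\mu_1 = p-1$ and $E_1 = \R Z$. The only genuinely delicate step---pinning $E_1$ down to a one-dimensional space---is subsumed by the non-degeneracy \cref{corollary nondeg hardy}, already proved; everything else is a routine assembly of Riesz representation, the compact embedding \cref{lemma compact embedding}, and spectral theory for positive compact self-adjoint operators.
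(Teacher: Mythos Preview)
Your proof is correct and follows the same overall approach as the paper: Riesz representation on $(\dot H^s, \scalprod{\emptyparam,\emptyparam}_\ast)$, compactness via \cref{lemma compact embedding}, and identification of $\mu_0,\mu_1$ using the equation, \cref{W Hessian}, and the non-degeneracy result. The one substantive difference is in the step $\mu_0=1$, $E_0=\mathrm{span}\{W\}$: the paper argues that any first eigenfunction is sign-definite (since $\norm{\abs{\varphi}}_\ast \leq \norm{\varphi}_\ast$, strictly if $\varphi$ changes sign) and hence $E_0$ is one-dimensional, whereas you bound the Rayleigh quotient below by $1$ via \cref{eq:hardy} plus H\"older and then use \cref{W Hessian} to exclude any other eigenfunction at level $1$; both routes are clean and equivalent here.
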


\begin{proof}
Let $f \in L^2(\R^n, W^{p-2}\abs{\emptyparam}^{-tp})$. Then, for every $\varphi \in \dot H^s(\R^n)$, using Cauchy--Schwarz followed by \cref{lemma compact embedding,equivalence Hs scalarprod}, we can estimate 
\begin{equation}
    \label{scalprod f varphi}
    \scalprod{f, \varphi}_{ L^2(\R^n, W^{p-2}\abs{\emptyparam}^{-tp})} \lesssim  \norm{f}_{ L^2(\R^n, W^{p-2}\abs{\emptyparam}^{-tp})} \norm{\varphi}_{\ast}.  
\end{equation} 
 Thus $\scalprod{f, \emptyparam}_{ L^2(\R^n, W^{p-2}\abs{\emptyparam}^{-tp})}$ is a continuous linear form on $(\dot H^s(\R^n), \scalprod{\emptyparam, \emptyparam}_*)$. By the Riesz representation theorem, there exists a unique $g =: T(f) \in \dot H^s(\R^n)$ such that $\scalprod{f, \varphi}_{ L^2(\R^n, W^{p-2}\abs{\emptyparam}^{-tp})} = \scalprod{g, \varphi}_\ast$. This is the weak formulation of $\mathcal L_W g = f$,
 so $T:L^2(\R^n, W^{p-2}\abs{\emptyparam}^{-tp})\to \dot H^s(\R^n)$ is inverted by $\mathcal L_W$. 
 
 Moreover, by \cref{scalprod f varphi}, the map $f \mapsto T(f)$ is continuous from $L^2(\R^n, W^{p-2}\abs{\emptyparam}^{-tp})$ into $\dot H^s(\R^n)$. By Proposition \ref{prop:W-regularity}, $W$ satisfies the assumptions of \cref{lemma compact embedding}. Hence, the embedding $\tau :\dot{H}^s(\R^n)\to L^2(\R^n, W^{p-2}\abs{\emptyparam}^{-tp})$ is compact. It follows that the composition $\tilde{T} \defeq \tau \circ T$ is a compact self-adjoint operator on $L^2(\R^n, W^{p-2}\abs{\emptyparam}^{-tp})$. 

In particular, the spectrum of $\tilde{T}$ consists of a countable number of real eigenvalues $(\lambda_k)$ which accumulate at $0$. It follows that the spectrum of  $ \mathcal L_W$ consists precisely of the numbers $\mu_k = \lambda_k^{-1}$, for $k \in \N$. 

The claimed orthogonality of the eigenspaces follows directly from $\mathcal L_W$ being self-adjoint on $L^2(\R^n, W^{p-2}\abs{\emptyparam}^{-tp})$  and the fact that, for $w_1, w_2\in\dot H^s(\R^n)$, we have
\begin{equation*}
    \scalprod{w_1, w_2}_\ast =
    \scalprod{\mathcal L_W w_1, w_2}_{L^2(\R^n, W^{p-2}\abs{x}^{-tp})}.
\end{equation*}

Let us now prove \cref{mu i E i }. First, we note that $W$ is an eigenfunction of $\mathcal L_W$ with eigenvalue 1. Due to the Rayleigh principle 
\[ \mu_0 = \inf_{\varphi \in L^2(\R^n, W^{p-2}\abs{\emptyparam}^{-tp})} \frac{\norm{\varphi}_\ast^2}{\norm{\varphi}^2_{ L^2(\R^n, W^{p-2}\abs{\emptyparam}^{-tp})}}\]
and the inequality $\norm{\abs{\varphi}}_\ast \leq \norm{\varphi}_\ast$, which is strict if $\varphi$ changes sign, every $\varphi \in E_0$ does not change sign. This implies that $\mu_0 = 1$ and $E_0 = \mathrm{span}\{W\}$. 

To prove the claims about $\mu_1$ and $E_1$, we will use our findings about non-degeneracy from \cref{sec:non-degeneracy-proof}. In view of the min-max characterization 
\begin{equation}
    \label{minmax mu1}
    \mu_1 = \inf_{\varphi \in L^2(\R^n, W^{p-2}\abs{\emptyparam}^{-tp}), \, \varphi \perp E_0} \frac{\norm{\varphi}_\ast^2}{\norm{\varphi}^2_{ L^2(\R^n, W^{p-2}\abs{\emptyparam}^{-tp})}} 
\end{equation} 
and the fact that $E_0 = \mathrm{span}\{W\}$, \cref{W Hessian} implies that $\mu_1 \geq p-1$. Still considering \cref{minmax mu1}, the statement of \cref{thm:non-degeneracy} can then be simply rephrased into saying that $\mu_1 = p-1$ and $E_1 = \mathrm{span}\{\partial_\lambda|_{\lambda = 1} W_\lambda\}$. 
\end{proof}

Our proof of \cref{theorem stability can} follows the classical strategy of Bianchi and Egnell~\cite{BianchiEgnell}. Let us study the deficit of the inequality \cref{eq:hardy} close to the manifold  $\mathcal W$ of its minimizers.

\begin{lemma} 
    \label{lemma BE local}
    Assume \cref{parameters-hardy} and, additionally, $\alpha\ge 0$.
        
    Let $\mathcal W$ be given by \cref{U manifold}. There is $\kappa=\kappa(n,s,p,\alpha) > 0$ such that the following statement holds.
    If $w_k \to w_\infty\in\mathcal W$, strongly in $\dot H^s(\R^n)$, then for all $k$ large enough, we have 
    \begin{equation}
        \label{inequality BE local}
         \norm{w_k}_\ast^2 - \tilde\Lambda \norm{w_k \abs{\emptyparam}^{-t}}_{L^p}^2 \geq \kappa \inf_{\tilde W\in \mathcal W} \norm{w_k-\tilde W}_{{\dot{H}^s}}^2.  
    \end{equation}
\end{lemma}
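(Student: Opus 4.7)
The plan is to adapt the classical Bianchi--Egnell scheme: project $w_k$ onto the manifold $\mathcal W$, use the Euler--Lagrange equation to eliminate first-order terms, and exploit the spectral gap of $\mathcal L_W$ established in \cref{lemma spectrum nondeg} to bound the second-order term from below.

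Both sides of \cref{inequality BE local} are invariant under the two-parameter group action $w \mapsto c\, \lambda^{(n-2s)/2} w(\lambda \cdot)$, which acts transitively on $\mathcal W$. For $k$ large, let $\tilde W_k \in \mathcal W$ minimize $\|w_k - \tilde W\|_\ast$ over $\tilde W \in \mathcal W$; this projection exists uniquely and depends smoothly on $w_k$ near $w_\infty$ by the inverse function theorem, since $\mathcal W$ is a smooth two-dimensional submanifold of $\dot H^s(\R^n)$. Applying the scaling element sending $\tilde W_k$ to $W$, we may assume $\tilde W_k = W$. Writing $w_k = W + \varepsilon_k$, first-order optimality gives $\varepsilon_k \perp_\ast T_W \mathcal W = \mathrm{span}\{W, \partial_\lambda|_{\lambda=1} W_\lambda\} = E_0 \oplus E_1$, and $\|\varepsilon_k\|_\ast \to 0$.

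Using $\varepsilon_k \perp_\ast W$, expand $\|w_k\|_\ast^2 = \|W\|_\ast^2 + \|\varepsilon_k\|_\ast^2$. For the $L^p$ term, the pointwise Taylor estimate
\[ \bigl| |W+\varepsilon|^p - W^p - p W^{p-1}\varepsilon - \tfrac{p(p-1)}{2} W^{p-2}\varepsilon^2 \bigr| \lesssim W^{\max(p-3,0)} |\varepsilon|^{\min(3,p)} + |\varepsilon|^p, \]
combined with Hölder's inequality and the Hardy--Sobolev embedding $\dot H^s \hookrightarrow L^p(|x|^{-tp}\, \mathrm d x)$, controls the remainder by $o(\|\varepsilon_k\|_\ast^2)$. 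The linear term vanishes because $p\int W^{p-1}\varepsilon_k |x|^{-tp} = p\,\scalprod{W, \varepsilon_k}_\ast = 0$ by \cref{eq:hardy-eq}. Setting $A_0 \defeq \int W^p |x|^{-tp}$ and observing that $\|W\|_\ast^2 = A_0$ and $\tilde\Lambda = A_0^{(p-2)/p}$ (both obtained by testing \cref{eq:hardy-eq} against $W$ together with the minimality of $W$), the Taylor expansion of $(A_0 + \delta)^{2/p}$ gives, after subtraction,
\[ \|w_k\|_\ast^2 - \tilde\Lambda \|w_k |\cdot|^{-t}\|_{L^p}^2 = \|\varepsilon_k\|_\ast^2 - (p-1)\int W^{p-2}\varepsilon_k^2 |x|^{-tp} + o(\|\varepsilon_k\|_\ast^2). \]

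Because $\scalprod{\cdot,\cdot}_\ast$ and $\scalprod{\cdot,\cdot}_{L^2(W^{p-2}|x|^{-tp})}$ are simultaneously diagonalized by the eigenspaces of $\mathcal L_W$, the $\ast$-orthogonality of $\varepsilon_k$ to $E_0 \oplus E_1$ combined with the spectral gap $\mu_2 > \mu_1 = p-1$ from \cref{lemma spectrum nondeg} yields $\|\varepsilon_k\|_\ast^2 \geq \mu_2 \int W^{p-2}\varepsilon_k^2 |x|^{-tp}$, so the quadratic part is bounded below by $\tfrac{\mu_2-(p-1)}{\mu_2}\|\varepsilon_k\|_\ast^2$. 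Absorbing the $o(\|\varepsilon_k\|_\ast^2)$ for $k$ large, and invoking the norm equivalence \cref{equivalence Hs scalarprod} together with $\|\varepsilon_k\|_{\dot H^s}^2 \geq \inf_{\tilde W \in \mathcal W}\|w_k - \tilde W\|_{\dot H^s}^2$, establishes \cref{inequality BE local}. The main technical difficulty is the $L^p$-remainder estimate when $p \geq 3$: the cubic term $\int W^{p-3}|\varepsilon_k|^3 |x|^{-tp}$ must be handled via Hölder with exponents $(p/(p-3), p/3)$ to get the bound $\lesssim \|W|\cdot|^{-t}\|_{L^p}^{p-3} \|\varepsilon_k |\cdot|^{-t}\|_{L^p}^3 = o(\|\varepsilon_k\|_{\dot H^s}^2)$, which crucially uses the Hardy--Sobolev integrability of $W$ itself.
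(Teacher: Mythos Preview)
Your proof is correct and follows essentially the same Bianchi--Egnell scheme as the paper: project onto $\mathcal W$ in the $\|\cdot\|_\ast$-norm to obtain $\varepsilon_k \perp_\ast E_0 \oplus E_1$, expand the deficit to second order using the Euler--Lagrange equation to kill the linear term, and invoke the spectral gap $\mu_2 > p-1$ from \cref{lemma spectrum nondeg}. The only cosmetic difference is that you justify existence of the projection via the inverse function theorem, whereas the paper gives an explicit Cauchy--Schwarz argument showing that $\lambda \mapsto \int w\, W_\lambda^{p-1}|x|^{-tp}$ vanishes at $\lambda \to 0,\infty$; your H\"older treatment of the $L^p$-remainder is in fact slightly more explicit than the paper's.
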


\begin{proof}
    It is not hard to check that $\inf_{\tilde W\in \mathcal W} \norm{w-\tilde W}_{\ast}$ is achieved for every $w \in \dot H^s(\R^n)$.\footnote{Indeed, we can complete a square and use the equation for $W_\lambda$ to find that 
    \begin{align*}
    \inf_{\tilde W \in \mathcal W} \norm{w - \tilde W}_\ast^2  &= \inf_{c, \lambda} \norm{w - c W_\lambda }_\ast^2  =  \norm{w}_\ast^2 - \norm{W}_\ast^{-2} \left( \sup_{\lambda > 0} \int_{\R^n} w W_\lambda \abs{x}^{-tp}  \right)^2 \end{align*}
      To see that $\sup_{\lambda > 0} \int_{\R^n} w W_\lambda \abs{x}^{-tp}$ is attained, it suffices to check that  $\int_{\R^n} w W_\lambda \abs{x}^{-tp} \to 0$ as $\lambda \to 0$ and $\lambda \to \infty$. But this follows from the general Cauchy--Schwarz estimate  
         \begin{align*}
        \scalprod{w, W} = 
        \scalprod{w\chi_{B_r}, W_\lambda\chi_{B_r}}
        +
        \scalprod{w\chi_{B_r^\complement}, W_\lambda\chi_{B_r^\complement}}
        \le
        \norm{w\chi_{B_r}}\norm{W_\lambda\chi_{B_r}}
        +
        \norm{w\chi_{B_r^\complement}}\norm{W_\lambda\chi_{B_r^\complement}}
    \end{align*}
    applied with $\scalprod{v,w} \defeq \int_{\R^n} vw \abs{x}^{-tp}$ and $\scalprod{v,v}^{1/2} =: \norm{v}$. 
    If $\lambda \to \infty$, then $W_\lambda$ is concentrated around the origin, so by picking $r$ small we have $\norm{w\chi_{B_r}}$ and $\norm{W_\lambda\chi_{B_r^\complement}}$ as small as we like, while $\norm{w\chi_{B_r^\complement}}$ and $\norm{W_\lambda\chi_{B_r}}$ are bounded uniformly in $r$, $\lambda$. Hence, $\scalprod{u, W_\lambda} \to 0$ as $\lambda \to \infty$. If $\lambda \to 0$, a variation of this argument gives the same conclusion. 
       }

     Up to multiplication by a constant and rescaling of $w_k$, we may assume that 
    \begin{equation}
        \label{distance-min}
        \inf_{\tilde W\in \mathcal W} \norm{w_k-\tilde W}_{\ast} =\norm{w_k-W}_{\ast}  
    \end{equation} 
    for every $k$, where $W \geq 0$ is a fixed (i.e., $k$-independent) minimizer of \cref{eq:hardy} satisfying \cref{eq:hardy-eq}.
  
    Let us decompose $w_k = W + \rho_k$. As a consequence of \cref{distance-min}, we find that.
    \begin{equation}
        \label{ortho conditions}
        \scalprod{\rho_k, W}_\ast = \scalprod{\rho_k,  \partial_\lambda |_{\lambda = 1} W_\lambda}_\ast =0. 
    \end{equation} 
    A consequence of \cref{ortho conditions} is that 
    \[ \norm{w_k}_\ast^2 = \norm{W}_\ast^2 + \norm{\rho_k}_\ast^2. \]
Moreover, using the pointwise expansion, for $a > 0$ and $b \in \R$,
\[ \abs{a + b}^p = a^p + p a^{p-1} b + \frac{p(p-1)}{2}a^{p-2} b^2 + \mathcal O( a^{(p-3)^+} \abs{b}^{p-(p-3)^+} +  \abs{b}^p) \]
with $a = W$ and $b = \rho$, and bounding the error terms using Hölder's inequality and \cref{lemma compact embedding}, we can expand 
\begin{equation}\begin{aligned} \label{eq:4564}
    \int_{\R^n} \abs{w_k} ^p \abs{x}^{-tp}&=  \int_{\R^n} \abs{W + \rho_k}^p \abs{x}^{-tp} \\
    &= \int_{\R^n} W^p \abs{x}^{-tp} + p \int_{\R^n} W^{p-1} \rho_k \abs{x}^{-tp} + \frac{p(p-1)}{2} \int_{\R^n} W^{p-2} \rho_k^2 \abs{x}^{-tp}  + o(\norm{\rho_k}_\ast^2).
\end{aligned}\end{equation}
By \cref{eq:hardy-eq,ortho conditions}, we have
\[ \int_{\R^n} W^{p-1} \rho_k \abs{x}^{-tp} = \scalprod{W, \rho_k}_\ast  =0.  \]
Thus, by taking the $\frac 2p$-th power of \cref{eq:4564}, we obtain
\[ \norm{w_k \abs{\emptyparam}^{-t}}_{L^p}^2 = \norm{W \abs{\emptyparam}^{-t}}_{L^p}^2 + (p-1) \tilde\Lambda^{-1} \int_{\R^n} W^{p-2} \rho_k^2 \abs{x}^{-tp} + o(\norm{\rho_k}_\ast^2), \]
where we used that $\norm{W \abs{\emptyparam}^{-t}}_{L^p}^{p-2} = \tilde\Lambda$ by the normalization of $W$. (Here, $\tilde\Lambda$ is the best constant in \cref{eq:hardy}.)

Combining all of this and using $\norm{W}_\ast^2 = \tilde\Lambda \norm{W \abs{\emptyparam}^{-t}}_{L^p}^2$ by \cref{eq:hardy-eq}, we get 
\begin{align*}
     \norm{w_k}_\ast^2 - \tilde\Lambda \norm{w_k \abs{\emptyparam}^{-t}}_{L^p}^2 &= \norm{\rho_k}_\ast^2 - (p-1) \int_{\R^n} W^{p-2} \rho_k^2 \abs{x}^{-tp} +  o(\norm{\rho_k}_\ast^2).
\end{align*}
By the orthogonality conditions \cref{ortho conditions} satisfied by $\rho_k$, \cref{lemma spectrum nondeg} yields the decisive information that 
´\[ \norm{\rho_k}_\ast^2 \geq \mu_2 \int_{\R^n} W^{p-2} \rho_k^2 \abs{x}^{-tp}  \]
for $\mu_2 > p-1$ the second eigenvalue of the operator $\mathcal L_W$ from \cref{lemma spectrum nondeg}. 
Hence, for $k$ large enough we have 
\[ \norm{w_k}_\ast^2 - \tilde\Lambda \norm{w_k \abs{\emptyparam}^{-t}}_{L^p}^2 \geq \left(1 - \frac{p-1}{\mu_2}\right) \norm{\rho_k}_\ast^2 + o(\norm{\rho_k}_\ast^2) \gtrsim \norm{\rho_k}_\ast^2,  \]
which implies the desired statement.
\end{proof}

The last missing step is to extend the local validity of the inequality \cref{inequality BE local}, which was just proved in \cref{lemma BE local}, to a global one. To do so, still following \cite{BianchiEgnell}, the following compactness lemma is required. 

\begin{lemma}
    \label{lemma compactness lions}
    Assume \cref{parameters-hardy}. Let $(w_k)_{k\in\N} \subset \dot H^s(\R^n)$ be such that  $\norm{w_k \abs{\emptyparam}^{-t}}_{L^p} = 1$ and $\norm{w_k}_\ast^2 \to \tilde\Lambda$, where $\tilde \Lambda$ is the best constant in \cref{eq:hardy}. Then there is a sequence $(\lambda_k)_{k \in \N} \subset (0, \infty)$ such that 
    \[ \lambda_k^\frac{n-2s}{2} w_k(\lambda_k \, \cdot) \to w_\infty \]
   strongly in $\dot H^s(\R^n)$, where $w_\infty$ is a minimizer of \cref{eq:hardy}.
\end{lemma}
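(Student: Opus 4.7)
My plan is a concentration-compactness argument à la Lions. The inequality \cref{eq:hardy} is invariant only under dilations $w(x) \mapsto \lambda^{(n-2s)/2}w(\lambda x)$ (translations are broken by the weights $\abs{x}^{-s}, \abs{x}^{-t}$), so only a single scale parameter must be adjusted to restore compactness of a minimizing sequence.

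First, by the norm equivalence \cref{equivalence Hs scalarprod}, $(w_k)$ is bounded in $\dot H^s(\R^n)$. Introduce the Lévy concentration function $Q_k(R) \defeq \int_{B_R}\abs{w_k}^p\abs{x}^{-tp}\,\mathrm dx$, which is continuous in $R$ with $Q_k(0)=0$ and $Q_k(\infty)=1$. Choose $R_k>0$ with $Q_k(R_k)=1/2$, and set $\lambda_k \defeq R_k^{-1}$, $\tilde w_k(x) \defeq \lambda_k^{(n-2s)/2}w_k(\lambda_k x)$. Scale invariance of \cref{eq:hardy} gives $\norm{\tilde w_k\abs{\emptyparam}^{-t}}_{L^p} = 1$, $\norm{\tilde w_k}_\ast^2 \to \tilde\Lambda$, and $\int_{B_1}\abs{\tilde w_k}^p\abs{x}^{-tp}\,\mathrm d x = 1/2$. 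Up to a subsequence, $\tilde w_k \weakto w_\infty$ in $\dot H^s(\R^n)$ and, by Rellich compactness on bounded subsets of $\R^n\setminus\{0\}$, also $\tilde w_k \to w_\infty$ almost everywhere.

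Second, the Brezis--Lieb lemma applied to both $\norm{\cdot}_\ast^2$ and $\norm{\cdot\abs{\emptyparam}^{-t}}_{L^p}^p$ yields
\[ \tilde\Lambda = \norm{w_\infty}_\ast^2 + \lim_k \norm{\tilde w_k - w_\infty}_\ast^2, \qquad 1 = A + B, \]
where $A \defeq \norm{w_\infty\abs{\emptyparam}^{-t}}_{L^p}^p$ and $B \defeq \lim_k\norm{(\tilde w_k-w_\infty)\abs{\emptyparam}^{-t}}_{L^p}^p$. Applying \cref{eq:hardy} to $w_\infty$ and to $\tilde w_k - w_\infty$ gives $A^{2/p} + B^{2/p} \leq 1$; since $p > 2$, strict concavity of $x \mapsto x^{2/p}$ combined with $A+B=1$ forces $(A,B) \in \{(0,1),(1,0)\}$. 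In the case $A=1$, weak lower semicontinuity and \cref{eq:hardy} for $w_\infty$ give $\norm{w_\infty}_\ast^2 = \tilde\Lambda$, so $w_\infty$ is a minimizer, and the Brezis--Lieb identity for $\norm{\cdot}_\ast^2$ then gives $\norm{\tilde w_k - w_\infty}_\ast^2 \to 0$, proving the lemma.

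The main obstacle is to rule out the alternative $A = 0$ (vanishing of the weak limit). If $w_\infty \equiv 0$, then local $L^p(\abs{\cdot}^{-tp})$-compactness on annuli (where the weight $\abs{x}^{-tp}$ is bounded) gives $\int_{B_R\setminus B_\epsilon}\abs{\tilde w_k}^p\abs{x}^{-tp}\,\mathrm dx \to 0$ for every $0 < \epsilon < R < \infty$, so the mass $1/2$ pinned in $B_1$ must concentrate at the origin and the mass $1/2$ in $B_1^c$ must escape to infinity, producing two bubbles at scales well separated from the pinned one. Performing a further rescaling onto each of the two bubbles and applying the Brezis--Lieb/concavity argument to this two-bubble splitting yields $A_0^{2/p} + A_\infty^{2/p} \leq 1$ with $A_0 + A_\infty = 1$ and $A_0, A_\infty > 0$, contradicting strict concavity. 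The technical care is to verify the asymptotic decoupling of the two well-separated scales in both $\norm{\cdot}_\ast^2$ and $L^p(\abs{\cdot}^{-tp})$; this can be done by cutoffs exploiting the scale separation, or more abstractly encapsulated via a Gérard-type profile decomposition in $\dot H^s(\R^n)$ adapted to the dilation symmetry.
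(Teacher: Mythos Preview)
Your overall scheme is correct and coincides with the paper's proof in the Brezis--Lieb/concavity step, but it diverges in how the vanishing alternative $w_\infty\equiv 0$ is excluded. You pin mass $1/2$ in the ball $B_1$ via the L\'evy concentration function; this does not prevent all of that mass from sliding to the origin, so you are forced into a two-bubble decoupling argument. As you acknowledge, making the energy splitting in $\norm{\emptyparam}_\ast^2$ rigorous for the two well-separated scales requires either cutoff/commutator estimates for the nonlocal norm or a profile decomposition adapted to the dilation group, neither of which you carry out; this is a nontrivial (though feasible) technical debt.

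The paper instead proves a Lions-type vanishing lemma on dyadic annuli (\cref{lemma Lq balls lions}): if $\sup_{r>0}\int_{B_{2r}\setminus B_r}\abs{w_k}^p\abs{x}^{-tp}\to 0$, then the full weighted $L^p$-mass vanishes. Since $\norm{w_k\abs{\emptyparam}^{-t}}_{L^p}=1$, there exist $\lambda_k$ with $\int_{B_{2\lambda_k}\setminus B_{\lambda_k}}\abs{w_k}^p\abs{x}^{-tp}\ge\delta>0$; after rescaling, mass $\ge\delta$ sits on the fixed annulus $B_2\setminus B_1$, where local $L^p$-compactness (the weight is bounded there) immediately yields $w_\infty\not\equiv 0$. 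The Brezis--Lieb/concavity argument then finishes exactly as you describe. In short: pinning mass on an annulus rather than a ball bypasses the two-bubble analysis entirely and is the cleaner route.
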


\Cref{lemma compactness lions} will be proved in \cref{subsection lions} below.

With the help of the above results, we can conclude the proof of \cref{theorem stability can}. 

\begin{proof}
    [Proof of \cref{theorem stability can}]
    We argue by contradiction and suppose that \cref{inequality stability thm} is false. That is, there exist $(w_k)_{k\in\N} \subseteq \dot H^s(\R^n)$, which we can take without loss of generality to satisfy $ \norm{w_k \abs{\emptyparam}^{-t}}_{L^p} = 1$, such that 
    \begin{equation}
        \label{BE contradiction ass}
        \frac{\norm{w_k}_{\ast}^2 - \tilde\Lambda}{\inf_{\tilde W\in \mathcal W} \norm{w_k-\tilde W}_{{\dot{H}^s}}^2}  \to 0
    \end{equation}  
    as $k \to \infty$. 
     Since $\inf_{\tilde W\in \mathcal W} \norm{w_k-\tilde W}_{{\dot{H}^s}}^2 \leq \norm{w_k}_{\dot H^s}^2 \leq c^2 \norm{w_k}_\ast^2$ (with $c$ as in \cref{equivalence Hs scalarprod}), \cref{BE contradiction ass} implies that $\norm{w_k}_\ast^2 \to \tilde\Lambda$. Hence, we can apply \cref{lemma compactness lions} to the sequence $(w_k)_{k\in\N}$ and deduce that $\inf_{\tilde W\in \mathcal W} \norm{w_k-\tilde W}_{{\dot{H}^s}}^2 \to 0$. Then, \cref{lemma BE local} is applicable to a rescaled subsequence of $(w_k)_{k\in\N}$ and yields a contradiction to \cref{BE contradiction ass}. 
\end{proof}

We conclude by proving \cref{corollary remainder Hardy}. Recall that the weak $L^r$ norm (which is not a norm) is defined by 
\[\norm{u}_{L^{r, \infty}} \defeq \sup_{c>0} c \cdot \abs*{\{x\in\R^n:\, \abs{u(x)}>c\}}^{\tfrac 1r} \,.
\] 

\begin{proof}[Proof of \cref{corollary remainder Hardy}]
\textbf{Step 1.}
\emph{Reduction steps.} 
    By \cref{theorem stability can} it suffices to prove that there exists $C = C(n,s,\alpha)$ such that 
\begin{equation}
    \label{weak to dist new}
     |\Omega|^{-\frac{n-2s-2\alpha}{n}} \norm{w}^2_{L^{\frac{n}{n-2s-\alpha}, \infty}} \lesssim \inf_{c \in \R, \lambda > 0} \norm{w - c W_\lambda}^2_{\dot H^s} = d(w, \mathcal W)^2
\end{equation}
(where, as usual, we have fixed a minimizer $W > 0$ that satisfies \cref{eq:hardy-eq}). 

For the rest of the proof, fix $c \in \R$ and $\lambda > 0$ such that 
\[ \norm{w - c W_\lambda}_{\dot H^s} =  d(u, \mathcal W). \]
By homogeneity of \cref{weak to dist new}, we may assume $c = 1$.

Replacing $w$ by its symmetric-decreasing rearrangement and $\Omega$ by the ball of same volume leaves invariant the left side of \cref{weak to dist new}  and decreases the right side.\footnote{To see that the right side of \cref{weak to dist new} decreases under symmetric-decreasing rearrangement, write $\norm{w -  W_\lambda}^2_{\dot H^s} = \norm{w}^2_{\dot H^s} + C(\alpha) \int_{\R^n} \frac{w^2}{|x|^{2s}}  - 2  \int_{\R^n} w W^{p-1}_\lambda |x|^{-tp} + + \norm{W_\lambda}_{\dot H^s}^2$. 
Now use \cite{AlmgrenLieb1989}, $C(\alpha) < 0$, and the fact that $W$ is symmetric-decreasing.} 

 Moreover, replacing $w$ by $w_\lambda = \lambda^\frac{n-2s}{2} w(\lambda x)$ and $\Omega$ by $\lambda^{-1} \Omega$ leaves invariant both sides of inequality \cref{remainder term ineq}. 
 
Summarizing, it suffices to prove 
\begin{equation}
    \label{goal remainder}
    \norm{w - W_\lambda}_{\dot H^s} \gtrsim \norm{w}_{{L^{\frac{n}{n-2s-\alpha}, \infty}}}. 
\end{equation}
for all $w$ supported on $B$, the ball centered in $0$ such that $|B| = 1$.

To do so, we are going to distinguish two cases depending on the value of $\lambda$. 

\textbf{Step 2.} \emph{The case when $\lambda$ is small. }
We have 
\begin{align*}
    \norm{w - W_\lambda}_{\dot H^s} & \gtrsim \norm{(w - W_\lambda)}_{L^{2^\ast_s}} \\
    &\gtrsim \norm{(W_\lambda) 
    }_{L^{2^*_s}(\R^n \setminus B)} + \norm{(w - W_\lambda) }_{L^{2^*_s}(B)} \\
    & \geq \norm{(W_\lambda)}_{L^{2^*_s}(\R^n \setminus B)} + \norm{w }_{L^{2^*_s}(B)} - \norm{W_\lambda}_{L^{2^*_s}(B)},
\end{align*}
where we used that $w$ is supported in $B$. 
If $\lambda$ is small enough, say $\lambda \leq c_0$ for some appropriate $c_0 = c_0(n,s,p,\alpha) > 0$, then 
\[ \norm{W_\lambda }_{L^{2^*_s}(\R^n \setminus B)}^{2^*_s} = \int_{\R^n \setminus \lambda B} W^{2^*_s} \geq \int_{\lambda B} W^{2^*_s} = \norm{W_\lambda}_{L^{2^*_s}(B)}^{2^*_s}.  \]
Since $\alpha<\frac{n-2s}2$, we have $\frac{n}{n-2s-\alpha}<2^*_s$. 
Thus we get
\begin{equation*}
    \norm{w }_{L^{2^*_s}}
    \ge
    \norm{w}_{L^{\frac{n}{n-2s-\alpha}}}
    \ge
    \norm{w}_{L^{\frac{n}{n-2s-\alpha}, \infty}}
\end{equation*} 
by Hölder, and \cref{goal remainder} follows in this case.

\textbf{Step 3.} \emph{The case when $\lambda$ is large. }

By Step 2, we may assume $\lambda \geq c_0$ now, for some (universal) $c_0 > 0$. To prove \cref{goal remainder} in this case, we start by writing 
\[ \norm{w}_{L^{\frac{n}{n-2s-\alpha}, \infty}} \lesssim \norm{W_\lambda \chi_B}_{L^{\frac{n}{n-2s-\alpha}, \infty}} + \norm{w - W_\lambda \chi_B}_{L^{\frac{n}{n-2s-\alpha}, \infty}}, \]
where $\chi_B$ denotes the indicator function of $B$. 
By Hölder and Sobolev, we again have
\begin{align*}
    \norm{w - W_\lambda \chi_B}_{L^{\frac{n}{n-2s-\alpha}, \infty}} &\leq \norm{w - W_\lambda \chi_B}_{L^{\frac{n}{n-2s-\alpha}}(B)} \leq  \norm{w - W_\lambda 1_B}_{L^{2^*_s}(B)}  \\
    &\leq \norm{w - W_\lambda}_{L^{2^*_s}(\R^n)}  \lesssim \norm{w - W_\lambda}_{\dot H^s}. 
\end{align*} 
The assumptions \cref{parameters-hardy} imply $-\alpha > -n+2s+\alpha$ and therefore \cref{prop:W-regularity} shows that $W(x) \lesssim |x|^{-(n-2s-\alpha)}$. Hence, 
\[ \norm{W_\lambda \chi_B}_{L^{\frac{n}{n-2s-\alpha}, \infty}} \leq \norm{W_\lambda}_{L^{\frac{n}{n-2s-\alpha}, \infty}} 
= \lambda^{\frac{-n + 2s + 2 \alpha}{2}} \norm{W}_{L^{\frac{n}{n-2s-\alpha}, \infty}}  
\lesssim 
\lambda^{\frac{-n + 2s + 2 \alpha}{2}}
.
\]

On the other hand, since $W(x) \gtrsim |x|^{-n+2s+\alpha}$ on $\R^n \setminus c_0 B$ by \cref{prop:W-regularity}, we have 
\[ \norm{w - W_\lambda}_{\dot H^s}^p \gtrsim \norm{(w - W_\lambda) |\emptyparam|^{-t}}_{L^p}^p \geq  \int_{\R^n \setminus B} W_\lambda^p |x|^{-tp} = \int_{\R^n \setminus \lambda B} W^p |x|^{-tp} \gtrsim \lambda^{(-n + 2s + \alpha -t)p + n}.  \]
In the second inequality we again used that $w$ is supported in $B$.
Recalling that $t = s - \frac{n}{2} + \frac{n}{p}$, it follows 
\[ \norm{w - W_\lambda}_{\dot H^s}  \gtrsim \lambda^{(-n + 2s + \alpha -t) + \frac{n}{p}} = \lambda^{\frac{-n+2s+2\alpha}{2}}. \]
By combining all these estimates, \cref{goal remainder} follows in this case as well. 
\end{proof}

\subsection{Proof of \texorpdfstring{\cref{lemma compactness lions}}{Lemma~\ref{lemma compactness lions}}}
\label{subsection lions}

\Cref{lemma compactness lions} would follow readily by transforming to the setting of the cylinder and using the argument in \cite[proof of Theorem 1.2 (iv)]{AoDelaTorreGonzalez2022}. Similarly, \cref{lemma compactness lions} would follow from the considerations in \cite[Section 3]{Ghoussoub2015}, using the Caffarelli--Silvestre extension.

However, since we do not use the cylindrical formulation nor the Caffarelli--Silvestre extension anywhere else in this paper, we find it of some interest to give a direct argument on $\R^n$. The argument has exactly the same structure as the one on the cylinder, only translations are replaced by dilations. 

We need the following lemma about the concentration behavior of an $\dot H^s$-bounded sequence. 

\begin{lemma}
\label{lemma Lq balls lions}
    Assume \cref{parameters-hardy}.
Let $(w_k)_{k\in\N}$ be a sequence of functions uniformly bounded in $\dot{H}^s(\R^n)$. 
Suppose that 
\begin{equation}
    \label{lions assumption local to zero}
    \sup_{r > 0} \int_{B_{2r} \setminus B_r} \frac{\abs{w_k}^p}{\abs{x}^{tp}} \to 0 \qquad \text{ as } k \to \infty.  
\end{equation} 
Then $\int_{\R^n} \frac{\abs{w_k}^p}{\abs{x}^{tp}} \to 0$.
\end{lemma}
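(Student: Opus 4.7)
The plan is to exploit the dilation invariance of both the $\dot H^s$-seminorm and the weighted $L^p$-norm. Decompose $\R^n = \bigsqcup_{j \in \Z} A_j$ with $A_j \defeq B_{2^{j+1}} \setminus B_{2^j}$, and rescale each annulus to the fixed $A_0 \defeq B_2 \setminus B_1$ by setting $v_k^{(j)}(y) \defeq 2^{j(n-2s)/2} w_k(2^j y)$. Then $\|v_k^{(j)}\|_{\dot H^s(\R^n)} = \|w_k\|_{\dot H^s(\R^n)}$ remains uniformly bounded in $k,j$, and the quantities
\[ a_{k,j} \defeq \int_{A_j} \frac{|w_k|^p}{|x|^{tp}} = \int_{A_0} \frac{|v_k^{(j)}|^p}{|y|^{tp}} \]
satisfy $\eps_k \defeq \sup_j a_{k,j} \to 0$ by applying the hypothesis at $r = 2^j$.

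The decisive point is the strict inequality $p > 2$: it enables the splitting $a_{k,j} = a_{k,j}^{1-2/p}\cdot a_{k,j}^{2/p} \leq \eps_k^{1-2/p}\cdot a_{k,j}^{2/p}$, which upon summing reduces the statement to a uniform-in-$k$ bound of the form $\sum_j a_{k,j}^{2/p} \leq C$.

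To establish this bound, introduce the slightly enlarged annulus $A_0' \defeq B_3 \setminus B_{1/2}$. Since $|y|^{-tp} \sim 1$ on $A_0$ and $p < 2^*_s$, Hölder's inequality combined with the fractional Sobolev embedding $H^s(A_0') \hookrightarrow L^{2^*_s}(A_0')$ on the bounded Lipschitz set $A_0'$ gives
\[ a_{k,j}^{2/p} \lesssim \|v_k^{(j)}\|_{L^p(A_0)}^2 \lesssim \|v_k^{(j)}\|_{L^{2^*_s}(A_0')}^2 \lesssim [v_k^{(j)}]_{H^s(A_0')}^2 + \|v_k^{(j)}\|_{L^2(A_0')}^2, \]
where $[\emptyparam]_{H^s(A_0')}$ denotes the Gagliardo seminorm on $A_0'$. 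Setting $A_j'' \defeq 2^j A_0'$ and undoing the rescaling, these two contributions become $[w_k]_{H^s(A_j'')}^2$ and $2^{-2js}\,\|w_k\|_{L^2(A_j'')}^2$, respectively. The family $\{A_j''\}_{j\in\Z}$ has bounded overlap, and any pair $(x,y)$ with both $x,y\in A_j''$ satisfies $|x-y| \lesssim |x|$, hence
\[ \sum_j [w_k]_{H^s(A_j'')}^2 \lesssim \|w_k\|_{\dot H^s(\R^n)}^2, \]
while the factor $2^{-2js}$ is precisely the Hardy weight at scale $2^j$, giving
\[ \sum_j 2^{-2js}\,\|w_k\|_{L^2(A_j'')}^2 \lesssim \int_{\R^n} \frac{|w_k|^2}{|x|^{2s}} \lesssim \|w_k\|_{\dot H^s(\R^n)}^2 \]
by the fractional Hardy inequality.

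Combining these ingredients gives $\sum_j a_{k,j}^{2/p} \lesssim \|w_k\|_{\dot H^s}^2 \leq C$, and therefore $\int_{\R^n} |w_k|^p |x|^{-tp} = \sum_j a_{k,j} \lesssim \eps_k^{1-2/p} \to 0$. The main subtlety is the fractional localization: multiplying by a cutoff and bounding $\|\chi v\|_{\dot H^s(\R^n)}^2$ back in terms of local data is delicate in the fractional setting, so it is important to pass through the Gagliardo seminorm on the fattened annulus $A_0'$, which is intrinsically local and makes the bounded-overlap summation go through cleanly.
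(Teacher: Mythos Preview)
Your proof is correct and follows essentially the same strategy as the paper: decompose $\R^n$ into dyadic annuli, rescale each to a fixed annulus, invoke the local Sobolev embedding $H^s(A_0')\hookrightarrow L^{2^*_s}(A_0')$, and sum using bounded overlap together with the fractional Hardy inequality. The only notable difference is that the paper routes the argument through an auxiliary weighted $L^q$-norm with $q=2+\tfrac{2sp}{n}\in(p,2^*_s)$ and then interpolates back, whereas your algebraic splitting $a_{k,j}\le \eps_k^{1-2/p}a_{k,j}^{2/p}$ extracts the gain from $\eps_k\to0$ more directly and avoids that second interpolation step; this is a mild simplification, not a genuinely different route.
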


\begin{proof}
We follow the proof of \cite[Lemma 1.21]{Willem}. 
For every $q \in (2, 2^*_s)$, we denote $t_q \defeq s - n(\frac{1}{2} - \frac{1}{q}) \in (0,s)$; thus $t = t_p$ for the fixed parameter $p$ in our notation in the rest of the paper. For every $q \in (p, 2^*_s)$, we have $t_p p > t_q q$. Therefore, by Hölder's inequality, for every open set $\Omega$ and any $w\in \dot H^s(\R^n)$, 
\[ 
\norm{w\abs{\emptyparam}^{-t_q}}_{L^q(\Omega)}^q
\le
\norm{w\abs{\emptyparam}^{-t_p}}_{L^p(\Omega)}^{\frac{t_q q}{t_p}}
\norm{w}_{L^{2^*_s}(\Omega)}^{2^*_s(1 - \frac{t_q q}{t_p p})} .
\]
Choosing $q = 2 + \frac{2sp}{n} \in (p, 2^*_s)$ we have $1 - \frac{t_q q}{t_p p} = \frac{2}{2^*_s}$ and so, by Sobolev inequality on $\Omega$, 
\begin{equation}
    \label{hölder willem}
    \norm{w\abs{\emptyparam}^{-t_q}}_{L^q(\Omega)}^q
    \le 
    S(\Omega) 
    \norm{w\abs{\emptyparam}^{-t_p}}_{L^p(\Omega)}^\theta
    \Big(\norm{w}_{\dot H^s(\Omega)}^2 + \norm{w}_{L^2(\Omega)}^2\Big).  
\end{equation}  
where $\theta \defeq \frac{t_q q}{t_p p}$ and  
\[  \norm{w}_{\dot H^s(\Omega)}^2 \defeq \iint_{\Omega \times \Omega} \frac{\abs{w(x) - w(y)}^2}{\abs{x-y}^{n+2s}} \, \mathrm dx \mathrm dy  . \]
Given $\lambda>0$, by replacing $w$ with $w(\lambda\emptyparam)$ in \cref{hölder willem}, we obtain
\begin{equation}
    \label{hölder willem 2}
    \norm{w\abs{\emptyparam}^{-t_q}}_{L^q(\lambda\Omega)}^q
    \le 
    S(\Omega) 
    \norm{w\abs{\emptyparam}^{-t_p}}_{L^p(\lambda\Omega)}^\theta
    \Big(\norm{w}_{\dot H^s(\lambda\Omega)}^2 + \lambda^{-2s}\norm{w}_{L^2(\lambda\Omega)}^2\Big).
\end{equation} 
We stress that the constant $S(\Omega)$ here does not depend on $\lambda$.

We now apply \cref{hölder willem 2} with $\Omega \defeq B_1 \setminus \overline{B_{\frac12}}$ and $\lambda = 2^i$ for $i\in\Z$.
Summing over all $i\in\Z$, we find 
\begin{equation}
\begin{aligned}
    \label{hölder willem 3}
    \norm{w\abs{\emptyparam}^{-t_q}}_{L^q}^q
    &\le
    S(B_1 \setminus \overline{B_{\frac12}})
    \Big(\sup_{r>0} \norm{w\abs{\emptyparam}^{-t_p}}_{L^p(B_{2r}\setminus B_r)}^\theta\Big)
    \Big(\norm{w}_{\dot H^s}^2 + \norm{w\abs{\emptyparam}^{-s}}_{L^2}^2\Big)\\
    &\lesssim
    \Big(\sup_{r>0} \norm{w\abs{\emptyparam}^{-t_p}}_{L^p(B_{2r}\setminus B_r)}\Big)^\theta
    \norm{w}_{\dot H^s}^2
    ,
\end{aligned}
\end{equation}
where the last step is justified by the fractional Hardy inequality.

Moreover, Hölder's inequality and the fractional Hardy inequality tell us that, for some $\vartheta \in (0,1)$, 
\begin{equation}\label{hölder willem 4}
    \norm{w\abs{\emptyparam}^{-t_p}}_{L^p} 
    \le 
    \norm{w\abs{\emptyparam}^{-s}}_{L^2}^\vartheta
    \norm{w\abs{\emptyparam}^{-t_q}}_{L^q}^{1-\vartheta}
    \lesssim
    \norm{w}_{\dot H^s}^\vartheta
    \norm{w\abs{\emptyparam}^{-t_q}}_{L^q}^{1-\vartheta}.
\end{equation}
Combining \cref{hölder willem 3,hölder willem 4}, we deduce
\begin{equation*}
    \norm{w\abs{\emptyparam}^{-t_p}}_{L^p}
    \lesssim 
    \Big(
        \sup_{r>0} \norm{w\abs{\emptyparam}^{-t_p}}_{L^p(B_{2r}\setminus B_r)}
    \Big)^{\frac{\theta(1-\vartheta)}q}
    \norm{w}_{\dot H^s}^{\vartheta + \frac{2(1-\vartheta)}q}.
\end{equation*}
The latter inequality implies the desired statement plugging in $w=w_k$ and letting $k$ go to infinity.
\end{proof}

\begin{proof}[Proof of \cref{lemma compactness lions}] 
    Let $(w_k)_{k\in\N} \subset \dot H^s(\R^n)$ be a minimizing sequence for the inequality \cref{eq:hardy}, normalized such that $\norm{w_k \abs{\emptyparam}^{-t}}_{L^p} = 1$ and $\norm{w_k}_\ast^2 \to \tilde\Lambda$ as $k \to \infty$. 

By \cref{lemma Lq balls lions}, there exists $\delta > 0$ such that, for all $k\in\N$, there is $\lambda_k > 0$ satisfying 
\[ \int_{B_{2\lambda_k} \setminus B_{\lambda_k}} \frac{\abs{w_k}^p}{\abs{x}^{tp}} \geq \delta. \]

Let $v_k(x) \defeq \lambda_k^\frac{n-2s}{2} w_k(\lambda_k x)$. Then, $\norm{v_k \abs{\emptyparam}^{-t}}_{L^p} = 1$ and $\norm{v_k}_\ast^2 \to \tilde\Lambda$ and 
\begin{equation}
    \label{v non-zero}
    \int_{B_2 \setminus B_1} \frac{\abs{v_k}^p}{\abs{x}^{tp}} =  \int_{B_{2\lambda_k} \setminus B_{\lambda_k}} \frac{\abs{w_k}^p}{\abs{x}^{tp}} \geq \delta. 
\end{equation}
Since $v_k$ is bounded in $\dot H^s$, there is $v \in \dot H^s$ such that, up to a subsequence,
\[ v_k \rightharpoonup v \qquad \text{ weakly in } \dot H^s. \]
Moreover, since $p < \frac{2n}{n-2s}$, we have the compact embedding 
\[\dot H^s(B_R \setminus B_r) \hookrightarrow L^p(B_R \setminus B_r) \simeq L^p(B_R \setminus B_r, \abs{\emptyparam}^{-tp})\]
for every annulus $B_R \setminus B_r$. By diagonal extraction along a sequence of radii $R \to \infty$ and $r \to 0$, we thus can assume that a further subsequence satisfies 
\begin{align*}
    v_k &\to v \qquad \text{ in } L^p_{\mathrm{loc}}(\R^n \setminus \{0\}, \abs{\emptyparam}^{-tp}), \\
    v_k(x) &\to v \qquad \text{ pointwise for a.e. } x \in \R^n. 
    \end{align*}
In particular, by the strong $L^p_{\mathrm{loc}}$-convergence we have 
\[ \int_{B_2 \setminus B_1} \frac{\abs{v}}{\abs{x}^{tp}}  = \lim_{k \to \infty} \int_{B_2 \setminus B_1} \frac{\abs{v_k}}{\abs{x}^{tp}} \geq \delta,  \]
so $v \not \equiv 0$. 
By weak lower semi-continuity (of the norm in the Hilbert space of functions in $\dot H^s(\R^n)$ endowed with the scalar product $\scalprod{\emptyparam,\emptyparam}_\ast$), $\norm{v}_\ast \leq \liminf_{k \to \infty} \norm{v_k}_\ast = \tilde\Lambda$.
So $v$ is a minimizer provided that we can show $\norm{v \abs{\emptyparam}^{-t}}_{L^p} = 1$. To achieve this, we write $v_k = v + \rho_k$. 
Using inequality \cref{eq:hardy} we find 
\[ \tilde\Lambda \left(\norm{v \abs{\emptyparam}^{-t}}_{L^p}^2 + \norm{\rho_k \abs{\emptyparam}^{-t}}_{L^p}^2 \right) \leq \norm{v}_\ast ^2 + \norm{\rho_k}_\ast^2. \]
Taking $\limsup$ on both sides and denoting $R \defeq \limsup_{k \to \infty} \norm{\rho_k \abs{\emptyparam}^{-t}}_{L^p}$, we find 
\begin{equation}
    \label{sobolevestimate}
    \tilde\Lambda \left(\norm{v \abs{\emptyparam}^{-t}}_{L^p}^2 + R^2 \right) \leq \norm{v}_\ast ^2 + \limsup_{k \to \infty} \norm{\rho_k}_\ast^2 = \lim_{k \to \infty} \norm{v_k}_\ast^2 = \tilde\Lambda, 
\end{equation} 
that is, 
\begin{equation}
    \label{concavity1}
    \norm{v \abs{\emptyparam}^{-t}}_{L^p}^2 + R^2  \leq 1.  
\end{equation} 
On the other hand, 
\begin{align}
\norm{v \abs{\emptyparam}^{-t}}_{L^p}^2 + R^2  &\geq \left(\norm{v \abs{\emptyparam}^{-t}}_{L^p}^p + R^p \right)^{2/p} = \left( \lim_{k \to \infty} \norm{(v + \rho_k) \abs{\emptyparam}^{-t}}_{L^p}^p \right)^{2/p} = 1. \label{concavity2}
    \end{align}
Here, for the first step, we used the concavity of $t \mapsto t^{2/p}$ and in the second we used the Brezis--Lieb lemma \cite{BrezisLieb1983} for the weighted space $L^p(\R^n, \abs{\emptyparam}^{-tp})$. 

Combining \cref{concavity1,concavity2}, we see that equality must hold in all of the above inequalities. Since $t \mapsto t^{2/p}$ is strictly concave on $(0, \infty)$, equality in the concavity inequality implies that one of the summands $\norm{v \abs{\emptyparam}^{-t}}_{L^p}^p$ or $R$ must be zero. Since $v \not \equiv 0$, we must have $\limsup_{k \to \infty} \norm{\rho_k \abs{\emptyparam}^{-t}}_{L^p} = R = 0$. Coming back to \cref{sobolevestimate}, we see $\lim_{k \to \infty} \norm{v_k}_\ast^2 = \norm{v}_\ast ^2$. Together with $v_k \rightharpoonup v$ in $\dot H^s$, this implies that $v_k \to v$ strongly in $\dot H^s$. Since $v \not \equiv 0$, $v$ is the desired minimizer.

Since minimizers do not change sign, either $v$ or $-v$ is the desired non-negative minimizer. 
 The value $c = \tilde\Lambda^{-\frac{1}{p-2}- \frac{1}{2}}$ is determined from the normalization of $v$. 
\end{proof}

\section{Symmetry for \texorpdfstring{$p$}{p} sufficiently close to \texorpdfstring{$2$}{2}}
\label{sec:symmetry-proof}

In this section, we present the proof of \Cref{thm:perturbative-symmetry-hardy}: if $p$ is taken sufficiently close to $2$, then the minimizers of \cref{eq:hardy} are radially symmetric. 

The equivalent statement for the classical case (i.e., $s=1$) was proven in \cite[Theorem 1.1]{Dolbeault2009} with the following strategy.
Consider an angular derivative $\chi_{\alpha, p}$ of a minimizer $W_{\alpha,p}$ of \cref{eq:hardy} and show that, as $p\to 2$, $(\chi_p)_{p>2}$ is a minimizing sequence for the fractional Hardy inequality. If $\chi_{\alpha, p}\not\equiv0$, this yields a contradiction because $\chi_{\alpha,p}$ is orthogonal to all radial functions but the Hardy inequality holds with an improved constant for functions orthogonal to radial ones. Thus $\chi_{\alpha,p}\equiv 0$ and therefore $W_{\alpha,p}$ is radial.

Repeating the same scheme in the fractional case presents some difficulties. First of all, the angular derivative $\chi_p$ may not be as integrable as necessary, so we consider a discrete angular derivative. 
Moreover, to find a contradiction as $p\to 2$, one needs a pointwise control on $W_{\alpha,p}$. When $s=1$, such control follows from classical elliptic regularity and a comparison with the minimizer among radial functions. 
 When $0<s<1$, we need to devise a different argument because the minimizer among radial functions is not explicit. The main new contribution is \cref{lem:asymptotic-growth}, which produces a pointwise control for subsolutions of critical fractional elliptic equations and in turn yields the desired control over $W_{\alpha,p}$. The proof is particularly technical because we need to prove a statement that is stable as $p$ and $\alpha$ change value. 

\begin{proof}[Proof of \Cref{thm:perturbative-symmetry-hardy}] 

\textbf{Step 1.} \emph{Setup.} Given the parameters set up in \cref{parameters-hardy} and further assuming $0 > \alpha > \alpha_0 $, we consider the functional
    \begin{equation*}
        \mathcal F_{\alpha,p}(w) \defeq 
        \frac{\norm{w}_{{\dot{H}^s}}^2 + C(\alpha)\norm{w\abs{\emptyparam}^{-s}}_{L^2}^2}{\norm{w\abs{\emptyparam}^{-t}}_{L^p}^2},
    \end{equation*}
    where $t=s -n\big(\frac12-\frac1p\big)$ depends on $p$.
   Let $\tilde\Lambda_{\alpha,p}$ be the infimum of $\mathcal F_{\alpha,p}$ (over non-zero functions) and let $W_{\alpha,p}$ be a non-negative minimizer, i.e., $\mathcal F_{\alpha,p}(W_{\alpha,p}) = \tilde\Lambda_{\alpha,p}$. {(Recall that such $W_{\alpha,p}$ exists by \cite[Theorem 1.1]{Ghoussoub2015}.)}
    We already know, thanks to \cref{eq:hardy-eq}, that
    \begin{equation}\label{eq:local10}
        (-\lapl)^sW_{\alpha,p} + C(\alpha)W_{\alpha,p}\abs{x}^{-2s} = W_{\alpha,p}^{p-1}\abs{x}^{-tp},
    \end{equation}
    where we have normalized $W_{\alpha,p}$ so that $\tilde\Lambda_{\alpha,p}=\norm{W_{\alpha,p}\abs{\emptyparam}^{-t}}_{L^p}^{p-2}$. 

We want to show that there exists $\varepsilon >0$ (depending on $n$, $s$, $\alpha_0$) such that, if $p \in (2,2+\varepsilon)$ and $\alpha_0\le \alpha\le 0$, then $W_{\alpha,p}$ is radially symmetric. 
We prove it by contradiction, so we assume the existence of a sequence $\alpha_k\to \alpha_\infty \ge \alpha_0$ and $p_k\to 2$ so that $W_{\alpha_k, p_k}$ is not radial.

To proceed, we would like to differentiate \cref{eq:local10} along an infinitesimal rotation. Since, a priori, we lack the needed regularity estimates on $W_{\alpha,p}$, we perform a discrete derivative instead.
Let $R:\R^n\to\R^n$ be a rotation and let \[\chi_{\alpha,p}\defeq W_{\alpha,p}\circ R-W_{\alpha,p}.\] 
We will show that $\chi_{\alpha_k,p_k}=0$ for $k$ sufficiently large.

    \textbf{Step 2.} \emph{Differentiating the Euler--Lagrange equations.}
    From \cref{eq:local10}, we deduce
    \begin{equation*}
        (-\lapl)^s\chi_{\alpha,p} + C(\alpha)\chi_{\alpha,p}\abs{x}^{-2s}
        = 
        \frac{(W_{\alpha,p}\circ R)^{p-1}-W_{\alpha,p}^{p-1}}{W_{\alpha,p}\circ R-W_{\alpha,p}}\abs{x}^{-tp+2s} \chi_{\alpha,p} \abs{x}^{-2s}
    \end{equation*}
    and integrating against $\chi_{\alpha,p}$ we obtain
    \begin{equation*}
       \mathcal F_{\alpha,2}(\chi_{\alpha,p})\norm{\chi_{\alpha,p}\abs{\emptyparam}^{-s}}_{L^2}^2
        \le 
        \norm*{\frac{(W_{\alpha,p}\circ R)^{p-1}-W_{\alpha,p}^{p-1}}{W_{\alpha,p}\circ R-W_{\alpha,p}}\abs{\emptyparam}^{-tp+2s}}_{L^\infty} \norm{\chi_{\alpha,p}\abs{\emptyparam}^{-s}}_{L^2}^2.
    \end{equation*}
    Since
    \begin{equation*}
        \norm*{\frac{(W_{\alpha,p}\circ R)^{p-1}-W_{\alpha,p}^{p-1}}{W_{\alpha,p}\circ R-W_{\alpha,p}}\abs{\emptyparam}^{-tp+2s}}_{L^\infty}
        \le
        (p-1)\norm{W_{\alpha,p}\abs{\emptyparam}^{(-tp+2s)/(p-2)}}^{p-2}_{L^\infty},
    \end{equation*}
    provided that $\chi_{\alpha,p}$ is not identically zero, we get (noticing that $\frac{-tp+2s}{p-2}=\frac n2-s$)
    \begin{equation*}
        \mathcal F_{\alpha,2}(\chi_{\alpha,p}) \le  (p-1)  \norm{W_{\alpha,p}\abs{\emptyparam}^{\frac n2-s}}^{p-2}_{L^\infty}.
    \end{equation*}
    To conclude, we need to establish three facts: 
    \begin{enumerate}[label={(F\arabic*)}]
        \item \label{it:f1} $\limsup_{k\to\infty} \tilde\Lambda_{\alpha_k,p_k}\le \tilde\Lambda_{\alpha_\infty,2}$;
        \item \label{it:f2} There exists a constant $\tilde\Lambda'_{\alpha_\infty,2}>\tilde\Lambda_{\alpha_\infty,2}$ such that, for any $\alpha$ sufficiently close to $\alpha_\infty$, $\mathcal F_{\alpha, 2}(\chi)> \tilde\Lambda'_{\alpha_\infty, 2}$ holds for all functions $\chi$ orthogonal to radial ones;
        \item \label{it:f3}  There exist $M=M(n, s)$ and $\kappa=\kappa(n,s)\in\N\setminus \{0\}$ such that
        \begin{equation*}
            \norm{W_{\alpha,p}\abs{\emptyparam}^{\frac n2-s}}_{L^\infty} 
            \le M\Big(\norm{W_{\alpha,p}\abs{\emptyparam}^{-t}}^{p-1}_{L^p} + \norm{W_{\alpha,p}\abs{\emptyparam}^{-t}}^{(p-1)^\kappa}_{L^p}\Big).
        \end{equation*}
    \end{enumerate}
    
    \textbf{Step 3.} \emph{Conclusion of the argument assuming \cref{it:f1}--\cref{it:f3}.}
    Let us take facts \cref{it:f1}, \cref{it:f2}, and \cref{it:f3} for granted. 
    
    By \cref{it:f1} and \cref{it:f3}, we have (recall that $\tilde \Lambda_{\alpha,p}=\norm{W_{\alpha,p}\abs{\emptyparam}^{-t}}_{L^p}^{p-2}$)
    \begin{equation*}
        \limsup_{k\to \infty}\norm{W_{\alpha_k,p_k}\abs{\emptyparam}^{\frac n2-s}}^{p_k-2}_{L^\infty} 
        \le \tilde\Lambda_{\alpha_\infty,2}, 
    \end{equation*}
    and, thus, 
    \begin{equation*}
        \limsup_{k\to\infty} \mathcal F_{\alpha_k, 2}(\chi_{\alpha_k,p_k})
        \le \limsup_{k\to\infty}(p_k-1)\norm{W_{\alpha_k,p_k}\abs{\emptyparam}^{\frac n2-s}}_{L^\infty}^{p_k-2}
        \le \tilde\Lambda_{\alpha_\infty,2}.
    \end{equation*}
    This is in contradiction with fact \cref{it:f2}. We deduce that $\chi_{\alpha_k,p_k}\equiv 0$ for $k$ sufficiently large and, since the rotation $R$ that defines $\chi_{\alpha,p}$ was chosen arbitrarily, we conclude that $W_{\alpha_k,p_k}$ is radial for $k$ sufficiently large.\footnote{We remark that there exists a finite set of rotations such that any $L^1_{\mathrm{loc}}$-function invariant under the action of such rotations must be radial.}

  \textbf{Step 4.} \emph{Proving \cref{it:f1}--\cref{it:f3}.} To conclude the proof, it remains to show that \cref{it:f1}, \cref{it:f2}, and \cref{it:f3} hold.  
    
    To prove \cref{it:f1}, it is sufficient to observe that $\mathcal F_{\alpha_k,p_k}\to \mathcal F_{\alpha_\infty,2}$ pointwise as $k\to \infty$ on smooth functions with compact support.

    For \cref{it:f2}, we observe that $\chi$ satisfies the Hardy inequality with a constant $C_s^{(1)}$ (in the notation of \cite{Yafaev1999}) strictly smaller than $C_s^{(0)}$ (in the notation of \cite{Yafaev1999}). That is,
    \begin{equation*}
        C_s^{(1)}\norm{\chi}_{\dot H^s}^2 \ge  \norm{\chi\abs{\emptyparam}^{-s}}_{L^2}^2,
    \end{equation*}
    whereas the optimal constant in this inequality for arbitrary functions (without the constraint of being $L^2$-orthogonal to radial ones) would be $C_s^{(0)}$ instead of $C_s^{(1)}$. 
    This result is contained in  \cite[p. 2]{Yafaev1999} (in particular, see \cite[Lemma 2.1, eq. (2.26), eq. (2.28), Theorem 2.9]{Yafaev1999}; cf. also the study of sharp constants in the Hardy inequality in \cite{Herbst77,FrankSeiringer}). {Notice that the proof of \cite[eq. (2.28)]{Yafaev1999} gives in fact the \emph{strict} inequality $C_s^{(0)} > C_s^{(1)}$ for every $s < \frac{n}{2}$.} 
    
    As a consequence, we deduce $\tilde\Lambda_{\alpha,2}= \frac1{C_s^{(0)}}+C(\alpha)$ while $\mathcal F_{\alpha, 2}(\chi)\ge \frac1{C_s^{(1)}}+C(\alpha)$ for all $\chi$ orthogonal to radial functions. The statement of \cref{it:f2} follows from the continuity of $C(\alpha)$.
    
    The estimate \cref{it:f3} is the content of \cref{lem:asymptotic-growth} below.
\end{proof} 

\begin{lemma}[Asymptotic growth for subsolutions of a nonlinear elliptic problem] \label{lem:asymptotic-growth}
    For any integer $n\ge 1$, $0<s<\min\{1, n/2\}$, and $\eps>0$, there exists a constant $C>0$ and a positive integer $\kappa$ so that the following statement holds.

    Let us assume that $1\le p < 2^*_s-\eps$.
    Let $w\in L^1_{\mathrm{loc}}(\R^n)$ be a non-negative function such that $\fint_{B_R} w\to 0$ as $R\to\infty$. 
    If $(-\lapl)^s w \le w^{p-1}\abs{x}^{-tp}$, then, for all $x\in\R^n$,
    \begin{equation*}
        w(x) \le C \big(M^{p-1} + M^{(p-1)^\kappa}\big) \frac{1}{\abs{x}^{\frac n2-s}},
    \end{equation*}
    where $M\defeq \norm{w\abs{\emptyparam}^{-t}}_{L^p}$.
\end{lemma}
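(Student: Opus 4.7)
The strategy is to combine the global comparison principle \cref{thm:global-max-principle} with a finite Moser-type bootstrap on scaling-adapted weighted $L^q$ norms of $w$, iterated via the Stein--Weiss inequality. First, the assumption $\fint_{B_R}\abs w\to 0$ together with the subsolution inequality $(-\lapl)^s w\le w^{p-1}\abs x^{-tp}$ yields via \cref{thm:global-max-principle} the pointwise integral bound
\[
w(x)\le C_{n,-s}\int_{\R^n}\frac{w(y)^{p-1}\abs y^{-tp}}{\abs{x-y}^{n-2s}}\,\mathrm dy,
\]
i.e.\ $w\le C\,I_{2s}\bigl(w^{p-1}\abs{\emptyparam}^{-tp}\bigr)$ with $I_{2s}$ the Riesz potential. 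This pointwise estimate will be the input for an iteration that I will run along the critical scaling line.

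Parametrise weights by an exponent $q\in[p,\infty]$ via $\alpha(q)\defeq n/q-(n/2-s)$, so that $\alpha(p)=t$ (the starting weight, for which $\norm{w\abs{\emptyparam}^{-\alpha(p)}}_{L^p}\le M$ by hypothesis) and $\alpha(\infty)=-(n/2-s)$ (the target weight, which realises the claimed pointwise bound). The one-step estimate to be established is: if $\norm{w\abs{\emptyparam}^{-\alpha(q)}}_{L^q}\le B$, then
\[
\norm{w\abs{\emptyparam}^{-\alpha(q')}}_{L^{q'}}\le C\,B^{p-1},\qquad \frac{1}{q'}\defeq \frac{p-1}{q}-\frac{2s}{n}.
\]
This follows by writing $w^{p-1}\abs y^{-tp}=(w\abs y^{-\alpha(q)})^{p-1}\abs y^{-a}$ with $a\defeq tp-\alpha(q)(p-1)$ and applying the Stein--Weiss inequality with input weight $a$ and output weight $b=-\alpha(q')$ chosen on the boundary $a+b=0$. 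A direct computation shows that the Stein--Weiss gaps $n/q'-b$ and $n/r'-a$ (where $r=q/(p-1)$) both equal $n/2-s$ independently of $q$, $p$ and $k$, and this is the uniformity that keeps the one-step constant bounded.

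Iterating yields the affine recursion $y_{k+1}=(p-1)y_k-2s/n$ on $y_k\defeq 1/q_k$ starting at $y_0=1/p$. Its unstable fixed point is $y^\ast=2s/(n(p-2))$, and the assumption $p<2^*_s-\eps$ guarantees that $y^\ast-y_0$ is bounded below by a positive constant depending only on $n,s,\eps$; combined with the bounded multiplier $p-1\le 2^*_s-1$, this forces the iterates to cross zero after at most $\kappa=\kappa(n,s,\eps)$ steps, independently of $p$. The endpoint step $q'=\infty$ is not literally a Stein--Weiss application and will be replaced by a direct H\"older estimate on the integral representation, yielding the pointwise bound $w(x)\lesssim B_\kappa^{p-1}\abs x^{-(n/2-s)}$. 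Composing the one-step estimates gives $B_k\lesssim M^{(p-1)^k}$; distinguishing the regimes $M\le 1$ (stop the iteration after the first step, producing the $M^{p-1}$ term) and $M\ge 1$ (iterate to step $\kappa$, producing the $M^{(p-1)^\kappa}$ term) yields the announced sum, and the degenerate case $p=1$ is handled by a single application of the comparison principle and H\"older. The main technical obstacle will be maintaining uniform control on constants as $p$ and $k$ vary, but the observation above — that the Stein--Weiss admissibility gaps are precisely $n/2-s$ at every iteration — is exactly what makes this possible.
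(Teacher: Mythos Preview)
Your approach is genuinely different from the paper's and conceptually appealing, but there is a real gap in the Stein--Weiss step. After the very first iteration you have $q'>2^*_s$, hence $\alpha(q')<0$; in standard Stein--Weiss notation this means you are applying the inequality
\[
\bigl\||x|^{-\beta}\,I_{2s}\bigl(|y|^{-\alpha}g\bigr)\bigr\|_{L^{q'}}\;\le\; C\,\|g\|_{L^{r}}
\]
with $\beta=\alpha(q')<0$ (and $\alpha=a>0$, $\alpha+\beta=0$). The classical Stein--Weiss theorem requires both weight exponents to be non-negative; the case of a negative output exponent is not covered, and it is not clear that the constant---even if the inequality happens to hold---depends only on the two ``gaps'' $n/q'-\beta$ and $n/r'-\alpha$ that you compute to be $n/2-s$. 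Your uniformity claim rests entirely on this, so the argument as written is incomplete. (There is also a sign slip: you write $b=-\alpha(q')$, but to get $a+b=0$ with your $a=tp-(p-1)\alpha(q)$ one needs $b=\alpha(q')$.)

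The paper avoids this difficulty by localizing rather than using global weighted estimates. After the scaling reduction to $|x|=1$, the integral representation is split into a far part ($|y|\notin[1-2\ell,1+2\ell]$), which is bounded directly by H\"older as $\ell^{-(n-2s)}M^{p-1}$, and a near part on the annulus $\mathrm{An}(2\ell)$, on which the weights $|y|^{-tp}$ are bounded above and below and one can invoke \emph{unweighted} Hardy--Littlewood--Sobolev. The iteration then runs over shrinking annuli $\mathrm{An}(2^{-i})$, and the additive far-part contribution at every step is what produces the $M^{p-1}$ term in the final bound (your sketch of ``stop after one step for $M\le 1$'' does not actually yield an $L^\infty$ estimate). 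If you can justify the weighted inequality you need---for instance via a cylinder/Mellin argument showing the conjugated Riesz kernel still gives a bounded convolution---your route would work and would in fact give the stronger estimate $w(x)\lesssim M^{(p-1)^\kappa}|x|^{-(n/2-s)}$; but as stated, that justification is missing.
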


\begin{remark}
We observe that $w(x)=\abs{x}^{-(\frac n2-s)}$ solves $(-\lapl)^s w = A w^{p-1}\abs{x}^{-tp}$ with $A\defeq \Big(\frac{2^s\Gamma(\frac n2+s)}{\Gamma(\frac n2-s)}\Big)^2$ (see \cite[Table 1]{MR3888401}). 
\end{remark}

\begin{proof} \, 
\textbf{Step 1.} \emph{Simplification via scaling.}
Given $r>0$, let $w_r(x) \defeq w(rx)\abs{r}^{\frac n2-s}$. We have $(-\lapl)^s w_r \le w_r^{p-1}\abs{x}^{-tp}$ and $\norm{w\abs{\emptyparam}^{-t}}_{L^p}=\norm{w_r\abs{\emptyparam}^{-t}}_{L^p}$ and  the desired statement is equivalent to 
\begin{equation*}
    w_r(x) \le C(M^{p-1}+M^{(p-1)^\kappa}) \quad\text{for all $r>0$ and all $\abs{x}=1$}.
\end{equation*}

Hence, since all the assumptions are invariant when replacing $w$ with $w_r$, it is sufficient to show
\begin{equation}\label{eq:local47}
    w(x) \le C(M^{p-1} + M^{(p-1)^\kappa}) \quad\text{ for all $x\in\R^n$ with $\abs{x}=1$.}
\end{equation}

\textbf{Step 2.} \emph{Main estimate.}
Using the differential inequality satisfied by $w$, we are going to show an inequality that controls $w$ pointwise in an annulus with a weighted integral of $w$ in a larger annulus (see \cref{eq:local-main-estimate} below).

Owing to the maximum principle stated in  \cref{thm:global-max-principle}, we have    
\begin{equation}\label{eq:int_ineq}
    w(x) \le C_{n,s} \int_{\R^n} \frac{w^{p-1}(y)\abs{y}^{-tp}}{\abs{x-y}^{n-2s}}\, \mathrm{d} y \quad\text{for almost every $x\in\R^n$}.
\end{equation}

In this proof, we will use the notation $A\lesssim B$ as a shorthand for $A\le \tilde CB$, where $\tilde C=\tilde C(n, s)>0$ is a constant depending only on $n$ and $s$. Let us fix $x\in B_2\setminus B_1$.
Given $0<\ell<1$, let us define the annulus $\annulus(\ell)\defeq B_{1+\ell}\setminus B_{1-\ell}$.

By H\"older's inequality, assuming $x\in \annulus(\ell)$ with $\ell<\frac12$, we have
\begin{align}
    \int_{B_{1-2\ell}} \frac{w^{p-1}(y)\abs{y}^{-tp}}{\abs{x-y}^{n-2s}}\, \mathrm{d} y
    &\lesssim \ell^{-(n-2s)}\norm{w^{p-1}\abs{\emptyparam}^{-t(p-1)}}_{L^{\frac p{p-1}}}
    \norm{\abs{\emptyparam}^{-t}}_{L^p(B_1)} 
    \lesssim \ell^{-(n-2s)}M^{p-1}, 
    \label{eq:local87}
    \\
    \int_{B_{1+2\ell}^{\complement}} \frac{w^{p-1}(y)\abs{y}^{-tp}}{\abs{x-y}^{n-2s}}\, \mathrm{d} y
    &\lesssim \ell^{-(n-2s)}\norm{w^{p-1}\abs{\emptyparam}^{-t(p-1)}}_{L^{\frac p{p-1}}}
    \norm{\abs{\emptyparam}^{-t-n+2s}}_{L^p(B_1^{\complement})} 
    \lesssim \ell^{-(n-2s)}M^{p-1},
    \label{eq:local88uff}
\end{align}
where we have implicitly used $t < \frac np < t+n-2s$ (and the fact that the difference between these numbers is $\frac n2-s$, so it depends only on $n$ and $s$).

By inserting \cref{eq:local87,eq:local88uff} into \cref{eq:int_ineq}, we obtain
\begin{equation}\label{eq:local-main-estimate}
    w(x) 
    \lesssim 
    \ell^{-(n-2s)}M^{p-1} 
    + \int_{\annulus(2\ell)} \frac{w^{p-1}(y)}{\abs{x-y}^{n-2s}}\, \mathrm{d} y \quad\text{for almost every $x\in \annulus(\ell)$.}
\end{equation}
From now on, we can forget about the equation satisfied by $w$ as we will only use \cref{eq:local-main-estimate}.

\textbf{Step 3.} \emph{Technical estimates for the iteration.}
We are going to prove that the $L^r$-norm of $w$ in $\annulus(\ell)$ is controlled by the $L^q$-norm of $w$ in $\annulus(2\ell)$ (i.e., the two inequalities \cref{eq:ineq-to-iterate,eq:ineq-to-conclude}). There will be a certain threshold $q_+$ so that if $q<q_+$ then $r$ will be a finite value larger than $q$, while if $q>q_+$ then $r=\infty$.
Some care is necessary to obtain uniformity in $p$ of the constants involved (recalling that the constants $C$ and $\kappa$ of \cref{eq:local47} are not allowed to depend on $p$).

Let us assume that $1\le q, r<\infty$ are such that $p-1<q$ and $n-2s=n(1+\frac 1r-\frac{p-1}{q})$. Then, the Hardy--Littlewood--Sobolev inequality (see, e.g.,~\cite{Lieb1983}) yields
\begin{equation}\label{eq:hls-application}
\begin{aligned}
    \norm*{\big(w^{p-1}\chi_{\annulus(2\ell)}\big)\ast \abs{\emptyparam}^{-(n-2s)}}_{L^r}
    &\le
    C(n-2s, \frac{q}{p-1}, r)
    \norm*{w^{p-1}\chi_{\annulus(2\ell)}}_{\frac{q}{p-1}} \\
    &=
    C_{\mathrm{HLS}}(n-2s, \frac{q}{p-1}, r)
    \norm*{w}^{p-1}_{L^q(\annulus(2\ell))}.
\end{aligned}
\end{equation}

Let us understand better the relationship between $q$ and $r$. 
Let $q_-\defeq (p-2)\frac{n}{2s}$ and $q_+\defeq (p-1)\frac{n}{2s}$.
If $p-1 < q < q_+$, the value of $r$ is given by
\begin{equation*}
    r = g(q)\defeq \left(\frac{p-1}q - \frac{2s}n\right)^{-1}.
\end{equation*} 
If $q_- < q < q_+$, then
\begin{equation}\label{eq:f-exponential-iteration}
    \frac{g(q)}q =
    \left(p-1 - \frac{2sq}n\right)^{-1}
    =
    \left(1-\frac{2s}{n}(q-q_-)\right)^{-1} > 1.
\end{equation}
The estimate \cref{eq:hls-application} together with \cref{eq:f-exponential-iteration}, will guarantee a uniform improvement of the integrability at each iteration step.

If $p-1<q<q_+$, by taking the $L^r$-norm of \cref{eq:local-main-estimate} and applying \cref{eq:hls-application}, we obtain
\begin{equation*}
    \norm{w}_{L^r(\annulus(\ell))}
    \lesssim
    \ell^{-(n-2s)}M^{p-1}
    +
    C_{\mathrm{HLS}}\big(n-2s, \frac{q}{p-1}, r\big)\norm{w}^{p-1}_{L^q(\annulus(2\ell))},
\end{equation*}
where $r=g(q)$.
Since $\annulus(\ell)$ has bounded measure, this estimate implies that, if $\max\{p-1,q_-\}\le q < q_+$, then 
\begin{equation}\label{eq:ineq-to-iterate}
    \norm{w}_{L^r(\annulus(\ell))}
    \lesssim
    \ell^{-(n-2s)}M^{p-1}
    +
    C_{\mathrm{HLS}}\big(n-2s, \frac{q}{p-1}, g(q)\big)\norm{w}^{p-1}_{L^q(\annulus(2\ell))}
\end{equation}
for all $1\le r \le g(q)$.

For $q>q_+$, we have (implicitly using $1<p<2^*_s$, $q_+>p-1$, and denoting by $\big(\frac q{p-1}\big)'$ the Hölder conjugate exponent of $\frac q{p-1}$)
\begin{align*}
    \norm{\abs{\emptyparam}^{-(n-2s)}}_{L^{\left(\frac q{p-1}\right)'}(B_3)}
    &\lesssim
    \left(n-(n-2s)\left(\frac q{p-1}\right)'\right)^{-(1-\frac{p-1}q)}
    \\ 
    & = 
    \left(\frac{q_+-(p-1)}{2s(q-q_+)}+\frac1{2s}\right)^{1-\frac{p-1}{q}}
    \lesssim
    1+\frac{1}{q-q_+}.
\end{align*}
Hence, by taking the $L^{\infty}$-norm of \cref{eq:local-main-estimate} and applying Hölder's inequality, we obtain
\begin{equation}\label{eq:ineq-to-conclude}
    \norm{w}_{L^\infty(\annulus(\ell))}
    \lesssim
    \ell^{-(n-2s)}M^{p-1}
    +
    \left(1+\frac{1}{q-q_+}\right)\norm{w}^{p-1}_{L^q(\annulus(2\ell))}
\end{equation}
for any $q>q_+$.

\textbf{Step 4.} \emph{Iteration argument uniform in $p$.}
To show \cref{eq:local47} we employ an iterative argument (with finitely many steps) over annuli that is built upon \cref{eq:ineq-to-iterate} (the last step of the iteration will use \cref{eq:ineq-to-conclude}).

The fact that $p-q_-=(2^*_s-p)\frac{n-2s}{2s}\gtrsim \eps$ and  \cref{eq:f-exponential-iteration} allow us to find $\delta=\delta(n, s,\eps)$ and $\kappa=\kappa(n, s,\eps)$ such that there exists a sequence $q_1<q_2<\cdots<q_{\kappa}$ satisfying 
\begin{alignat*}{2}
    \max\{p-1, q_-\}+\delta < &\,q_1 &\le &\,p,\\
    &\,q_{i+1}&\le &\,g(q_i), \\
    &\,q_{\kappa-1}&\le &\, q_+-\delta,\\
    &\,q_{\kappa}&\ge &\,q_++\delta
\end{alignat*}
(here notice that $q_1$ may be smaller than $1$).

By definition of $M$, we have
\begin{equation*}
    \int_{\annulus(\frac12)} w^{p}(y)\, \mathrm{d} y 
    \le 2^{tp}M^p \lesssim M^p; 
\end{equation*}
thus $\norm{w}_{L^p(\annulus(\frac12))} \lesssim M$. Since $q_1\le p$, we deduce the starting point of our iteration procedure, that is 
\begin{equation}\label{eq:iteration-first-step}
    \norm{w}_{L^{q_1}(\annulus(\frac12))}\lesssim M
\end{equation}
holds. 

For any $(p-1)+\delta<q<q_+-\delta$, the optimal constant of the Hardy--Littlewood--Sobolev inequality $C_{\mathrm{HLS}}(n-2s, \frac{q}{p-1}, g(q))$ is bounded from above by a constant that depends only on $n, s, \eps$ (as shown in \cite{Lieb1983}).
Therefore, due to \cref{eq:ineq-to-iterate}, there exists a constant $C_1= C_1(n, s, \eps)$ such that, for any $1\le i\le \kappa-1$, 
\begin{equation}\label{eq:iteration-middle-step}
    \norm{w}_{L^{q_{i+1}}(\annulus(2^{-(i+1)}))}
    \le 
    C_1 \big(M^{p-1} + \norm{w}_{L^{q_i}(\annulus(2^{-i}))}^{p-1}\big).
\end{equation}
By concatenating \cref{eq:iteration-first-step} and \cref{eq:iteration-middle-step}, we deduce that there exists a constant $C_2=C_2(n, s,\eps)$ such that
\begin{equation*}
    \norm{w}_{L^{q_\kappa}(\annulus(2^{-\kappa}))}
    \le C_2 (M^{p-1}+M^{(p-1)^{\kappa-1}}). 
\end{equation*}
Since $q_\kappa>q_++\delta$, we can conclude and obtain \cref{eq:local47} by concatenating the last inequality with \cref{eq:ineq-to-conclude}.
\end{proof}


\appendix
\section{Smoothing a singular decreasing convex profile}
\label{sec:smoothing}

For a convex decreasing profile $\varphi:(0,\infty)\to(0,\infty)$ that has a singularity at $0$, we construct a regularized function $\psi\in C^\infty(0,\infty)$ that coincides with $\varphi$ on $[1,\infty)$, is decreasing, and has the property that at each point $0<r<1$ the graph of $\psi$ can be touched from above by a suitable translation of the graph of $\varphi$ (see \textit{(2)} of \cref{lem:smoothing_argument_consequence}).
This construction is necessary in the proof of \cref{prop:Gamma_def}.

\begin{lemma}\label{lem:crucial_smoothing_argument}
    Let $\varphi:(0,\infty)\to(0,\infty)$ be a function such that:
    \begin{itemize}
        \item $\varphi\in C^\infty((0,\infty))$,
        \item $\varphi(0^+) = \infty$,
        \item $\varphi(\infty) = 0$,
        \item $\varphi' < 0$,
        \item $\varphi'' > 0$.
    \end{itemize}
    There exists a function $\psi:(0,\infty)\to(0,\infty)$ such that:
    \begin{enumerate}
        \item $\psi\in C^\infty((0,\infty))$,
        \item $\psi(r)=a-br^2$, for some $a,b>0$, for all $0<r<\frac12$,
        \item $\psi = \varphi$ in $[1,\infty)$,
        \item $\psi'<0$,
        \item If $\psi'(r_0)=\varphi'(r_1)$ for some $0 < r_0 < 1$ and $r_1>0$, then $\psi''(r_0) < \varphi''(r_1)$.
        \item There exists $\bar r>0$ such that $\frac{\psi'}{\psi}\ge \frac{\varphi'}{\varphi}$ in $[\bar r, 1]$ and $\psi''<0$ in $(0, \bar r]$.
    \end{enumerate}
\end{lemma}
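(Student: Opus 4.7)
My plan is to construct $\psi$ as a smooth partition-of-unity interpolation between a quadratic profile on the left and $\varphi$ on the right. Fix a smooth cutoff $h\colon\R\to[0,1]$ with $h\equiv 0$ on $(-\infty,1/2]$, $h\equiv 1$ on $[1,\infty)$, $h'\ge 0$ everywhere, and all derivatives of $h$ flat at both $1/2$ and $1$. Set $Q(r)\defeq a-br^2$ (with $a,b>0$ to be chosen) and define
\[
\psi(r)\defeq(1-h(r))\,Q(r)+h(r)\,\varphi(r),\qquad r>0.
\]
By flatness of $h$, $\psi$ is $C^\infty$ on $(0,\infty)$, agrees with $Q$ on $(0,1/2]$, and equals $\varphi$ on $[1,\infty)$, giving (1)--(3) for free. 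The parameters are picked as follows: take $b>0$ small (precise smallness constraint dictated by the verification of (5)), then take $a$ large enough that $Q(r)>\varphi(r)$ on $[1/2,1]$ (e.g.\ $a\ge 2\varphi(1/2)$ and $b<1$ works).

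For (4), I differentiate:
\[
\psi'(r)=(1-h(r))Q'(r)+h(r)\varphi'(r)+h'(r)(\varphi(r)-Q(r)),
\]
where the first two terms give a convex combination of two strictly negative quantities and the third is $\leq 0$ since $h'\geq 0$ and $\varphi<Q$ on $[1/2,1]$; thus $\psi'<0$ everywhere. For the first half of (6), observe that $\psi''(1/2)=-2b<0$, so by continuity $\psi''<0$ on some interval $(0,\bar r]$ with $\bar r$ slightly larger than $1/2$. For the second half of (6), use $\psi-\varphi=(1-h)(Q-\varphi)>0$ on $(0,1)$; combined with the fact that $\psi'-\varphi'$ is of lower order than $\psi-\varphi$ near $r=1$ (by the flatness of the cutoff), a direct comparison yields $\psi'/\psi\ge\varphi'/\varphi$ on $[\bar r,1]$, possibly after enlarging $\bar r$.

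The main obstacle, and the only truly delicate verification, is (5). On $(0,1/2]$ it is trivial because $\psi''=-2b<0<\varphi''(r_1)$. On the transition interval $(1/2,1)$ I expand
\[
\psi''(r)=(1-h)Q''+h\varphi''+2h'(\varphi'-Q')+h''(\varphi-Q)
\]
and must establish $\psi''(r)<\varphi''(r_1(r))$ with $r_1(r)\defeq(\varphi')^{-1}(\psi'(r))$. Since $\psi'(r)$ is a convex combination of $-2br$ and $\varphi'(r)$ with $-2br>\varphi'(r)$ for small $b$, one has $r_1(r)\in[r,(\varphi')^{-1}(-2br)]$; hence $r_1$ is a controlled perturbation of $r$. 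The ``bulk'' inequality on a compact sub-interval separated from $r=1$ can be secured by taking $b$ sufficiently small, exploiting continuity in $b$ and the strict gap that exists at $b=0$. The hard part is the strict inequality near $r=1^-$, where both sides of (5) converge to $\varphi''(1)$ and all derivatives agree by the flat gluing. There I would argue that the flat cutoff's log-derivative $h'/(1-h)$ blows up, forcing $\psi'(r)-\varphi'(r)$ to have a definite sign for $r$ near $1^-$ (determined by the sign of $\varphi-Q$), and that the resulting displacement $r_1(r)-r$ produces, together with the explicit form of the correction terms in $\psi''$, the desired strict sign of $\varphi''(r_1(r))-\psi''(r)$. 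I expect this last step---propagating the strict inequality through the infinitely-flat tangency at $r=1$---to be the most technically demanding ingredient of the proof.
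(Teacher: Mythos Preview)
Your construction has a genuine obstruction at property~(6), and the error is not merely a missing technical detail.

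You choose $a$ large so that $Q>\varphi$ on $[1/2,1]$, which forces $\psi>\varphi$ on all of $(0,1)$ (since $\psi-\varphi=(1-h)(Q-\varphi)>0$). But this is incompatible with the log-derivative inequality in~(6) on \emph{any} left neighborhood of $1$. Indeed, if $\psi'/\psi\ge\varphi'/\varphi$ on $[\bar r,1]$, then integrating $(\log\psi-\log\varphi)'\ge 0$ from $\bar r$ to $1$ and using $\psi(1)=\varphi(1)$ gives $\psi(\bar r)\le\varphi(\bar r)$, contradicting $\psi(\bar r)>\varphi(\bar r)$. So no choice of $\bar r<1$ can work, regardless of how you ``enlarge'' it. Your heuristic justification --- that $\psi'-\varphi'$ is of lower order than $\psi-\varphi$ near $r=1$ by flatness of the cutoff --- is in fact backwards: writing $\psi-\varphi=(1-h)(Q-\varphi)$ and $\psi'-\varphi'=(1-h)(Q'-\varphi')-h'(Q-\varphi)$, the ratio involves $h'/(1-h)$, which necessarily blows up at a point where a smooth positive function vanishes (its log-derivative cannot stay integrable). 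Hence $|\psi'-\varphi'|/|\psi-\varphi|\to\infty$, not $0$.

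The paper's construction goes in the opposite direction: it sets $\psi=\varphi-\kappa\,\theta(1-r)$ on an interval $[\bar r_\kappa,1]$ (with $\theta(t)=e^{-1/t}$ for $t>0$), so that $\psi<\varphi$ there, which is the correct sign for~(6). The same blow-up of $\theta''/\theta'$ near the flat endpoint is then exploited \emph{in your favor} to verify~(5): the perturbation moves $\varphi''$ much more than $\varphi'$, producing the strict gap $\psi''(r_0)<\varphi''(r_1)$ all the way up to $r_0=1^-$. The concave, eventually quadratic, piece is glued on only afterwards, on $(0,\bar r_\kappa]$, with $\bar r_\kappa$ close to $1$; in that region (5) is trivial because $\psi''<0<\varphi''$. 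If you want to salvage the partition-of-unity idea, you would at minimum need to interpolate toward something \emph{below} $\varphi$ near $r=1$, and then the easy sign argument for~(4) is lost and must be replaced by something closer to the paper's balancing act.
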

\begin{proof}
    Let $\theta:\R\to[0,\infty)$ be the smooth function
    \begin{equation*}
        \theta(r)\defeq
        \begin{cases}
            0 \quad&\text{if $r<0$,} \\
            e^{-\frac1r}\quad&\text{if $r\ge 0$.}
        \end{cases}
    \end{equation*}
    Observe that $\frac{\theta'(r)}{\theta(r)}=\frac1{r^2}$ and $\frac{\theta''(r)}{\theta'(r)}=\frac1{r^2}-\frac2r$.
    
    Given $\kappa>0$, let us consider $r\mapsto \varphi_\kappa(r)\defeq \varphi(r)-\kappa\theta(1-r)$.
    Observe that
    \begin{equation*}
        \varphi'_\kappa(r) = \varphi'(r)+\kappa\theta'(1-r)
        \quad
        \varphi''_\kappa(r) = \varphi''(r) - \kappa\theta''(1-r).
    \end{equation*}
    It holds $\varphi''_\kappa(1)>0$ and, if $\kappa$ is sufficiently large, $\varphi'_\kappa(\frac12)>\varphi'_\kappa(1)=\varphi'(1)$. Thus, for $\kappa$ large enough, we can define $\frac12<\bar r_\kappa<1$ as the maximum point such that $\varphi'_\kappa(\bar r_\kappa)=\varphi'_\kappa(1)=\varphi'(1)$.
    Furthermore, $\bar r_\kappa\to 1^-$ as $\kappa\to \infty$ and, for $\kappa$ large enough, $\varphi''_\kappa(\bar r_\kappa)<0$ (because $\tfrac{\theta''(1-r)}{\theta'(1-r)}\to\infty$ as $r\to 1^-$).

    Observe that, by smoothness of $\varphi$, there exists a constant $C>0$ such that
    \begin{equation*}
        \frac{\varphi''(r_1)-\varphi''(r_0)}{\varphi'(r_1)-\varphi'(r_0)} > -C \quad\text{for all $\frac12<r_0<r_1<1$.}
    \end{equation*}
    By choosing $\kappa$ large enough we can assume that $\frac{\theta''(1-r)}{\theta'(1-r)}>C$ on $(\bar r_\kappa, 1)$. Analogously, we can also assume that $\frac{\theta'(1-r)}{\theta(1-r)}>-\frac{\varphi'(r)}{\varphi(r)}$ for $r\in(\bar r_\kappa, 1)$.

    From now on, we fix $\kappa$ large enough so that all properties mentioned above hold.
    We are going to construct a smooth function $\psi$ such that:
    \begin{itemize}
        \item $\psi(r)=a-br^2$ on $(0, \frac12)$ for some $a, b>0$,
        \item $\psi''<0$ on $[\frac12, \bar r_\kappa]$;
        \item $\psi=\varphi_\kappa$ on $[\bar r_\kappa, \infty)$.
    \end{itemize}
    In order to find the sought smooth concave extension on $[\frac12, \bar r_\kappa]$, the boundary conditions to satisfy are:
    \begin{align*}
        &\psi''(\tfrac12) < 0,\\
        &\psi''(\bar r_\kappa)<0,\\
        &\psi(\tfrac12) + (\bar r_\kappa-\tfrac12)\psi'(\tfrac12)\ge \psi(\bar r_\kappa),\\
        &\psi(\bar r_\kappa) + (\tfrac12-\bar r_\kappa)\psi'(\bar r_\kappa) \ge \psi(\tfrac12).
    \end{align*}
    The first condition holds because $b>0$; we have already checked that the second condition holds; the last two conditions hold if $a>\psi(\bar r_\kappa)$ is chosen close enough to $\psi(\bar r_\kappa)$ and then $b$ is chosen small enough.

    We shall verify that $\psi$ satisfies all the constraints mentioned in the statements. Only properties \textit{(5)} and \textit{(6)} are nontrivial to check for our choice of $\psi$.

    For \textit{(5)}, if $0<r_0\le\bar r_\kappa$, then $\psi''(r_0) < 0 < \varphi''(r_1)$. 
    For $\bar r_\kappa<r_0<1$, since $\varphi'(r_0) < \varphi'(r_0) + \kappa\theta'(1-r_0) = \psi'(r_0)<\varphi'(1)$, we deduce that $r_0<r_1<1$.
    Therefore, we have
    \begin{equation*}
        \frac{\varphi''(r_1)-\varphi''(r_0)}{\kappa\theta'(1-r_0)}
        =
        \frac{\varphi''(r_1)-\varphi''(r_0)}{\varphi'(r_1)-\varphi'(r_0)}
        > - C
        >
        -\frac{\theta''(1-r_0)}{\theta'(1-r_0)}
    \end{equation*}
    and thus $\varphi''(r_1)-\varphi''(r_0)>-\theta''(1-r_0)$, which is equivalent to $\varphi''(r_1)>\psi''(r_0)$ as desired.

    For \textit{(6)}, we set $\bar r=\bar r_\kappa$. The condition $\psi''<0$ in $(0, \bar r]$ holds by definition of $\psi$. For $\bar r<r<1$, observe that
    \begin{equation*}
        \frac{\psi'(r)}{\psi(r)} \ge \frac{\varphi'(r)}{\varphi(r)} \iff
        \frac{\theta'(1-r)}{\theta(1-r)}
        \ge
        -\frac{\varphi'(r)}{\varphi(r)},
    \end{equation*}
    which is true because of our choice of $\kappa$.
\end{proof}

\begin{lemma}\label{lem:smoothing_argument_consequence}
    Let $\varphi:(0,\infty)\to(0,\infty)$ be a function satisfying the assumptions of \cref{lem:crucial_smoothing_argument} and let $\psi:(0,\infty)\to(0,\infty)$ be a function satisfying the properties mentioned in \cref{lem:crucial_smoothing_argument}.

    Then:
    \begin{enumerate}
        \item It holds $\psi\le \varphi$ and $\varphi'\le \psi'$.
        \item Given $0<r_0<1$ there exists $r_0 < r_1$ such that the function 
        \begin{equation*}
            (0,\infty)\ni r\mapsto \varphi(r+(r_1-r_0))-\psi(r)
        \end{equation*}
        has a global minimum at $r=r_0$.
    \end{enumerate}
\end{lemma}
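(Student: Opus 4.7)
My plan is to prove the two assertions in sequence, exploiting the three-piece description of $\psi$ furnished by \cref{lem:crucial_smoothing_argument}.

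For (1), the goal is to establish $\psi' \ge \varphi'$ on $(0,\infty)$, from which $\psi \le \varphi$ follows by integrating from $r$ up to $r = 1$ and using $\psi(1) = \varphi(1)$. On $[1,\infty)$ property (3) gives equality. On $(\bar r, 1)$ the explicit form $\psi = \varphi - \kappa\theta(1-\cdot)$ from the construction combined with $\theta'>0$ on $(0,\infty)$ yields $\psi' - \varphi' = \kappa\theta'(1-\cdot) > 0$; in particular $\psi'(\bar r) > \varphi'(\bar r)$. On $(0,\bar r]$, property (6) gives $\psi'' < 0$ while convexity of $\varphi$ gives $\varphi'' > 0$, so $\psi' - \varphi'$ is decreasing; hence $\psi'(r) - \varphi'(r) \ge \psi'(\bar r) - \varphi'(\bar r) > 0$ on $(0, \bar r]$.

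For (2), I will first identify $r_1$. Since $\varphi$ is strictly convex with $\varphi(\infty)=0$, the derivative $\varphi'$ is continuous, strictly increasing, and satisfies $\varphi'(r) \to 0$ as $r \to \infty$. Combining part (1) with property (4) gives $\psi'(r_0) \in (\varphi'(r_0), 0)$, so by the intermediate value theorem there is a unique $r_1 \in (r_0, \infty)$ with $\varphi'(r_1) = \psi'(r_0)$. Setting $\tau := r_1 - r_0 > 0$ and $F(r) := \varphi(r+\tau) - \psi(r)$, one gets $F'(r_0) = 0$ and, by property (5), $F''(r_0) = \varphi''(r_1) - \psi''(r_0) > 0$, so $r_0$ is a strict local minimum of $F$.

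The hard part will be upgrading this to a \emph{global} minimum, since $\psi$ is not globally concave and $F$ is not globally convex. My plan is to show that $r_0$ is the unique critical point of $F$ in $(0,\infty)$. On $[1,\infty)$ we have $\psi = \varphi$ and therefore $F'(r) = \varphi'(r+\tau) - \varphi'(r) > 0$ by strict monotonicity of $\varphi'$. On $(0,1)$, at any zero $\tilde r$ of $F'$ we have $\psi'(\tilde r) = \varphi'(\tilde r + \tau)$, so property (5) yields $F''(\tilde r) > 0$. Thus every critical point of $F$ in $(0,1)$ is a strict local minimum, which forbids more than one such: if there were two, applying the intermediate value theorem to the continuous function $F'$ would produce a third zero between them at which $F'$ changes from positive to negative, contradicting $F'' > 0$ at that zero. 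Hence $r_0$ is the only critical point of $F$ in $(0,\infty)$; the signs observed near $r_0$ then propagate, making $F$ strictly decreasing on $(0,r_0)$ and strictly increasing on $(r_0,\infty)$, so $r_0$ is the global minimum.
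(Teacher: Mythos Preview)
Your proof of part (2) is correct and follows essentially the same strategy as the paper: both arguments show that $r_0$ is the unique critical point of $F$ on $(0,\infty)$ by exploiting property (5). The paper packages this slightly more elegantly by introducing the auxiliary map $G \defeq (\varphi')^{-1}\circ\psi':(0,1)\to(0,\infty)$ and observing (via property (5)) that $G(r)-r$ is strictly decreasing with $G(1)=1$; then $F'(r)=0$ on $(0,1)$ is equivalent to $G(r)-r = r_1-r_0 = G(r_0)-r_0$, which forces $r=r_0$. But your ``every critical point is a strict local minimum, hence there is at most one'' argument is equally valid.

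There is, however, a genuine gap in your proof of part (1). The statement of \cref{lem:smoothing_argument_consequence} assumes only that $\psi$ satisfies properties (1)--(6) listed in the \emph{conclusion} of \cref{lem:crucial_smoothing_argument}; the explicit formula $\psi = \varphi - \kappa\theta(1-\cdot)$ on $(\bar r,1)$ is part of the \emph{proof} of \cref{lem:crucial_smoothing_argument}, not part of its statement. Your argument on $(\bar r,1)$ (and consequently the strict inequality $\psi'(\bar r)>\varphi'(\bar r)$ that you feed into the $(0,\bar r]$ step) therefore uses more than what is hypothesised. The paper's proof avoids this by using the same map $G=(\varphi')^{-1}\circ\psi'$: since $G(r)>r$ on $(0,1)$ and $\varphi'$ is increasing, one gets $\psi'(r)=\varphi'(G(r)) > \varphi'(r)$ directly from properties (3)--(5), uniformly on $(0,1)$, without any case splitting or appeal to the construction. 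You could also repair your approach using property (6) alone on $[\bar r,1]$: since $(\log\psi - \log\varphi)' = \psi'/\psi - \varphi'/\varphi \ge 0$ and equals $0$ at $r=1$, one gets $\psi\le\varphi$ on $[\bar r,1]$, whence $\psi' \ge (\psi/\varphi)\varphi' \ge \varphi'$ there.
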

\begin{proof}
    Notice that $\psi'$ attains only values in $(-\infty,0)$ and $\varphi'$ is a smooth diffeomorphism between $(0,\infty)$ and $(-\infty, 0)$. Hence, the map $F\defeq (\varphi')^{-1}\circ \psi':(0,1)\to(0,\infty)$ is well defined and smooth.

    We prove that $(0,1)\ni r\mapsto F(r)-r$ is strictly decreasing. Indeed, its derivative at $0<r<1$ satisfies
    \begin{equation*}
        F'(r) = \frac{\psi''(r)}{\varphi''(F(r))}-1 < 0,
    \end{equation*}
    where we applied property \textit{(5)} of $\psi$ to justify the last inequality.
    Observe that $F(1)=1$, so $r<F(r)$ for $0<r<1$.

    Now we prove the two desired statements.
    \begin{enumerate}
        \item Observe that $\psi(\infty)=\varphi(\infty)=0$, so if we prove $\psi'\ge \varphi'$ then $\psi\le \varphi$ follows.

        Since $\psi=\varphi$ in $[1,\infty)$, then also $\psi'=\varphi'$ in such domain.
        For $0<r<1$, we have $\psi'(r) = \varphi'(F(r)) \ge \varphi'(r)$ because $F(r)>r$.
        \item Given $0<r_0<1$, let $r_1=F(r_0)$. We have already shown that $r_0<r_1$. Let $Q(r)\defeq  \varphi(r+(r_1-r_0))-\psi(r)$. This function is smooth on $(0, \infty)$, it satisfies $Q'(r_0)=0$ and $Q''(r_0) > 0$ (we are applying property \textit{(5)} of $\psi$). 
        Thus, to show that $r_0$ is a global minimum point for $Q$ it is sufficient to show that $Q'(r)=0$ implies $r=r_0$.
        For $0<r<1$, we can have $Q'(r)=0$ if and only if $F(r)=r+(r_1-r_0)$, but this can happen only if $r=r_0$ since $F(r)-r$ is strictly monotone.
        For $1\le r$, we have $Q'(r) = \varphi'(r+r_1-r_0)-\varphi'(r) > 0$ because $\varphi=\psi$ on $[1,\infty)$ and $\varphi$ is strictly convex.
    \end{enumerate}
\end{proof}

\vspace{0.5cm}
\section*{Acknowledgments}
N. De Nitti is a member of the Gruppo Nazionale per l'Analisi Matematica, la Probabilità e le loro Applicazioni (GNAMPA) of the Istituto Nazionale di Alta Matematica (INdAM). He has been partially supported by the Swiss State Secretariat for Education, Research and Innovation (SERI) under contract number MB22.00034 through the project TENSE. F. Glaudo is supported by the National Science Foundation under Grant No. DMS–1926686.

We are grateful for the hospitality of the Friedrich-Alexander-Universität Erlangen-Nürnberg, where part of this work was carried out in May 2022.

We thank Alessio Figalli for discussions that led to \cref{remark s=1} and Rupert Frank for suggesting to prove \cref{corollary remainder CKN}.
We thank Maria del Mar González, Roberta Musina, and Dino Sciunzi for some helpful comments on the topic of this work.  

\vspace{0.5cm}

\printbibliography

\vfill 

\end{document}